\setlist[itemize]{leftmargin=11mm}
\setlist[enumerate]{leftmargin=11mm}
\newtheorem{statement}{statement}[section]
\newtheorem{theorem}[statement]{Theorem}
\newtheorem{lemma}[statement]{Lemma}
\newtheorem{conjecture}[statement]{Conjecture}
\newtheorem{corollary}[statement]{Corollary}
\newtheorem{observation}[statement]{Observation}
\DeclareMathOperator{\tw}{{\text{\sf{tw}}}}
\DeclareMathOperator{\pw}{{\text{\sf{pw}}}}
\DeclareMathOperator{\dist}{{\text{\sf{dist}}}}
\DeclareMathOperator{\diam}{{\text{\sf{diam}}}}
\DeclareMathOperator{\ex}{{\text{\sf{ex}}}}
\DeclareMathOperator{\bin}{{\text{\sf{B}}}}
\def\dd{\hbox{-}}   
\newcommand{\mf}{\mathfrak}
\newcommand{\mca}{\mathcal}
\newcommand{\poi}{\mathbb{N}}
\newcounter{tbox}
\newcommand{\sta}[1]{%
  \par\addvspace{0.5\baselineskip}
  \refstepcounter{tbox}%
  \noindent(\thetbox)\,{\em #1}\par%
  \addvspace{0.3\baselineskip}%
}
\newcommand{\squeezeline}[1]{%
  \noindent\hbox to \linewidth{#1\hfil}%
}
\title[Polynomial bounds for pathwidth]{Polynomial bounds for pathwidth}
\author{Sepehr Hajebi$^{\dagger}$}
\thanks{$^{\dagger}$ Department of Combinatorics and Optimization, University of Waterloo, Waterloo, Ontario, Canada}
\date {\today}
\begin{document}

\maketitle

\begin{abstract}
Dallard, Milani\v{c}, and \v{S}torgel conjectured that for a hereditary graph class $\mca{G}$, if there is some function $f:\poi\to\poi$ such that every graph $G\in \mca{G}$ with clique number $\omega(G)$ has treewidth at most $f(\omega(G))$, then there is a polynomial function $f$ with the same property. Chudnovsky and Trotignon refuted this conjecture in a strong sense, showing that neither polynomial nor any prescribed growth can be guaranteed in general. Here we prove that, in stark contrast, the analog of the Dallard-Milani\v{c}-\v{S}torgel conjecture for \textit{pathwidth} is true: For every hereditary graph class $\mca{G}$, if the pathwidth of every graph in $\mca{G}$ is bounded by some function of its clique number, then the pathwidth of every graph in $\mca{G}$ is bounded by a polynomial function of its clique number.
\end{abstract}
 
\section{Introduction}
\subsection{Background and the (weaker) main result.} We write $\poi$ for the set of all positive integers, and $2^{X}$ for the power set of a set $X$. In this paper, a graph $G=(V(G),E(G))$ consists of a finite vertex set $V(G)$ and an edge set $E(G)$ whose elements are $2$-subsets of $V(G)$; thus, ``loops'' and ``parallel edges'' are not allowed. A \textit{class} is a set of graphs taken up to isomorphism. See \cite{diestel} for standard graph-theoretic terminology.

Let $G$ be a graph. A \textit{tree decomposition} of $G$ is a pair $\mf{t}=(T,\beta)$ where $T$ is a tree and $\beta:V(T)\rightarrow 2^{V(G)}$ is a function such that for every $v\in V(G)$, the subgraph of $T$ induced by $\{x\in V(T):v\in \beta(x)\}$ is non-null and connected, and for every $uv\in E(G)$, there exists $x\in V(T)$ such that $u,v\in \beta(x)$. We say that $\mf{t}$ is a \textit{path decomposition} of $G$ if $T$ is a path. The \textit{treewidth} of $G$, denoted $\tw(G)$, is the minimum $w\in \poi\cup \{0\}$ for which $G$ has a tree decomposition $(T,\beta)$ such that $|\beta(x)|\leq w+1$ for all $x\in V(T)$. The \textit{pathwidth} of $G$, denoted $\pw(G)$, is defined analogously using path decompositions. 

In two seminal results  \cite{GMI,GMV} from their Graph Minors project in the 1980s, Robertson and Seymour proved that, for minor-closed classes, bounded pathwidth is equivalent to excluding a forest, and bounded treewidth is characterized by excluding a planar graph.

\begin{theorem}[Robertson and Seymour \cite{GMI}]\label{thm:RSGMI}
Let $\mca{G}$ be a minor-closed class. Then every graph in $\mca{G}$ has bounded pathwidth if and only if $\mca{G}$ excludes a forest.
\end{theorem}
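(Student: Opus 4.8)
The plan is to prove the two directions of \Cref{thm:RSGMI} separately; essentially all of the content lies in one of them. For the implication ``bounded pathwidth $\Rightarrow$ $\mca{G}$ excludes a forest'' I would argue by contraposition: if $\mca{G}$ excludes no forest then $\mca{G}$ contains \emph{every} forest, in particular the complete binary tree $T_h$ of height $h$ for every $h\in\poi$, and $\pw(T_h)\to\infty$ as $h\to\infty$ --- a standard fact, e.g.\ because $T_h$ fails to be a caterpillar once $h\ge 3$ and iterating this observation produces inside $T_h$ the recursively defined tree obstructions to small pathwidth --- so $\mca{G}$ has unbounded pathwidth. (This direction does not use that $\mca{G}$ is minor-closed.)

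The substantive implication is ``$\mca{G}$ excludes a forest $\Rightarrow$ bounded pathwidth''. Fix a forest $F\notin\mca{G}$; since $\mca{G}$ is minor-closed no member of $\mca{G}$ has an $F$-minor, so it suffices to prove the self-contained statement: \emph{for every forest $F$ there is a constant $c(F)$ such that every graph with no $F$-minor has pathwidth at most $c(F)$}. I would aim for the sharp bound $c(F)=|V(F)|-2$ (the Bienstock--Robertson--Seymour--Thomas ``quickly excluding a forest'' theorem), though any bound depending only on $F$ is enough here. To prove the statement I would first reduce to the case that $F$ is a tree --- if $F$ has $k$ components, add $k-1$ edges to get a tree $F'$ on the same vertex set; $F$ is then a minor of $F'$, so every $F$-minor-free graph is $F'$-minor-free and $|V(F')|=|V(F)|$ --- and then, $F$ being connected, to the case that $G$ is connected, since $G$ has no $F$-minor if and only if no component of $G$ does while $\pw(G)$ is the maximum of $\pw$ over the components of $G$. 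Now induct on $n:=|V(F)|$, the base case $n=2$ (where $F$-minor-free means edgeless) being immediate. For $n\ge 3$, pick a leaf $v$ of $F$ with neighbour $u$ and set $F_0:=F-v$, a tree on $n-1$ vertices, and let $G$ be connected with no $F$-minor. If $G$ has no $F_0$-minor, the inductive hypothesis gives $\pw(G)\le n-3$. Otherwise, fix a model $(B_w)_{w\in V(F_0)}$ of $F_0$ in $G$ chosen extremally --- say, with $\sum_w|B_w|$ minimum and each $B_w$ inducing a subtree. Since $G$ has no $F$-minor this model admits no connected branch set for $v$ using vertices outside $\bigcup_w B_w$, so no component of $G-\bigcup_w B_w$ is adjacent to $B_u$; from this one builds a path decomposition of $G$ of width at most $n-2$ by sweeping through the branch sets $B_w$ in an order dictated by a suitable linear layout of $F_0$ and inserting the leftover components of $G-\bigcup_w B_w$ at the appropriate moments, the extremality of the model being what keeps each bag of size at most $n-1$.

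The main obstacle is exactly this last construction: turning the structural information --- an extremal, non-extendable $F_0$-model --- into an honest path decomposition of all of $G$ while holding the width at $n-2$. This is the technical heart of the argument, and where the linear (rather than exponential) dependence on $|V(F)|$ is won or lost; I expect the bookkeeping --- choosing the order of the branch sets, slotting in the leftover components, checking that every edge is covered and no bag is too large --- to absorb most of the effort, whereas everything preceding it (the two directions, the two reductions, the base case, and the ``no $F_0$-minor'' case) is routine. I do not see a shortcut through some simpler excluded forest, since $\{F\text{-minor-free}\}\subseteq\{F'\text{-minor-free}\}$ only when $F$ is a minor of $F'$, and the branching of $F$ cannot be removed by passing to such an $F'$.
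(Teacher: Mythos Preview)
The paper does not prove \Cref{thm:RSGMI}: it is quoted as a background result of Robertson and Seymour, with a citation to \cite{GMI}, and no argument is supplied. So there is no proof in the paper to compare your proposal against.

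On the merits of your plan: the easy direction is fine as stated (and the paper later uses the specific fact, mentioned in \Cref{sec:notation}, that subdivisions of $\bin_r$ have pathwidth at least $\lfloor (r+1)/2\rfloor$, which is exactly what you need). For the hard direction, your reductions to a tree $F$ and to connected $G$ are correct, and the leaf-deletion induction with an extremal $F_0$-model is indeed the skeleton of the Bienstock--Robertson--Seymour--Thomas argument you are aiming for. You are also right that the entire difficulty lives in the step you flag: a single non-extendable $F_0$-model, together with the observation that no leftover component touches $B_u$, does \emph{not} by itself yield a path decomposition of width $n-2$. Leftover components may attach to several branch sets at once, the branch sets themselves carry internal structure, and nothing so far bounds the pathwidth of a leftover component (you only know it has no $F$-minor, which is the hypothesis you started with). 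Turning this into an actual decomposition requires a genuinely new idea beyond ``sweep and insert''; in the published proof this is handled by a careful recursive construction rather than bookkeeping on a single model. If you only need \emph{some} bound depending on $F$ --- which is all \Cref{thm:RSGMI} asserts --- the original Robertson--Seymour argument in \cite{GMI} is less sharp but avoids this obstacle.
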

\begin{theorem}[Robertson and Seymour \cite{GMV}]\label{thm:RSGMV}
Let $\mca{G}$ be a minor-closed class. Then every graph in $\mca{G}$ has bounded treewidth if and only if $\mca{G}$ excludes a planar graph.
\end{theorem}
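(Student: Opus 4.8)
The plan is to prove the two directions of the biconditional separately, reducing the substantive direction to the Excluded Grid Theorem, which is the deep content of \cite{GMV}.

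For the ``only if'' direction, suppose every graph in $\mca{G}$ has treewidth at most some $w\in\poi\cup\{0\}$. I would use the standard fact that the $(k\times k)$-grid $\Gamma_k$ is planar with $\tw(\Gamma_k)=k$ for all $k\ge 2$ --- the upper bound from a row-by-row decomposition, the lower bound from a bramble (or cops-and-robbers) argument. Then $\Gamma_{w+2}$ is a planar graph of treewidth $w+2>w$, so $\Gamma_{w+2}\notin\mca{G}$; since $\mca{G}$ is minor-closed, $\mca{G}$ excludes the planar graph $\Gamma_{w+2}$.

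For the ``if'' direction, suppose $\mca{G}$ excludes a fixed planar graph $H$ on $n$ vertices. The first ingredient is the classical fact that every $n$-vertex planar graph is a minor of the $(m\times m)$-grid for some $m$ bounded in terms of $n$ (one may take $m=O(n)$: fix a planar embedding of $H$, dilate it inside a sufficiently fine grid, and route the edges of $H$ along pairwise non-crossing grid paths). Hence $H$ is a minor of $\Gamma_m$, and since minor containment is transitive and $\mca{G}$ is minor-closed, no graph in $\mca{G}$ contains $\Gamma_m$ as a minor. Now I would invoke the Excluded Grid Theorem: there is a function $g\colon\poi\to\poi$ such that every graph $G$ with $\tw(G)\ge g(m)$ contains $\Gamma_m$ as a minor. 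It follows that every graph in $\mca{G}$ has treewidth at most $g(m)-1$, so $\mca{G}$ has bounded treewidth, as desired.

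The heart of the matter --- and the step I would not try to reprove --- is the Excluded Grid Theorem itself; I would cite it as a black box. The remaining ingredients (the treewidth of grids and the embedding of a planar graph as a grid minor) are routine, and only the qualitative form ``$m$ is finite in terms of $n$'' is needed, since we want boundedness of treewidth rather than any particular rate. The one point to handle with care is that $\mca{G}$ being \emph{minor-closed} --- not merely \emph{hereditary} --- is exactly what lets us pass from ``$H\notin\mca{G}$'' to ``$H$ is not a minor of any member of $\mca{G}$''.
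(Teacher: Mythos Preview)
Your sketch is correct as an argument for this classical theorem. However, the paper does not give its own proof of this statement: \Cref{thm:RSGMV} is stated in the introduction purely as background and is simply attributed to Robertson and Seymour \cite{GMV}, with no proof supplied. So there is nothing in the paper to compare your approach against --- the paper treats the result as a black box, exactly as you treat the Excluded Grid Theorem.
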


In recent years, there has been tremendous interest in characterizing bounded \mbox{pathwidth} and treewidth in \textit{hereditary} classes -- those closed under taking induced \mbox{subgraphs}. In joint work with Chudnovsky and Spirkl \cite{tw18}, we resolved this inquiry for pathwidth. Despite substantial effort, the analogous result for treewidth remains out of reach.

On the other hand, there are hereditary classes of unbounded treewidth in which all it takes to bound the treewidth is to satisfy an obvious necessary condition: excluding a complete graph (recall \cite{diestel} that $\tw(K_{t})=\pw(K_{t})=t-1$ for every $t\in \poi$). Dallard, Milani\v{c}, and \v{S}torgel \cite{DMS} made a bold conjecture that the treewidth bound in this scenario {\sl never} needs to grow much faster than the size of the excluded complete graph. 

A \textit{clique} in a graph $G$ is a set of pairwise adjacent vertices, and a \textit{stable set} in $G$ is a set of pairwise nonadjacent vertices. The \textit{clique number} $\omega(G)$ of $G$ is the largest cardinality of a clique in $G$. A class $\mca{G}$ is (\textit{polynomially}) \textit{$(\tw,\omega)$-bounded} if there is a (polynomial) function $f:\poi\rightarrow \poi$ -- called a \textit{$(\tw,\omega)$-bounding function for $\mca{G}$} -- such that $\tw(G)\leq f(\omega(G))$ for every $G\in \mca{G}$. For instance, the class of chordal graphs is $(\tw,\omega)$-bounded, whereas the class of complete bipartite graphs is not (recall \cite{diestel} that $\tw(G)=\omega(G)-1$ for every chordal graph $G$, and $\tw(K_{t,t})=\pw(K_{t,t})=t$ for all $t\in \poi$). 

The following was conjectured\footnote{In \cite{DMS}, \Cref{conj:DMS} appears as Question~8.4, but it is immediately followed by their Conjecture~8.5 suggesting that an even stronger statement is true. We therefore believe it is safe to call it a ``conjecture.''} in \cite{DMS}:

\begin{conjecture}[Dallard, Milani\v{c}, \v{S}torgel \cite{DMS}]\label{conj:DMS}
     Every hereditary $(\tw,\omega)$-bounded class is polynomially $(\tw,\omega)$-bounded.
\end{conjecture}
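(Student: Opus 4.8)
The statement here is \Cref{conj:DMS}, the Dallard--Milani\v{c}--\v{S}torgel conjecture for \emph{treewidth}; as the abstract records, it was refuted by Chudnovsky and Trotignon, so there is no proof to be had. What I can offer is the most natural line of attack together with a diagnosis of exactly where --- and why irreparably --- it breaks down; this diagnosis is essentially the content of the known counterexamples, and it also explains why the analogous statement for pathwidth (the actual theorem of this paper) behaves so differently.

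The plan one would want to run is a three-step reduction modelled on \Cref{thm:RSGMV}. Fix a hereditary, $(\tw,\omega)$-bounded class $\mca G$ with bounding function $f$. Step one: isolate a ``canonical family'' $\mca F$ of graphs of bounded clique number and unbounded treewidth, each tagged by a size parameter $k$ --- the standard candidates being complete bipartite graphs $K_{k,k}$, subdivisions of the $k\times k$ wall, and line graphs of such subdivisions. Step two: note that since every member of $\mca F$ has clique number at most an absolute constant $c$, the hypothesis forces $\mca G$ to \emph{exclude as an induced subgraph} every member of $\mca F$ of size parameter exceeding $f(c)$ --- otherwise a single graph of $\mca G$ would have bounded $\omega$ and unbounded $\tw$. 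Step three, the crux: prove an \emph{induced} analog of the Robertson--Seymour grid theorem, namely a function $g$ with the property that $\tw(H)\ge g(k,\omega(H))$ forces an induced copy of a member of $\mca F$ of size parameter $\ge k$. Feeding $k=f(c)+1$ into steps two and three would yield $\tw(G)\le g(f(c)+1,\omega(G))$ uniformly over $\mca G$, and a $g$ polynomial in each coordinate would finish the job.

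The main obstacle is step three, and it is fatal rather than merely hard. No such canonical family $\mca F$ exists: the layered-wheel constructions of Sintiari--Trotignon, their refinements by Davies and others, and ultimately the Chudnovsky--Trotignon graphs, produce hereditary classes of bounded clique number and unbounded treewidth that omit \emph{every} large member of the usual list; worse, in these classes the treewidth can be forced to grow, as a function of $\omega$, faster than any prescribed function, so even a bound of some slow prescribed growth is impossible. Attempting to dodge the canonical family --- e.g.\ arguing directly through balanced separators, or through a bounded ``tree-independence-number'' decomposition --- founders on the same rock, since bounded tree-independence number is strictly stronger than $(\tw,\omega)$-boundedness and the counterexamples are precisely built to live in that gap.

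What does go through, and is what this paper actually establishes, is the pathwidth version: the forest-exclusion characterisation \Cref{thm:RSGMI}, unlike \Cref{thm:RSGMV}, admits a quantitative induced refinement in the hereditary setting, so the three-step scheme above can be carried out with ``treewidth'' replaced by ``pathwidth'' and with a polynomial $g$. There the difficult ingredient is the quantitative induced forest-exclusion step; for \Cref{conj:DMS} as literally worded, by contrast, the difficulty is not a difficulty but an impossibility.
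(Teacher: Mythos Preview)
Your assessment is correct: \Cref{conj:DMS} is a conjecture, not a theorem, and the paper does not prove it --- indeed, immediately after stating it the paper cites \Cref{thm:CT} (Chudnovsky--Trotignon), which refutes it in the strong sense that no prescribed growth rate suffices. There is no ``paper's own proof'' to compare against; your recognition that the statement is false, and your diagnosis of why the natural grid-theorem-style attack cannot succeed, are both on point and consistent with the paper's framing.

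One small refinement to your sketch of the pathwidth analog: the paper's actual route to \Cref{thm:main} is not quite the three-step template you describe (exclude a canonical family, then apply a quantitative induced forest-exclusion theorem). Rather, it passes through the stronger \Cref{thm:mainjagged} and then through \Cref{thm:indms}, which combines three separate ingredients --- a polynomial pathwidth-versus-treewidth bound under exclusion of an induced $\bin_\rho$ minor (\Cref{thm:pwtotw}), a polynomial treewidth bound for separable graphs excluding planar and complete-bipartite induced minors (\Cref{thm:mainpolyblock}), and a polynomial separability result (\Cref{thm:mainpolysep}). So the ``difficult ingredient'' is not a single induced forest-exclusion step but a chain of reductions, each of which must preserve polynomiality in $t$.
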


Chudnovsky and Trotignon \cite{CT} showed that \Cref{conj:DMS} fails spectacularly:

\begin{theorem}[Chudnovsky and Trotignon; Theorem~5.1 in \cite{CT}]\label{thm:CT}
    For every function $g:\poi\rightarrow \poi$, there is a hereditary $(\tw,\omega)$-bounded class $\mca{G}$ such that every $(\tw,\omega)$-bounding function $f$ for $\mca{G}$ satisfies $f(t)>g(t)$ for all $t\in \poi\setminus \{1\}$.
\end{theorem}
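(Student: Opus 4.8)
The plan is to reduce \Cref{thm:CT} to the construction of a family of ``witness'' graphs, and then to build this family by an iterative amplification procedure. First I would observe that it is enough to construct, for every integer $t\geq 2$, a finite graph $G_t$ with the following properties:
\begin{itemize}
\item[(i)] $\omega(G_t)=t$;
\item[(ii)] $\tw(G_t)>g(t)$;
\item[(iii)] (\emph{fragility}) every induced subgraph $H$ of $G_t$ with $\omega(H)<t$ is isomorphic to an induced subgraph of $G_{\omega(H)}$ (it would suffice, more weakly, that $\tw(H)\leq \tw(G_{\omega(H)})$).
\end{itemize}
Given such a family, let $\mca{G}$ be the class of all graphs isomorphic to an induced subgraph of some $G_t$. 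Then $\mca{G}$ is hereditary; by~(iii), and because treewidth does not increase when passing to subgraphs, every $H\in \mca{G}$ satisfies $\tw(H)\leq \tw(G_{\omega(H)})$, so $t\mapsto \tw(G_t)$ is a $(\tw,\omega)$-bounding function for $\mca{G}$. On the other hand $G_t\in \mca{G}$, so by~(i) and~(ii) every $(\tw,\omega)$-bounding function $f$ for $\mca{G}$ satisfies $f(t)\geq \tw(G_t)>g(t)$ for all $t\geq 2$, which is exactly the conclusion.

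The graphs $G_t$ I would build recursively. For $G_2$ take a wall large enough that $\tw(G_2)>g(2)$; walls are triangle-free, so~(i) and~(ii) hold and~(iii) is vacuous. To go from $G_{t-1}$ to $G_t$ I would apply an \emph{amplifier}: place a copy of $G_{t-1}$ at each vertex of a fresh wall-like skeleton (chosen large enough that the final treewidth exceeds $g(t)$), and, for each edge of the skeleton, link the two incident copies of $G_{t-1}$ through a clique gadget designed so that (a) the clique number increases by exactly $1$ --- here one uses that the skeleton is triangle-free, so no clique can meet three copies, together with a size bound on cliques meeting two copies --- and (b) contracting each copy of $G_{t-1}$ recovers the skeleton, hence a wall minor, certifying $\tw(G_t)>g(t)$. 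The reason for routing the skeleton through clique gadgets of carefully chosen size is to enforce~(iii): an induced subgraph $H$ with $\omega(H)<t$ must omit enough vertices of these gadgets that the wall minor is destroyed, leaving $H$ inside a tree-like arrangement of copies of $G_{t-1}$ --- a structure that glues pieces along (small) cliques and therefore has treewidth no larger than $\tw(G_{t-1})$, and that one can moreover embed back into $G_{t-1}$.

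I expect the design of the amplifier, and the proof of its fragility, to be the main obstacle. The obvious candidates --- blowing up every vertex of a wall into a $K_t$, taking the Cartesian product of a wall with $K_t$, or subdividing the edges of a wall and blowing up its branch vertices --- all fail~(iii) in the same way: one can delete a single vertex from every clique gadget uniformly, which brings the clique number down to $t-1$ but leaves the wall structure, and hence treewidth linear in the skeleton size, completely intact. The fix must make the clique gadgets simultaneously numerous and ``spread out'' enough to witness large treewidth \emph{and} act as essential separators, so that every transversal of the family of $t$-cliques is forced to disconnect $G_t$ into pieces of bounded treewidth. Making these two demands coexist is the crux of the argument; once a correct amplifier is available, the remaining verifications --- the clique-number count in~(i), the wall-minor lower bound in~(ii), and the assembly of $\mca{G}$ --- are routine, and the arbitrariness of $g$ is irrelevant, since it only dictates the (freely chosen) size of the skeleton at each level.
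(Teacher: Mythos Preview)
This theorem is not proved in the present paper; it is quoted from Chudnovsky and Trotignon \cite{CT} as background, so there is no ``paper's own proof'' to compare against.

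On its own merits, your proposal is a reasonable reduction and a correct diagnosis of where the difficulty lies, but it is not a proof: you explicitly leave the construction of the amplifier as an open problem. You correctly observe that the obvious candidates (blow-ups, Cartesian products with $K_t$, etc.) fail property~(iii) because a uniform transversal of the $t$-cliques preserves the wall minor, and you correctly identify the tension the amplifier must resolve --- the clique gadgets must simultaneously carry the large-treewidth certificate and be fragile enough that any $\omega$-lowering deletion destroys it. But identifying this tension is not the same as resolving it, and nothing in your outline explains how to build such a gadget. Everything else in your plan (the assembly of $\mca{G}$, the verification that $t\mapsto\tw(G_t)$ is a bounding function, the lower bound from $G_t\in\mca{G}$) is indeed routine once the $G_t$ exist, so the entire content of the theorem is in the step you have deferred. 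To turn this into a proof you would need to actually exhibit the amplifier and verify~(iii); for that, consult \cite{CT} directly.
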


The analog of \Cref{conj:DMS} for pathwidth, however, turns out to be true, and that is a result we prove in this paper. A class $\mca{G}$ is (\textit{polynomially}) \textit{$(\pw,\omega)$-bounded} if there is a (polynomial) function $f:\poi\rightarrow \poi$ such that $\pw(G)\leq f(\omega(G))$ for every $G\in \mca{G}$.

\begin{theorem}\label{thm:main}
   Every hereditary $(\pw,\omega)$-bounded class is polynomially $(\pw,\omega)$-bounded.
\end{theorem}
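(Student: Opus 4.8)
The plan is to prove the contrapositive-flavored statement: if $\mca{G}$ is a hereditary class that is \emph{not} polynomially $(\pw,\omega)$-bounded, then it is not $(\pw,\omega)$-bounded at all, i.e.\ there is some fixed $t$ such that $\pw$ is unbounded on the graphs of $\mca{G}$ with clique number at most $t$. The central tool is the characterization of bounded pathwidth in hereditary classes from \cite{tw18} (joint with Chudnovsky and Spirkl): a hereditary class has bounded pathwidth if and only if it excludes, as induced subgraphs, all members of finitely many explicitly described ``unavoidable'' families — subdivided stars/forests, their line graphs, and certain bipartite-type analogues (the analogue for pathwidth of the forest in Theorem~\ref{thm:RSGMI}).

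First I would set up the following dichotomy for a fixed clique bound $t$: let $\mca{G}_t=\{G\in\mca{G}:\omega(G)\le t\}$, which is again hereditary. Either $\mca{G}_t$ has bounded pathwidth, or it does not; in the latter case, by the \cite{tw18} characterization, $\mca{G}_t$ contains arbitrarily large members of one of the finitely many unavoidable obstruction families. The key quantitative step is then to bound the pathwidth of every graph in $\mca{G}$ polynomially in $\omega$ \emph{under the assumption} that for every $t$, the class $\mca{G}_t$ has bounded pathwidth, say $\pw\le b(t)$ for some (a priori arbitrary) function $b$. Here the point is that a single ``global'' polynomial must be extracted, so one cannot simply invoke boundedness for each $t$ separately; instead one argues that the obstructions witnessing large pathwidth are ``uniform'' — the same bounded-size configurations drive pathwidth up regardless of $\omega$. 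Concretely, I expect to show: if $G\in\mca{G}$ has $\pw(G)$ large compared to a fixed polynomial in $\omega(G)$, then $G$ contains an induced subgraph $H$ with $\omega(H)\le h(\omega(G))$ (for a slowly growing, ideally \emph{constant}, function $h$) and $\pw(H)$ still large — so $H\in\mca{G}_{h(\omega(G))}$ has pathwidth exceeding $b(h(\omega(G)))$, a contradiction once we know $h$ can be taken bounded.

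The heart of the matter, and what I expect to be the main obstacle, is producing such an induced subgraph $H$ with \emph{bounded} clique number and \emph{large} pathwidth inside any graph of large pathwidth. This is a Ramsey-type / extraction argument: starting from a large-pathwidth graph, one wants to locate one of the pathwidth obstructions of \cite{tw18} — and crucially, each of those obstruction families consists of graphs of clique number at most $3$ (subdivided forests and their line graphs have bounded clique number). So the strategy is: (i) show that a graph of sufficiently large pathwidth — large relative to $\omega$ — must contain, as an \emph{induced} subgraph, a large member of one of these bounded-clique obstruction families; (ii) since that obstruction has clique number at most some absolute constant $c$ and arbitrarily large pathwidth, it lies in $\mca{G}_c$, contradicting $\pw\le b(c)<\infty$ on $\mca{G}_c$. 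Step (i) is where the real work lies: it amounts to a ``local'' version of the \cite{tw18} theorem with control on clique number, presumably proved by induction on $\omega(G)$ — peeling off a vertex of maximum degree or a well-chosen clique separator, applying the inductive hypothesis to the lower-clique pieces, and using pathwidth's behavior under such decompositions to argue that if no small-clique obstruction appears then $\pw(G)$ is polynomially bounded in $\omega(G)$.

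Finally I would assemble the pieces: fixing the absolute constant $c$ from the obstruction families, the hypothesis that $\mca{G}$ is $(\pw,\omega)$-bounded gives $\pw\le b(c)=:N$ on $\mca{G}_c$; step (i) then yields a polynomial $P$ (depending only on $N$ and $c$, hence legitimately a fixed polynomial) with $\pw(G)\le P(\omega(G))$ for all $G\in\mca{G}$; so $\mca{G}$ is polynomially $(\pw,\omega)$-bounded, as required. I would also double-check the edge cases $\omega(G)\le c$ directly from $\pw\le N$, absorbing them into $P$ by taking $P(t)\ge N$ for all $t$. The contrast with the treewidth situation (Theorem~\ref{thm:CT}) is exactly that treewidth has no analogue of the \cite{tw18} obstruction theorem with bounded-clique obstructions — the Chudnovsky–Trotignon constructions exploit precisely that large-treewidth graphs of bounded clique number can be ``spread out'' arbitrarily — whereas for pathwidth the obstructions are rigid and low-clique, which is what powers the whole argument.
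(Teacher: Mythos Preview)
Your high-level strategy is essentially the paper's: reduce to showing that once the bounded-clique-number subclass $\mca{G}_c$ (for a fixed absolute $c$) has bounded pathwidth, a polynomial bound in $\omega$ follows for all of $\mca{G}$. The paper executes this by showing that $(\pw,\omega)$-boundedness forces $\mca{G}$ to exclude both $\bin_\rho$ and $K_{\sigma,\sigma}$ as induced minors (for some $\rho,\sigma$ depending only on the class), and then proving a quantitative statement, Theorem~\ref{thm:indms}: every $K_t$-free graph with no induced minor isomorphic to $\bin_\rho$ or $K_{\sigma,\sigma}$ has pathwidth $<t^{d(\rho,\sigma)}$. This is exactly your step~(i) in contrapositive form.

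The gap is in how you propose to prove step~(i). Your sketch --- ``induction on $\omega(G)$, peeling off a vertex of maximum degree or a well-chosen clique separator'' --- is not how the paper proceeds, and I do not see how to make it work: clique separators and degree-peeling do not interact well with pathwidth in a way that preserves a polynomial-in-$\omega$ bound, and there is no obvious inductive structure on $\omega$ here. The paper instead splits step~(i) into three substantial and quite different pieces: (1)~a polynomial bound on pathwidth in terms of treewidth for graphs excluding $\bin_\rho$ as an induced minor (Theorem~\ref{thm:pwtotw}, proved via Ramsey-type arguments on rooted subtrees); (2)~a polynomial treewidth bound for $(\kappa,\lambda)$-separable graphs excluding a planar graph and $K_{\sigma,\sigma}$ as induced minors (Theorem~\ref{thm:mainpolyblock}, going through edge-colorings of bounded diameter, torsos of tight tree decompositions, and the grid-minor theorem); and (3)~polynomial separability of $K_t$-free graphs excluding $\bin_\rho$ and $K_{\sigma,\sigma}$ as induced minors (Theorem~\ref{thm:mainpolysep}, via the ``seedling'' machinery). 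None of this resembles induction on clique number. So while your framing is right, the plan for the hard step needs to be replaced entirely by something along these lines.
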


\Cref{thm:main} has sweeping algorithmic consequences: Combined with a result from \cite{logDMS} (see Theorem~1.1 in \cite{logDMS} along with the two paragraphs preceding it), it implies that the \textsc{Maximum Weight Independent Set} problem, along with numerous other problems that are {\sf NP}-hard in general, can be solved in quasi-polynomial time in every hereditary class that is $(\pw,\omega)$-bounded\footnote{One may deduce an even stronger result stating that every hereditary $(\pw,\omega)$-bounded class $\mca{G}$ has ``poly-logarithmic path independence number'': There exist $c,d\in \poi$ depending only on $\mca{G}$ such that every graph $G\in \mca{G}$ on $n>1$ vertices admits a path decomposition $(T,\beta)$ where $|\beta(x)\cap S|\leq c\log^dn$ for every $x\in V(T)$ and every stable set $S$ in $G$. We omit the details.}. 

Viewed differently, it is perhaps more surprising in itself that pathwidth {\sl does} come with a ``free upgrade'' to polynomial bounds in the clique number. Statements of the form in \Cref{conj:DMS} tend to be far from true, as \Cref{conj:DMS} itself is no exception. Another example was a famous conjecture of Esperet \cite{esperet}, which proposed that in a hereditary class, if the chromatic number is bounded by some function of the clique number, then it is bounded by a polynomial function; this too was later disproved \cite{espertdis2}. At least at first glance, pathwidth hardly seems ``friendly'' enough to defy this pattern. The only other graph parameter known to us to have this property in full generality and in a nontrivial way is the minimum degree \cite{GHdeg} (we say ``nontrivial'' because, after all, Ramsey's theorem \cite{multiramsey} implies already that in a hereditary class, if the number of vertices in every graph is bounded by some function of its clique number, then it is bounded by a polynomial function. There may be other results that could be interpreted in this vein).


\subsection{The stronger result and its consequences.} Our main result, \Cref{thm:mainjagged}, is a strengthening of \Cref{thm:main}. For $\zeta\in \poi$, we say that a graph $J$ is \textit{$\zeta$-jagged} if for every induced subgraph $H$ of $J$ with $\pw(H)\geq 3$ has at least $\zeta$ vertices of degree two.

\begin{restatable}{theorem}{mainjagged}\label{thm:mainjagged}
   Let $\mca{G}$ be a hereditary class. Assume that $\mca{G}$ excludes a complete bipartite graph, and for some $\zeta\in \poi$, every $\zeta$-jagged graph in $\mca{G}$ has bounded pathwidth. Then $\mca{G}$ is polynomially $(\pw,\omega)$-bounded. 
\end{restatable}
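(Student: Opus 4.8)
The plan is to deduce \Cref{thm:mainjagged} from an $\omega$-polynomial sharpening of the structure theorem for graphs of large pathwidth from \cite{tw18}. Fix $t$ with $K_{t,t}\notin\mca{G}$ and the integer $\zeta$ from the hypothesis; the target is a polynomial $p$ and a function $q$, depending only on $t$ and $\zeta$, such that for every $k\in\poi$, every graph $G$ with no $K_{t,t}$ subgraph and $\pw(G)>q(k)\cdot p(\omega(G))$ contains, as an induced subgraph, either $K_{t,t}$ or a $\zeta$-jagged graph of pathwidth at least $k$. Granting this, \Cref{thm:mainjagged} follows at once: let $c$ be a pathwidth bound for the $\zeta$-jagged members of $\mca{G}$, and apply the sharpening with $k=c+1$ to an arbitrary $G\in\mca{G}$; the first outcome is excluded by hypothesis, and the second produces, since $\mca{G}$ is hereditary, a $\zeta$-jagged member of $\mca{G}$ of pathwidth exceeding $c$, a contradiction. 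Hence $\pw(G)\le q(c+1)\cdot p(\omega(G))$ for every $G\in\mca{G}$, which is polynomial in $\omega(G)$.

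Qualitatively, a statement of this shape is what \cite{tw18} supplies: in a graph with no $K_{t,t}$ subgraph, pathwidth exceeding a bound depending on the clique number forces a large induced member of an explicit list of ``subdivided'' obstructions — long subdivisions of walls, the line graphs of such subdivisions, and long subdivisions of a fixed forest (cf.\ \Cref{thm:RSGMI}) — and if the subdivisions are taken $\zeta$-fold, any of these is $\zeta$-jagged while retaining large pathwidth, since any induced subgraph of it with pathwidth at least $3$ must still be substantially subdivided and hence has at least $\zeta$ vertices of degree two. What is new is obtaining this with the dependence on $\omega(G)$ \emph{polynomial}, and for that I would replay the proof of \cite{tw18} with quantitative care. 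Following that proof, two regimes arise: if $\tw(G)$ stays below a polynomial in $\omega(G)$ while $\pw(G)$ is large, the ``quickly excluding a forest'' estimates of Bienstock, Robertson, Seymour and Thomas give a subgraph of $G$ isomorphic to a long subdivision of a suitable large forest, at a threshold on $\pw(G)$ depending only on $k$; if $\tw(G)$ exceeds a threshold depending only on $k$, the excluded grid theorem gives a subgraph isomorphic to a long subdivision of a large wall. In either case one holds a subgraph model $S\subseteq G$ of a prescribed shape whose size can be taken as large as desired, and the task reduces to extracting an \emph{induced} copy of a slightly smaller subdivided wall, line graph of a subdivided wall, or subdivided forest, at a cost polynomial in $\omega(G)$.

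That extraction — destroying the chords of $S$, i.e.\ the edges of $G$ with both ends in $V(S)$ but not in $E(S)$ — is where the polynomial saving is won or lost, and I expect it to be the main obstacle. The chord graph has clique number at most $\omega(G)$ and no $K_{t,t}$ subgraph, so every \emph{local} impediment to inducedness involves only boundedly many candidate vertices and can be resolved via the \emph{off-diagonal} Ramsey bound $R(\omega+1,m)=O_m(\omega^{m-1})$, which is polynomial in $\omega$ for each fixed $m$ — unlike the diagonal Ramsey estimates natural to the qualitative argument, which are exponential in $\omega$. The delicate point is to organise the cleaning so that the local resolutions compound tamely: one processes $S$ branch by branch and subdivided path by subdivided path, exploiting that long subdivisions make every chord local in a precise sense, and appeals to the three-in-a-tree theorem of Chudnovsky and Seymour wherever induced paths or trees through prescribed vertices must be located or re-routed; since the shape has bounded size there are only $O(1)$ local steps, so the polynomial losses multiply to a polynomial, and chaining with the thresholds above yields the required $p$. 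It is here that ``$\zeta$-jagged'' does real work: it lets the cleaning stop at \emph{some} large-pathwidth jagged gadget rather than a pristine induced wall, which is exactly what turns a merely finite pathwidth bound into a polynomial one.
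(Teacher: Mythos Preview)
Your reduction in the first paragraph is sound, and the paper's proof has the same overall shape: show that the hypotheses on $\mca{G}$ yield, for every $G\in\mca{G}$ and every $t\in\poi$, a pathwidth bound polynomial in $t$ whenever $G$ is $K_t$-free. But the execution diverges sharply from yours, and the part you flag as ``the main obstacle'' is exactly where your proposal stops being a proof.

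You propose to replay \cite{tw18} and, at the step where a subgraph model $S$ of a subdivided wall or forest is cleaned to an induced one, replace diagonal Ramsey by off-diagonal Ramsey to keep the loss polynomial in $\omega$. This is not worked out, and the paper explicitly notes that the bound from \cite{tw18} is ``far from polynomial in $t$''; it does \emph{not} obtain \Cref{thm:mainjagged} by tightening that argument. The chord-destruction you sketch is not a sequence of $O(1)$ independent local Ramsey steps: the steps feed into one another, the required input sizes cascade, and the appeal to three-in-a-tree does not obviously control this. You may well believe this can be organised, but nothing in the proposal establishes it, and the invocation of three-in-a-tree is not substantiated (it plays no role in the paper's proof).

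The paper instead works entirely at the level of \emph{induced minors}, never attempting to clean a subgraph model. From the hypotheses it first shows (using \Cref{lem:binaryisg} and \Cref{cor:compbip}, the latter building on \cite{tw16}) that there exist $\rho,\sigma$ depending only on $\mca{G}$ such that no $G\in\mca{G}$ has an induced minor isomorphic to $\bin_\rho$ or to $K_{\sigma,\sigma}$; here the $\zeta$-jagged hypothesis is what rules out both the long-subdivided-binary-tree outcomes and the ample-constellation outcome. It then applies a new quantitative theorem, \Cref{thm:indms}: any $K_t$-free graph excluding $\bin_\rho$ and $K_{\sigma,\sigma}$ as induced minors has pathwidth at most $t^{d(\rho,\sigma)}$. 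That theorem is the real work and is assembled from three pieces with no analogue in your sketch: (i) excluding $\bin_\rho$ as an induced minor makes $\pw$ polynomial in $\tw$ (\Cref{thm:pwtotw}); (ii) for $(\kappa,\lambda)$-separable graphs excluding a planar graph and $K_{\sigma,\sigma}$ as induced minors, $\tw$ is polynomial in $\kappa,\lambda,t$ (\Cref{thm:mainpolyblock}); and (iii) excluding $\bin_\rho$ and $K_{\sigma,\sigma}$ as induced minors forces $(\kappa,\lambda)$-separability with $\kappa$ bounded and $\lambda$ polynomial in $t$ (\Cref{thm:mainpolysep}). The off-diagonal/Erd\H{o}s--Hajnal flavoured saving you anticipate does appear (\Cref{lem:EHforKss}), but only inside this separability machinery, not in any chord-cleaning of a wall.

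In short: your plan has a genuine gap at the cleaning step, and the paper circumvents that step rather than completing it.
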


 In addition, \Cref{thm:mainjagged} also implies the pathwidth analogs of two other conjectures on $(\tw,\omega)$-boundedness. The first, due to the author \cite{chordalehf}, was disproved by Chudnovsky and Trotignon \cite{CT}. The second, due to Cocks \cite{cocks}, is (curiously) still open. A graph $G$ is \textit{$d$-degenerate}, for $d\in \poi$, if every induced subgraph of $G$ has a vertex of degree at most $d$. For instance, graphs of treewidth (or pathwidth) at most $d$ are $d$-degenerate \cite{diestel}. We say that $G$ is \textit{$H$-free}, for another graph $H$, if $G$ has no {\sl induced} subgraph isomorphic to $H$.

\begin{conjecture}[Hajebi \cite{chordalehf}; Disproved by Chudnovsky and Trotignon \cite{CT}]\label{conj:2-deg}
     A \mbox{hereditary} class $\mca{G}$ is $(\tw,\omega)$-bounded if (and only if) $\mca{G}$ excludes a complete bipartite graph and every $2$-degenerate graph in $\mca{G}$ has bounded treewidth.
\end{conjecture}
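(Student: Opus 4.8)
\emph{A caveat on the statement.} As the attribution records, \Cref{conj:2-deg} is false as stated. Its ``only if'' direction is immediate: if $f$ is a $(\tw,\omega)$-bounding function for a hereditary class $\mca{G}$, then $\mca{G}$ cannot contain $K_{t,t}$ for every $t$ (since $\tw(K_{t,t})=t$ while $\omega(K_{t,t})=2$), so $\mca{G}$ excludes a complete bipartite graph; and every $2$-degenerate graph has clique number at most $3$ (a clique on four vertices is $3$-regular, hence not $2$-degenerate), so every $2$-degenerate graph in $\mca{G}$ has treewidth at most $f(3)$. The ``if'' direction, however, is precisely the implication refuted by the construction behind \Cref{thm:CT}. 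What \emph{is} true -- and what \Cref{thm:mainjagged} delivers -- is the verbatim pathwidth analog: a hereditary class $\mca{G}$ is $(\pw,\omega)$-bounded if and only if $\mca{G}$ excludes a complete bipartite graph and every $2$-degenerate graph in $\mca{G}$ has bounded pathwidth. That is the statement I would prove, and the plan is to deduce it from \Cref{thm:mainjagged}.

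\emph{Plan.} The ``only if'' direction is the paragraph above with $\pw$ in place of $\tw$ (using $\pw(K_{t,t})=t$). For the ``if'' direction, the plan is to invoke \Cref{thm:mainjagged} with the choice $\zeta=1$; the only thing to verify is that this choice is legitimate, and it is, because of the following elementary fact, which I will call the \emph{Observation}: every $1$-jagged graph is $2$-degenerate. Granting the Observation, suppose $\mca{G}$ is hereditary, excludes a complete bipartite graph, and every $2$-degenerate graph in $\mca{G}$ has pathwidth at most $w$. Since $\mca{G}$ is hereditary, every $1$-jagged graph $J\in\mca{G}$ is $2$-degenerate (by the Observation) and lies in $\mca{G}$, hence has $\pw(J)\leq w$. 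Thus $\mca{G}$ satisfies the hypotheses of \Cref{thm:mainjagged} with $\zeta=1$, so $\mca{G}$ is polynomially $(\pw,\omega)$-bounded; in particular it is $(\pw,\omega)$-bounded, as required.

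\emph{Proof of the Observation, and where the difficulty really sits.} Let $J$ be $1$-jagged and let $H$ be an induced subgraph of $J$; I must exhibit a vertex of $H$ of degree at most $2$ in $H$, since this property for all induced subgraphs is exactly $2$-degeneracy. If $\pw(H)\leq 2$, then $H$ is $2$-degenerate -- any graph of pathwidth $w$ is $w$-degenerate, as one sees by repeatedly deleting a vertex lying in an endpoint bag of a width-$w$ path decomposition but not in the adjacent bag -- so $H$ has a vertex of degree at most $2$. If $\pw(H)\geq 3$, then since $J$ is $1$-jagged, $H$ has a vertex of degree exactly $2$ in $H$. Either way such a vertex exists, so $J$ is $2$-degenerate. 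Apart from this fact and the choice $\zeta=1$, there is no obstacle at this level: the entire burden is already carried by \Cref{thm:mainjagged}, which is what makes the deduction a two-liner. (An analogous reduction, with ``$2$-degenerate'' and $\zeta$ replaced by whatever degeneracy or excluded-induced-subgraph hypothesis Cocks's conjecture uses, yields the pathwidth analog of that conjecture as well.)
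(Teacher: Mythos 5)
Your reading is the right one: the paper offers no proof of \Cref{conj:2-deg} itself (it is stated only as a refuted conjecture), and the meaningful content is its pathwidth analog, which you prove by exactly the route the paper takes in \Cref{cor:main} -- namely the implication \ref{cor:main_d}$\Rightarrow$\ref{cor:main_z} via the fact that $1$-jagged graphs are $2$-degenerate, followed by \Cref{thm:mainjagged} with $\zeta=1$. Your argument is correct and coincides with the paper's, the only difference being that you spell out the (correct) two-case proof of the Observation that the paper merely asserts in one clause.
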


\begin{conjecture}[Cocks; Equivalent to Conjecture 1.5 in \cite{cocks}]\label{conj:cocks}
     A hereditary class $\mca{G}$ is $(\tw,\omega)$-bounded if (and only if) $\mca{G}$ excludes a complete bipartite graph, and for every non-complete graph $H\in \mca{G}$, the class of all $H$-free graph in $\mca{G}$ is $(\tw,\omega)$-bounded.
\end{conjecture}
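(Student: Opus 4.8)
The ``only if'' part of \Cref{conj:cocks} needs no hypothesis: if $f$ is a $(\tw,\omega)$-bounding function for $\mca{G}$, then since $\omega(K_{t,t})=2$ while $\tw(K_{t,t})=t$, we have $K_{t,t}\notin\mca{G}$ for every $t>f(2)$, so $\mca{G}$ excludes a complete bipartite graph; and for every non-complete $H\in\mca{G}$, the class of $H$-free graphs in $\mca{G}$ is a subclass of $\mca{G}$, hence $(\tw,\omega)$-bounded by the same $f$. So all the content is in the ``if'' direction, which I would attack by contradiction. Suppose $\mca{G}$ is hereditary, excludes $K_{t,t}$, satisfies that $\mca{G}_H:=\{G\in\mca{G}:G\text{ is }H\text{-free}\}$ is $(\tw,\omega)$-bounded for every non-complete $H\in\mca{G}$, and yet $\mca{G}$ is not $(\tw,\omega)$-bounded. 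Then there is a least $k\geq 2$ for which $\mca{G}^{\leq k}:=\{G\in\mca{G}:\omega(G)\leq k\}$ has unbounded treewidth; by minimality of $k$, the class $\mca{G}^{\leq k-1}$ has treewidth at most a constant $c$, so we may choose $G_1,G_2,\dots\in\mca{G}^{\leq k}$ with $\omega(G_n)=k$ and $\tw(G_n)\to\infty$. Since $\tw(G_n)>k-1=\tw(K_k)$ for all large $n$, each such $G_n$ is non-complete, so $\mca{G}_{G_n}$ is $(\tw,\omega)$-bounded and hence $\{G\in\mca{G}^{\leq k}:G\text{ is }G_n\text{-free}\}$ has bounded treewidth. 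The goal is to upgrade this family of ``witnesses'' $G_n$ to a single fixed non-complete $H^{\star}\in\mca{G}^{\leq k}$ for which $\{G\in\mca{G}^{\leq k}:G\text{ is }H^{\star}\text{-free}\}$ has unbounded treewidth -- a flat contradiction with $(\tw,\omega)$-boundedness of $\mca{G}_{H^{\star}}$.

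\textbf{The mechanism I would use.} Producing such an $H^{\star}$ calls for a structural dichotomy of the shape \emph{``$\mca{G}$ excludes a complete bipartite graph and all of its members in some restricted tame family $\mca{F}$ have bounded treewidth $\Rightarrow$ $\mca{G}$ is $(\tw,\omega)$-bounded''} -- the treewidth twin of \Cref{thm:mainjagged}. Granting such an engine, the contrapositive applied to our non-$(\tw,\omega)$-bounded, $K_{t,t}$-free class $\mca{G}$ yields graphs in $\mca{F}\cap\mca{G}$ of arbitrarily large treewidth; one then distils $H^{\star}$ from the ``tameness'' defining $\mca{F}$ (few vertices of a prescribed type, low degeneracy, many vertices of degree two, $\dots$) via a pigeonhole/Ramsey argument, arranged so that the $H^{\star}$-free graphs in $\mca{G}^{\leq k}$ still realise the unbounded-treewidth behaviour. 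For \emph{pathwidth} this plan can be carried out in full: \Cref{thm:mainjagged} is precisely the engine, with $\mca{F}$ the $\zeta$-jagged graphs, and the excluded-induced-subgraph characterization of bounded pathwidth from \cite{tw18} -- large pathwidth in a $K_{t,t}$-free hereditary class forces one of a short, explicit list of obstructions (subdivided caterpillar-type forests and their line graphs) -- together with a count of the degree-two vertices supplies the fixed non-complete $H^{\star}$. One thus recovers the pathwidth analog of \Cref{conj:cocks}, in fact with the polynomial strength of \Cref{thm:mainjagged}.

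\textbf{The main obstacle.} The bottleneck, and in my view the reason \Cref{conj:cocks} is still open, is the very existence of the engine: no excluded-induced-subgraph description of graphs of large treewidth and bounded clique number is known, so there is nothing to play the role of \Cref{thm:mainjagged} on the treewidth side. Worse, the most natural candidate for the tame family, the $2$-degenerate graphs -- which would turn \Cref{conj:cocks} into \Cref{conj:2-deg} -- is exactly what \Cref{thm:CT} refutes: Chudnovsky and Trotignon construct a hereditary class excluding a complete bipartite graph whose $2$-degenerate members have bounded treewidth but which is not $(\tw,\omega)$-bounded. So any correct treewidth engine must replace $2$-degeneracy by a subtler notion of tameness -- some parameter strictly between degeneracy and treewidth, perhaps one controlling tree-independence number -- and isolating such a notion (or proving the ``engine'' route itself is hopeless) is the crux that remains. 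Pathwidth escapes this precisely because it is rigid enough that large pathwidth forces one of a short explicit list of induced obstructions, which is the foothold \Cref{thm:mainjagged} exploits and which has no treewidth counterpart.
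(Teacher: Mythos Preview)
The statement you are addressing, \Cref{conj:cocks}, is presented in the paper as an \emph{open conjecture} of Cocks about treewidth; the paper does not prove it, so there is no proof against which to compare your proposal. Your treatment of the trivial ``only if'' direction is fine, and your diagnosis of why the ``if'' direction is open is accurate: what is missing is a treewidth analog of the engine \Cref{thm:mainjagged}, and the most natural candidate family (the $2$-degenerate graphs of \Cref{conj:2-deg}) has been ruled out by \Cref{thm:CT}.

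What the paper \emph{does} prove is the pathwidth analog, namely the implication \ref{cor:main}\ref{cor:main_e}$\Rightarrow$\ref{cor:main}\ref{cor:main_z} in \Cref{cor:main}, and here your sketch diverges slightly from the paper's argument. You propose to extract the fixed non-complete witness $H^{\star}$ from the excluded-induced-subgraph characterization of bounded pathwidth in \cite{tw18} together with a Ramsey-type argument. The paper's route is more self-contained: assuming \ref{cor:main}\ref{cor:main_z} fails, one has for every $\zeta,\xi$ a $\zeta$-jagged $J_{\zeta,\xi}\in\mca{G}$ with $\pw(J_{\zeta,\xi})\geq\xi$, and the witness is simply $H^{\star}=J_{1,3}$ (non-complete since $\pw(J_{1,3})\geq 3$). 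Setting $\zeta^{+}=|V(J_{1,3})|+1$, every $\zeta^{+}$-jagged graph is $K_4$-free (as $K_4$ has pathwidth $3$ and no degree-two vertex) and is automatically $J_{1,3}$-free: an induced copy of $J_{1,3}$ would have pathwidth at least $3$ and hence at least $\zeta^{+}>|V(J_{1,3})|$ vertices of degree two, an absurdity. Thus $(J_{\zeta^{+},\xi})_{\xi\in\poi}$ are $J_{1,3}$-free, $K_4$-free graphs in $\mca{G}$ of unbounded pathwidth, contradicting \ref{cor:main}\ref{cor:main_e}. So the ``distillation of $H^{\star}$'' is just a vertex count inside the jagged framework itself, with no appeal to \cite{tw18} or to Ramsey.
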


From \Cref{thm:mainjagged}, we deduce the following (and \ref{thm:main} states that ``\ref{cor:main_b} implies \ref{cor:main_a}''):

\begin{corollary}\label{cor:main}
  The following are equivalent for every hereditary class $\mca{G}$.
  \begin{enumerate}[{\rm(a)}]
  \item\label{cor:main_z} $\mca{G}$ excludes a complete bipartite graph, and for some $\zeta\in \poi$, every $\zeta$-jagged graph in $\mca{G}$ has bounded pathwidth.
  \item\label{cor:main_a} $\mca{G}$ is polynomially $(\pw,\omega)$-bounded.
    \item\label{cor:main_b} $\mca{G}$ is $(\pw,\omega)$-bounded.
    \item\label{cor:main_c} Every $K_4$-free graph in $\mca{G}$ has bounded pathwidth\footnote{Line graphs of subdivided binary trees show that ``$K_4$-free'' here cannot be improved to $K_3$-free.}. 
    \item\label{cor:main_d} $\mca{G}$ excludes a complete bipartite graph, and every $2$-degenerate graph in $\mca{G}$ has bounded pathwidth.
    \item\label{cor:main_e} $\mca{G}$ excludes a complete bipartite graph, and for every non-complete graph $H\in \mca{G}$, the class of all $H$-free graph in $\mca{G}$ is $(\pw,\omega)$-bounded.
  \end{enumerate}
\end{corollary}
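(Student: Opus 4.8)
The plan is to close the cycle of implications \ref{cor:main_z}$\Rightarrow$\ref{cor:main_a}$\Rightarrow$\ref{cor:main_b}$\Rightarrow$\ref{cor:main_c}$\Rightarrow$\ref{cor:main_d}$\Rightarrow$\ref{cor:main_z}, and then, separately, to prove \ref{cor:main_b}$\Leftrightarrow$\ref{cor:main_e}. The implication \ref{cor:main_z}$\Rightarrow$\ref{cor:main_a} is exactly \Cref{thm:mainjagged}, and \ref{cor:main_a}$\Rightarrow$\ref{cor:main_b} is immediate since a polynomial is a function $\poi\to\poi$. Everything else except \ref{cor:main_e}$\Rightarrow$\ref{cor:main_b} will be ``soft'', resting on two elementary observations. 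First: since $\pw(K_4)=3$ and $K_4$ is $3$-regular (so has no vertex of degree $2$), the graph $K_4$ is not $\zeta$-jagged for any $\zeta\in\poi$; as $\zeta$-jaggedness passes to induced subgraphs, every $\zeta$-jagged graph is $K_4$-free, and hence has clique number at most $3$. Second: every graph of treewidth at most $2$ is $2$-degenerate, so every induced subgraph $H$ of a $\zeta$-jagged graph contains a vertex of degree at most $2$ --- from the definition of jaggedness when $\pw(H)\geq 3$, and from $2$-degeneracy of $H$ itself when $\pw(H)\leq 2$ --- whence every $\zeta$-jagged graph is $2$-degenerate. Conversely, since $K_4$ is not $2$-degenerate, every $2$-degenerate graph is $K_4$-free and has clique number at most $3$.

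With these at hand the soft implications are one-liners. For \ref{cor:main_b}$\Rightarrow$\ref{cor:main_c}: if $f$ is a $(\pw,\omega)$-bounding function for $\mca{G}$, then every $K_4$-free $G\in\mca{G}$ has $\pw(G)\leq f(\omega(G))\leq\max\{f(1),f(2),f(3)\}$. For \ref{cor:main_c}$\Rightarrow$\ref{cor:main_d}: the $2$-degenerate graphs in $\mca{G}$ are $K_4$-free, so \ref{cor:main_c} bounds their pathwidth; and $\omega(K_{t,t})=2$ while $\pw(K_{t,t})=t$, so a uniform bound on the pathwidth of the $K_4$-free graphs in $\mca{G}$ forces $\mca{G}$ to exclude a complete bipartite graph. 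For \ref{cor:main_d}$\Rightarrow$\ref{cor:main_z}: take $\zeta=1$; the $1$-jagged graphs in $\mca{G}$ are $2$-degenerate, hence of bounded pathwidth by \ref{cor:main_d}, and $\mca{G}$ excludes a complete bipartite graph by hypothesis. For \ref{cor:main_b}$\Rightarrow$\ref{cor:main_e}: a subclass of a $(\pw,\omega)$-bounded class is $(\pw,\omega)$-bounded, and (arguing as above, with a bounding function $f$) $\mca{G}$ excludes $K_{f(2)+1,f(2)+1}$.

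The one substantial step is \ref{cor:main_e}$\Rightarrow$\ref{cor:main_b}, which I expect to be the main obstacle. I would argue the contrapositive: suppose $\mca{G}$ excludes a complete bipartite graph but is not $(\pw,\omega)$-bounded. By the equivalences already established, \ref{cor:main_c} fails, so the hereditary subclass of $\mca{G}$ consisting of the graphs of clique number at most $3$ has unbounded pathwidth. I would then invoke the structural description of hereditary classes of bounded pathwidth from \cite{tw18}: having unbounded pathwidth, bounded clique number, and no complete bipartite subgraph, this subclass must contain an entire canonical obstruction family of unbounded pathwidth --- subdivisions of complete binary trees of unbounded depth, or line graphs of such subdivisions --- and moreover one may take these with subdivision paths of arbitrary length. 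Such a family is \emph{robust} under forbidding a single non-complete induced subgraph: in an $s$-subdivision of a binary tree, any two distinct branch vertices lie at distance more than $s$, and branch vertices can only be realised by branch vertices, so a lightly subdivided obstruction cannot embed as an induced subgraph into a much more heavily subdivided one; hence deleting any fixed non-complete $H\in\mca{G}$ from the family leaves infinitely many of its members, still with bounded clique number and unbounded pathwidth. Choosing $H$ to be a small member of $\mca{G}$ from the family then contradicts \ref{cor:main_e}. The technical points I expect to need care are extracting the right quantitative form of the statement from \cite{tw18} (ensuring arbitrarily long subdivision paths are available) and carrying out the line-graph alternative in parallel with the graph alternative.
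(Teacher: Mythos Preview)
Your cycle \ref{cor:main_z}$\Rightarrow$\ref{cor:main_a}$\Rightarrow$\ref{cor:main_b}$\Rightarrow$\ref{cor:main_c}$\Rightarrow$\ref{cor:main_d}$\Rightarrow$\ref{cor:main_z} and the implication \ref{cor:main_b}$\Rightarrow$\ref{cor:main_e} match the paper's proof exactly, including the observations that $1$-jagged graphs are $2$-degenerate and $2$-degenerate graphs are $K_4$-free.

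The divergence is in closing the loop through \ref{cor:main_e}. You argue \ref{cor:main_e}$\Rightarrow$\ref{cor:main_b} by contraposition, invoking the structural obstruction theorem of \cite{tw18} to produce a concrete family of (line graphs of) heavily subdivided binary trees in $\mca{G}$, and then arguing that this family is robust under forbidding a single small member. The paper instead proves \ref{cor:main_e}$\Rightarrow$\ref{cor:main_z} directly, and the argument is entirely self-contained: if \ref{cor:main_z} fails while $\mca{G}$ excludes a complete bipartite graph, then for \emph{every} $\zeta$ there are $\zeta$-jagged graphs $J_{\zeta,\xi}\in\mca{G}$ with $\pw(J_{\zeta,\xi})\geq\xi$. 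Take $H=J_{1,3}$ (non-complete since it is $1$-jagged with $\pw\geq 3$) and set $\zeta^+=|V(H)|+1$. Any induced copy of $H$ inside a $\zeta^+$-jagged graph would have $\pw\geq 3$ and hence at least $\zeta^+>|V(H)|$ vertices of degree two, which is absurd; so every $J_{\zeta^+,\xi}$ is $H$-free and $K_4$-free with $\pw\geq\xi$, contradicting \ref{cor:main_e}.

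The point is that the jaggedness parameter $\zeta$ \emph{is} the abstract version of ``subdivision paths of length at least $\zeta$'': it encodes exactly the robustness property you are trying to extract from the explicit obstructions, but without appealing to \cite{tw18} and without the case analysis (graph versus line graph) that you flag as needing care. Your route should work, but it imports a deep external theorem for a step that the paper dispatches in a paragraph using only the definition of $\zeta$-jagged and the already-established equivalences.
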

\begin{proof}
By \Cref{thm:mainjagged}, \ref{cor:main_z} implies \ref{cor:main_a}. Clearly, \ref{cor:main_a} implies \ref{cor:main_b}, and \ref{cor:main_b} implies \ref{cor:main_c}. Moreover, \ref{cor:main_c} implies \ref{cor:main_d} because $2$-degenerate graphs are $K_4$-free, and \ref{cor:main_d} implies \ref{cor:main_z} because $1$-jagged graphs are $2$-degenerate. Note also that \ref{cor:main_b} implies \ref{cor:main_e}.

We now show that \ref{cor:main_e} implies \ref{cor:main_z}. Suppose for a contradiction that $\mca{G}$ excludes a complete bipartite graph, and for every non-complete graph $H\in \mca{G}$, the class of all $H$-free graph in $\mca{G}$ is $(\pw,\omega)$-bounded, but for all $\zeta,\xi\in \poi$, there is a $\zeta$-jagged graph $J_{\zeta,\xi}\in \mca{G}$ with $\pw(J_{\zeta,\xi})\ge \xi$. In particular, $J_{1,3}$ is not a complete graph. Let $\zeta^+=|V(J_{1,3})|+1$. Then $J_{\zeta^+,\xi}$ is $K_4$-free for every $\xi\in \poi$. Assume that $J_{\zeta^+,\xi}$ is not $J_{1,3}$-free for some $\xi\in \poi$. Let $H$ be an induced subgraph of $J_{\zeta^+,\xi}$ isomorphic to $J_{1,3}$. Then $\pw(H)=\pw(J_{1,3})\ge 3$. Since $J_{\zeta^+,\xi}$ is $\zeta^+$-jagged, $H$ has at least $\zeta^+$ vertices of degree two, and so $|V(J_{1,3})|=|V(H)|\geq \zeta^+= |V(J_{1,3})|+1$, a contradiction. We deduce that $J_{\zeta^+,\xi}$ is $J_{1,3}$-free for every $\xi\in \poi$. But now $J_{1,3}\in \mca{G}$ is a non-complete graph and the class of all $J_{1,3}$-free graphs in $\mca{G}$ is not $(\pw,\omega)$-bounded, because for all $\xi\in \poi$, $J_{\zeta^+,\xi}$ is $J_{1,3}$-free and $K_4$-free with $\pw(J_{\zeta^+,\xi})\geq \xi$, a contradiction. This completes the proof of \ref{cor:main}.
\end{proof}

The rest of this paper is organized as follows. In the next section, we set up the basic notation and terminology used throughout. In \Cref{sec:gos}, we break down the proof of \Cref{thm:mainjagged} into its three main parts, Theorems~\ref{thm:pwtotw}, \ref{thm:mainpolyblock}, and \ref{thm:mainpolysep}, and derive \Cref{thm:mainjagged} from these results (combined with others in the literature). We then give the proofs of Theorems~\ref{thm:pwtotw}, \ref{thm:mainpolyblock}, and \ref{thm:mainpolysep} in Sections~\ref{sec:pwtotw}, \ref{sec:mainpolyblock}, and \ref{sec:mainpolysep}, respectively.

\section{Preliminaries}\label{sec:notation}
For integers $k,k'$, we denote by $\{k,\ldots,k'\}$ the set of all integers at that are at least $k$ and at most $k'$. For $k\in \poi\cup \{0\}$, we denote by $\binom{X}{k}$ the set of all $k$-subsets of a set $X$. 

Let $k\in \poi$ and let $P$ be a $k$-vertex path. We write $P = x_1 \dd \cdots \dd x_k$ to indicate that $V(P) = \{x_1, \dots, x_k\}$ and $E(P)=\{x_ix_{i+1}:i\in \{1,\ldots, k-1\}\}$. We call $x_1$ and $x_k$ the \textit{ends of $P$}, and say that $P$ is \textit{from $x_1$ to $x_k$}. The \textit{interior of $P$}, denoted $P^*$, is the (possibly null) path obtained from $P$ by removing $x_1,x_k$. For $x,x'\in V(P)$, the subpath of $P$ from $x$ to $x'$ is denoted by $x\dd P\dd x'$. The \textit{length} of $P$ is $|E(P)|=k-1$.

Let $G$ be a graph. A {\em path in $G$} is an induced subgraph of $G$ that is a path. For a set $\mca{P}$ of paths in $G$, define $V(\mca{P})=\bigcup_{P\in \mca{P}}V(P)$. For $X\subseteq V(G)$, we denote by $G[X]$ the subgraph of $G$ induced by $X$, and by $G\setminus X$ the subgraph of $G$ induced by $V(G)\setminus X$. We denote by $N_G(X)$ the set of all vertices in $V(G)\setminus X$ with at least one neighbor in $X$. If $X=\{x\}$, then we write $G\setminus x$ instead of $G\setminus \{x\}$, and $N_G(x)$ instead of $N_G(\{x\})$. For $X,Y\subseteq V(G)$, we say that \textit{$X$ and $Y$ are anticomplete in $G$} if $X\cap Y=\varnothing$ and there is no edge in $G$ with an end in $X$ and an end in $Y$; in this case, we also say that \textit{$G[X]$ and $G[Y]$ are anticomplete in $G$}. If $X=\{x\}$, then we say that \textit{$x$ is anticomplete to $Y$ in $G$} to mean that $\{x\}$ and $Y$ are anticomplete in $G$. For vertices $u,v\in V(G)$, we denote by $\dist_G(u,v)$ the \textit{distance between $u$ and $v$ in $G$}; that is, the length of the shortest path in $G$ from $u$ to $v$. For $u\in V(G)$ and $\rho\in \poi\cup \{0\}$, we write $N^{\rho}_G(u)=\{v\in V(G):\dist_{G}(u,v)=\rho\}$. It follows that $N^{0}_G(u)=\{u\}$ and $N^{1}_G(u)=N_G(u)$. Also, note that if $G$ is connected, then:
\begin{itemize}
    \item $(N^{\rho}_G(u):\rho\in \poi\cup\{0\})$ is a partition of $V(G)$ (and $N^{\rho}_G(u)=\varnothing$ for large enough $\rho$).
    \item For all $\rho\in \poi$, every vertex in $N^{\rho}_G(u)$ has at least one neighbor in $N^{\rho-1}_G(u)$.
    \item $N^{\rho}_G(u)$ and $N^{\rho'}_G(u)$ are anticomplete in $G$ for all $\rho,\rho'\in \poi\cup \{0\}$ with $|\rho-\rho'|>1$.
\end{itemize}

Recall that a \textit{minor} of a graph $G$ is a graph obtained from $G$ by removing vertices, removing edges, and contracting edges, and an \textit{induced minor} of $G$ is a graph obtained from $G$ by only removing vertices and contracting edges (in both cases, loops and parallel edges arising from contracting edges are automatically removed). Minors and induced minors may also be defined using ``models.'' For graphs $G$ and $H$, an \textit{$H$-model in $G$} is a tuple $\mf{m}=(X_v:v\in V(H))$ of pairwise disjoint connected (and non-null) induced subgraphs of $G$ such that for every $uv\in E(H)$, $X_u$ and $X_v$ are not anticomplete in $G$. We say that $\mf{m}$ is \textit{induced} if, in addition, for all distinct vertices $u,v\in V(H)$ with $uv\notin E(H)$, $X_u$ and $X_v$ are anticomplete in $G$. Note that $G$ has an (induced) minor isomorphic to $H$ if and only if there is an (induced) $H$-model in $G$. When $H=K_{p,q}$ for $p,q\in \poi$, we denote a $K_{p,q}$-model $\mf{c}$ in $G$ by $(A_1,\ldots, A_p;B_1,\ldots, B_q)$ to mean that:
\begin{itemize}
    \item $A_1,\ldots, A_p, B_1,\ldots, B_q$ are pairwise disjoint connected induced subgraphs of $G$;
    \item for all $i\in \{1,\ldots, p\}$ and $j\in \{1,\ldots, q\}$, $A_i$ and $B_j$ are not anticomplete in $G$; and
    \item if $\mf{c}$ is an induced $K_{p,q}$-model in $G$, then $A_1,\ldots, A_p$ are pairwise anticomplete in $G$, and $B_1,\ldots, B_q$ are pairwise anticomplete in $G$.
\end{itemize}
As a special case, we say that $\mf{c}$ is a \textit{$(p,q)$-constellation in $G$} if $\mf{c}$ is induced, we have $|V(A_1)|=\cdots=|V(A_p)|=1$, and each $B_1,\ldots, B_q$ is a path in $G$.

A \textit{subdivision} of a graph $H$ is a graph $H'$ obtained from $H$ by replacing the edges of $H$ with
pairwise internally disjoint paths of nonzero length between the corresponding ends. For $d \in \poi\cup \{0\}$, we say that $H'$ is a \textit{$(\geq d)$-subdivision} of $H$ if each edge of $H$ is replaced by a path of length at least $d+1$, and that $H'$ is a \textit{$d$-subdivision} of $H$ if each edge of $H$ is replaced by a path of length exactly $d+1$. A \textit{proper subdivision} of $H$ is a $(\geq 1)$-subdivision of $H$. Note that if a graph $G$ has an (induced) subgraph isomorphic to a subdivision of $H$, then $G$ has an (induced) minor isomorphic to $H$. Moreover, if $|V(H)|=h\in \poi$ and a graph $G$ has an induced subgraph isomorphic to a proper subdivision of $K_h$, then $G$ has an induced subgraph isomorphic to a proper subdivision of $H$, and so $G$ has an induced minor isomorphic to $H$.

The \textit{line graph} of $H$ is the graph $L$ with vertex set $E(H)$ such that $e,f\in E(H)$ are adjacent in $L$ if and only if $e$ and $f$ share an end in $H$.

For $r\in \poi\cup \{0\}$, we denote by $\bin_{r}$ the (full) binary tree of radius $r$. It is well known \cite{GMI} that all subdivisions of $\bin_{r}$ and their line graphs have pathwidth at least $\lfloor\frac{r+1}{2}\rfloor$. Also, for $r\in \poi$, by the \textit{$r$-wall} we mean the $r$-by-$r$ hexagonal grid, and by the \textit{$r$-grid} we mean the $r$-by-$r$ square grid (see Figure~\ref{fig:binarywall}). It is straightforward to check that:

\begin{observation}\label{obs:wall}
    For all $r\in \poi$, the $(2r-1)$-wall has an induced minor isomorphic to the $r$-grid.
\end{observation}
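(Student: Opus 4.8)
The plan is to realize the $r$-grid as an induced minor of the $(2r-1)$-wall by exhibiting an induced subgraph of the wall that is a subdivision of a ``split'' grid: let $\Gamma_r'$ be the graph obtained from the $r$-grid by replacing each degree-$4$ vertex by an edge whose two ends split the four neighbours two and two. By the remarks in \Cref{sec:notation}, an induced subgraph of the wall isomorphic to a subdivision of $\Gamma_r'$ witnesses $\Gamma_r'$ as an induced minor of the wall; moreover the $r$-grid is an induced minor of $\Gamma_r'$ (contract the new edges back); and the induced-minor relation is transitive. So it suffices to find the desired induced subdivision of $\Gamma_r'$ in the $(2r-1)$-wall.

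Fix the usual coordinatisation of the $(2r-1)$-wall as in \Cref{fig:binarywall}: it has $2r$ pairwise anticomplete ``rows'' $R_0,\dots,R_{2r-1}$, each an induced path whose vertices are indexed by ``columns'' $0,1,2,\dots$ (on the order of $4r$ columns in all), and ``rungs'' joining $R_k$ to $R_{k+1}$ precisely at the columns of one parity, the parity alternating with $k$. Take the $r$ even rows $R_0,R_2,\dots,R_{2r-2}$ as the horizontal lines of the grid; these are pairwise anticomplete since rungs join only consecutive rows. Now route $r$ ``vertical'' paths $V_1,\dots,V_r$, each running from $R_0$ up through $R_2,R_4,\dots$ in turn and ending on $R_{2r-2}$. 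Since the parity of the available rungs flips from row to row, $V_j$ cannot remain in a single column; instead, each time it crosses a row it must step one column sideways before taking the next rung, so it oscillates between two consecutive columns and meets each even row in one or two consecutive vertices. The columns used by $V_j$ and $V_{j+1}$ are kept separated by at least one unused column, so that the only wall-edges among $V(V_1)\cup\dots\cup V(V_r)\cup V(R_0)\cup V(R_2)\cup\dots$ are those that belong to these paths or lie on a horizontal row between two consecutive crossings. This is exactly where the ``$2r-1$'' is used: $r$ such vertical paths, each two columns wide and mutually spaced, just fit into the roughly $4r$ available columns, whereas in an $r$-wall they would not. Deleting all remaining vertices of the wall, and trimming each $R_{2i}$ to run from its crossing with $V_1$ to its crossing with $V_r$, leaves an induced subgraph $H$ in which the crossing of an internal row with an internal vertical path is an edge whose two ends have degree three, and every other crossing is a single vertex of the appropriate degree; that is, $H$ is a subdivision of $\Gamma_r'$, as required.

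The only real work is this bookkeeping: committing to one explicit model of the hexagonal grid and checking that the zigzagging vertical paths can be chosen so as to stay induced, to fit the available width, and to meet the horizontal rows so that exactly the grid adjacencies arise and no chords. Nothing here is deep --- which is why the statement is billed as ``straightforward to check'' and accompanied by \Cref{fig:binarywall} --- but it does force one to pin down coordinates for the wall; equivalently, one can bypass $\Gamma_r'$ altogether and instead specify the branch sets of an $r$-grid model in $H$ directly, as the ``crosses'' formed by a row-segment together with a column-segment around each crossing.
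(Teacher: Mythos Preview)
The paper gives no proof of this observation beyond the phrase ``straightforward to check'' and the right panel of \Cref{fig:binarywall}; your sketch is exactly the standard construction that figure depicts, and it is correct. One slip in phrasing: the $2r$ rows of the wall are not \emph{pairwise} anticomplete (consecutive rows are joined by rungs, as you yourself say in the next clause), but the $r$ even-indexed rows you actually use are, which is all the argument needs.
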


\begin{figure}[t!]
    \centering
    \includegraphics[scale=0.6]{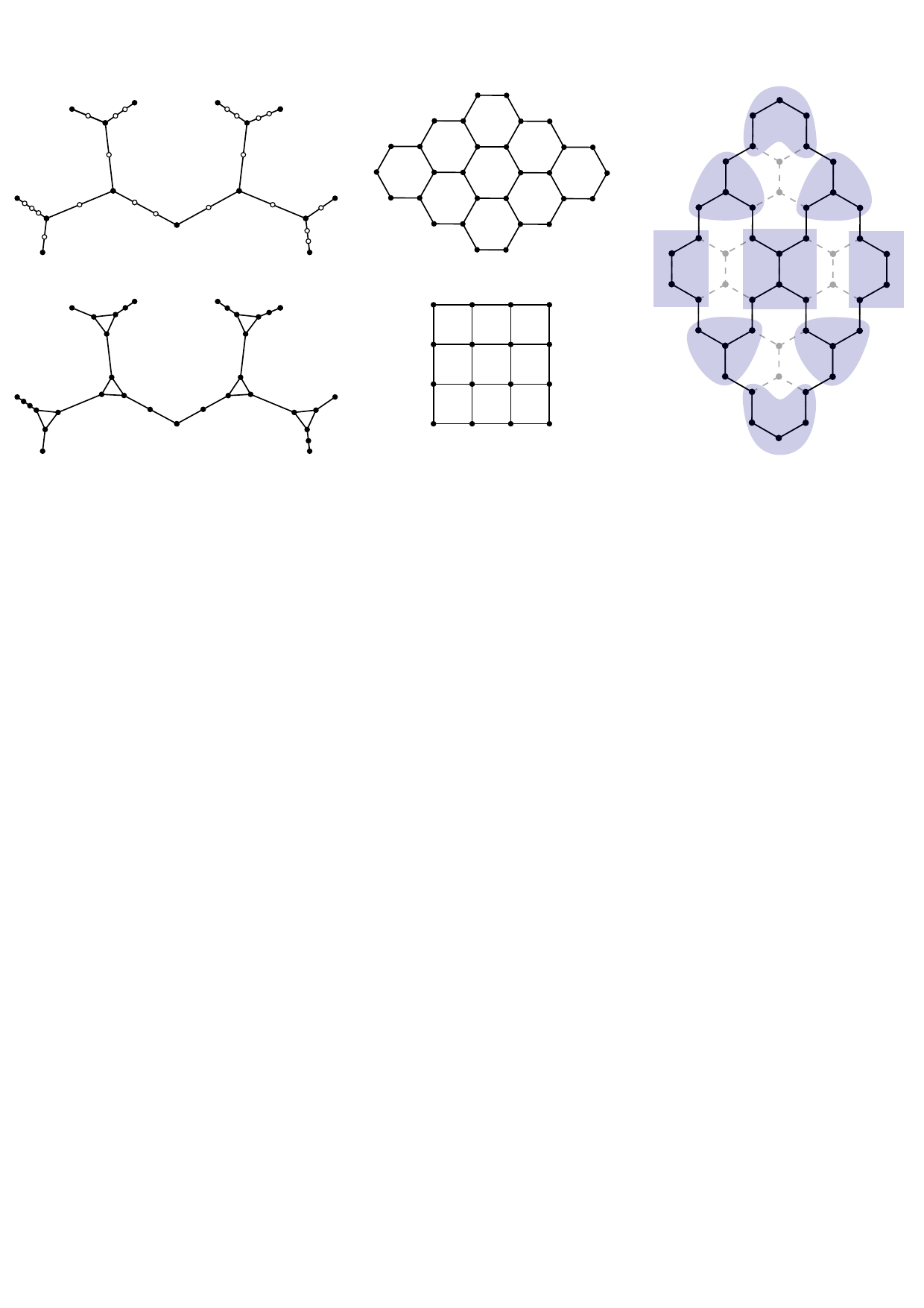}
    \caption{Left: A subdivision of $\bin_3$ (top) and its line graph (bottom). Middle: The $4$-wall (top) and the $4$-grid (bottom). Right: An induced model of the $3$-grid in the $5$-wall.}
    \label{fig:binarywall}
\end{figure}

\section{Getting the good old stuff out of the way}\label{sec:gos}

There are three steps in the proof of \Cref{thm:mainjagged} that comprise the main contribution of this paper. The first is to prove that excluding a (binary) tree as an induced minor bounds the pathwidth by a polynomial function of the treewidth:

\begin{restatable}{theorem}{pwtotw}\label{thm:pwtotw}
    For all $\rho,\tau\in \poi$, every graph $G$ with $\tw(G)<\tau$ and no induced minor isomorphic to $\bin_{\rho}$ satisfies $\pw(G)< \tau^{6\rho+2}$. 
\end{restatable}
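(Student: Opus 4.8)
\emph{The plan is to prove the contrapositive by induction on $\rho$:} if $\tw(G)<\tau$ and $\pw(G)\ge\tau^{6\rho+2}$, then $G$ has an induced minor isomorphic to $\bin_\rho$. The base case $\rho=1$ is easy, since $\bin_1$ is the three-vertex path: if $G$ has no induced $\bin_1$-minor then every component of $G$ is a clique (otherwise a shortest path between two non-adjacent vertices of a component contains an induced three-vertex path), so $\pw(G)\le\tw(G)<\tau\le\tau^{8}$.

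For the inductive step I would first pass to the tree of a tree decomposition. We may assume $G$ is connected. Using Thomas's theorem, fix a \emph{lean} tree decomposition $(T,\beta)$ of $G$ of width $w=\tw(G)<\tau$ with all bags nonempty; since $G$ is connected, every interface $\beta(x)\cap\beta(y)$ with $xy\in E(T)$ is nonempty, and leanness guarantees that its size equals the maximum number of pairwise disjoint paths in $G$ between the two sides of the associated separation. Replacing each bag of an optimal path decomposition of $T$ by the union of the corresponding $\beta$-bags shows $\pw(G)\le(\pw(T)+1)(w+1)-1$, so $\pw(T)\ge\tau^{6\rho+1}$. By the classical theory of the pathwidth of trees (a tree of pathwidth $\ge p$ contains, as a subgraph, a subdivision of the complete ternary tree of depth $p$, hence of $\bin_{p}$), $T$ therefore contains as a subgraph a subdivision of $\bin_{M}$ for some $M$ that is at least a constant fraction of $\tau^{6\rho+1}$; and since $M$ far exceeds anything we shall need, by retaining only every $(10\tau)$-th level of this subdivided tree we may further assume all its subdivision-paths have length at least $10\tau$, while $M$ is still at least a constant fraction of $\tau^{6\rho}$ — the slack in the exponent $6\rho+2$ is precisely what absorbs these losses.

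The heart of the matter is pulling this structure back into $G$, which I would isolate as a lemma proved by its own induction on $\rho$: \emph{if $G$ is connected and has a lean tree decomposition of width $<\tau$ whose tree contains a subdivision of $\bin_{m}$, with $m$ polynomially large (of order $\tau^{6\rho}$) and all subdivision-paths of length $\ge 10\tau$, then $G$ has an induced subdivision of $\bin_\rho$.} Given such a subdivided $\bin_{m}$ inside $T$ with root branch-node $x^{*}$, put $S:=\beta(x^{*})$, so $|S|\le\tau$. The two subtrees of the subdivided tree hanging below $x^{*}$ lie in two distinct components of $T-x^{*}$ and host disjoint subdivided copies of $\bin_{m-1}$, and the crucial geometric observation is that for distinct components $C,C'$ of $T-x^{*}$ the sets $\big(\bigcup_{z\in C}\beta(z)\big)\setminus S$ and $\big(\bigcup_{z\in C'}\beta(z)\big)\setminus S$ are disjoint and anticomplete in $G$. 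I would then pick a suitable vertex $v_{0}\in S$ and, along each of the two subdivision-paths $x^{*}=z_{0}\dd z_{1}\dd\cdots\dd z_{\ell}$ (with $\ell\ge 10\tau$) leaving $x^{*}$, route an induced path in $G[\bigcup_{i}\beta(z_{i})]$ from $v_{0}$ to the far branch-node's bag, using leanness and connectivity for existence and the length bound $\ell\ge10\tau$ to prune it so that it meets $S$ only in $v_{0}$ and meets the far bag only in its last vertex. Recursing inside the two subtrees hanging below the two far branch-nodes and gluing everything at $v_{0}$ produces a subdivision of $\bin_\rho$; by the anticompleteness observation together with the clean entry/exit of the routed paths, pieces built along different branches have no unwanted adjacencies, so the subdivision is induced. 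The vertex $v_{0}$ has to be chosen so that \emph{both} subtrees can be continued from it, which is arranged by pigeonhole over the $\le\tau$ candidates in $S$; this is exactly the source of a polynomial (rather than bounded) loss per level of the binary tree, and hence of the exponent $6\rho$. Finally, an induced subdivision of $\bin_\rho$ in $G$ gives an induced $\bin_\rho$-minor by contracting each subdivision-path to a single vertex (since the subdivision is an induced subgraph of $G$, the resulting branch sets are adjacent exactly along the edges of $\bin_\rho$), contradicting our assumption and completing the induction.

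I expect the pullback lemma to be the main obstacle. Turning the abstract subdivided binary tree found in the decomposition tree $T$ into a genuinely \emph{induced} subdivided binary tree of $G$ — that is, controlling every potential chord among the routed induced paths — is where leanness (which prevents $T$ from being artificially bushy) and the long subdivision-paths (which provide the room needed to reroute and prune) are both essential, and where the bookkeeping of which vertex of each small separator one continues from forces the polynomial dependence on $\tau$ recorded in the exponent $6\rho+2$.
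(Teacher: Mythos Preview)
Your approach is genuinely different from the paper's, and the gap you yourself flag --- the ``pullback lemma'' --- is real and not closed by the sketch you give. The specific failure point is the pigeonhole for $v_0$. You say $v_0$ must be chosen so that ``both subtrees can be continued from it,'' but ``can be continued'' is a depth-$\rho$ recursive predicate, not a local one: whether the construction succeeds from $v_0$ depends on which vertex of the child bag your routed induced path lands on, which in turn constrains the next choice, and so on down. A naive pigeonhole over the $\le\tau$ candidates at each branch node does not obviously compose to a polynomial loss, because the feasible sets at different levels are coupled through the endpoints of the routed paths. Relatedly, your claim that the length bound $\ell\ge 10\tau$ lets you prune the routed $G$-path so that it meets $S=\beta(x^{*})$ only in $v_0$ is unsubstantiated: the length of the path in $T$ says nothing direct about where a path in $G[\bigcup_i\beta(z_i)]$ may revisit $S$, since nothing (not even leanness) prevents vertices of $S$ from lying in many of the $\beta(z_i)$. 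Without that pruning, the anticompleteness between the two branches fails and the subdivision need not be induced.

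The paper avoids the decomposition tree entirely in the cleanup phase. It first invokes the Groenland--Joret--Nadara--Walczak bound (\Cref{thm:pwsub}) as a black box: since $\tw(G)<\tau$ and $\pw(G)$ is large, $G$ has a huge $\bin_M$ as an ordinary minor, hence a high-degree regular rooted tree $(T,u)$ sitting as a subgraph of some induced minor $G_1$ of $G$. The work is then a sequence of Ramsey-type cleanups performed directly on $(T,u)$ inside $G_1$: a colouring argument (\Cref{lem:pathuniform}) makes the adjacency pattern along every root-to-leaf path identical; a path-Ramsey lemma (\Cref{lem:longpath}), using only that $G_1$ has no $K_{\tau,\tau}$ subgraph, then extracts a subtree in which every root-to-leaf path is induced (\Cref{lem:pathinduced}); finally, using that $G_1$ has no $K_{\tau+1}$ minor, a Ramsey step (\Cref{lem:branchinduced}) makes distinct branches pairwise anticomplete. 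Each step costs a controlled power of $\tau$ in the required depth, and their product is what produces the exponent $6\rho+2$. The conceptual gain over your route is that ``find a big tree'' and ``make it induced'' are completely decoupled: the first is off-the-shelf, and the second never has to reason about bags, separators, or consistency of choices along a decomposition.
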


The second and third steps are, respectively, to prove Theorems~\ref{thm:mainpolyblock} and \ref{thm:mainpolysep} below; these results deal with “separability” in graphs, a notion first introduced in \cite{milanicsep}. For $\lambda\in \poi$, we say that a graph $G$ is \textit{$\lambda$-separable} if for every two nonadjacent vertices $x,y$ in $G$, there is no set of $\lambda$ pairwise internally disjoint paths in $G$ from $x$ to $y$. 

  We need an extension of separability that involves more than two vertices. For a graph $G$ and $\kappa,\lambda\in \poi$, a \textit{$(\kappa,\lambda)$-block in $G$} is a pair $(B,\mca{P})$ where $B$ is a set of at least $\kappa$ vertices in $G$, and $\mca{P}$ is a function with domain $\binom{B}{2}$ that maps each $2$-subset $\{x,y\}$ of $B$ to a set $\mca{P}(\{x,y\})=\mca{P}_{\{x,y\}}$ of at least $\lambda$ pairwise internally disjoint paths in $G$ from $x$ to $y$. We say that $(B,\mca{P})$ is \textit{strong} if, for all distinct $2$-subsets $\{x,y\}$ and $\{x',y'\}$ of $B$, we have $V(\mca{P}_{\{x,y\}})\cap V(\mca{P}_{\{x',y'\}})=\{x,y\}\cap \{x',y'\}$. We say that $(B,\mca{P})$ is \textit{stable} if $B$ is a stable set in $G$. We say that $G$ is \textit{$(\kappa,\lambda)$-separable} if there is no stable strong $(\kappa,\lambda)$-block in $G$. Note that $G$ is $\lambda$-separable if and only if $G$ is $(2,\lambda)$-separable.

\begin{restatable}{theorem}{mainpolyblock}\label{thm:mainpolyblock}
    For every planar graph $H$ and every $\sigma\in \poi$, there is a constant $d_{\ref{thm:mainpolyblock}}=d_{\ref{thm:mainpolyblock}}(H,\sigma)\in \poi$ such that for all $\kappa,\lambda,t\in \poi$, if $G$ is a $(\kappa,\lambda)$-separable $K_t$-free graph with no induced minor isomorphic to $H$ or $K_{\sigma,\sigma}$, then $\tw(G)\leq (\kappa\lambda t)^{d_{\ref{thm:mainpolyblock}}}$.
\end{restatable}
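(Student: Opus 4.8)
The plan is to argue by contraposition: assuming $G$ is $K_t$-free, has no induced minor isomorphic to $H$ or to $K_{\sigma,\sigma}$, and $\tw(G)>(\kappa\lambda t)^{d}$ for a constant $d=d_{\ref{thm:mainpolyblock}}(H,\sigma)$ chosen large in terms of $H$ and $\sigma$, I will produce a stable strong $(\kappa,\lambda)$-block in $G$. Since $H$ is planar, $H$ is an induced minor of a large enough square grid, and hence, by \Cref{obs:wall}, of a large enough wall; so one can fix $r_0=r_0(H)$ for which $H$ is an induced minor of the $r_0$-wall, of every subdivision of the $r_0$-wall, and of the line graph of every subdivision of the $r_0$-wall. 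It will therefore suffice to find inside $G$ an induced subgraph isomorphic to a subdivision of the $r_0$-wall or to the line graph of one: that contradicts $H$ not being an induced minor of $G$, and so the promised block must be present instead.

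First I would extract a wall. By the Grid Minor Theorem of Robertson and Seymour, in the polynomial form of Chekuri and Chuzhoy, there is an absolute constant $c$ such that every graph of treewidth more than $M^{c}$ has the $M$-wall as a minor; since walls have maximum degree $3$, such a minor is witnessed by a subgraph isomorphic to a subdivision of the $M$-wall. Taking $M$ to be a suitable polynomial in $t$ --- of degree and with coefficients depending only on $r_0$ and $\sigma$ --- one can choose $d$ so that $\tw(G)>(\kappa\lambda t)^{d}\ge M^{c}$, which yields a subgraph $W'\subseteq G$ that is a subdivision of the $M$-wall.

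The heart of the proof is then to clean $W'$, or else find a block. In general $W'$ is not an induced subgraph of $G$: there may be chords of $W'$, and vertices of $V(G)\setminus V(W')$ with neighbours on it, possibly carrying extra internally disjoint paths between the branch paths and the pegs (the branch vertices) of $W'$. I would repair this in stages, each time either passing to a slightly smaller \emph{induced} substructure or exposing a forbidden configuration. Chords along a single branch path are removed by taking subpaths. To bound the attachments of $G$ to $W'$: if many vertices of $G$ each had neighbours on many pegs, then a Ramsey-type argument --- with bounds depending only on $\sigma$ and $t$ --- would produce a $(\sigma,\sigma)$-constellation ($\sigma$ pairwise non-adjacent vertices each with a neighbour on each of $\sigma$ pairwise anticomplete long branch paths of $W'$), which is an induced $K_{\sigma,\sigma}$-model and hence excluded; so only boundedly many vertices attach to many pegs, and one deletes them. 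What survives is a subdivided sub-wall $W''$ of order roughly $M/\mathrm{poly}(\sigma,t)$, together with, possibly, a family of pairwise internally disjoint paths through $G\setminus V(W'')$ linking far-apart pegs or branch paths of $W''$. Here $(\kappa,\lambda)$-separability enters: the far-apart pegs of $W''$ may be chosen to form a stable set, so if, for some $\kappa$ of them, there survived more than a polynomial (in $\kappa$ and $\lambda$) number of such linking paths between each pair, one would read off a stable strong $(\kappa,\lambda)$-block and be done. Otherwise the linking is bounded; a few further deletions make the structure induced; and --- after replacing each peg by the ($K_t$-free, hence bounded-size) clique it spans and rerouting the incident branch paths through that clique, in case the irreducible local dirt forces it --- one arrives at an induced subgraph of $G$ isomorphic to a subdivision of a wall of order at least $r_0$, or to the line graph of one. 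By the choice of $r_0$, $H$ is then an induced minor of $G$, a contradiction; so the stable strong $(\kappa,\lambda)$-block was present all along.

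The hard part will be the separability step: distilling from the purely qualitative hypothesis ``$G$ has no stable strong $(\kappa,\lambda)$-block'' a bound \emph{polynomial in $\kappa$ and $\lambda$} on how the pegs and branch paths of a large wall can be linked through the rest of $G$, while keeping the surviving wall large, and crucially without incurring any Ramsey-type blow-up in $\kappa$ or $\lambda$ (a blow-up in $\sigma$ or $t$ is harmless, since $d$ may depend on $\sigma$ and the target bound is polynomial in $t$). This calls for a delicate Menger-type linkage analysis interlaced with the constellation machinery, and it is where essentially all of the difficulty resides.
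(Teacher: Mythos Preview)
Your route and the paper's diverge sharply, and the step you yourself flag as ``the hard part'' is exactly the step the paper sidesteps.

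The paper never touches walls here. It first proves a standalone lemma (\Cref{lem:subcompletesubg}): if $G$ has no induced minor isomorphic to $H$ or $K_{\sigma,\sigma}$ and no \emph{subgraph} isomorphic to any subdivision of $K_\mu$, then $\tw(G)<\mu^{d}$ for some $d=d(H,\sigma)$. (That lemma carries real work---a tight tree decomposition \`a la Erde--Wei\ss auer whose torsos either have few high-degree vertices or no large clique minor, plus an edge-colouring argument of bounded $G$-diameter for the bounded-degree case---but it makes no use of separability.) Given the lemma, \Cref{thm:mainpolyblock} is a one-paragraph contrapositive: if $\tw(G)$ is too large then $G$ contains a subdivision of $K_{\lambda\tau^2}$ as a subgraph, where $\tau=\kappa^{\sigma}t^{\sigma-1}$. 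Since $\lambda\tau^2\ge \tau+\lambda\binom{\tau}{2}$, pick $\tau$ branch vertices for $B$ and, for each pair in $B$, route $\lambda$ internally disjoint paths through $\lambda$ \emph{fresh} auxiliary branch vertices; the complete-graph topology makes the path families for different pairs disjoint automatically, so this is already a strong $(\tau,\lambda)$-block. Then \Cref{lem:EHforKss} (Erd\H{o}s--Hajnal for $K_{\sigma,\sigma}$-free graphs) extracts a stable $\kappa$-subset, contradicting $(\kappa,\lambda)$-separability.

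Your plan instead tries to clean a large wall into an induced one, using separability and the $K_{\sigma,\sigma}$ exclusion in tandem. That is a genuinely harder programme, and the decisive step is not carried out: from the mere absence of a stable strong $(\kappa,\lambda)$-block you would need a bound---polynomial in $\kappa$ and $\lambda$---on how many internally disjoint paths can link prescribed pegs through $G\setminus V(W'')$, \emph{and} you would need the path families for different pairs of pegs to be themselves disjoint (the ``strong'' condition), all without a Ramsey blow-up in $\kappa,\lambda$. You name this as the crux but give no mechanism for it. In the paper's route this difficulty never appears, because the branch paths of a $K_N$-subdivision are already pairwise internally disjoint; separability is discharged by a two-line counting argument rather than by any linkage analysis.
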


\begin{restatable}{theorem}{mainpolysep}\label{thm:mainpolysep}
    For all $\rho,\sigma\in \poi$, there are constants $c_{\ref{thm:mainpolysep}}=c_{\ref{thm:mainpolysep}}(\sigma)\in \poi$ and $d_{\ref{thm:mainpolysep}}=d_{\ref{thm:mainpolysep}}(\rho,\sigma)\in \poi\cup\{0\}$ such that for all $t\in \poi$, every $K_t$-free graph with no induced minor isomorphic to $\bin_{\rho}$ or $K_{\sigma,\sigma}$ is $(c_{\ref{thm:mainpolysep}}, t^{d_{\ref{thm:mainpolysep}}})$-separable.
\end{restatable}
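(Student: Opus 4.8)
The plan is to prove the contrapositive, with $c_{\ref{thm:mainpolysep}}(\sigma):=2\sigma$ and $d_{\ref{thm:mainpolysep}}(\rho,\sigma)$ to be chosen (large) at the end. So I assume $G$ is a $K_t$-free graph with no induced minor isomorphic to $\bin_\rho$ or $K_{\sigma,\sigma}$, and I suppose for a contradiction that $G$ contains a stable strong $(2\sigma,\lambda)$-block $(B,\mca{P})$ with $\lambda=t^{d_{\ref{thm:mainpolysep}}}$. I want to show that a block this large forces one of the two excluded induced minors. Throughout, whenever a Ramsey-type extraction returns a clique among genuine vertices of $G$, that clique has size at least $t$ and we are done; so every such extraction may be read as ``return a large independent set''.

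\textbf{Cleaning.} First I would pass, through $O_\sigma(1)$ Ramsey-type reductions each costing a $t^{O_{\rho,\sigma}(1)}$ (or smaller) factor in the bundle sizes, to a sub-block with the same core $B$ and bundles $\mca{P}'_{\{x,y\}}\subseteq\mca{P}_{\{x,y\}}$ that interact minimally: for each bundle, apply Ramsey to the graph on its paths in which two paths are joined when their interiors touch; for two bundles (sharing a core vertex or disjoint) pass to sub-bundles that are either mutually non-touching or exhibit a large complete bipartite ``touching'' pattern, using that a sufficiently sparse bipartite graph contains a large independent rectangle while a dense one contains either a large complete bipartite subgraph or a high-degree vertex; and likewise clean the adjacency between path interiors and non-incident core vertices. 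Since $\lambda$ is polynomial in $t$ with a large enough exponent, we emerge in one of three cases: \textbf{(A)} a \emph{clean} sub-block -- within each bundle the path interiors are pairwise anticomplete, and each path interior is anticomplete to every non-incident core vertex and to every path of every other bundle except possibly at a shared end; \textbf{(B)} a large complete bipartite ``touching'' pattern between suitably cleaned paths of two bundles; or \textbf{(C)} a large collection of pairwise-touching, internally disjoint, induced paths between two nonadjacent vertices.

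\textbf{Easy cases.} In case (A), picking one path per bundle exhibits an induced subgraph that is a \emph{proper} subdivision of $K_{2\sigma}$ (proper since $B$ is a stable set, so each path has length at least $2$); as $|V(K_{\sigma,\sigma})|=2\sigma$, the observation recalled in \Cref{sec:notation} turns this into an induced subgraph isomorphic to a proper subdivision of $K_{\sigma,\sigma}$, hence an induced minor isomorphic to $K_{\sigma,\sigma}$ -- contradiction. In case (B), once the two families have size at least $\sigma$ they form an induced minor isomorphic to $K_{\sigma,\sigma}$ directly: the families are pairwise disjoint or meet only in one common end, which we delete; within each family the paths are anticomplete; and across families every pair touches -- again a contradiction.

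\textbf{The hard case, and the main obstacle.} Everything comes down to case (C): extracting an induced minor isomorphic to $\bin_\rho$ from a large collection $Q_1,\dots,Q_m$ of pairwise-touching, internally disjoint, induced $x$--$y$ paths with $x\not\sim y$, where $m$ is a large constant depending on $\rho,\sigma$ and $G$ is $K_t$-free. This is the crux: such a configuration is minor-rich but induced-poor -- it trivially contains a large wall as an ordinary minor, yet producing an \emph{induced} copy of $\bin_\rho$ as a minor takes work. My plan is to fix one path as a spine and run further Ramsey/ordering arguments on the locations and endpoints of the touching edges (again discarding any clique outcome via $K_t$-freeness) to reach a ``clean crossing'' configuration, from which one reads off either a large induced subdivided wall -- whence, via \Cref{obs:wall} and a routing argument that spaces out the branches to avoid chords, an induced minor isomorphic to $\bin_\rho$ -- or a nested/laminar configuration that directly carries an induced subdivision of $\bin_\rho$, and hence an induced minor isomorphic to $\bin_\rho$. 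Carrying out this final extraction with only a polynomial-in-$t$ loss, and setting up the cleaning so that case (C) is the sole surviving obstruction, is where the real difficulty lies; I do not expect to need \Cref{thm:pwtotw} or \Cref{thm:mainpolyblock} here.
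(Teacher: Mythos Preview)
Your overall framing—take the contrapositive, start from a stable strong block, and clean via Ramsey—matches the paper's first step, and your cases (A) and (B) are essentially correct. The gap is in case (C), and it is a genuine one. As you set it up, $m$ is a \emph{constant} depending only on $\rho,\sigma$, and you then ask whether $m$ pairwise-touching internally disjoint induced $x$–$y$ paths (with $x\not\sim y$) in a $K_t$-free graph must carry an induced minor isomorphic to $\bin_\rho$ or $K_{\sigma,\sigma}$. They need not: take $G=K_{m+2}$ minus the edge $xy$. This graph is $K_{m+2}$-free, and the paths $x\dd a_i\dd y$ give exactly the configuration of case (C); yet every induced minor of $G$ is of the form $K_k$ or $K_k$ minus one edge, so it contains neither $\bin_\rho$ (for $\rho\ge 2$) nor $K_{\sigma,\sigma}$ (for $\sigma\ge 2$). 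Your reduction has discarded the polynomial-in-$t$ size of the bundle at the first Ramsey step, and the constant-size object that survives is simply too small to force either excluded minor. Any ``crossing versus nested'' analysis on a fixed number of paths runs into the same wall: the configuration can sit entirely inside a near-clique of bounded size.

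The paper does \emph{not} reduce to a constant-size touching configuration. Its dichotomy is between every bundle admitting $\mu$ pairwise-anticomplete paths—then a product-Ramsey cleanup (your (A)/(B)) yields an induced proper subdivision of a large complete graph, hence $K_{\sigma,\sigma}$—and some bundle being \emph{$\mu$-rigid}, meaning no $\mu$ pairwise-anticomplete paths can be carved out of it at all. A rigid bundle is repackaged as a \emph{seedling} $(A,\mca{L},Y)$ with $|\mca{L}|$ still polynomial in $t$. The key lemma (\Cref{lem:getbranches}) shows that one large seedling spawns many pairwise-anticomplete smaller seedlings attached to $A$; recursing $\rho$ times builds an induced model of $\bin_\rho$ directly. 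The polynomial budget in $t$ is consumed level by level across this recursion rather than thrown away in a single Ramsey extraction—and that recursive ``seedling method'' is precisely the idea your plan for case (C) is missing.
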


From Theorems~\ref{thm:pwtotw}, \ref{thm:mainpolyblock}, and \ref{thm:mainpolysep}, we deduce \Cref{thm:indms} below. This improves the main result of \cite{tw18} by Chudnovsky, Spirkl, and the author, which states that under the same assumptions, the pathwidth of $G$ is bounded by \textit{some} function of $\rho,\sigma$, and $t$, which is far from polynomial in $t$. (Similar to our approach in \cite{tw18}, one may also combine \Cref{thm:indms} with the main result of \cite{tw16} to characterize the induced subgraph obstructions to bounded pathwidth, \textit{with} a polynomial bound in the clique number. We omit the details.)

\begin{theorem}\label{thm:indms}
    For all $\rho,\sigma\in \poi$, there is a constant $d_{\ref{thm:indms}}=d_{\ref{thm:indms}}(\rho,\sigma)\in \poi$ such that for every $t\in \poi$, every $K_{t}$-free graph $G$ with no induced minor isomorphic to $\bin_{\rho}$ or $K_{\sigma,\sigma}$ satisfies $\pw(G)<t^{d_{\ref{thm:indms}}}$.
\end{theorem}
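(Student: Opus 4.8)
The plan is to derive Theorem~\ref{thm:indms} by composing Theorems~\ref{thm:pwtotw}, \ref{thm:mainpolyblock}, and~\ref{thm:mainpolysep} in the order in which they are proved; the only additional work is to keep the exponents under control. Fix $\rho,\sigma\in\poi$ and let $G$ be a $K_t$-free graph with no induced minor isomorphic to $\bin_{\rho}$ or $K_{\sigma,\sigma}$. The one preliminary remark we need is that $\bin_{\rho}$, being a forest, is planar, so Theorem~\ref{thm:mainpolyblock} is applicable with $H=\bin_{\rho}$.

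First I would apply Theorem~\ref{thm:mainpolysep} to the pair $\rho,\sigma$, obtaining constants $c=c_{\ref{thm:mainpolysep}}(\sigma)$ and $b=d_{\ref{thm:mainpolysep}}(\rho,\sigma)$ for which $G$ is $(c,t^{b})$-separable. Feeding this into Theorem~\ref{thm:mainpolyblock}, applied to the planar graph $\bin_{\rho}$, the integer $\sigma$, and the separability parameters $\kappa=c$ and $\lambda=t^{b}$, and writing $c'=d_{\ref{thm:mainpolyblock}}(\bin_{\rho},\sigma)$, I would get $\tw(G)\le(\kappa\lambda t)^{c'}=(c\,t^{b+1})^{c'}$. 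Finally, setting $\tau=(c\,t^{b+1})^{c'}+1\in\poi$, we have $\tw(G)<\tau$, so Theorem~\ref{thm:pwtotw} (whose remaining hypothesis, no induced minor isomorphic to $\bin_{\rho}$, holds by assumption) gives $\pw(G)<\tau^{6\rho+2}$.

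It then remains to bound $\tau^{6\rho+2}$ by a fixed power of $t$. Since $b\ge 0$ and $c'\ge 1$ we have $(b+1)c'\ge 1$, hence $t^{(b+1)c'}\ge 1$ and $\tau\le(c^{c'}+1)\,t^{(b+1)c'}$ for every $t\in\poi$; raising to the power $6\rho+2$ yields $\pw(G)<M\,t^{e}$, where $e=(b+1)c'(6\rho+2)$ and $M=(c^{c'}+1)^{6\rho+2}$ depend only on $\rho,\sigma$. Taking $d_{\ref{thm:indms}}=e+\lceil\log_2 M\rceil$, for $t\ge 2$ we get $M\,t^{e}\le t^{d_{\ref{thm:indms}}}$ and hence $\pw(G)<t^{d_{\ref{thm:indms}}}$; for $t=1$ the graph $G$ is $K_1$-free and so has no vertices, whence $\pw(G)=0<1=t^{d_{\ref{thm:indms}}}$, completing the proof.

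There is no real obstacle in this deduction, which is precisely why the statement belongs to ``getting the good old stuff out of the way'': the content is entirely in Theorems~\ref{thm:pwtotw}, \ref{thm:mainpolyblock}, and~\ref{thm:mainpolysep}. The points that need attention are merely that the conclusion of Theorem~\ref{thm:mainpolysep} (namely $(c,t^{b})$-separability) is exactly the separability hypothesis of Theorem~\ref{thm:mainpolyblock} with $\kappa=c$, $\lambda=t^{b}$; that $\bin_{\rho}$ is planar so Theorem~\ref{thm:mainpolyblock} can be invoked with $H=\bin_{\rho}$; and that each application multiplies the exponent of $t$ by a constant depending only on $\rho$ and $\sigma$, with the degenerate case $t=1$ handled separately.
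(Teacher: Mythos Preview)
Your proposal is correct and follows essentially the same route as the paper: compose Theorems~\ref{thm:mainpolysep}, \ref{thm:mainpolyblock} (with $H=\bin_{\rho}$, which is planar), and \ref{thm:pwtotw} in that order, handle $t=1$ separately, and absorb the multiplicative constants into the exponent using $t\ge 2$. The only cosmetic difference is that the paper bounds $c_{\ref{thm:mainpolysep}}<t^{c_{\ref{thm:mainpolysep}}}$ directly to get the clean exponent $d_{\ref{thm:indms}}=(6\rho+2)(c_{\ref{thm:mainpolysep}}+d_{\ref{thm:mainpolysep}}+1)d_{\ref{thm:mainpolyblock}}$, whereas you carry the constant through as a multiplicative factor $M$ and fold it in via $\lceil\log_2 M\rceil$ at the end.
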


\begin{proof}[Proof {\sl (assuming \ref{thm:pwtotw}, \ref{thm:mainpolyblock}, and \ref{thm:mainpolysep})}]
    Let $d=d_{\ref{thm:mainpolyblock}}(\bin_{\rho},\sigma)$; note that $d$ exists because $\bin_{\rho}$ is planar. Let $c'=c_{\ref{thm:mainpolysep}}(\sigma)$ and let $d'=d_{\ref{thm:mainpolysep}}(\rho,\sigma)$. Let
$$d_{\ref{thm:indms}}=d_{\ref{thm:indms}}(\rho,\sigma)=(6\rho+2)(c'+d'+1)d.$$

Let $G$ be a $K_{t}$-free graph with no induced minor isomorphic to $\bin_{\rho}$ or $K_{\sigma,\sigma}$. If $t=1$, then $\pw(G)=0<t^{d_{\ref{thm:indms}}}$. Assume that $t\geq 2$. Thus, by \Cref{thm:mainpolysep}, $G$ is $(c', t^{d'})$-separable. By \Cref{thm:mainpolyblock} (and since $c'<2^{c'}\leq t^{c'}$), we have $\tw(G)\leq (c't^{d'}\cdot t)^{d}<t^{(c'+d'+1) d}$. Hence, by \Cref{thm:pwtotw}, we have $\pw(G)<(t^{(c'+d'+1)d})^{(6\rho+2)}=t^{d_{\ref{thm:indms}}}$.
\end{proof}

We derive \Cref{thm:mainjagged} from \Cref{thm:indms} combined with some preexisting results. First, we need the next two results from our paper \cite{tw16} with Chudnovsky and Spirkl.

\begin{theorem}[Chudnovsky, Hajebi, Spirkl; Theorem~1.2 in \cite{tw16}]\label{thm:compbip}
   For all $p,q,r\in \poi$, there is a constant $c_{\ref{thm:compbip}}=c_{\ref{thm:compbip}}(p,q,r)\in \poi$ such that for every graph $G$ with an induced minor isomorphic to $K_{c_{\ref{thm:compbip}},c_{\ref{thm:compbip}}}$, one of the following holds: 
\begin{enumerate}[{\rm (a)}]
  \item\label{thm:compbip_a} $G$ has an induced minor isomorphic to the $r$-wall.
  \item\label{thm:compbip_b} There is a $(p,q)$-constellation in $G$.
\end{enumerate} 
\end{theorem}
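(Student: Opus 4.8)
The plan is to fix an induced $K_{N,N}$-model $\mf{m}=(A_1,\dots,A_N;B_1,\dots,B_N)$ in $G$, with $N=c_{\ref{thm:compbip}}(p,q,r)$ chosen large at the end, and to simplify $\mf{m}$ in stages: at each stage we either read off a $(p,q)$-constellation, or pass to an induced $K_{m,m}$-model (with $m$ still large) whose branch sets are better behaved. The main tools are minimality of branch sets, pigeonhole and Ramsey, a Zarankiewicz-type bound (a bipartite graph with enough edges contains a prescribed complete bipartite subgraph), the Erd\H{o}s--Szekeres monotone-subsequence theorem, and the standard fact that large square grids and large walls are equivalent as induced minors (one direction is \Cref{obs:wall}).

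First I would reduce both sides to induced paths. Fix, for all $i,j$, an edge $a_{ij}b_{ij}$ of $G$ with $a_{ij}\in A_i$ and $b_{ij}\in B_j$, and replace each $B_j$ by a minimal connected induced subgraph of $G[B_j]$ still having a neighbour in every $A_i$. Such a minimal subgraph is ``essentially a subdivided tree'' whose leaves lie among the attachment vertices $b_{1j},\dots,b_{Nj}$; hence it either (i) has very large leaf number, and so contains a long subdivided binary tree attached, at boundedly many of its leaves, to pairwise anticomplete sets $A_i$, or (ii) is a subdivision of a tree on boundedly many vertices. In case (i), applying the same dichotomy on the $A$-side and interlocking the two resulting binary-tree-like skeletons produces the desired $r$-wall induced minor. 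In case (ii), there are boundedly many ``arms'' (maximal subpaths with all internal vertices of degree $2$) in each $B_j$; by pigeonhole I may assume that combinatorially the same arm of each $B_j$ receives the attachments $b_{ij}$ for a constant fraction of indices $i$, restrict each $B_j$ to that arm, and then apply the Zarankiewicz-type bound to the bipartite incidence pattern between the surviving arms and the sets $A_i$ to obtain an induced $K_{m,m}$-model in which every $B$-side branch set is an induced path. Running the same argument on the $A$-side gives a ``grid-like'' model: an induced $K_{m',m'}$-model in which every branch set is an induced path. Moreover, if at any point some branch set contains a vertex with neighbours in very many branch sets on the opposite side, then — since there are very many such branch sets and they are pairwise anticomplete, so the witnessing vertices are pairwise nonadjacent — the Zarankiewicz-type bound again yields a $(p,q)$-constellation; hence we may additionally assume that in the grid-like model no vertex of any branch set has a neighbour in more than, say, $p$ branch sets on the opposite side (a ``spread-out'' condition).

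From a spread-out grid-like model $(P_1,\dots,P_{m'};Q_1,\dots,Q_{m'})$ I would extract a grid. For all $i,j$ let $x_{ij}\in P_i$ and $y_{ij}\in Q_j$ be the chosen attachment vertices; then each $P_i$ linearly orders the indices $j$ by the positions of $x_{i1},\dots,x_{im'}$ along $P_i$, and symmetrically each $Q_j$ orders the indices $i$. Iterating Erd\H{o}s--Szekeres a bounded number of times on each side, I pass to $r'$ indices per side so that all surviving $P_i$ order the surviving $j$'s the same way and all surviving $Q_j$ order the surviving $i$'s the same way. Cutting each $P_i$ into $r'$ consecutive subpaths around $x_{i1},\dots,x_{ir'}$, and each $Q_j$ likewise, the union of the $(i,j)$-th subpath of $P_i$ with the $(i,j)$-th subpath of $Q_j$ is a candidate $(i,j)$-cell of an $r'$-grid, and the intended grid adjacencies hold since consecutive subpaths of a path are adjacent and $P_i$ touches $Q_j$ at $x_{ij}$. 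The spread-out condition bounds, for each cell, the number of cross-paths to which it is spuriously adjacent, so a final pigeonhole/Ramsey selection of an $r\times r$ subfamily of rows and columns removes all spurious adjacencies and yields an $r$-grid, hence an $r$-wall, as an induced minor of $G$.

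The main obstacle, as is typical for induced-minor statements, is exactly the control of spurious adjacencies: every ``or a $(p,q)$-constellation'' escape route occurs precisely where this control breaks down — a branch set that is too branchy, or too clustered, or a cell with too many cross-adjacencies — and in each such failure one harvests a dense pattern as a constellation. Assembling the quantitative bounds from the several composed applications of Ramsey, the Zarankiewicz-type bound, and Erd\H{o}s--Szekeres (so that a single $N=c_{\ref{thm:compbip}}(p,q,r)$ works) is routine but lengthy, and that bookkeeping is where most of the write-up would go.
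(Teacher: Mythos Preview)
This theorem is not proved in the present paper: it is quoted from \cite{tw16} (as Theorem~1.2 there) and used here only as a black box, so there is no proof in this paper to compare your proposal against.

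On the proposal itself: the overall architecture --- reduce branch sets to paths via minimality, use degree/density dichotomies to either harvest a $(p,q)$-constellation or pass to a cleaner model, then align attachment orderings by Erd\H{o}s--Szekeres and read off a grid --- is the right genre of argument for statements of this type. The part that is not yet a proof is your handling of case~(i). A minimal connected induced subgraph of $B_j$ meeting every $A_i$ need not literally be a subdivided tree, and even granting that it contains a large subdivided binary tree when the ``leaf number'' is large, the assertion that ``interlocking the two resulting binary-tree-like skeletons produces the desired $r$-wall induced minor'' is doing all the work and is left entirely unjustified. Two subdivided binary trees, one on each side of a bipartite model, do not in any obvious way assemble into a wall; walls require a two-dimensional grid of local connections, not a pair of one-dimensional branching structures. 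In practice this case is usually rerouted rather than pushed through: for instance, a branch set that is a subdivided tree with many leaves, each leaf attached to a distinct pairwise-anticomplete $A_i$, already yields (after trimming) exactly the sort of ``one vertex side, path side'' pattern that feeds a constellation or a further reduction --- not a wall directly. If you want to turn the sketch into a proof you will need either to supply a genuine wall-building argument for case~(i) or, more likely, to show that case~(i) also collapses to a constellation or to the path-model case, so that the wall arises only from the aligned grid-like model in the final stage.
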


Let $G$ be a graph and let $\mf{c}=(A_1,\ldots, A_p;B_1,\ldots, B_q)$ be a $(p,q)$-constellation in $G$ where $p,q\in \poi$. For $d\in \poi$, we say that $\mf{c}$ is \textit{$d$-ample} if for all distinct $i,j\in \{1,\ldots, p\}$, say $V(A_i)=\{x_i\}$ and $V(A_j)=\{x_j\}$, and every $k\in \{1,\ldots, q\}$, every path $R$ in $G$ from $x_i$ to $x_j$ with $V(R^*)\subseteq V(B_k)$ has length at least $d+2$. In particular, $\mf{c}$ is $1$-ample if and only if no two vertices in $V(A_1)\cup \cdots\cup V(A_p)$ have common neighbors in $V(B_1)\cup \cdots \cup V(B_q)$.

\begin{lemma}[Chudnovsky, Hajebi, Spirkl; Lemma~4.1 in \cite{tw16}]\label{lem:compbip}
   For all $d,p,q,r,s\in \poi$, there is a constant $c_{\ref{lem:compbip}}=c_{\ref{lem:compbip}}(d,p,q,r,s)\in \poi$ such that for every graph $G$, if there is a $(c_{\ref{lem:compbip}},c_{\ref{lem:compbip}})$-constellation in $G$, then one of the following holds: 
\begin{enumerate}[{\rm (a)}]
  \item\label{lem:compbip_a} $G$ has an induced subgraph isomorphic to either $K_{r,r}$ or a proper subdivision of $K_{s}$.
  \item\label{lem:compbip_b} There is a $d$-ample $(p,q)$-constellation in $G$.
\end{enumerate} 
\end{lemma}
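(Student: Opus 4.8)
The plan is to prune the given $(c_{\ref{lem:compbip}},c_{\ref{lem:compbip}})$-constellation $\mf{c}=(A_1,\dots;B_1,\dots)$ by a recursive Ramsey-type argument, where $c_{\ref{lem:compbip}}=c_{\ref{lem:compbip}}(d,p,q,r,s)$ is taken enormous. Write $V(A_i)=\{x_i\}$, so $\{x_1,x_2,\dots\}$ is a stable set, each $B_k$ is an induced path, the $B_k$ are pairwise anticomplete, and each $x_i$ has a neighbour on every $B_k$. For indices $i,k$ let $F_{i,k}=N_G(x_i)\cap V(B_k)$ be the \emph{footprint} of $x_i$ on $B_k$; since $B_k$ is a path, $F_{i,k}$ spans a subpath of $B_k$. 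The goal is to find a set $I$ of $p$ indices and a set $K$ of $q$ indices such that, for all distinct $i,j\in I$ and all $k\in K$, no subpath of $B_k$ of length less than $d$ meets both $F_{i,k}$ and $F_{j,k}$: then $(A_i:i\in I;\,B_k:k\in K)$ is an induced sub-tuple of $\mf{c}$, hence a $(p,q)$-constellation, and it is $d$-ample, because the interior of any path $R$ from $x_i$ to $x_j$ with $V(R^*)\subseteq V(B_k)$ is such a subpath and so $R$ has length at least $d+2$. If we cannot produce such $I$ and $K$, we must instead produce an induced $K_{r,r}$ or an induced proper subdivision of $K_s$.

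First I would set up a recursion with $q+\binom{s}{2}$ rounds, maintaining a ``live'' set of branch vertices, initially all of $\{x_1,x_2,\dots\}$. At round $\ell$ we pick a fresh path $B_{k_\ell}$ and consider the footprints on $B_{k_\ell}$ of the live branch vertices. On $B_{k_\ell}$ the relation ``$F_{i,k_\ell}$ lies entirely to the left of $F_{j,k_\ell}$'' is a strict partial order whose incomparable pairs are exactly those with overlapping footprints; so by Dilworth's theorem there is a live subset of size at least the square root of the current one that is either a chain of this order (pairwise disjoint footprints, totally ordered along $B_{k_\ell}$), in which case we call $B_{k_\ell}$ a \emph{chain path}, or an antichain (pairwise overlapping footprints), in which case we call it an \emph{antichain path}. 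In either case we replace the live set by the chosen subset. Since each round costs only a square root, the live set remains huge throughout, and after $q+\binom{s}{2}$ rounds, either at least $q$ of the recorded paths are chain paths, or at least $\binom{s}{2}$ are antichain paths.

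Suppose first that $B_{k_1},\dots,B_{k_q}$ are chain paths, and let $W$ be the live set after all the rounds; then $W$ has pairwise disjoint, totally ordered footprints on each $B_{k_j}$. Applying the Erd\H{o}s--Szekeres monotone subsequence theorem $q-1$ times, pass to a still-large $W'\subseteq W$ on which the $q$ along-path orders all coincide up to reversal; in the resulting common order $w_1,w_2,\dots$ of $W'$, put $I=\{w_1,w_{d+1},w_{2d+1},\dots\}$, of size $p$. For any two chosen $w_a,w_b$ and any $j$, the footprints of at least $d$ further vertices of $W'$ lie, pairwise disjoint and nonempty, strictly between those of $w_a$ and $w_b$ along $B_{k_j}$, so every subpath of $B_{k_j}$ meeting both of these footprints has length at least $d$; hence $I$ and $K=\{k_1,\dots,k_q\}$ are as required, provided $c_{\ref{lem:compbip}}$ was chosen so that $|W'|\geq pd$. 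Suppose instead that $\binom{s}{2}$ of the recorded paths are antichain paths. On each such path $B_k$ the live footprints pairwise intersect, so by the Helly property of intervals there is a vertex $v_k\in V(B_k)$ lying in the span of every live footprint; in particular every live $x_i$ has a neighbour weakly on each side of $v_k$ on $B_k$. If on some antichain path there are $r$ live branch vertices with at least $2r$ common neighbours -- a \emph{concentration} -- then $r$ pairwise nonadjacent ones of these (a path has stable sets of every size), together with those $r$ branch vertices, induce a $K_{r,r}$, and we are done; otherwise I would choose $s$ live branch vertices and route, through each of the $\binom{s}{2}$ antichain paths privately assigned to a pair, an induced path joining that pair whose interior hugs the relevant $v_k$ and avoids the neighbourhoods of the other $s-2$ chosen branch vertices (the absence of a concentration being what makes room for such a route), so that the $\binom{s}{2}$ routes, which are internally disjoint and pairwise non-adjacent, assemble into an induced proper subdivision of $K_s$.

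I expect the diffuse subcase of the last step -- routing the $\binom{s}{2}$ internally disjoint paths so that the resulting subdivision of $K_s$ carries no extra edge, in particular no chosen branch vertex adjacent to another pair's connector -- to be the main obstacle; it is precisely this verification that fixes the correct quantitative notion of ``concentration'', and with it the values of $r$, $s$ and the final value of $c_{\ref{lem:compbip}}$. By contrast the chain case is routine once the recursion is in place, its only delicate point being the cross-path alignment, handled by iterated Erd\H{o}s--Szekeres.
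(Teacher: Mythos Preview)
The paper does not prove this lemma; it is quoted as Lemma~4.1 of \cite{tw16} and used as a black box. So there is no in-paper argument to compare against, and I can only comment on the soundness of your sketch.

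Your chain case is essentially fine (the ``at least $d$ further vertices'' should read $d-1$, but $d-1$ intermediate disjoint nonempty footprints already force $|V(R^*)|\ge d+1$, which is exactly what $d$-ampleness needs). The real problem is the antichain case. Your ``concentration'' test --- some $r$ live branch vertices with at least $2r$ common neighbours on a single antichain path --- is too weak a dichotomy to support the routing you want in the diffuse subcase. Take the configuration in which, on every antichain path $B_k$, each live $x_i$ is adjacent only to one fixed vertex $v_k$ (or only to the two ends of $B_k$). Then all spans coincide, so this is an antichain; there is no concentration (at most one or two common neighbours per path); yet every path in $G$ from $x_a$ to $x_b$ with interior in $B_k$ must use $v_k$ (respectively an end of $B_k$), which is adjacent to every other chosen $x_c$. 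So no induced proper subdivision of $K_s$ can be routed, and you are stuck. In this example outcome~(a) does hold, but the $K_{r,r}$ lives \emph{across} the paths: the $v_k$ from $r$ different $B_k$'s together with $r$ of the $x_i$'s induce it. Your per-path concentration test cannot see this.

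So the antichain branch needs a genuinely different mechanism, one that can detect a $K_{r,r}$ assembled from one vertex per path, not only from many vertices on a single path; and ``no concentration'' must be strengthened to something that actually clears the neighbourhoods of the other $s-2$ branch vertices along the intended route. As written, the diffuse subcase is not just the ``main obstacle'' --- it is a gap.
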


We also use the well-known fact that every planar graph is an induced minor of a sufficiently large square grid. For instance, it is proved in \cite{twhad} that:

\begin{theorem}[Campbell, Davies, Distel, Frederickson, Gollin, Hendrey, Hickingbotham, Wiederrecht, Wood, Yepremyan; Theorem~12 in \cite{twhad}]\label{thm:gridtograph}
    Every planar graph $H$ on $h\in \poi$ vertices is isomorphic to an induced minor of the $28h$-grid.
\end{theorem}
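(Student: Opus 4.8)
The plan is to read the induced-minor model off a planar drawing of $H$, after first reducing to the bounded-degree case. I would use two elementary closure properties of the induced minor relation, both proved by pasting branch sets together: (i) the induced minor relation is transitive, and an induced subgraph of a graph is in particular an induced minor of it; and (ii) $H$ is an induced minor of every proper subdivision of $H$ (use as branch sets the singletons $\{v\}$ together with the subdivision vertices of the edges at $v$, each edge assigned to one of its two endpoints), and also of the graph $\tilde H$ obtained from $H$ by splitting each vertex $v$ of degree $d\ge 5$ into a path on about $d/2$ vertices and distributing the edges at $v$ along this path, done so that $\tilde H$ is planar of maximum degree at most $4$ (use the split paths as branch sets). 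Since $H$ is planar it has at most $3h-6$ edges (the cases $h\le 2$ being trivial), so $\tilde H$ has $O(h)$ vertices. By (i) and (ii) it therefore suffices to realize \emph{some} subdivision of the bounded-degree planar graph $\tilde H$ as an \emph{induced} subgraph of an $O(h)$-grid; the number of vertices of that subdivision is immaterial, only the dimensions of the grid matter.

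This is what orthogonal (rectilinear) graph drawing provides. Fix a planar embedding of $\tilde H$. A standard orthogonal-drawing construction places $\tilde H$ on a grid with $O(|V(\tilde H)|)=O(h)$ rows and columns so that each vertex is a distinct grid point, each edge is a path of grid edges, distinct edge paths are internally disjoint, and no edge path passes through the grid point of a vertex other than its two ends. Such a drawing exhibits a particular subdivision $\tilde H'$ of $\tilde H$ as a subgraph of an $O(h)$-grid $\Gamma$. (If $H$ is disconnected, draw the components of $\tilde H$ in disjoint rectangular regions of the grid; no new difficulty arises.)

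The technical heart is to promote ``subgraph'' to ``induced subgraph''. In the drawing there might be two grid points that are grid-adjacent but not adjacent in $\tilde H'$: two edge paths running alongside one another, or an edge path grazing a vertex. I would eliminate every such spurious adjacency by first dilating the grid by a constant factor and then performing local surgery on the drawing, so that any two edge paths, and any edge path and any vertex, that are not meant to meet stay at grid-distance at least $2$, and the at most four ``ports'' at each vertex are pairwise separated. After this the only grid adjacencies among the points used by $\tilde H'$ are the ones along its edge paths, so $\tilde H'$ is an induced subgraph of the (still $O(h)$-size) grid $\Gamma$. Chaining this with the two closure properties above, $H$ is an induced minor of $\Gamma$, as required.

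The real obstacle is quantitative: keeping the blow-up linear \emph{and} forcing the constant down to $28$. Each ingredient --- the vertex splitting, the subdivision implicit in the orthogonal drawing, the dilation used for separation, and the orthogonal-drawing scheme itself --- contributes its own multiplicative constant to the grid size, so to reach $28h$ one must run a tightly optimized version of each step (splitting into the shortest admissible paths, subdividing as sparingly as the separation argument allows, using an orthogonal-drawing scheme with the smallest possible bounding grid) and carefully total the bookkeeping. Qualitatively, i.e.\ for any linear bound, the argument above is routine given classical planar-drawing results.
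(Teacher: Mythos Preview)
This theorem is not proved in the present paper at all: it is quoted verbatim as an external result (Theorem~12 of \cite{twhad}) and used as a black box. There is therefore no ``paper's own proof'' to compare your proposal against.

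That said, your sketch is a reasonable outline of how such a statement can be established: reducing to bounded maximum degree by planarity-preserving vertex splitting, invoking a linear-area orthogonal drawing, and then dilating to kill spurious grid adjacencies is a standard route to results of this type. You correctly identify that the only nontrivial issue is pushing the multiplicative constant down to $28$; the qualitative $O(h)$ bound follows from classical orthogonal-drawing theorems. If you want to see how the constant $28$ is actually achieved, you would need to consult \cite{twhad} directly.
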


We deduce that:

\begin{corollary}\label{cor:compbip}
   For all $d,h,p,q,r\in \poi$, there is a constant $c_{\ref{cor:compbip}}=c_{\ref{cor:compbip}}(d,h,p,q,r)\in \poi$ such that for every graph $G$ with an induced minor isomorphic to $K_{c_{\ref{cor:compbip}},c_{\ref{cor:compbip}}}$, one of the following holds: 
\begin{enumerate}[{\rm (a)}]
  \item\label{cor:compbip_a} $G$ has an induced subgraph isomorphic to $K_{r,r}$.
  \item\label{cor:compbip_b} For every planar graph $H$ on $h$ vertices, $G$ has an induced minor isomorphic to $H$.
  \item\label{cor:compbip_c} There is a $d$-ample $(p,q)$-constellation in $G$.
\end{enumerate}
\end{corollary}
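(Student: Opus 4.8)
The plan is to derive \Cref{cor:compbip} purely by bookkeeping: feed the dichotomy of \Cref{thm:compbip} into the dichotomy of \Cref{lem:compbip}, and use \Cref{obs:wall} together with \Cref{thm:gridtograph} to convert a sufficiently large wall into an induced minor of every $h$-vertex planar graph. Concretely, I would first fix the constants: set $c'=c_{\ref{lem:compbip}}(d,p,q,r,h)$, set $r'=56h-1$, and then define $c_{\ref{cor:compbip}}=c_{\ref{cor:compbip}}(d,h,p,q,r)=c_{\ref{thm:compbip}}(c',c',r')$.

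Now let $G$ be a graph with an induced minor isomorphic to $K_{c_{\ref{cor:compbip}},c_{\ref{cor:compbip}}}$, and apply \Cref{thm:compbip} with the triple $(c',c',r')$. In the first case, $G$ has an induced minor isomorphic to the $r'$-wall; since $r'=2(28h)-1$, \Cref{obs:wall} gives that $G$ has an induced minor isomorphic to the $28h$-grid, and then \Cref{thm:gridtograph} together with the transitivity of the induced-minor relation yields that $G$ has an induced minor isomorphic to every planar graph $H$ on $h$ vertices, which is outcome~\ref{cor:compbip_b}. In the second case there is a $(c',c')$-constellation in $G$; since $c'=c_{\ref{lem:compbip}}(d,p,q,r,h)$, \Cref{lem:compbip} (applied with $s=h$) produces one of three possibilities: $G$ has an induced subgraph isomorphic to $K_{r,r}$, which is outcome~\ref{cor:compbip_a}; there is a $d$-ample $(p,q)$-constellation in $G$, which is outcome~\ref{cor:compbip_c}; or $G$ has an induced subgraph isomorphic to a proper subdivision of $K_h$, in which case I would invoke the fact recorded in \Cref{sec:notation} that such a $G$ contains, for every graph $H$ on $h$ vertices, an induced subgraph isomorphic to a proper subdivision of $H$, hence an induced minor isomorphic to $H$ --- again outcome~\ref{cor:compbip_b}. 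This exhausts all cases.

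I do not expect any genuine obstacle here: the whole argument is a matter of stringing together the cited black boxes with matching parameters. The only points requiring a little care are (i) lining up the $r'$-wall output of \Cref{thm:compbip} with the $28h$-grid input of \Cref{thm:gridtograph}, which is precisely why $r'$ is taken to be $56h-1$ rather than $28h$ (so that \Cref{obs:wall} applies), and (ii) choosing $s=h$ in the application of \Cref{lem:compbip}, so that its ``proper subdivision of $K_s$'' outcome is strong enough to cover \emph{all} $h$-vertex planar graphs simultaneously in outcome~\ref{cor:compbip_b}. The transitivity of induced minors, used twice above, is immediate from the model formulation and needs no separate argument.
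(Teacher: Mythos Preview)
Your proposal is correct and matches the paper's proof essentially line for line: the same constants $c'=c_{\ref{lem:compbip}}(d,p,q,r,h)$ and $c_{\ref{cor:compbip}}=c_{\ref{thm:compbip}}(c',c',56h-1)$, the same case split via \Cref{thm:compbip}, and the same handling of each branch through \Cref{obs:wall}, \Cref{thm:gridtograph}, and \Cref{lem:compbip}.
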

\begin{proof}
  \sloppy Let $\sigma=c_{\ref{lem:compbip}}(d,p,q,r,h)$. Let $$c_{\ref{cor:compbip}}=c_{\ref{cor:compbip}}(d,h,p,q,r)=c_{\ref{thm:compbip}}(\sigma,\sigma, 56h-1).$$
  Let $G$ be a graph with an induced minor isomorphic to $K_{c_{\ref{cor:compbip}},c_{\ref{cor:compbip}}}$. By \Cref{thm:compbip}, either $G$ has an induced minor isomorphic to the $(56h-1)$-wall, or there is a $(\sigma,\sigma)$-constellation in $G$. In the former case, by \Cref{obs:wall}, $G$ has an induced minor isomorphic to the $28h$-grid. By \Cref{thm:gridtograph}, $G$ has induced minors isomorphic to every planar $h$-vertex planar graph, so \ref{cor:compbip}\ref{cor:compbip_b} holds. In the latter case, by \Cref{lem:compbip}, one of the following holds:
\begin{itemize}
    \item $G$ has an induced subgraph isomorphic to $K_{r,r}$.
    \item $G$ has an induced subgraph isomorphic to a proper subdivision of $K_h$.
    \item There is a $d$-ample $(p,q)$-constellation in $G$.
\end{itemize}
The first and third bullets are the same as \ref{cor:compbip}\ref{cor:compbip_a} and \ref{cor:compbip}\ref{cor:compbip_c}, and the second bullet implies \ref{cor:compbip}\ref{cor:compbip_b}. This completes the proof of \ref{cor:compbip}.
\end{proof}

Finally, we need the following:

\begin{lemma}[Hickingbotham \cite{completeminorpw}]\label{lem:binaryisg}
 For every $r\in \poi$, if a graph $G$ has an \mbox{induced} \mbox{minor} isomorphic to $\bin_{8r}$, then $G$ has an induced subgraph isomorphic to either a \mbox{subdivision} of $\bin_{r}$ or the line graph of a subdivision of $\bin_{r}$.
\end{lemma}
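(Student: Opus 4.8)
The plan is to process an induced minor model of $\bin_{8r}$ into a subdivision-type subgraph of $G$ and then make it induced, at a controlled cost in the radius. So fix an induced minor model $\mf{m}=(X_v:v\in V(\bin_{8r}))$ of $\bin_{8r}$ in $G$ with $|V(\mf{m})|$ as small as possible. For each edge $uv$ of $\bin_{8r}$ choose one edge $e_{uv}$ of $G$ between $X_u$ and $X_v$, with ends $a_{uv}\in X_u$ and $a_{vu}\in X_v$, and for each vertex $v$ choose inside the connected graph $G[X_v]$ a minimal subtree $T_v$ containing the at most three ports $a_{vu}$ with $u$ adjacent to $v$, so that $T_v$ is a path or a subdivided claw. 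Then $S:=\bigcup_v T_v\cup\{e_{uv}:uv\in E(\bin_{8r})\}$ is a subgraph of $G$ which, after suppressing vertices of degree two, is isomorphic to $\bin_{8r}$; that is, $S$ is a subdivision of $\bin_{8r}$, with one branch vertex $b_v$ inside $X_v$ for each $v$ of degree three in $\bin_{8r}$. The feature we get for free from working with an \emph{induced} minor is that every edge of $G$ with both ends in $V(S)$ lies inside a single $X_v$ or joins $X_u$ to $X_v$ for some edge $uv$ of $\bin_{8r}$; in particular there is no edge between non-adjacent branch sets, so every ``chord'' of $S$ is confined to the tree-neighbourhood of a single vertex of $\bin_{8r}$.

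Next I would eliminate the chords of $S$. Minimality of $\mf{m}$ already forces the part of each subdivision path that is private to a single edge of $\bin_{8r}$ to be chordless; the surviving chords are those that tie together two paths sharing a common branch set, or that sit inside the port tree $T_v$. Each such chord shortcuts locally — it can only interfere with the at most three subdivision paths meeting at the corresponding vertex of $\bin_{8r}$ — so replacing subdivision paths by shortest paths inside suitable induced subgraphs and propagating the effect outward through the tree removes every chord except those in a bounded ball around a branch vertex. Since the binary tree is self-similar, each of these local surgeries costs only a constant factor in the radius, so what remains is an induced subgraph of $G$ that, outside bounded balls around its branch vertices, is a disjoint union of induced paths joined in the pattern of a subdivision of $\bin_{4r}$, say.

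The crux is the local picture at a branch vertex $b$, where three of these induced paths $P_1,P_2,P_3$ meet. After absorbing a bounded number of vertices around $b$ and re-running the cleanup, I would argue that only two configurations can survive. Either $P_1,P_2,P_3$ pairwise intersect in exactly $\{b\}$ and the arms $P_i\setminus b$ are pairwise anticomplete — the local shape of a \emph{subdivision} of a binary tree — or the first internal vertices of $P_1,P_2,P_3$ together with $b$ induce a triangle that, with the three arms hanging off its vertices, is exactly the local shape of the \emph{line graph of a subdivision} of a binary tree. Colour each branch vertex of the $\bin_{4r}$-pattern by which of the two it realises; a pigeonhole/Ramsey argument over the levels of the binary tree then produces a subtree isomorphic to $\bin_r$ all of whose branch vertices have the same colour (the starting radius $8r$ being comfortably more than enough slack to absorb this and the earlier constant-factor losses). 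That subtree, read inside $G$, is an induced subdivision of $\bin_r$ if the common colour is the first one, and an induced copy of the line graph of a subdivision of $\bin_r$ if it is the second.

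The real obstacle, I expect, is the branch-vertex dichotomy: showing that bounded local surgery always collapses the picture at $b$ to exactly one of those two clean shapes, and in particular rules out messier patterns such as a $K_4$ on $b$ together with the first vertices of $P_1,P_2,P_3$, or a triangle with an arm attached at the ``wrong'' vertex. Establishing this seems to require using the minimality of $\mf{m}$ to re-route the ports $a_{vu}$ and the edges $e_{uv}$ around $v$, together with a case analysis of which edges within and between the three branch sets meeting at $v$ can coexist. The rest — the shortest-path replacements, the anticompleteness handed to us by the induced-minor hypothesis, and the concluding Ramsey step on the tree — I would expect to be routine, if somewhat tedious to write out in full.
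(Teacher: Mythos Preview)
The paper does not prove this lemma: it is quoted from Hickingbotham~\cite{completeminorpw} and used as a black box, with no argument supplied. So there is no ``paper's own proof'' to compare against.

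As for your sketch on its merits: the high-level plan is the natural one, and the first phase (extracting a not-necessarily-induced subdivision of $\bin_{8r}$ from a minimal induced-minor model, with all chords confined to adjacent branch sets) is fine. But you have correctly identified, and not closed, the real gap. The claim that at every branch vertex the local picture collapses to exactly one of ``claw'' or ``triangle'' is the whole content of the lemma, and your paragraph about it is a wish list rather than an argument: minimality of $\mf{m}$ controls what happens \emph{inside} a single $X_v$, but the problematic chords are the $G$-edges between $X_u$ and $X_v$ for $uv\in E(\bin_{8r})$ other than the chosen $e_{uv}$, and nothing you have said rules out, say, two of the three incident subdivision paths being tied together by such an edge arbitrarily far from $b_v$, or a $K_4$ forming on the first vertices of the three arms. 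Your chord-elimination step (``replace by shortest paths and propagate outward'') is also not obviously sound, since taking a shortest path in $G$ between two ports can introduce new chords with the third arm. Until the branch-vertex dichotomy is actually established with a concrete case analysis and a precise accounting of how much radius each step costs, this remains an outline rather than a proof.
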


We are now in a position to prove \Cref{thm:mainjagged}, which we restate:

\mainjagged*
\begin{proof} 
By assumption, for some $r\in \poi$, we have $K_{r,r}\notin \mca{G}$ (thus every graph in $\mca{G}$ is $K_{r,r}$-free), and for some $\zeta,\xi\in \poi$, every $\zeta$-jagged graph $J\in \mca{G}$ satisfies $\pw(J)<\xi$. 

We claim that:

\sta{\label{st:noisgbinary} For every $G\in \mca{G}$, there is no induced subgraph of $G$ isomorphic to a subdivision of $\bin_{2(\zeta+2)\xi}$ or to the line graph of a subdivision of $\bin_{2(\zeta+2)\xi}$.}

For suppose some graph $G\in \mca{G}$ has an induced subgraph isomorphic to a subdivision of $\bin_{2(\zeta+2)\xi}$ or to the line graph of a subdivision of $\bin_{2(\zeta+2)\xi}$. Note that $\bin_{2(\zeta+2)\xi}$ has an induced subgraph isomorphic to the $(\zeta+1)$-subdivision of $\bin_{2\xi}$. It follows that there is an induced subgraph $J$ of $G$ such that $J$ is isomorphic to either a $(\geq \zeta+1)$-subdivision of $\bin_{2\xi}$ or the line graph of a $(\geq \zeta+1)$-subdivision of $\bin_{2\xi}$. In particular, $J\in \mca{G}$ (because $J$ is an induced subgraph of $G\in \mca{G}$ and $\mca{G}$ is hereditary), and $\pw(J)\geq \xi$. Thus $J$ is not $\zeta$-jagged. On the other hand, for every connected induced subgraph $H$ of $J$ with $\pw(H)\ge 3$, there is a $C$ of component of $H$ that is not isomorphic to an induced subgraph of a subdivision of $K_{1,3}$ or to the line graph of a subdivision of $K_{1,3}$. Consequently, there are nonadjacent vertices $u,v$ in $C$ such that both $u$ and $v$ have degree three in $J$, and so the unique path $P$ in $J$ from $u$ to $v$ is also a path in $C$. It follows that $V(P^*)$ is a set of at least $\zeta$ vertices of degree two in $C$, and so in $H$. But now $J$ is $\zeta$-jagged, a contradiction. This proves \eqref{st:noisgbinary}.
\vspace{2mm}

Let $\rho=16(\zeta+2)\xi$. From \eqref{st:noisgbinary}, \Cref{lem:binaryisg}, and the choice of $\rho$, it follows that:

\sta{\label{st:nobinary} For every $G\in \mca{G}$, there is no induced minor of $G$ isomorphic to $\bin_{\rho}$.}

Let $\sigma=c_{\ref{cor:compbip}}(2^{\rho+1}-1,\zeta+1,\xi,\xi,r)$. Notice that $\rho$ and $\sigma$ depend only on $r,\zeta,\xi$, which in turn depend only on the class $\mca{G}$. We claim that:

\sta{\label{st:nocompbip} For every $G\in \mca{G}$, there is no induced minor of $G$ isomorphic to $K_{\sigma,\sigma}$.}

Suppose for a contradiction that some graph $G\in \mca{G}$ has an induced minor isomorphic to $K_{\sigma,\sigma}$. By the choice of $\sigma$, we may apply \Cref{cor:compbip}. Recall that $G$ is $K_{r,r}$-free, and by \eqref{st:nobinary}, $G$ has no induced minor isomorphic to the $(2^{\rho+1}-1)$-vertex planar graph $\bin_{\rho}$. Thus \ref{cor:compbip}\ref{cor:compbip_c} holds: there is a $(\zeta+1)$-ample $(\xi,\xi)$-constellation $\mf{c}=(A_1,\ldots, A_{\xi};B_{1},\ldots, B_{\xi})$ in $G$. Let $A=V(A_1)\cup \cdots \cup V(A_{\xi})$, let $B=V(B_1)\cup \cdots \cup V(B_{\xi})$, and let $J=G[A\cup B]$. Then $J\in \mca{G}$ (because $J$ is an induced subgraph of $G\in \mca{G}$ and $\mca{G}$ is hereditary) with $\pw(J)\geq\xi$, and so $J$ is not $\zeta$-jagged. Let $H$ be an induced subgraph of $J$. Assume first that $|V(C)\cap A|\leq 1$ for every component $C$ of $H$. Then, since every component of $H[V(H)\cap B]$ is a path, it follows that $\pw(H)\leq 2$. Assume now that $|V(C)\cap A|\geq 2$ for some component $C$ of $H$. Choose distinct vertices $x,x'\in V(C)\cap A$ with $\dist_C(x,x')$ as small as possible. Let $R$ be a shortest path in $C$ from $x$ to $x'$, and let $y,y'$ be the neighbors of $x,x'$ in $R$; thus $y,y'\in B$. Since $\mf{c}$ is $(\zeta+1)$-ample, it follows that $x$ is the only neighbor of $y$ in $A$ and $x'$ is the only neighbor of $y'$ in $A$; in particular, $y$ and $y'$ are distinct, and $R$ has length at least three. So by the choice of $x$ and $x'$, we have that $(V(C)\cap A)\setminus \{x,x'\}$ and $V(R^*)$ are anticomplete in $C$. Therefore, there exists $k\in \{1,\ldots, \xi\}$ such that $V(R^*)\subseteq V(B_k)$, and so $|V(R^*)\setminus \{y,y'\}|\geq \zeta$ because $\mf{c}$ is $(\zeta+1)$-ample. Moreover, since $(V(C)\cap A)\setminus \{x,x'\}$ and $V(R^*)$ are anticomplete in $C$, and since $R$ is a path in $C$, it follows that $V(C)\cap A$ and $V(R^*)\setminus \{y,y'\}$ are anticomplete in $C$. Thus, every vertex in $V(R^*)\setminus \{y,y'\}$ has degree two in $C$, and so in $H$. In conclusion, we have shown that either $\pw(H)\leq 2$, or $H$ has at least $\zeta$ vertices of degree two. But now $J$ is $\zeta$-jagged, a contradiction. This proves \eqref{st:nocompbip}.
\vspace{2mm}

By \eqref{st:nobinary}, \eqref{st:nocompbip}, and \Cref{thm:indms}, for every $t\in \poi$, every $K_{t}$-free graph $G\in \mca{G}$ satisfies $\pw(G)< t^{d_{\ref{thm:indms}}(\rho,\sigma)}$. Hence, every graph $G\in \mca{G}$ satisfies $\pw(G)< (\omega(G)+1)^{d_{\ref{thm:indms}}(\rho,\sigma)}$, and so $\mca{G}$ is polynomially $(\pw,\omega)$-bounded. This completes the proof of \ref{thm:mainjagged}.
\end{proof}
It remains to prove Theorems~\ref{thm:pwtotw}, \ref{thm:mainpolyblock}, and \ref{thm:mainpolysep}, which we do in Sections~\ref{sec:pwtotw}, \ref{sec:mainpolyblock}, and \ref{sec:mainpolysep}.

\section{Pathwidth versus treewidth}\label{sec:pwtotw}

Our goal here is to prove \Cref{thm:pwtotw}. This will be done in several steps, and we begin with some definitions.

A \textit{rooted tree} is a pair $(T,u)$ such that $T$ is a tree and $u\in V(T)$. Let $(T,u)$ be a rooted tree and let $v\in V(T)$. For each $i\in \{0,\ldots, \dist_T(u,v)\}$, the \textit{$i$-ancestor of $v$ in $(T,u)$} is the unique vertex $v'$ of the unique path in $T$ from $u$ to $v$ for which $\dist_T(v',v)=i$. It follows that $u$ is the $\dist_T(u,v)$-ancestor of $v$, and $v$ is the $0$-ancestor of $v$. By an \textit{ancestor of $v$ in $(T,u)$} we mean the $i$-ancestor of $v$ in $(T,u)$ for some $i\in \{0,\ldots, \dist_T(u,v)\}$. A \textit{child of $v$ in $(T,u)$} is a vertex in $V(T)\setminus \{v\}$ of which $v$ is the $1$-ancestor. 

Given a rooted tree $(T,u)$, a \textit{rooted subtree of $(T,u)$} is a rooted tree $(T',u')$ such that $T'$ is an induced subgraph of $T$, and for all $v\in V(T')$, every child of $v$ in $(T',u')$ is also a child of $v$ in $(T,u)$. 

For $\delta\in \poi$ and $\rho\in \poi\cup \{0\}$, we say that a rooted tree $(T,u)$ is \textit{$(\delta,\rho)$-regular} if every vertex of $T$ is at distance at most $\rho$ from $u$ in $T$, and every vertex in $V(T)\setminus N^{\rho}_T(u)$ has exactly $\delta$ children in $(T,u)$. 
\medskip

First, we need a Ramsey-type lemma:

\begin{lemma}\label{lem:coloruniform}
   Let $\delta,\gamma\in \poi$, let $\rho\in \poi\cup \{0\}$ and let $(T,u)$ be a $(\delta\gamma,\rho)$-regular rooted tree. Let $I$ be a non-empty set with $|I|\leq \gamma$ and let $\Phi:N^{\rho}_T(u)\rightarrow I$ be a function. Then there exists $i\in I$ as well as a $(\delta,\rho)$-regular rooted subtree $(T',u)$ of $(T,u)$ -- in particular, we have $N^{\rho}_{T'}(u)\subseteq N^{\rho}_T(u)$ -- such that $\Phi(v)=i$ for every $v\in N^{\rho}_{T'}(u)$.
\end{lemma}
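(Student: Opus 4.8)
The plan is to prove the lemma by induction on $\rho$. The base case $\rho=0$ is immediate: then $T$ is the single vertex $u$, so $N^{0}_{T}(u)=\{u\}$, and I would simply take $i=\Phi(u)$ and $(T',u)=(T,u)$, which is vacuously $(\delta,0)$-regular since $V(T)\setminus N^{0}_{T}(u)=\varnothing$.

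For the inductive step, assume $\rho\geq 1$ and that the statement holds with $\rho-1$ in place of $\rho$ (for the same $\delta,\gamma$). Since $\dist_T(u,u)=0<\rho$, the root $u$ has exactly $\delta\gamma$ children $v_1,\dots,v_{\delta\gamma}$ in $(T,u)$. For each $j$, I would let $(T_j,v_j)$ be the rooted subtree of $(T,u)$ on $v_j$ together with all its descendants; a routine check shows $(T_j,v_j)$ is $(\delta\gamma,\rho-1)$-regular and $N^{\rho-1}_{T_j}(v_j)=N^{\rho}_{T}(u)\cap V(T_j)$, so $\Phi$ restricts to a function $N^{\rho-1}_{T_j}(v_j)\to I$. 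Applying the induction hypothesis to each $(T_j,v_j)$ produces a colour $i_j\in I$ and a $(\delta,\rho-1)$-regular rooted subtree $(T_j',v_j)$ of $(T_j,v_j)$ with $\Phi\equiv i_j$ on $N^{\rho-1}_{T_j'}(v_j)$. Now the map $j\mapsto i_j$ sends a set of size $\delta\gamma$ into $I$, which has size at most $\gamma$, so by pigeonhole there is a colour $i\in I$ and a set $S$ of exactly $\delta$ indices with $i_j=i$ for all $j\in S$. I would then let $T'$ be the subgraph of $T$ induced by $\{u\}\cup\bigcup_{j\in S}V(T_j')$, rooted at $u$.

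It then remains to verify that $(T',u)$ is the required subtree. Because $T$ is a tree, $u$ is adjacent in $T$ only to its children and the branches $V(T_j)$ ($j\in S$) are pairwise anticomplete in $T$, so $T'$ is exactly the tree obtained from the disjoint $(T_j',v_j)$, $j\in S$, by joining $u$ to each $v_j$; in particular it is a genuine \emph{induced} subtree. From this description one gets that $(T',u)$ is a rooted subtree of $(T,u)$ (the children of $u$ in $(T',u)$ are among its children in $(T,u)$, and below each $v_j$ the children are inherited from $(T_j',v_j)$ and hence from $(T,u)$), that it is $(\delta,\rho)$-regular (the root has exactly $|S|=\delta$ children, and every vertex outside $N^{\rho}_{T'}(u)=\bigcup_{j\in S}N^{\rho-1}_{T_j'}(v_j)$ is either $u$ or a vertex of some $T_j'$ at distance $<\rho-1$ from $v_j$, so has exactly $\delta$ children), and that $N^{\rho}_{T'}(u)\subseteq\bigcup_{j}N^{\rho-1}_{T_j}(v_j)\subseteq N^{\rho}_{T}(u)$ with $\Phi\equiv i$ on $N^{\rho}_{T'}(u)$. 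I expect the entire difficulty to be concentrated in this last bookkeeping step -- confirming that gluing the monochromatic branches back under $u$ preserves inducedness, regularity, and the child-inheritance property -- but since each of these reduces to the absence of edges between the branches at a common vertex of a tree, no real obstacle should arise.
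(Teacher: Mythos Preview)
Your proposal is correct and follows essentially the same approach as the paper: induction on $\rho$ combined with pigeonhole on the colour set $I$. The only difference is organizational --- the paper applies pigeonhole at the \emph{bottom} level (for each vertex $x\in N_T^{\rho-1}(u)$ it selects a monochromatic $\delta$-subset of its children, then applies the inductive hypothesis once to the truncated tree $T\setminus N_T^{\rho}(u)$ with the induced colouring $x\mapsto i_x$), whereas you apply the inductive hypothesis to each of the $\delta\gamma$ subtrees hanging from the root and then pigeonhole at the \emph{top}. Both orderings work equally well here; neither buys anything the other does not.
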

\begin{proof}
We proceed by induction on $\rho$ (for fixed $\delta,\gamma$). If $\rho=0$, then $V(T)=\{u\}$, and the result is immediate by choosing $T'=T$ and $i=\Phi(u)$. 

Assume that $\rho\geq 1$. For each $x\in N_T^{\rho-1}(u)$, let $C_x$ be the set of all children of $x$ in $(T,u)$. Then $(C_x:x\in N_T^{\rho-1}(u))$ are pairwise disjoint $\delta\gamma$-subsets of $V(T)$ with $\bigcup_{x\in N_T^{\rho-1}(u)}C_x=N^{\rho}_{T}(u)\subseteq V(T)\setminus \{u\}$.

Let $T^-=T\setminus N^{\rho}_{T}(u)$. Then $u\in V(T^-)$ and $(T^-,u)$ is the $(\delta\gamma,\rho-1)$-regular rooted subtree of $(T,u)$; in particular, we have $N_{T^-}^{\rho-1}(u)=N_T^{\rho-1}(u)$. For each $x\in N_{T^-}^{\rho-1}(u)=N_T^{\rho-1}(u)$, since $C_x\subseteq N^{\rho}_{T}(u)$ with $|C_x|=\delta\gamma$, and since $|I|=\gamma$, it follows that there exists $i_x\in I$ as well as a $\delta$-subset $D_x$ of $C_x$ such that 
$$\Phi(v)=i_x$$
for every $v\in D_x$. 

Define the function $\Phi^-:N^{\rho-1}_{T^-}(u)\rightarrow I$ as 
$$\Phi^-(x)=i_x$$
for every $x\in N^{\rho-1}_{T^-}(u)$. By the inductive hypothesis applied to $(T^-,u)$ and $\Phi^-$, there exists $i\in I$ as well as a $(\delta,\rho-1)$-regular rooted subtree $(T'',u)$ of $(T^-,u)$ (and thus $N^{\rho-1}_{T''}(u)\subseteq N^{\rho-1}_{T^-}(u)$), such that
$$\Phi^-(x)=i$$
for every $x\in N^{\rho-1}_{T''}(u)$.

Now, let 
   $$T'=T\left[V(T'')\cup \left(\bigcup_{x\in N^{\rho-1}_{T''}(u)}D_x\right)\right].$$
   Since $(T'',u)$ is a $(\delta,\rho-1)$-regular rooted subtree of $(T^-,u)$, $(T^-,u)$ is a $(\delta\gamma,\rho-1)$-regular rooted subtree of $(T,u)$, and since $|D_x|=\delta$ for all $x\in N^{\rho-1}_{T''}(u)\subseteq N^{\rho-1}_{T^-}(u)$, it follows that $(T',u)$ is a $(\delta,\rho)$-regular rooted subtree of $(T,u)$, where
   $$N^{\rho}_{T'}(u)=\bigcup_{x\in N^{\rho-1}_{T''}(u)}D_x.$$
In particular, for every $v\in N^{\rho}_{T'}(u)$, the $1$-ancestor of $v$ in $(T',u)$ is the unique vertex $x\in N^{\rho-1}_{T''}(u)\subseteq N^{\rho-1}_{T^-}(u)$ for which $v\in D_x$, and thus $\Phi(v)=i_x=\Phi^-(x)=i$. This completes the proof of \ref{lem:coloruniform}.
\end{proof}

Let $G$ be a graph, let $\delta\in \poi$, let $\rho\in \poi\cup \{0\}$ and let $(T,u)$ be a $(\delta,\rho)$-regular rooted tree where $T$ is a subgraph of $G$. 

\begin{itemize}
     \item We say that $(T,u)$ is \textit{path-uniform in $G$} if for all $v,v'\in N^{\rho}_{T}(u)$, and $i,j\in \{0,\ldots, \rho\}$, the $i$-ancestor and the $j$-ancestor of $v$ in $(T,u)$ are adjacent in $G$ if and only if the $i$-ancestor and the $j$-ancestor of $v'$ in $(T,u)$ are adjacent in $G$.
     \item We say that $(T,u)$ is \textit{path-induced in $G$} if for all $v\in N^{\rho}_{T}(u)$ and $i,j\in \{0,\ldots, \rho\}$, the $i$-ancestor and the $j$-ancestor of $v$ in $(T,u)$ are adjacent in $G$ if and only if $|i-j|=1$; that is, the unique path in $T$ from $u$ to $v$ is a path in $G$ (recall that by a ``path in $G$'' we mean an ``induced'' subgraph of $G$ that is a path).
      \item We say that $(T,u)$ is \textit{branch-induced in $G$} if for every edge $uv\in E(G)\setminus E(T)$ with $u,v\in V(T)$, one of $u,v$ is an ancestor of the other.
\end{itemize}

Observe that $T$ is an induced subgraph of $G$ if and only if $(T,u)$ is both branch-induced and path-induced in $G$.
\medskip

Using \Cref{lem:coloruniform}, we show that:

\begin{lemma}\label{lem:pathuniform}
   Let $\delta\in \poi$ and let $\rho\in \poi\cup \{0\}$. Let $G$ be a graph and let $(T,u)$ be a $(\delta 2^{\rho^2},\rho)$-regular rooted tree where $T$ is a subgraph of $G$. Then there is a $(\delta,\rho)$-regular rooted subtree $(T',u)$ of $(T,u)$ that is path-uniform in $G$.
\end{lemma}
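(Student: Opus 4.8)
The plan is to invoke \Cref{lem:coloruniform} after encoding, for each leaf of $T$, the full adjacency pattern among its ancestors as a single color drawn from a set of size at most $2^{\rho^2}$.

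First I would dispose of the case $\rho=0$: then $V(T)=\{u\}$, so one may take $(T',u)=(T,u)$, which is $(\delta,0)$-regular and vacuously path-uniform in $G$. So assume $\rho\ge 1$. Every $v\in N^{\rho}_T(u)$ then satisfies $\dist_T(u,v)=\rho$, so its ancestors in $(T,u)$ are exactly its $i$-ancestors for $i\in\{0,\ldots,\rho\}$. For such $v$, I would define its \emph{ancestor type}
$$\Phi(v)=\{\{i,j\}\in\textstyle\binom{\{0,\ldots,\rho\}}{2}:\ \text{the $i$-ancestor and the $j$-ancestor of $v$ in $(T,u)$ are adjacent in $G$}\},$$
so that $\Phi$ is a function from $N^{\rho}_T(u)$ into $I:=2^{\binom{\{0,\ldots,\rho\}}{2}}$. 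Since $\rho\ge1$, we have $|I|=2^{\binom{\rho+1}{2}}=2^{\rho(\rho+1)/2}\le 2^{\rho^2}$.

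Next I would apply \Cref{lem:coloruniform} to $(T,u)$ and $\Phi$ with $\gamma=2^{\rho^2}$; this is legitimate because $(T,u)$ is $(\delta 2^{\rho^2},\rho)=(\delta\gamma,\rho)$-regular and $|I|\le\gamma$. It produces some $i^{*}\in I$ and a $(\delta,\rho)$-regular rooted subtree $(T',u)$ of $(T,u)$, with $N^{\rho}_{T'}(u)\subseteq N^{\rho}_T(u)$, such that $\Phi(v)=i^{*}$ for every $v\in N^{\rho}_{T'}(u)$.

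Finally I would verify that $(T',u)$ is path-uniform in $G$. The one point that needs care is that ancestors are inherited by rooted subtrees: for $v\in N^{\rho}_{T'}(u)$ and $i\in\{0,\ldots,\rho\}$, the $i$-ancestor of $v$ in $(T',u)$ equals the $i$-ancestor of $v$ in $(T,u)$, because $T'$ is a tree, hence a connected induced subgraph of the tree $T$ containing both $u$ and $v$, so the unique path in $T'$ from $u$ to $v$ coincides with the unique path in $T$ from $u$ to $v$. Granting this, for all $v,v'\in N^{\rho}_{T'}(u)$ and $i,j\in\{0,\ldots,\rho\}$, the $i$- and $j$-ancestors of $v$ in $(T',u)$ are adjacent in $G$ if and only if $\{i,j\}\in\Phi(v)=i^{*}=\Phi(v')$, if and only if the $i$- and $j$-ancestors of $v'$ in $(T',u)$ are adjacent in $G$; this is exactly the definition of path-uniformity. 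There is no genuine obstacle here: the combinatorial content is entirely absorbed into \Cref{lem:coloruniform}, and what remains is the elementary count $\binom{\rho+1}{2}\le\rho^2$ (which is where $\rho\ge1$ is used, the case $\rho=0$ having been handled separately) together with the routine inheritance of ancestors by rooted subtrees.
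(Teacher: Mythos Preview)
Your proposal is correct and follows essentially the same approach as the paper: encode the adjacency pattern among the ancestors of each leaf as a color in $2^{\binom{\{0,\ldots,\rho\}}{2}}$, apply \Cref{lem:coloruniform} with $\gamma=2^{\rho^2}$, and read off path-uniformity. Your separate treatment of $\rho=0$ is harmless but unnecessary (the bound $\binom{\rho+1}{2}\le\rho^2$ holds with equality there), and your explicit check that ancestors in $(T',u)$ coincide with those in $(T,u)$ is a detail the paper leaves implicit.
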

\begin{proof}
Let 
$\displaystyle \mca{I}=2^{\binom{\{0,\ldots, \rho\}}{2}}$; that is, $\mca{I}$ is the set of all subsets of the set of all $2$-subsets of $\{0,\ldots, \rho\}$. For each $v\in N^{\rho}_{T}(u)$, let $I_v\in \mca{I}$ be the set of those $2$-subsets $\{i,j\}$ of $\{0,\ldots, \rho\}$ for which the $i$-ancestor and the $j$-ancestor of $v$ in $(T,u)$ are adjacent in $G$. 

Define the function $\Phi:N^{\rho}_{T}(u)\rightarrow \mca{I}$ as 
$$\Phi(v)=I_v$$
for every $v\in N^{\rho}_{T}(u)$. Since $(T,u)$ is $(\delta 2^{\rho^2}, \rho)$-regular and since $0<|\mca{I}|\leq 2^{\rho^2}$, by \Cref{lem:coloruniform}, there exists $I\in \mca{I}$ as well as a $(\delta,\rho)$-regular rooted subtree $(T',u)$ of $(T,u)$ (and thus $N^{\rho}_{T'}(u)\subseteq N^{\rho}_{T}(u)$), such that 
$$\Phi(v)=I$$ 
for every $v\in N^{\rho}_{T'}(u)$.

Now, for all $v,v'\in N^{\rho}_{T'}(u)$, we have
$$I_v=\Phi(v)=I=\Phi(v')=I_{v'}.$$
It follows that for every choice of $i,j\in \{0,\ldots, \rho\}$, the $i$-ancestor and the $j$-ancestor of $v$ in $(T,u)$ are adjacent in $G$, if and only if $\{i,j\}\in I$, if and only if the $i$-ancestor and the $j$-ancestor of $v'$ in $(T,u)$ are adjacent in $G$. Hence, by definition, $(T',u)$ is path-uniform in $G$. This completes the proof of \ref{lem:pathuniform}.
\end{proof}

We need another Ramsey-type lemma:

\begin{lemma}\label{lem:longpath}
   Let $\rho,\tau\in \poi$, let $G$ be a graph and let $P=x_0\dd x_1\dd \cdots\dd x_{\tau^{\rho}}$ be a path of length $\tau^{\rho}$ that is a (not necessarily induced) subgraph of $G$. Then one of the following holds.
   \begin{enumerate}[{\rm (a)}]
       \item\label{lem:longpath_a} There exists $i_0\in \{0,\ldots, \tau^{\rho}\}$ such that $x_{i_0}$ is adjacent in $G$ to at least $\tau$ vertices in $\{x_i:i_0+1\leq i\leq \tau^{\rho}\}$.
       \item\label{lem:longpath_b} There are $j_0,j_1,\ldots,j_{\rho}\in \{1,\ldots, \tau^{\rho}\}$ with $0=j_0<j_1<\cdots<j_{\rho}$ such that $$E(G[\{x_{j_0},x_{j_1},\cdots, x_{j_{\rho}}\}])=\{x_{j_{k-1}}x_{j_{k}}:1\leq k\leq \rho\}.$$
       In other words, $x_{0}\dd x_{j_1}\dd \cdots \dd x_{j_{\rho}}$ is a path in $G$.
   \end{enumerate}
\end{lemma}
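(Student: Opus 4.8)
The plan is to induct on $\rho$. The base case $\rho=1$ is trivial: the edge $x_0x_1$ of $P$ directly gives outcome (b) with $j_1=1$. Likewise, if $\tau=1$ then $\tau^\rho=1$ and $x_0$ is adjacent to $x_1$, so outcome (a) holds with $i_0=0$. So assume $\rho,\tau\ge 2$, that the statement holds for $\rho-1$, and --- aiming to establish (b) --- that (a) fails; that is, every $x_i$ has at most $\tau-1$ neighbours among $\{x_{i+1},\dots,x_{\tau^\rho}\}$, which we call its \emph{forward neighbours}.

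The first step is to locate, hanging off some forward neighbour of $x_0$, a long subpath of $P$ all of whose vertices other than its first one are nonadjacent to $x_0$. Let $1=a_1<\cdots<a_s$ be the indices of the forward neighbours of $x_0$; here $a_1=1$ because $x_0x_1\in E(G)$, and $1\le s\le\tau-1$ by the assumption that (a) fails. Put $a_{s+1}=\tau^\rho+1$. Then the $s$ sets $G_k=\{a_k+1,\dots,a_{k+1}-1\}$ partition $\{1,\dots,\tau^\rho\}\setminus\{a_1,\dots,a_s\}$, which has $\tau^\rho-s$ elements, so some $G_{k^*}$ has at least $\tau^{\rho-1}$ elements, since $\tfrac{\tau^\rho-s}{s}\ge\tfrac{\tau^\rho}{\tau-1}-1\ge\tau^{\rho-1}$ (the last inequality being the elementary bound $\tfrac{\tau^\rho}{\tau-1}\ge\tau^{\rho-1}+1$, which holds for $\rho\ge 2$). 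Thus $a_{k^*+1}-1\ge a_{k^*}+\tau^{\rho-1}$, and the subpath $Q=x_{a_{k^*}}\dd x_{a_{k^*}+1}\dd\cdots\dd x_{a_{k^*}+\tau^{\rho-1}}$ of $P$ has length exactly $\tau^{\rho-1}$, with every vertex other than $x_{a_{k^*}}$ having index in $G_{k^*}$ and hence --- being positive and not a forward neighbour of $x_0$ --- nonadjacent to $x_0$.

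Next I would apply the inductive hypothesis to $Q$, re-indexed so that its vertices are $y_i=x_{a_{k^*}+i}$ for $0\le i\le\tau^{\rho-1}$. Outcome (a) for $Q$ cannot occur: it would supply some $y_{i_0}=x_{a_{k^*}+i_0}$ with at least $\tau$ neighbours among $\{y_i:i_0<i\le\tau^{\rho-1}\}\subseteq\{x_j:a_{k^*}+i_0<j\le\tau^\rho\}$, which is outcome (a) for $P$, contrary to assumption. So outcome (b) holds for $Q$: there are $0=j'_0<j'_1<\cdots<j'_{\rho-1}\le\tau^{\rho-1}$ with $x_{a_{k^*}}\dd x_{a_{k^*}+j'_1}\dd\cdots\dd x_{a_{k^*}+j'_{\rho-1}}$ a path in $G$. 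Finally I would prepend $x_0$: set $j_1=a_{k^*}$ and $j_\ell=a_{k^*}+j'_{\ell-1}$ for $2\le\ell\le\rho$. Then $0<j_1<\cdots<j_\rho\le a_{k^*}+\tau^{\rho-1}\le a_{k^*+1}-1\le\tau^\rho$, and $x_0,x_{j_1},\dots,x_{j_\rho}$ induce the path $x_0\dd x_{j_1}\dd\cdots\dd x_{j_\rho}$: the edge $x_0x_{j_1}=x_0x_{a_{k^*}}$ is present since $a_{k^*}$ is a forward neighbour of $x_0$; $x_0$ is nonadjacent to each of $x_{j_2},\dots,x_{j_\rho}$ since their indices lie in $G_{k^*}$; and $x_{j_1},\dots,x_{j_\rho}$ already induce a path by the previous step. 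This is outcome (b) for $P$, completing the induction.

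Essentially everything here is bookkeeping, and the one place requiring care is the calibration of the pigeonhole: I need the longest gap $G_{k^*}$ to have at least $\tau^{\rho-1}$ elements --- so that $Q$ has exactly the length the inductive hypothesis consumes --- and I need the slack $a_{k^*}+\tau^{\rho-1}\le a_{k^*+1}-1$, so that the freshly added vertices $x_{j_2},\dots,x_{j_\rho}$ remain inside $G_{k^*}$ and thus avoid the neighbourhood of $x_0$. Both of these follow from the single estimate $\tfrac{\tau^\rho}{\tau-1}\ge\tau^{\rho-1}+1$; with that in hand, each adjacency check in the last step is immediate.
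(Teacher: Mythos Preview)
Your proof is correct and follows essentially the same approach as the paper's: induct on $\rho$, locate a gap of length at least $\tau^{\rho-1}$ between consecutive forward neighbours of $x_0$ via pigeonhole, apply the inductive hypothesis to the subpath starting at that forward neighbour, and prepend $x_0$. The only cosmetic differences are that the paper sets the sentinel index to $\tau^\rho$ rather than $\tau^\rho+1$ and phrases the pigeonhole as a sum estimate rather than an average, and it does not assume (a) fails globally up front but instead passes outcome (a) for $Q$ back to $P$ explicitly.
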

\begin{proof}
    We proceed by induction on $\rho$ (for fixed $\tau$). If $\rho=1$, then \ref{lem:longpath}\ref{lem:longpath_b} holds trivially (with $j_1=1$). So we may assume that $\rho\geq 2$.
    
    Let $\tau'=|N_G(x_0)\cap \{x_1,\ldots, x_{\tau^{\rho}}\}|$. Then $\tau'\geq 1$ because $x_0x_1\in E(G)$. In particular, there are $i_1,\ldots,i_{\tau'}\in \{1,\ldots, \tau^{\rho}\}$ with $1=i_1<\cdots<i_{\tau'}$ such that $$N_G(x_0)\cap \{x_1,\ldots, x_{\tau^{\rho}}\}= \{x_{i_1},\ldots,x_{i_{\tau'}}\}.$$
    If $\tau'\geq \tau$, then \ref{lem:longpath}\ref{lem:longpath_a} holds (with $i_0=0$). Thus, we may assume that $\tau'\leq \tau-1$, and so $\tau\geq 2$.

    We claim that:
    
\sta{\label{st:longsegment} There exists $\ell\in \{1,\ldots, \tau'\}$ such that $i_{\ell}+\tau^{\rho-1}\leq \tau^{\rho}$, and $x_0$ is anticomplete to $\{x_i:i_{\ell}+1\leq i\leq i_{\ell}+\tau^{\rho-1}\}$ in $G$.}

Write $i_{\tau'+1}=\tau^{\rho}$. Then $1=i_1<\cdots<i_{\tau'}\leq i_{\tau'+1}$. Since $\rho\geq 2$ and $1\leq \tau'\leq \tau-1$, it follows that:
    \[\sum_{\ell=1}^{\tau'}i_{\ell+1}-i_{\ell}=i_{\tau'+1}-i_{1}=\tau^{\rho}-1> \tau^{\rho-1}(\tau-1)\geq \tau^{\rho-1}\cdot \tau'\]
    and so $i_{\ell+1}-i_{\ell}>\tau^{\rho-1}$ for some $\ell\in \{1,\ldots, \tau'\}$. This proves \eqref{st:longsegment}.
    \medskip

   Henceforth, let $\ell$ be as given by \eqref{st:longsegment}. For each $i\in \{0,\ldots,\tau^{\rho-1}\}$, let $$y_{i}=x_{i_{\ell}+i}.$$
   
   Then $Q=y_0\dd y_1\dd\cdots y_{\tau^{\rho-1}}$ is a path of length $\tau^{\rho-1}$ that is a subgraph of $G$. By the inductive hypothesis applied to $Q$, either 
   \begin{itemize}
       \item there exists $i'_0\in \{0,\ldots, \tau^{\rho-1}\}$ such that $y_{i'_0}$ is adjacent in $G$ to at least $\tau$ vertices in $\{y_{i}:i'_0+1\leq i\leq \tau^{\rho-1}\}$; or
       \item there are $j'_0,j'_1,\ldots,j'_{\rho-1}\in \{1,\ldots, \tau^{\rho-1}\}$ with $0=j'_0<j'_1<\cdots<j'_{\rho-1}$ such that $y_{j'_0}\dd y_{j'_1}\dd \cdots \dd y_{j'_{\rho-1}}$ is a path in $G$. 
   \end{itemize}
   
   In the former case, \ref{lem:longpath}\ref{lem:longpath_a} holds with $i_0=i_{\ell}+i'_0$. 
   
   In the latter case, for each $k\in \{1,\ldots, \rho\}$, let $$j_{k}=i_{\ell}+j'_{k-1}.$$
   Then $j_1,\ldots,j_{\rho}\in \{1,\ldots, \tau^{\rho}\}$ with $j_1<\cdots<j_{\rho}$. Also, for each $k\in \{1,\ldots, \rho\}$, we have $x_{j_k}=x_{i_{\ell}+j'_{k-1}}=y_{k-1}$. It follows that $x_{j_1}\dd \cdots \dd x_{j_{\rho}}=y_{j'_0}\dd y_{j'_1}\dd \cdots \dd y_{j'_{\rho-1}}$ is a path in $G$. 
   
   Moreover, recall that $x_0$ is adjacent in $G$ to $x_{j_1}=y_{j'_0}=y_0=x_{i_{\ell}}$, and by \eqref{st:longsegment}, for each $k\in \{2,\ldots, \rho\}$, $x_0$ is not adjacent to $x_{j_k}=x_{i_{\ell}+j'_{k-1}}$ in $G$. Hence $x_0\dd x_{j_1}\dd \cdots \dd x_{j_{\rho}}$ is a path in $G$, and thus \ref{lem:longpath}\ref{lem:longpath_b} holds. This completes the proof of \ref{lem:longpath}.
\end{proof}

We apply Lemmas~\ref{lem:pathuniform} and \ref{lem:longpath} to prove the following, which is at the heart of the proof of \Cref{thm:pwtotw}:

\begin{lemma}\label{lem:pathinduced}
Let $\delta,\rho,\tau\in \poi$ with $\delta\geq 2$, let $G$ be a graph with no subgraph isomorphic to $K_{\tau,\tau}$ and let $(T,u)$ be a $(\delta 2^{(2\tau)^{2\rho}},(2\tau)^{\rho})$-regular rooted tree where $T$ is a subgraph of $G$. Then there is a $(\delta,\rho)$-regular rooted subtree $(T',u')$ of $(T,u)$ that is path-induced~in~$G$. 
\end{lemma}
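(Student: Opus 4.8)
The plan is to prove \Cref{lem:pathinduced} by induction on $\rho$, with $\delta$ and $\tau$ fixed. The base case $\rho=1$ is immediate: since $(2\tau)^1\ge 1$, the tree $(T,u)$ has a $(\delta,1)$-regular rooted subtree (namely $u$ together with $\delta$ of its children), which is a star, so its root-to-leaf paths all have length one and are therefore induced in $G$. So assume $\rho\ge 2$. The first move is to reduce to the path-uniform case: since $(2\tau)^{2\rho}=((2\tau)^\rho)^2$, \Cref{lem:pathuniform} — applied with its ``$\rho$'' equal to $(2\tau)^\rho$ and its ``$\delta$'' equal to $\delta$ — produces a $(\delta,(2\tau)^\rho)$-regular rooted subtree $(T_1,u)$ of $(T,u)$ that is path-uniform in $G$. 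Path-uniformity gives a single set $I\subseteq\binom{\{0,\ldots,(2\tau)^\rho\}}{2}$ such that, along \emph{every} root-to-leaf path of $T_1$, the $a$-ancestor and the $b$-ancestor of the leaf are adjacent in $G$ exactly when $\{a,b\}\in I$; so a configuration exhibited on one branch is present on all branches at once.

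Now fix a root-to-leaf path $P=x_0\dd x_1\dd\cdots\dd x_{(2\tau)^\rho}$ of $T_1$ with $x_0=u$; this is a path of length $(2\tau)^\rho$ that is a (not necessarily induced) subgraph of $G$, so \Cref{lem:longpath} applies to it with its ``$\rho$'' equal to $\rho$ and its ``$\tau$'' equal to $2\tau$. If conclusion \ref{lem:longpath_b} holds, we obtain $0=j_0<j_1<\cdots<j_\rho$ with $x_{j_0}\dd x_{j_1}\dd\cdots\dd x_{j_\rho}$ a path in $G$; translated through $I$, this says $\{j_{k-1},j_k\}\in I$ for each $k$ while $\{j_a,j_b\}\notin I$ whenever $|a-b|\ge 2$, and by path-uniformity the same holds on every branch of $T_1$. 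The desired subtree is then obtained from $T_1$ by keeping only the levels $j_0,\ldots,j_\rho$: starting at $u$ and repeatedly selecting $\delta$ of the (many) descendants at level $j_k$ below a chosen level-$j_{k-1}$ vertex produces a $(\delta,\rho)$-regular rooted tree on a subset of $V(T_1)$ whose root-to-leaf paths all induce paths in $G$ — the object we want. (Only routine bookkeeping is needed to cast this in the formal notion of rooted subtree, the ``skip'' edges $j_{k-1}\to j_k$ being edges of $G$.)

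The substance of the argument — and what I expect to be the main obstacle — is ruling out conclusion \ref{lem:longpath_a}: some $i_0$ and some $S\subseteq\{i_0+1,\ldots,(2\tau)^\rho\}$ with $|S|\ge 2\tau$ satisfy $\{i_0,k\}\in I$ for all $k\in S$, i.e.\ (by path-uniformity) every level-$i_0$ vertex of $T_1$ is adjacent in $G$ to all of its descendants at each of the $\ge 2\tau$ levels of $S$. Crucially this is not by itself incompatible with $G$ being $K_{\tau,\tau}$-free, so it cannot simply be excluded in one stroke; the strategy is to iterate \Cref{lem:longpath} inside the portion of the branch carried by the levels of $S$. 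Either some iteration returns its conclusion \ref{lem:longpath_b}, yielding a run of good ``skip'' levels lying below and disjoint from the levels recorded so far — which, together with those, is extracted as above — or the iterations keep returning conclusion \ref{lem:longpath_a}, and after at most $\tau$ steps we have accumulated $\tau$ levels, each adjacent in $I$ to $\tau$ common later levels; then along any single branch of $T_1$ the $2\tau$ vertices at these levels are distinct and have all $\tau\times\tau$ cross-pairs present, hence span a $K_{\tau,\tau}$ subgraph of $G$, contradicting the hypothesis. Making this loop precise — guaranteeing that the working subpath stays long enough to re-apply \Cref{lem:longpath} with the parameters required, and that the relevant level-sets do not shrink too fast through the nesting — is the technical heart of the proof, and it is where the gap between the depth $(2\tau)^\rho$ that is available and the $\tau^\rho$ that \Cref{lem:longpath} literally consumes (together with the hypothesis $\delta\ge 2$) is spent.
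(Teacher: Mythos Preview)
Your setup is right --- pass to a path-uniform $(\delta,(2\tau)^\rho)$-regular subtree via \Cref{lem:pathuniform}, apply \Cref{lem:longpath} along one branch, and in case~\ref{lem:longpath_b} read off the $\rho$ good levels on every branch by path-uniformity. The gap is in your treatment of case~\ref{lem:longpath_a}. You assert that it ``cannot simply be excluded in one stroke'' and set up an iteration; but it \emph{can} be excluded in one stroke, and this is precisely what the paper does, by orienting the branch from the leaf towards the root rather than from the root towards the leaf.

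With that orientation, outcome~\ref{lem:longpath_a} says that some vertex $x_{i_0}$ has at least $2\tau$ neighbours among its \emph{ancestors}; dropping the $\tau$ nearest ones, take $i_0+\tau<i_1<\cdots<i_\tau$ with $x_{i_0}x_{i_j}\in E(G)$ for each $j$. Here the hypothesis $\delta\ge 2$ (which your sketch never actually uses) does the work: below the fixed ancestor $x_{i_1}$ there are $\delta^{\,i_1-i_0}>2^{\tau}>\tau$ distinct vertices at level~$i_0$, and they all share the same ancestors $x_{i_1},\ldots,x_{i_\tau}$. By path-uniformity each of these level-$i_0$ vertices is adjacent in $G$ to each of $x_{i_1},\ldots,x_{i_\tau}$, so $G$ contains $K_{\tau,\tau}$ as a subgraph --- contradiction. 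In your root-to-leaf orientation the ``forward'' vertices are descendants, which are \emph{not} shared across branches, and that is exactly why no direct contradiction appears.

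Your proposed iteration does not repair this. You cannot re-apply \Cref{lem:longpath} to ``the portion of the branch carried by the levels of $S$'': the vertices at those $\ge 2\tau$ levels need not form a path in $G$, which is what \Cref{lem:longpath} requires. If instead you re-apply it to a contiguous tail of the branch, the new high-degree centre and its neighbour-set have no reason to lie inside the previous $S$, so the nesting you need (to reach $\tau$ centres with $\tau$ \emph{common} later neighbours) is not guaranteed. The announced induction on $\rho$ is also never invoked. Once you reverse the direction, no iteration and no induction are needed; \Cref{lem:longpath} is applied exactly once.
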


\begin{proof}
    By \Cref{lem:pathuniform}, there is a $(\delta,(2\tau)^{\rho})$-regular rooted subtree $(T'',u)$ of $(T,u)$ such that $(T'',u)$ is path-uniform in $G$. For each $v\in N^{(2\tau)^{\rho}}_{T''}(u)$ and every $i\in \{0,\ldots, (2\tau)^{\rho}\}$, let $x^v_i$ be the $i$-ancestor of $v$ in $(T'',u)$. It follows that
    $x^v_0\dd \cdots \dd x^{v}_{(2\tau)^{\rho}}$ is the unique path in $T$ from  $v=x^v_0$ to $u=x^v_{(2\tau)^{\rho}}$.
    
    Henceforth, let $w\in N^{(2\tau)^{\rho}}_{T''}(u)$ be fixed. We claim that:

\sta{\label{st:smalldegreeforward} For every $i_0\in \{0,\ldots, (2\tau)^{\rho}\}$, $x^w_{i_0}$ is adjacent in $G$ to fewer than $2\tau$ vertices in $\{x^w_i:i_0+1\leq i\leq (2\tau)^{\rho}\}$.}

Suppose not. Then there are $i_0,i_1,\ldots, i_{\tau}\in \{0,\ldots, (2\tau)^{\rho}\}$ with $i_0+\tau <i_1<\cdots<i_{\tau}$ such that $x^w_{i_0}x^w_{i_j}\in E(G)$ for all $j\in \{1,\ldots, \tau\}$. Let $W$ be the set of all vertices $v\in N^{(2\tau)^{\rho}}_{T''}(u)$ for which $x^v_{i_1}=x^w_{i_1}$. Then $w\in W$, $|W|=2^{i_1}$, and for all $j\in \{1,\ldots, \tau\}$ and every $v\in W$, we have $x^v_{i_j}=x^w_{i_j}$. Let 
$$X=\{x^v_{i_0}:v\in W\}.$$
Then $x^w_{i_0}\in X$, and since $i_1-i_0>\tau$ and $\delta\geq 2$, it follows that $|X|=\delta^{i_1-i_0}>2^{\tau}>\tau$. On the other hand, since $(T'',u)$ is path-uniform in $G$ and since $x^w_{i_0}\in X$ is adjacent to $x^w_{i_1},\ldots, x^w_{i_{\tau}}$ in $G$, it follows that every vertex in $X$ is adjacent in $G$ to every vertex among $x^w_{i_1},\ldots, x^w_{i_{\tau}}$. But then $G[X\cup \{x^w_{i_1},\ldots, x^w_{i_{\tau}}\}]$ has a subgraph isomorphic to $K_{\tau,\tau}$, a contradiction. This proves \eqref{st:smalldegreeforward}.
\medskip

 Next, we apply \Cref{lem:longpath} to $P=x^w_0\dd \cdots \dd x^{w}_{(2\tau)^{\rho}}$. From \eqref{st:smalldegreeforward}, it follows that \ref{lem:longpath}\ref{lem:longpath_b} holds; that is, there exist $j_0,j_1,\ldots,j_{\rho}\in \{1,\ldots, (2\tau)^{\rho}\}$ with $0=j_0<j_1<\cdots<j_{\rho}$ such that $x^w_{j_0}\dd x^w_{j_1}\dd \cdots \dd x^w_{j_{\rho}}$ is a path in $G$. This, along with the fact that $(T'',u)$ is path-uniform, implies that

\sta{\label{st:allinducedpaths} For every $v\in N^{(2\tau)^{\rho}}_{T''}(u)$, $$x^v_{j_0}\dd x^v_{j_1}\dd \cdots \dd x^v_{j_{\rho}}=v\dd x^v_{j_1}\dd \cdots \dd x^v_{j_{\rho}}$$
is a path in $G$.}

 Now, define pairwise disjoint subsets $X_0,\ldots, X_{\rho}$  of $V(T'')$ recursively, as follows: 
 \begin{itemize}
     \item Let $X_{\rho}=\{x^w_{j_{\rho}}\}$.
     \item For each $i\in \{0,\ldots, \rho-1\}$, having defined $X_{i+1}$ (and recalling $j_0=0$), let $X_{i}$ be the set of all vertices $v\in V(T'')$ for which
     $$x^v_{j_{i+1}-j_{i}}\in X_{i+1}.$$
 \end{itemize} 
 
 Let $u'=x^w_{j_{\rho}}$ and let 
 $T'=T''[X_0\cup \cdots \cup X_{\rho}]$. Then $(T',u')$ is a $(\delta,\rho)$-regular rooted subtree of $(T'',u)$, and so of $(T,u)$. Moreover, by \eqref{st:allinducedpaths}, we have that $(T',u')$ is path-induced in $G$. This completes the proof of \ref{lem:pathinduced}.    
\end{proof}

For our last lemma, we need Ramsey's theorem:

\begin{theorem}[Ramsey \cite{multiramsey}]\label{thm:ramsey}
  For all $\alpha,t\in \poi$, every $K_t$-free graph on at least $t^{\alpha-1}$ vertices has a stable set of cardinality $\alpha$.
\end{theorem}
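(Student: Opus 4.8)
The plan is to prove the statement directly, by induction on $\alpha$. The base case $\alpha=1$ is immediate: $t^{0}=1$, so a $K_t$-free graph with at least one vertex contains a stable set of size $1$, namely any single vertex.

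For the inductive step, fix $\alpha\ge 2$, assume the statement holds for $\alpha-1$ (with the same $t$), and let $G$ be a $K_t$-free graph with $n:=|V(G)|\ge t^{\alpha-1}$. The crux is the following claim: $G$ has a vertex $v$ with at least $t^{\alpha-2}$ non-neighbours, that is, $|V(G)\setminus(N_G(v)\cup\{v\})|\ge t^{\alpha-2}$. Granting this, put $B=V(G)\setminus(N_G(v)\cup\{v\})$. Then $G[B]$ is an induced subgraph of $G$, hence $K_t$-free, and $|B|\ge t^{\alpha-2}=t^{(\alpha-1)-1}$, so the inductive hypothesis yields a stable set $S\subseteq B$ with $|S|=\alpha-1$. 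Since $v\notin B$ and $v$ is anticomplete to $B\supseteq S$ in $G$, the set $S\cup\{v\}$ is a stable set of size $\alpha$ in $G$, as desired.

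To prove the claim, suppose for a contradiction that every vertex of $G$ has at most $t^{\alpha-2}-1$ non-neighbours (note that $t^{\alpha-2}-1\ge 0$ since $\alpha\ge 2$). Then the complement $\ol{G}$ of $G$ has maximum degree at most $t^{\alpha-2}-1$, so the greedy colouring algorithm partitions $V(G)$ into at most $t^{\alpha-2}$ classes, each of which is a stable set in $\ol{G}$ and therefore a clique in $G$. By the pigeonhole principle, one of these classes has at least $n/t^{\alpha-2}\ge t^{\alpha-1}/t^{\alpha-2}=t$ vertices, giving a clique of size $t$ in $G$ and contradicting the assumption that $G$ is $K_t$-free. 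This proves the claim and completes the induction.

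I do not anticipate a real obstacle; the only delicate points are the bookkeeping with exponents (in particular, that $\alpha\ge 2$ guarantees $t^{\alpha-2}$ is a positive integer throughout the inductive step) and the elementary fact that a graph whose complement has small maximum degree must contain a large clique. If one wishes to dispense with graph colouring, an alternative is a double induction on $(\alpha,t)$: splitting $G$ at a vertex $v$, the graph $G[N_G(v)]$ is $K_{t-1}$-free with no stable set of size $\alpha$, while $G[V(G)\setminus(N_G(v)\cup\{v\})]$ is $K_t$-free with no stable set of size $\alpha-1$; writing $g(\alpha,t)$ for the maximum order of a $K_t$-free graph with no stable set of size $\alpha$, this gives $g(\alpha,t)\le 1+g(\alpha,t-1)+g(\alpha-1,t)$, which with $g(1,t)=g(\alpha,1)=0$ solves to $g(\alpha,t)\le t^{\alpha-1}-1$, again implying the theorem.
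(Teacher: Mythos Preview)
Your proof is correct. The paper does not give its own proof of this statement: it is quoted as a classical result (Ramsey's theorem) with a citation to \cite{multiramsey}, and used as a black box throughout. So there is nothing to compare against; your induction on $\alpha$ via the ``many non-neighbours'' claim is a standard and clean way to obtain the explicit bound $t^{\alpha-1}$, and the alternative double induction you sketch is the other textbook route. Either is fine here.
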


\begin{lemma}\label{lem:branchinduced}
   Let $\delta,\tau\in \poi$ and let $\rho\in \poi\cup \{0\}$. Let $G$ be a graph with no minor isomorphic to $K_{\tau+1}$ and let $(T,u)$ be a $(\Delta,\rho)$-regular rooted tree, for some $\Delta\in \poi$ with $\Delta\geq \tau^{\delta-1}$, where $T$ is a subgraph of $G$. Then some $(\delta,\rho)$-regular rooted subtree $(T',u)$ of $(T,u)$ is branch-induced in $G$.
\end{lemma}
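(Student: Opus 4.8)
The plan is to build $T'$ by pruning, one internal vertex at a time, the set of children of each vertex of $T$: at a vertex $x$ we will discard all but $\delta$ of its $\Delta$ children so that the subtrees hanging off the surviving children become pairwise nonadjacent in $G$. The no-$K_{\tau+1}$-minor hypothesis, fed into Ramsey's theorem (\Cref{thm:ramsey}), is exactly what guarantees that $\delta$ children survive at each vertex. The case $\rho=0$ is trivial (then $T=T'=(\{u\})$), so assume $\rho\geq 1$.

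First I would set up the auxiliary graphs. For a vertex $x\in V(T)$, let $T_x$ denote the subtree of $T$ rooted at $x$, i.e.\ $T[\{y\in V(T):x$ is an ancestor of $y$ in $(T,u)\}]$; this is connected and nonnull. For each $x\in V(T)\setminus N^{\rho}_T(u)$, which has a set $C_x$ of exactly $\Delta$ children in $(T,u)$, define a graph $H_x$ on vertex set $C_x$ by making $c,c'\in C_x$ adjacent if and only if $G$ has an edge with one end in $V(T_c)$ and the other in $V(T_{c'})$. The key observation is that $H_x$ is $K_\tau$-free: if $\{c_1,\dots,c_s\}$ were a clique of $H_x$, then $(\{x\},T_{c_1},\dots,T_{c_s})$ would be a $K_{s+1}$-model in $G$ --- the parts are pairwise disjoint (as $x$ is the common parent and distinct children have disjoint subtrees) and pairwise non-anticomplete (via the tree edges $xc_i\in E(T)\subseteq E(G)$ and the edges witnessing $c_ic_j\in E(H_x)$) --- forcing $s+1\leq \tau$ since $G$ has no $K_{\tau+1}$ minor. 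Since $|V(H_x)|=\Delta\geq \tau^{\delta-1}$, \Cref{thm:ramsey} (applied with $t=\tau$, $\alpha=\delta$) yields a stable set $C'_x\subseteq C_x$ with $|C'_x|=\delta$.

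Next I would assemble $T'$: let $V(T')$ be the smallest subset of $V(T)$ with $u\in V(T')$ such that $C'_x\subseteq V(T')$ whenever $x\in V(T')$ with $\dist_T(u,x)<\rho$, and put $T'=T[V(T')]$. Then $V(T')$ is closed under taking parents, so $T'$ is a subtree, and by construction every vertex of $(T',u)$ at distance below $\rho$ from $u$ has exactly $\delta$ children, all of which are children in $(T,u)$; hence $(T',u)$ is a $(\delta,\rho)$-regular rooted subtree of $(T,u)$. It remains to verify branch-inducedness. Suppose $ab\in E(G)\setminus E(T')$ with $a,b\in V(T')$ and neither an ancestor of the other in $(T',u)$; let $x$ be the ancestor of both $a$ and $b$ in $(T',u)$ at maximum distance from $u$. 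Then $x\notin\{a,b\}$, so $a$ lies in $T'_{c_a}$ and $b$ in $T'_{c_b}$ for distinct children $c_a,c_b\in C'_x$ of $x$ in $(T',u)$ (distinct because otherwise $c_a$ would be a deeper common ancestor). Since $T'_{c_a}\subseteq T_{c_a}$ and $T'_{c_b}\subseteq T_{c_b}$, the edge $ab$ witnesses $c_ac_b\in E(H_x)$, contradicting that $C'_x$ is stable in $H_x$. Hence $(T',u)$ is branch-induced in $G$.

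I do not expect a real obstacle here: the whole lemma is essentially a reduction of branch-inducedness to a per-vertex Ramsey argument. The only point needing care is the last paragraph --- checking that a stable set in each $H_x$ really annihilates \emph{all} cross edges of $T'$, by locating each such edge at the meet of its endpoints --- together with confirming that the pruned object is a rooted subtree in the paper's precise sense (children of $T'$ are children of $T$). The $K_\tau$-freeness of $H_x$ and the application of \Cref{thm:ramsey} are short.
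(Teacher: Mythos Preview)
Your proof is correct and follows essentially the same approach as the paper's: at each internal vertex, observe that a clique among the children (in the ``adjacent-subtrees'' graph) would yield a $K_{\tau+1}$-model, then apply \Cref{thm:ramsey} to extract $\delta$ children with pairwise anticomplete subtrees. The only cosmetic difference is that the paper packages this as an induction on $\rho$ (prune at the root, then recurse into the $\delta$ surviving subtrees), whereas you make all the pruning choices $C'_x$ in parallel and assemble $T'$ directly; the resulting subtree and the verification of branch-inducedness are the same.
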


\begin{proof}
 We proceed by induction on $\rho$ (for fixed $\delta,\tau$). If $\rho=0$, then $V(T)=\{u\}$, and we are done by choosing $T'=T$. So we may assume that $\rho\geq 1$. 
 
 For every $v\in N_T(u)$ (that is, for every child $v$ of $u$ in $(T,u)$), let $T_v$ be the component of $T\setminus u$ containing $v$. Then $(T_v,v)$ is a $(\Delta,\rho-1)$-regular rooted subtree of $(T,u)$. Note that if there is a $\tau$-subset $\{v_1,\ldots, v_{\tau}\}$ of $N_T(u)$ such that no two of $V(T_{v_1}),\ldots, V(T_{v_{\tau}})$ are anticomplete in $G$, then $(G[\{u\}], G[V(T_{v_1})],\ldots, G[V(T_{v_{\tau}})])$ is a $K_{\tau+1}$-model in $G$, which is impossible. Therefore, since $|N_T(u)|=\Delta\geq \tau^{\delta-1}$, it follows from \Cref{thm:ramsey} that:
 
 \sta{\label{st:manyantisubtree} There is a $\delta$-subset $\{u_1,\ldots, u_{\delta}\}$ of $N_T(u)$ for which $V(T_{u_1}),\ldots, V(T_{u_{\delta}})$ are pairwise anticomplete in $G$.}
 
 On the other hand, for each $i\in \{1,\ldots, \delta\}$, by the inductive hypothesis applied to $(T_{u_i},u_i)$, there is a $(\delta,\rho-1)$-regular rooted subtree $(T'_i,u_i)$ of $(T_{u_i},u_i)$ such that $(T'_i,u_i)$ is branch-induced in $G$. Let $T'=T[\{u\}\cup V(T'_{u_1})\cup\cdots \cup V(T'_{u_{\delta}})]$.
Then, from \eqref{st:manyantisubtree}, it follows that $(T',u)$ is a $(\delta,\rho)$-regular rooted subtree $(T',u)$ of $(T,u)$ that is branch-induced in $G$. This completes the proof of \ref{lem:branchinduced}.
\end{proof}

We also need a result from \cite{approxpw}, and an observation (whose proof is easy and we omit).

\begin{theorem}[Groenland, Joret, Nadra, Walczak; 1.1 in \cite{approxpw}]\label{thm:pwsub}
   For all $\rho,\tau\in \poi$, every graph $G$ with $\tw(G)<\tau$ and no minor isomorphic to $\bin_{\rho}$ satisfies $\pw(G)\leq (\rho-1)\tau+1$.
\end{theorem}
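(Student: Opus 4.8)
The plan is to prove the contrapositive by induction on $\rho$: if $\tw(G)<\tau$ and $\pw(G)\ge(\rho-1)\tau+2$, then $G$ has $\bin_{\rho}$ as a minor; Theorem~\ref{thm:pwsub} is then the case where $W$ below is a single vertex. To make the induction close I would prove the \emph{anchored} strengthening: for every connected $G$ with $\tw(G)<\tau$, every nonempty $W\subseteq V(G)$ with $|W|\le\tau$, and every $\rho$ with $\pw(G)\ge(\rho-1)\tau+2$, there is a $\bin_{\rho}$-model in $G$ whose root branch set meets $W\cup N_G(W)$. One must track $W$ because, when a $\bin_{\rho}$ is assembled from two $\bin_{\rho-1}$'s, the two halves have to be joined through a prescribed separator. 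We may assume $G$ is connected, and we fix a suitably normalized tree decomposition $\mf{t}=(T,\beta)$ of width $<\tau$: every bag is connected, for every edge $tt'$ of $T$ the adhesion $\beta(t)\cap\beta(t')$ has size $\le\tau$ and separates the two sides of $\mf t$, and torsos over connected subtrees of $T$ are connected (all arrangeable for connected $G$). Root $T$ arbitrarily and write $c\pre t$ for ``$c$ lies in the subtree of $T$ below $t$''. For the base case $\rho=1$ we have $\pw(G)\ge2$, so $G$ is not a path; following a maximal chain of degree-two vertices out of a vertex of $W$ (it must reach a vertex of degree $\ge3$ or close a cycle) produces a connected set meeting $W\cup N_G(W)$ with two neighbours in two disjoint connected subgraphs, i.e.\ a $\bin_1=K_{1,2}$-model anchored at $W$.

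For the step ($\rho\ge2$), assume $\pw(G)\ge(\rho-1)\tau+2$. For a node $t$ and a child $c$ of $t$ put $G_c^{-}=G[\bigcup_{s\pre c}\beta(s)]$ and $A_c=\beta(t)\cap\beta(c)$; then $G_c^{-}$ is connected, inherits a tree decomposition of width $<\tau$, and $V(G_c^{-})\cap\beta(t)=A_c$ with $|A_c|\le\tau$. Call $c$ \emph{heavy} if $\pw(G_c^{-})\ge(\rho-2)\tau+2$ and \emph{light} otherwise. \textbf{Case~1: no node has two heavy children.} Then I claim $\pw(G)\le(\rho-1)\tau+1$, contradicting the hypothesis. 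Construct a path decomposition by walking down the \emph{spine} of $T$ (the path from the root that always proceeds to the unique heavy child, for as long as one exists): at each spine node $t$ emit the bag $\beta(t)$, and for each light child $\ell$ of $t$ splice in right afterwards a path decomposition of $G_{\ell}^{-}$ of width $\le(\rho-2)\tau+1$ (it exists because $\ell$ is light) with the $\le\tau$ vertices of $A_{\ell}$ added to every bag. Every bag then has size $\le(\rho-2)\tau+2+\tau=(\rho-1)\tau+2$, so the width is $\le(\rho-1)\tau+1$; and since every node of $T$ lies on the spine or inside a light subtree hanging off a spine node, the running-intersection property and the covering of all edges of $G$ follow from the adhesion-separation property of $\mf t$.

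\textbf{Case~2: some node $t$ has two heavy children $c_i$ and $c_j$.} Apply the inductive hypothesis to $G_{c_i}^{-}$ with anchor $A_{c_i}$ and to $G_{c_j}^{-}$ with anchor $A_{c_j}$ (legitimate, since $\pw(G_{c_k}^{-})\ge(\rho-2)\tau+2$ and $|A_{c_k}|\le\tau$), obtaining $\bin_{\rho-1}$-models $\mf m_i\subseteq G_{c_i}^{-}$ and $\mf m_j\subseteq G_{c_j}^{-}$ whose roots meet $A_{c_i}\cup N_G(A_{c_i})$ and $A_{c_j}\cup N_G(A_{c_j})$, respectively. Since $V(G_{c_i}^{-})\cap V(G_{c_j}^{-})\subseteq\beta(t)$ and each model meets $\beta(t)$ only inside its own adhesion, $\mf m_i$ and $\mf m_j$ intersect at most in $A_{c_i}\cap A_{c_j}$. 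Now use that the ``rest-torso'' $G_{\mathrm{rest}}$ — the torso of $\mf t$ over $T$ with the two subtrees below $c_i$ and $c_j$ deleted — is connected and contains $A_{c_i}\cup A_{c_j}$, together with the separation property of the adhesions to reroute a connecting path away from the two deleted interiors: this yields a connected set $X\subseteq V(G_{\mathrm{rest}})$ that is disjoint from $\mf m_i$ and $\mf m_j$, is adjacent to the root of $\mf m_i$ and to the root of $\mf m_j$, and meets the adhesion of $t$ to its parent together with its neighbourhood (or meets $W$, using that $\mf t$ may be rooted at a node whose bag meets $W$). Taking $X$ as the new root and hanging $\mf m_i,\mf m_j$ below it produces a $\bin_{\rho}$-model in $G$ anchored as required, completing the induction.

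The main obstacle is the gluing in Case~2: making $X$ genuinely vertex-disjoint from both recursively obtained models while keeping it adjacent to both of their roots and to the next adhesion up $T$. This is exactly why I would work with a normalized tree decomposition (so the relevant torsos are connected and the adhesions behave as genuine separators) and why the inductive hypothesis must carry the anchor $W$. Concretely, one picks vertices $a_k\in A_{c_k}$ with $a_k\in\mathrm{root}(\mf m_k)$ or $a_k\in N_G(\mathrm{root}(\mf m_k))$, routes a path from $a_i$ to $a_j$ inside $G_{\mathrm{rest}}$, and — if an endpoint of that path is occupied by a model — drops it and relies instead on the adjacency of the next vertex of the path to the model's root. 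The bookkeeping of which vertices of $\beta(t)$ each model is permitted to occupy is forced by the separation properties of $\mf t$, but it is the most delicate point of the argument.
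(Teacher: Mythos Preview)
The paper does not prove \Cref{thm:pwsub}; it is quoted verbatim from \cite{approxpw} (Groenland, Joret, Nadara, Walczak) and used as a black box in the proof of \Cref{thm:pwtotw}. So there is no ``paper's own proof'' to compare against.

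That said, your outline is essentially the argument of \cite{approxpw} (which in turn refines the qualitative Bienstock--Robertson--Seymour--Thomas argument): an anchored induction on $\rho$, the heavy/light dichotomy on children in a rooted tree decomposition, a spine-plus-light-subtrees path decomposition when no node has two heavy children, and the recursive gluing of two anchored $\bin_{\rho-1}$-models under a common root when some node does. The shape of the proof is right and your bound $(\rho-1)\tau+1$ comes out correctly from the Case~1 calculation.

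The one place your sketch is genuinely incomplete is exactly where you flag it: in Case~2 you have not guaranteed that $\mf m_i$ and $\mf m_j$ are vertex-disjoint (they may overlap in $A_{c_i}\cap A_{c_j}$), nor that the connecting set $X$ can be routed in $G$ (not just in the torso $G_{\mathrm{rest}}$, whose added clique-edges need not be edges of $G$) while avoiding both models. The standard fix is to strengthen the inductive statement so that the recursively obtained model lives in $G_c^{-}\setminus A_c$ except possibly for a single vertex of the root branch set lying in $A_c$; then disjointness is automatic and the root-connecting path can be taken inside $\beta(t)$ (which is connected in the normalized decomposition) with at most its two endpoints absorbed into the respective root branch sets. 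Your sketch gestures at this but does not state the strengthened invariant precisely; writing it down carefully is what makes the gluing go through.
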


\begin{observation}\label{obs:trees}
   Let $\delta,\rho\in \poi$ and let $G$ be a graph with a minor isomorphic to $\bin_{\delta\rho}$. Then there is a $(2^{\delta},\rho)$-regular rooted tree $(T,u)$ such that $T$ is a minor of $G$.
\end{observation}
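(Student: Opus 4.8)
The plan is to use transitivity of the minor relation: since $G$ has $\bin_{\delta\rho}$ as a minor, it suffices to exhibit a $(2^{\delta},\rho)$-regular rooted tree that is a minor of $\bin_{\delta\rho}$ itself. I would root $\bin_{\delta\rho}$ at its central vertex $r$, and for $i\in\{0,\ldots,\delta\rho\}$ write $\Lambda_i=N^i_{\bin_{\delta\rho}}(r)$ for the set of vertices at distance exactly $i$ from $r$. The one arithmetic observation needed is that for each $i<\rho$, every $w\in\Lambda_{i\delta}$ has exactly $2^{\delta}$ descendants in $\Lambda_{(i+1)\delta}$, since passing down $\delta$ levels of a binary tree multiplies the number of descendants by $2^{\delta}$.

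Next I would build the model directly. For $i\in\{0,\ldots,\rho\}$ and $w\in\Lambda_{i\delta}$, let $X_w$ be the subgraph of $\bin_{\delta\rho}$ induced by $w$ together with all its descendants at distance less than $(i+1)\delta$ from $r$; thus $X_w$ is a complete binary tree rooted at $w$ (of radius $\delta-1$ if $i<\rho$, and a single vertex if $i=\rho$), hence connected and non-null. The family $(X_w)$ is a model of a graph $T$, which is therefore a minor of $\bin_{\delta\rho}$, and I claim that $T$, rooted at the vertex $\tilde r$ representing $X_r$, is $(2^{\delta},\rho)$-regular. This reduces to three routine checks: (i) the $X_w$ are pairwise disjoint, because for a fixed $i$ the sets $X_w$ with $w\in\Lambda_{i\delta}$ lie in pairwise vertex-disjoint subtrees of $\bin_{\delta\rho}$, while collectively they use only vertices in the level band $\Lambda_{i\delta}\cup\cdots\cup\Lambda_{(i+1)\delta-1}$, and these bands are disjoint over $i$; (ii) for $w\in\Lambda_{i\delta}$ with $i<\rho$ and $w'\in\Lambda_{(i+1)\delta}$, the sets $X_w$ and $X_{w'}$ touch in $\bin_{\delta\rho}$ precisely when $w'$ is a descendant of $w$ — the parent of $w'$ lies in $\Lambda_{(i+1)\delta-1}$ and in that case belongs to $X_w$, and conversely every edge of $\bin_{\delta\rho}$ joins consecutive levels, the only consecutive pair of levels shared by the two bands being $(i+1)\delta-1$ and $(i+1)\delta$; and (iii) no two $X_w,X_{w'}$ with $w,w'\in\Lambda_{i\delta}$ touch. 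Combined with the arithmetic observation, (i)--(iii) show that $\tilde r$ has no parent, that each $\tilde w$ with $w\in\Lambda_{i\delta}$ and $i<\rho$ has exactly $2^{\delta}$ children in $(T,\tilde r)$, and that $\tilde w$ lies at distance $i\le\rho$ from $\tilde r$; hence $(T,\tilde r)$ is $(2^{\delta},\rho)$-regular, as required.

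As the paper remarks, there is no genuine difficulty here; the only point deserving care is the disjointness of the branch sets, since the naive attempt to partition all of $V(\bin_{\delta\rho})$ into branch sets (for instance, assigning every vertex to its descendant at the next multiple of $\delta$) fails to produce disjoint sets. One could equally run the argument by induction on $\rho$: contract the top $\delta-1$ levels of $\bin_{\delta\rho}$ to a single vertex, which then has the $2^{\delta}$ vertices of $\Lambda_{\delta}$ as neighbours, each being the root of a copy of $\bin_{\delta(\rho-1)}$, and apply the inductive hypothesis (strengthened so as to record that the branch set of the minor's root contains the root of $\bin_{\delta(\rho-1)}$) inside each copy.
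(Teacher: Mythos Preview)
Your proof is correct; the paper omits the argument entirely (calling it ``easy''), so there is nothing to compare against, and your explicit construction of the branch sets $X_w$ as the depth-$(\delta-1)$ subtrees rooted at each vertex of level $i\delta$ is exactly the natural way to fill the gap. One small remark: your parenthetical about a ``naive attempt'' that fails is a bit muddled---in fact your branch sets \emph{do} partition all of $V(\bin_{\delta\rho})$, since the bands $\Lambda_{i\delta}\cup\cdots\cup\Lambda_{(i+1)\delta-1}$ for $i<\rho$ together with $\Lambda_{\rho\delta}$ cover every level---so that comment can simply be dropped.
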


Finally, we are ready to prove \Cref{thm:pwtotw}, which we restate:

\pwtotw*

\begin{proof}
  Let $G$ be a graph with $\tw(G)<\tau$ and $\pw(G)\geq \tau^{6\rho+2}$. Then $\tau\geq 2$. Let $\rho_1=(2\tau)^{\rho}$.
Since $\tau\geq 2$, we have $\tau^{2\rho}=\tau^{\rho}\cdot\tau^{\rho} \geq 2^{\rho}\cdot\tau^{\rho}=\rho_1$, and so $\tau^{4\rho+1}\geq 2\tau^{4\rho}\geq \tau+\tau^{4\rho}\geq \tau+\rho^2_1$. Therefore,
$$\pw(G)\geq \tau^{6\rho+2}=\tau^{4\rho+1}\cdot \tau^{2\rho+1}\geq \left(\tau+\rho_1^2\right)\rho_1\tau> \left(\left(\tau+\rho_1^2\right)\rho_1-1\right)\tau+1.$$
Since $\tw(G)<\tau$, it follows from \Cref{thm:pwsub} that $G$ has a minor isomorphic to $\bin_{(\tau+\rho_1^2)\rho_1}$. This, combined with \Cref{obs:trees}, implies that there is an induced minor $G_1$ of $G$ as well as a $(2^{\tau}\cdot 2^{\rho_1^2},\rho_1)$-regular rooted tree $(T_1,u_1)$ such that $T_1$ is a subgraph of $G_1$.

Since $G_1$ is an induced minor of $G$, it follows that $\tw(G_1)\leq \tw(G)<\tau$, and so $G_1$ has no subgraph isomorphic to $K_{\tau,\tau}$. Consequently, since $\rho_1=(2\tau)^{\rho}$ and $2^{\tau}>2$, it follows from \Cref{lem:pathinduced} applied to $G_1$ and $(T_1,u_1)$ (with $\delta=2^{\tau}$) that there is a $(2^{\tau},\rho)$-regular rooted subtree $(T,u)$ of $(T_1,u_1)$ that is path-induced in $G_1$. Since $\tw(G_1)<\tau$, it follows that $G_1$ has no minor isomorphic to $K_{\tau+1}$. Therefore, by \Cref{lem:branchinduced} applied to $G_1$ and $(T,u)$ (with $\Delta=2^{\tau}>\tau$ and $\delta=2$), there is a $(2,\rho)$-regular rooted subtree $(T',u)$ of $(T,u)$ that is branch-induced in $G_1$. Moreover, $(T',u)$ is path-induced in $G_1$ because $(T,u)$ is. It follows that $T'$ is an induced subgraph of $G_1$ isomorphic to $\bin_{\rho}$. Hence, $G$ has an induced minor isomorphic to $\bin_{\rho}$, because $T$ is an induced subgraph of $G_1$, and $G_1$ is an induced minor of $G$. This completes the proof of \ref{thm:pwtotw}.
\end{proof}

\section{Treewidth versus separability}\label{sec:mainpolyblock}

In this section we prove \Cref{thm:mainpolyblock}. The necessary lemmas appear in Subsections~\ref{subsec:edgecoloring}, \ref{subsec:indmtorso}, and \ref{subsec:subcompletesubg}, and we complete the proof of \ref{thm:mainpolyblock} in Subsection~\ref{subsec:finalblock}.

\subsection{Monochromatic components of bounded diameter}\label{subsec:edgecoloring}
Here we prove:

\begin{lemma}
    \label{lem:mainpolydegree}
    For all $h,\sigma\in \poi$, there is a constant $d_{\ref{lem:mainpolydegree}}=d_{\ref{lem:mainpolydegree}}(h,\sigma)\in \poi$ such that for every planar graph $H$ on $h$ vertices, every graph $G$ with maximum degree at most $\Delta\in \poi$ and no induced minor isomorphic to $H$ or $K_{\sigma,\sigma}$ satisfies $\tw(G)\leq  \Delta^{d_{\ref{lem:mainpolydegree}}}$.
\end{lemma}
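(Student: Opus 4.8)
The plan is to reduce to a statement about induced wall subdivisions, extract a wall, and then clean it up. First, by \Cref{thm:gridtograph} the planar graph $H$ is an induced minor of the $28h$-grid, and by \Cref{obs:wall} the $28h$-grid is an induced minor of the $(56h-1)$-wall; since contracting the subdivision paths of a subdivided wall recovers the wall, any graph containing a subdivision of the $(56h-1)$-wall as an \emph{induced} subgraph has $H$ as an induced minor. Hence it suffices to find $d=d(h,\sigma)$ such that every graph $G$ with maximum degree at most $\Delta$, no induced minor isomorphic to $K_{\sigma,\sigma}$, and no induced subgraph isomorphic to a subdivision of the $(56h-1)$-wall satisfies $\tw(G)\le\Delta^{d}$. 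So suppose $G$ is such a graph and, for contradiction, $\tw(G)$ exceeds a suitable polynomial in $\Delta$ (with exponent depending on $h,\sigma$). By the Grid Minor Theorem of Robertson and Seymour \cite{GMV} in its polynomial form, together with the well-known fact that the $r$-wall and the $r$-grid are minor-equivalent up to a constant factor, $G$ has a minor isomorphic to the $m$-wall for some $m$ that grows polynomially with $\tw(G)$ and is thus arbitrarily large in terms of $\Delta,h,\sigma$; since walls have maximum degree at most $3$, this minor is a topological minor, so $G$ contains a subgraph $W'$ that is a subdivision of the $m$-wall.

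The core of the argument — and where ``monochromatic components of bounded diameter'' enters — is to convert $W'$ into an \emph{induced} subdivision of the $(56h-1)$-wall, using that $G$ has bounded degree and no induced $K_{\sigma,\sigma}$ minor. Write $Q_e$ for the subdivision path of $W'$ corresponding to an edge $e$ of the $m$-wall $W$; the only obstruction to $W'$ being induced is the presence of \emph{chords}, i.e.\ edges of $G$ joining two distinct paths $Q_e$ (chords inside a single $Q_e$ can be eliminated by shortcutting, taking $W'$ of minimum total length). One would like to locate a large ``window'' sub-wall $W_0\subseteq W$, of side length at least $56h-1$, such that no chord joins $Q_e$ to $Q_f$ for edges $e,f$ of $W_0$; then $W'$ restricted to $W_0$ is the desired induced subdivision, contradicting the reduction above. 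To find $W_0$ I would colour the edges of $G$ incident with $V(W')$ (together with $E(W')$) using a number of colours polynomial in $\Delta$, so that within each colour class the monochromatic components have diameter bounded in terms of $\Delta,h,\sigma$: a colour class with bounded-diameter, bounded-degree components is a disjoint union of graphs of bounded order, hence it can ``spoil'' only a controlled part of the wall, and a Ramsey/pigeonhole selection (along the lines of \Cref{thm:ramsey}) then isolates a large chord-free window $W_0$. The point of the forbidden induced $K_{\sigma,\sigma}$ minor is to guarantee this: when no such bounded-diameter colouring is available, one can spread out a monochromatic piece, and the absence of an induced $K_{\sigma,\sigma}$ minor forces the resulting chord pattern to be ``local'' rather than dense, so that a large window always survives.

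I expect this cleaning step to be the main obstacle. The difficulty is genuine: a subdivision path $Q_e$ may be long, so a single $Q_e$ can be incident with unboundedly many chords, and a priori the chords could spoil all of $W$; it is precisely here that the excluded induced $K_{\sigma,\sigma}$ minor must be used to argue that, after the colouring and after discarding subdivision paths that carry chords, a still-large sub-wall region survives unspoiled. By contrast, once an induced subdivision of the $(56h-1)$-wall is obtained, the endgame is immediate from \Cref{thm:gridtograph} and \Cref{obs:wall}, and the bookkeeping needed to keep every parameter polynomial in $\Delta$ (with exponents depending only on $h$ and $\sigma$) is routine, yielding the bound $\tw(G)\le\Delta^{d(h,\sigma)}$ claimed in \Cref{lem:mainpolydegree}.
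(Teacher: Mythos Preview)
Your proposal has a genuine gap at the step you yourself flag as ``the main obstacle'': converting the wall subdivision $W'$ (found as a subgraph) into an \emph{induced} subdivision of a large wall. You sketch a plan---colour edges so that monochromatic components have bounded diameter, then pigeonhole to find a chord-free window $W_0$---but you do not establish where such a colouring comes from, nor why it would let you isolate a clean window. The difficulty is real: a single subdivision path $Q_e$ can be arbitrarily long and carry unboundedly many chords, and a bounded-diameter colouring of the ambient graph tells you nothing about which \emph{wall edges} are spoiled. There is no evident Ramsey/pigeonhole argument that finishes this; the passage ``the absence of an induced $K_{\sigma,\sigma}$ minor forces the resulting chord pattern to be `local'\,'' is a hope, not an argument.

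The paper avoids this obstacle entirely by a different and much shorter route. It never extracts or cleans a wall. Instead it proves directly (\Cref{lem:mainedgecluster}) that excluding $K_{\sigma,\sigma}$ as an induced minor yields an edge-colouring of $G$ with a number of colours depending \emph{only on $\sigma$} in which every monochromatic component has $G$-diameter less than $12\sigma$; bounded degree then converts bounded diameter to bounded cluster size (\Cref{obs:diamtocluster}); and finally a black-box lemma from \cite{BHKM} (\Cref{lem:BHKM}) says that any graph excluding a planar $H$ as an induced minor and admitting such a bounded-colour, bounded-clustering edge-colouring has treewidth at most $h^{c}2^{c(\gamma^5+\gamma\log\nu)}$. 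Since $\gamma$ depends only on $\sigma$ and $\nu\le\Delta^{12\sigma}$, this is polynomial in $\Delta$. The ``monochromatic components of bounded diameter'' in the subsection title thus refer to \Cref{lem:mainedgecluster} feeding into \Cref{lem:BHKM}, not to a wall-cleaning device.
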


This is a variant of the main result of \cite{BHKM} (Theorem~2 therein), where the complete bipartite graph is excluded as a subgraph (and that result seems incomparable with our \Cref{lem:mainpolydegree}). The key tool is \Cref{lem:mainedgecluster} below.
Let $G$ be a graph. For $X\subseteq V(G)$, we write $\diam_G(X)=\max\{\dist_{G}(x,x'):x,x'\in X\}$. For $c\in \poi$, a \textit{$c$-edge-coloring of $G$} is a partition $(E_1,\ldots,E_{c})$ of $E(G)$. We say that $(E_1,\ldots, E_{c})$ has \textit{clustering less than $n\in \poi$ in $G$} if for each $i\in \{1,\ldots, c\}$, every component of the graph $(V(G),E_i)$ has fewer than $n$ vertices. We also say that $(E_1,\ldots,E_{c})$ has \textit{$G$-diameter less than $h\in \poi$} if for each $i\in \{1,\ldots,c\}$, every component $C$ of the graph $(V(G),E_i)$ satisfies $\diam_G(V(C))<h$.

\begin{lemma}\label{lem:mainedgecluster}
    For all $\sigma\in \poi$, there is a constant $c_{\ref{lem:mainedgecluster}}=c_{\ref{lem:mainedgecluster}}(\sigma)\in \poi$ such that every graph with no induced minor isomorphic to $K_{\sigma,\sigma}$ has a $c_{\ref{lem:mainedgecluster}}$-edge-coloring of $G$-diameter less than $12\sigma$.
\end{lemma}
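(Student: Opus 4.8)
The plan is a recursive breadth‑first‑search argument, with the hypothesis on $K_{\sigma,\sigma}$ as an induced minor entering only at the very end, to bound the recursion depth. Since distinct components of $G$ are anticomplete, and an edge‑colouring of each of them with the prescribed property may be amalgamated into one for $G$ (using the same palette, with each monochromatic component still contained in a single component of $G$), we may assume $G$ is connected. Fix $r\in V(G)$ and write $L_i=N^i_G(r)$; every edge of $G$ joins two vertices in a common $L_i$ or in consecutive $L_i,L_{i+1}$. Fix $w=2$. For a \emph{phase} $a\in\{0,1\}$, partition $\{0,1,2,\dots\}$ into consecutive blocks of length $w$ with $\{a,a+1\}$ a block, and for the block starting at $a+jw$ call $S^a_j=L_{a+jw}\cup\dots\cup L_{a+(j+1)w-1}$ a \emph{slab} of \emph{index} $j\in\mathbb{Z}$. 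Say an edge is \emph{internal} to a slab if both its ends lie in it; every edge of $G$ is internal to a slab of at least one of the two phases. Assign to each edge $e$ a phase $\phi(e)$ to which $e$ is internal, and let $p(e)\in\{0,1\}$ be the parity of the corresponding slab index. Any two distinct slabs of the same phase whose indices have the same parity are anticomplete in $G$ (their layer indices differ by more than one, while edges of $G$ join only equal or consecutive layers); hence, for each fixed $(a,p)$, in the subgraph of $G$ with edge set $\{e:(\phi(e),p(e))=(a,p)\}$ every component lies inside a single slab $S^a_j$ with $j\equiv p\pmod{2}$, and so inside a single component of $G\bigl[\bigcup\{S^a_j:j\equiv p\pmod{2}\}\bigr]$.

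Now define a colouring of $E(G)$ recursively. If $\diam_G(V(G))<12\sigma$, give all edges one colour. Otherwise, run the construction above; for every $(a,p)\in\{0,1\}^2$ and every component $C$ of $G\bigl[\bigcup\{S^a_j:j\equiv p\pmod{2}\}\bigr]$, recursively colour $E(C)$; then colour each edge $e$ of $G$ by the triple consisting of $\phi(e)$, $p(e)$, and the recursive colour of $e$ in the unique such component (with $(a,p)=(\phi(e),p(e))$) that contains $e$. This recursion clearly terminates, since each such $C$ has strictly fewer vertices than the graph it refines; the point is to bound its depth independently of $|V(G)|$. If $k$ is the depth, the number of colours is at most $(2w)^k=4^k$. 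An induction on $k$ shows the colouring has the required property: by the previous paragraph each monochromatic component lies inside some recursion leaf $C$, and since $C$ is an induced subgraph of $G$ and thus $\dist_G(u,v)\le\dist_C(u,v)$ for $u,v\in V(C)$, its $G$‑diameter is at most its $C$‑diameter, which is $<12\sigma$. So it remains to bound $k$ by a function of $\sigma$.

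This last step is where I expect the real difficulty to lie. Unwinding a depth‑$k$ branch yields a chain $G=C_0\supseteq C_1\supseteq\dots\supseteq C_k$ of connected induced subgraphs in which, for each $i$, the graph $C_{i+1}$ is a component of a width‑$w$ slab of a breadth‑first layering of $C_i$ rooted at some $r_i\in V(C_i)$ --- so $C_{i+1}$ meets at most $w$ consecutive layers of that layering --- and $\diam_{C_i}(V(C_i))\ge 12\sigma$ for all $i<k$; in particular each such $C_i$ contains an induced path of length at least $12\sigma$, a shortest path realising its diameter. I would then aim to prove that if $k$ exceeds some $k(\sigma)\in\poi$ depending only on $\sigma$, then $G$ has an induced minor isomorphic to $K_{\sigma,\sigma}$, contradicting the hypothesis: the idea is to select $\sigma$ vertices on the long induced path inside $C_k$ that are pairwise far apart along it, to route each of them inward through the $k$ nested bounded‑width layerings (at most $w$ steps at each level, leaving every band on its inner side), and to argue that the long path supplies enough separation for $\sigma$ parallel routes while each additional level of nesting forces the routes into a complete‑bipartite interaction pattern, ultimately assembling an induced $K_{\sigma,\sigma}$‑model --- and it is here that the constant $12\sigma$ is calibrated. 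Granting this, $k\le k(\sigma)$, and we may take $c_{\ref{lem:mainedgecluster}}(\sigma)=4^{k(\sigma)}$.
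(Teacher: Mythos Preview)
Your recursive BFS scheme is essentially the mechanism behind the paper's Lemmas~\ref{lem:abyssmain} and~\ref{lem:mainvertexcluster}: the chain $G=C_0\supseteq C_1\supseteq\cdots\supseteq C_k$ you extract is precisely what the paper calls an \emph{abyss}, and the paper's Lemma~\ref{lem:abyssmain} is exactly the missing depth bound you flag. The construction there is the one you sketch: pick $\sigma$ vertices $x_1,\dots,x_\sigma$ far apart in the innermost level, let $A_i$ consist of $x_i$ together with one backward BFS-neighbour $y_{i,j}$ into each $C_j$, and let $B_j$ be the ``inner ball'' of the BFS of $C_j$ (the layers strictly before the slab containing $C_{j+1}$); then $(A_1,\dots,A_\sigma;B_0,\dots,B_{\sigma-1})$ is an induced $K_{\sigma,\sigma}$-model. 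So your plan is right, and the gap you identify is real but fillable --- it is the content of Lemma~\ref{lem:abyssmain}.

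Two points of comparison. First, and interestingly, your route is \emph{more direct} than the paper's on one front: you produce an edge-colouring immediately, whereas the paper first obtains a $2^\sigma$-\emph{vertex}-colouring of $G$-diameter $<4\sigma$ (Lemma~\ref{lem:mainvertexcluster}), and then spends a second stage converting this to an edge-colouring via a quotient graph and the bounded-expansion/star-colouring machinery of Dvo\v{r}\'ak and Ne\v{s}et\v{r}il--Ossona de Mendez (Corollary~\ref{cor:starcomponents}); the factor $3$ in $12\sigma=3\cdot 4\sigma$ comes from that conversion. Your width-$2$ slab trick sidesteps this entirely. Second, a technical caveat: your termination condition is $\diam_{C}(V(C))<12\sigma$, but the abyss argument needs the $x_i$ to be far apart in $G$, not in $C_{k-1}$ (the sets $A_i$ spread across all levels, so their anticompleteness is a $G$-statement, and $\dist_{C_{k-1}}$ can exceed $\dist_G$). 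The clean fix is to terminate on $\diam_G(V(C))<12\sigma$ instead --- correctness becomes easier, and the depth bound then follows from Lemma~\ref{lem:abyssmain} essentially verbatim, with a small adjustment for width-$2$ slabs (the $A_i$ now need radius $2$ rather than $1$ to reach each $B_j$, so one takes the $x_i$ at pairwise $G$-distance $\ge 6$ rather than $\ge 4$; your $12\sigma$ comfortably accommodates this).
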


To derive \Cref{lem:mainpolydegree} from \Cref{lem:mainedgecluster}, we need a result from \cite{BHKM} (all logarithms are base~2):

\begin{lemma}[Bonnet, Hodor, Korhonen, Masa\v{r}\'{i}k; Lemma 14 in \cite{BHKM}]\label{lem:BHKM}
    There is an absolute constant $c_{\ref{lem:BHKM}}\in \poi$ with the following property. Let $H$ be a planar graph on $h\in \poi$ vertices and let $G$ be a graph with no induced minor isomorphic to $H$. Let $\gamma,\nu\in \poi$ such that $G$ admits a $\gamma$-edge-coloring of clustering less than $\nu$ in $G$. Then $\tw(G)\leq  h^{c_{\ref{lem:BHKM}}} 2^{c_{\ref{lem:BHKM}}(\gamma^5+\gamma \log \nu)}$.
\end{lemma}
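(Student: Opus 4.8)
The plan is to feed the $G$-diameter bound from \Cref{lem:mainedgecluster} into \Cref{lem:BHKM}, using the maximum degree bound to convert ``bounded $G$-diameter'' into ``bounded clustering''. Fix $h,\sigma\in\poi$, a planar graph $H$ on $h$ vertices, and a graph $G$ with maximum degree at most $\Delta$ and no induced minor isomorphic to $H$ or $K_{\sigma,\sigma}$. First I would dispose of the case $\Delta=1$: then $G$ is a disjoint union of edges and isolated vertices, so $\tw(G)\le 1\le\Delta^{d}$ for every $d\in\poi$. So from now on assume $\Delta\ge 2$.

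By \Cref{lem:mainedgecluster} applied to $G$, there is a $\gamma$-edge-coloring $(E_1,\ldots,E_\gamma)$ of $G$ of $G$-diameter less than $12\sigma$, where $\gamma=c_{\ref{lem:mainedgecluster}}(\sigma)$. The one substantive step is to bound the clustering of this coloring: if $C$ is a component of $(V(G),E_i)$ and $x\in V(C)$, then every vertex of $C$ is at distance at most $12\sigma-1$ from $x$ in $G$, so, using $\Delta\ge 2$, we get $|V(C)|\le\sum_{j=0}^{12\sigma-1}\Delta^{j}\le\Delta^{12\sigma}-1$. Hence $(E_1,\ldots,E_\gamma)$ has clustering less than $\nu:=\Delta^{12\sigma}$ in $G$.

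Now I would apply \Cref{lem:BHKM} to $G$ with this $H$, $\gamma$, and $\nu$, obtaining $\tw(G)\le h^{c_{\ref{lem:BHKM}}}\,2^{c_{\ref{lem:BHKM}}(\gamma^5+\gamma\log\nu)}$. Since $\log\nu=12\sigma\log\Delta$, the right-hand side equals $(h^{c_{\ref{lem:BHKM}}}\,2^{c_{\ref{lem:BHKM}}\gamma^5})\cdot\Delta^{12c_{\ref{lem:BHKM}}\sigma\gamma}$. The prefactor $A:=h^{c_{\ref{lem:BHKM}}}\,2^{c_{\ref{lem:BHKM}}\gamma^5}$ depends only on $h$ and $\sigma$ (recall $\gamma=c_{\ref{lem:mainedgecluster}}(\sigma)$), and since $\Delta\ge 2$ we have $A\le\Delta^{\log_2 A}$; thus $\tw(G)\le\Delta^{12c_{\ref{lem:BHKM}}\sigma\gamma+\log_2 A}$. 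Taking $d_{\ref{lem:mainpolydegree}}(h,\sigma)=\lceil 12c_{\ref{lem:BHKM}}\sigma\gamma+\log_2 A\rceil$, a positive integer, completes the argument.

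The derivation itself contains no real obstacle: all the content is packed into \Cref{lem:mainedgecluster} (the genuinely new ingredient, proved separately) and the cited \Cref{lem:BHKM}. The only points needing a moment's care are the degenerate case $\Delta=1$ (where the ball-size estimate degenerates) and the legitimacy of folding the $h,\sigma$-dependent constant $A$ into the exponent of $\Delta$, which works precisely because $\Delta\ge 2$.
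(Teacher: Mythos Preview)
Your proposal does not prove the stated lemma. \Cref{lem:BHKM} is an external result cited from Bonnet, Hodor, Korhonen, and Masa\v{r}\'{i}k; the paper does not prove it, and neither do you---you invoke it as a black box. What you have actually written is a proof of \Cref{lem:mainpolydegree}, the lemma that \emph{uses} \Cref{lem:BHKM} together with \Cref{lem:mainedgecluster}.

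Read as a proof of \Cref{lem:mainpolydegree}, your argument is correct and essentially identical to the paper's: both handle $\Delta=1$ separately, apply \Cref{lem:mainedgecluster} to get a $\gamma$-edge-coloring of $G$-diameter less than $12\sigma$ with $\gamma=c_{\ref{lem:mainedgecluster}}(\sigma)$, convert bounded $G$-diameter to clustering less than $\Delta^{12\sigma}$ via the ball-size bound (the paper packages this as \Cref{obs:diamtocluster}), feed this into \Cref{lem:BHKM}, and then absorb the $(h,\sigma)$-dependent prefactor into the exponent of $\Delta$ using $\Delta\ge 2$. The only cosmetic difference is the exact choice of $d_{\ref{lem:mainpolydegree}}$: the paper takes $h^{c_{\ref{lem:BHKM}}}\cdot 2^{c_{\ref{lem:BHKM}}\gamma^5}+12c_{\ref{lem:BHKM}}\sigma\gamma$, while you take $\lceil 12c_{\ref{lem:BHKM}}\sigma\gamma+\log_2(h^{c_{\ref{lem:BHKM}}}2^{c_{\ref{lem:BHKM}}\gamma^5})\rceil$; both work.
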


We also need the following observation (the proof is straightforward and omitted):

\begin{observation}\label{obs:diamtocluster}
    Let $\Delta, \eta\in \poi$ with $\Delta\geq 2$, let $G$ be a graph with maximum degree at most $\Delta$ and let $X\subseteq V(G)$ with $\diam_G(X)<\eta$. Then $|X|<\Delta^{\eta}$.
\end{observation}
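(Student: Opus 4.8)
The plan is an elementary breadth-first-search layering argument from a single vertex, so I expect no serious obstacle; the statement is essentially immediate once the right quantities are identified. If $X=\varnothing$ there is nothing to prove (note that $\eta\geq 1$ gives $\Delta^{\eta}\geq 2>0$), so I would assume $X\neq\varnothing$ and fix a vertex $x\in X$. Since $\diam_G(X)<\eta$, every $y\in X$ satisfies $\dist_G(x,y)\leq \eta-1$, and therefore $X\subseteq \bigcup_{\rho=0}^{\eta-1}N^{\rho}_G(x)$. It thus suffices to bound $\sum_{\rho=0}^{\eta-1}|N^{\rho}_G(x)|$.

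The core of the argument is the bound $|N^{\rho}_G(x)|\leq \Delta^{\rho}$ for every $\rho\in\poi\cup\{0\}$, which I would prove by induction on $\rho$. The case $\rho=0$ holds because $N^{0}_G(x)=\{x\}$. For $\rho\geq 1$, every $y\in N^{\rho}_G(x)$ lies on a shortest path from $x$ to $y$ whose second-to-last vertex is at distance $\rho-1$ from $x$; hence every vertex of $N^{\rho}_G(x)$ has a neighbour in $N^{\rho-1}_G(x)$ (this is exactly one of the layering facts recorded for connected graphs in \Cref{sec:notation}, applied to the component of $G$ containing $x$, which affects none of the relevant quantities). Since each vertex of $N^{\rho-1}_G(x)$ has at most $\Delta$ neighbours in $G$, we get $|N^{\rho}_G(x)|\leq \Delta\,|N^{\rho-1}_G(x)|\leq \Delta\cdot\Delta^{\rho-1}=\Delta^{\rho}$, completing the induction.

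Finally, I would sum the geometric series and use $\Delta\geq 2$:
\[
|X|\;\leq\;\sum_{\rho=0}^{\eta-1}|N^{\rho}_G(x)|\;\leq\;\sum_{\rho=0}^{\eta-1}\Delta^{\rho}\;=\;\frac{\Delta^{\eta}-1}{\Delta-1}\;\leq\;\Delta^{\eta}-1\;<\;\Delta^{\eta}.
\]
The only points to watch are the degenerate cases $X=\varnothing$ and $|X|=1$ (both handled above) and the harmless reduction to the connected component of $x$ so that the quoted layering property applies verbatim; beyond that the proof is a direct computation, which is presumably why the authors omit it.
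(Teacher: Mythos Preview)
Your argument is correct; the paper explicitly omits the proof of this observation as ``straightforward,'' so there is no author's proof to compare against, and your BFS layering bound is exactly the standard one-line justification one would expect.
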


\Cref{lem:mainpolydegree} now follows almost immediately from \ref{lem:mainedgecluster}, \ref{lem:BHKM}, and \ref{obs:diamtocluster}:

\begin{proof}[Proof of \Cref{lem:mainpolydegree}]
    Let $\gamma=c_{\ref{lem:mainedgecluster}}(\sigma)$ and let $\eta=12\sigma$; thus $\gamma$ and $\eta$ depend only on $\sigma$. Recall the absolute constant 
    $c_{\ref{lem:BHKM}}$ from \Cref{lem:BHKM}. Let
$$d_{\ref{lem:mainpolydegree}}=d_{\ref{lem:mainpolydegree}}(h,\sigma)=h^{c_{\ref{lem:BHKM}}}\cdot 2^{c_{\ref{lem:BHKM}}\gamma^5}+c_{\ref{lem:BHKM}}\gamma\eta.$$
If $\Delta=1$, then $\tw(G)\leq 1\leq \Delta^{d_{\ref{lem:mainpolydegree}}}$, and we are done. So we may assume that $\Delta\geq 2$. Since $G$ has no induced minor isomorphic to $K_{\sigma,\sigma}$, by \Cref{lem:mainedgecluster} and the choice of $\gamma,\eta$, there is a $\gamma$-edge-coloring of $G$ with $G$-diameter less than $\eta$. Since $G$ has maximum degree at most $\Delta\geq 2$, by \Cref{obs:diamtocluster}, the same coloring has clustering less than $\Delta^{\eta}$ in $G$. Since $G$ has no induced minor isomorphic to $H$, by \Cref{lem:BHKM}, we deduce that
$$\tw(G)\leq  h^{c_{\ref{lem:BHKM}}} 2^{c_{\ref{lem:BHKM}}(\gamma^5+\gamma \log \Delta^{\eta})}=h^{c_{\ref{lem:BHKM}}} 2^{c_{\ref{lem:BHKM}}\gamma^5}\Delta^{c_{\ref{lem:BHKM}}\gamma\eta}\leq 2^{h^{c_{\ref{lem:BHKM}}\gamma^5} 2^{c_{\ref{lem:BHKM}}}}\Delta^{c_{\ref{lem:BHKM}}\gamma\eta}\leq \Delta^{d_{\ref{lem:mainpolydegree}}};$$
where the last inequality follows from $\Delta\geq 2$. This completes the proof of \ref{lem:mainpolydegree}.
\end{proof}

It remains to prove \Cref{lem:mainedgecluster}, which occupies the rest of this subsection.

We start with a definition that is central to the proof. Let $\sigma,\theta\in \poi\cup \{0\}$, let $G$ be a graph and let $X\subseteq V(G)$. A \textit{$(\sigma,\theta)$-$X$-abyss in $G$} is $(\sigma+2)$-tuple $(a_0,\ldots, a_{\sigma+1})$ of vertices in $G$ with the following specifications:
\begin{itemize}
    \item there is a connected induced subgraph $G_0$ of $G[X]$ such that $a_0,\ldots, a_{\sigma+1}\in V(G_0)$;
    \item for each $i\in \{0,\ldots, \sigma-1\}$, there exists $\rho_{i}\in \poi\cup \{0\}$ as well as a connected induced subgraph $G_{i+1}$ of $G[N_{G_i}^{\rho_{i}+1}(a_i)]$ such that $a_{i+1},\ldots, a_{\sigma+1}\in V(G_{i+1})$; and 
    \item we have $\dist_{G}(a_{\sigma},a_{\sigma+1})\geq \theta$.
\end{itemize}

We show that:

\begin{lemma}\label{lem:abyssmain}
 Let $\sigma\in \poi$, let $G$ be a graph with no induced minor isomorphic to $K_{\sigma,\sigma}$ and let $X\subseteq V(G)$. Then there is no $(\sigma,4\sigma)$-$X$-abyss in $G$.  
\end{lemma}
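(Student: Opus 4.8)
The plan is to prove the contrapositive: if $G$ has a $(\sigma,4\sigma)$-$X$-abyss then $G$ has an induced minor isomorphic to $K_{\sigma,\sigma}$. I would run this as an induction on $\sigma$. Fix an abyss $(a_0,\dots,a_{\sigma+1})$ with witnessing connected induced subgraphs $G_0\supseteq G_1\supseteq\cdots\supseteq G_\sigma$ (in terms of vertex sets) and radii $\rho_0,\dots,\rho_{\sigma-1}$, so $V(G_{i+1})\subseteq N^{\rho_i+1}_{G_i}(a_i)$ and $a_{i+1},\dots,a_{\sigma+1}\in V(G_{i+1})$. The base case $\sigma=1$ is immediate: $\dist_G(a_1,a_2)\ge 4$ forces $a_1\ne a_2$ in the connected graph $G_0$, so $G$ has an edge, i.e.\ an induced $K_{1,1}$.

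The first move is to dispose of all ``small radius'' degeneracies at once, by observing that in fact $\rho_i\ge 2\sigma-1$ for every $i$: since $a_\sigma,a_{\sigma+1}\in V(G_{i+1})\subseteq N^{\rho_i+1}_{G_i}(a_i)$, the triangle inequality inside $G_i$ (an induced subgraph of $G$) gives $4\sigma\le\dist_G(a_\sigma,a_{\sigma+1})\le\dist_{G_i}(a_\sigma,a_{\sigma+1})\le 2(\rho_i+1)$. Hence every level of the abyss carries a comfortable radial margin. In particular the ``inner ball'' $\mathbb{B}_i$ (the ball of radius $\rho_i-1$ around $a_i$ in $G_i$) is a nonempty connected induced subgraph sitting in spheres of radius $\le\rho_i-1$, hence anticomplete in $G_i$ to everything at radius $\ge\rho_i+1$ from $a_i$; and since $V(G_{i+1})\subseteq N^{\rho_i+1}_{G_i}(a_i)$, the balls $\mathbb{B}_0,\dots,\mathbb{B}_{\sigma-1}$ occupy pairwise disjoint, mutually anticomplete radial ranges, and they are all disjoint from (and anticomplete to nothing forced about) $V(G_\sigma)$, which also lies in a single sphere of each $G_i$.

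The core of the proof is the explicit construction of a $K_{\sigma,\sigma}$-model. For the ``$B$-side'' I would work at the bottom: let $W$ be a shortest path of $G_\sigma$ from $a_\sigma$ to $a_{\sigma+1}$ (of length $\ge 4\sigma$), and pick vertices $w_1,\dots,w_\sigma$ on $W$, pairwise at distance $\ge 2$ along $W$ and distinct from $a_\sigma,a_{\sigma+1}$; since $W$ is an induced path, the singletons $B_j:=\{w_j\}$ are $\sigma$ pairwise-anticomplete connected sets, all inside $V(G_\sigma)$. For the ``$A$-side'', for each level $i\in\{0,\dots,\sigma-1\}$ I would take $A_i:=\mathbb{B}_i\cup\{f_{i,1},\dots,f_{i,\sigma}\}$, where $f_{i,j}\in N^{\rho_i}_{G_i}(a_i)$ is a neighbour of $w_j$ — such $f_{i,j}$ exists because $w_j\in V(G_\sigma)\subseteq N^{\rho_i+1}_{G_i}(a_i)$, and $f_{i,j}$ has a neighbour back in $N^{\rho_i-1}_{G_i}(a_i)\subseteq\mathbb{B}_i$, so $A_i$ is connected. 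Then each $A_i$ is adjacent to each $B_j$, and by the nesting observation above all the $A_i$ and $B_j$ are pairwise disjoint and the $B_j$ are pairwise anticomplete. So $(A_0,\dots,A_{\sigma-1};B_1,\dots,B_\sigma)$ would be the required induced $K_{\sigma,\sigma}$-model, contradicting the hypothesis.

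The one condition that is \emph{not} automatic — and which I expect to be the main obstacle — is that the $A_i$ must be pairwise \emph{anticomplete}: the inner balls are fine, but the finger $f_{i,j}$ lies in the sphere $N^{\rho_i}_{G_i}(a_i)$, which is adjacent to $N^{\rho_i+1}_{G_i}(a_i)$, and \emph{all} of $A_{i+1},\dots,A_{\sigma-1}$ lies in that latter sphere (being contained in $G_{i+1}$). Reconciling this is exactly where the slack $\rho_i\ge 2\sigma-1$ and the length-$\ge 4\sigma$ geodesic $W$ get spent: one chooses the $O(\sigma)$ fingers of level $i$ and then re-chooses the subgraph $G_{i+1}$ handed down to the deeper levels — re-selecting the anchor vertices $w_j$ along $W$ as needed — so that only those few finger-vertices are ``close'' to the next level and can be kept isolated from it. If no such choice exists at some level, then fewer than $\sigma$ vertices of $N^{\rho_i}_{G_i}(a_i)$ separate $a_i$ from a large set of vertices of $W$ in $G_i$; re-grouping the construction around such a bounded separator — a Menger-type argument, which is what makes the induction on $\sigma$ the natural framework — produces the $K_{\sigma,\sigma}$ directly. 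Pushing this dichotomy through while keeping all $2\sigma$ parts simultaneously disjoint and correctly (non)adjacent is the crux of the argument.
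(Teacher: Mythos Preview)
Your construction has the $A$-side and $B$-side swapped relative to the clean argument, and this swap is exactly what manufactures the obstacle you flag. Attach the fingers to the \emph{bottom} vertices instead of to the inner balls: choose $x_1,\dots,x_\sigma\in V(G_\sigma)$ pairwise at distance at least $4$ \emph{in $G$} (not merely along $W$); this is possible because on any path $P$ in $G_\sigma$ from $a_\sigma$ to $a_{\sigma+1}$ the function $v\mapsto\dist_G(a_\sigma,v)$ changes by at most $1$ along each edge, so one can pick $x_i$ with $\dist_G(a_\sigma,x_i)=4i$. For each $i\in\{1,\dots,\sigma\}$ and each level $j\in\{0,\dots,\sigma-1\}$, let $y_{i,j}\in N^{\rho_j}_{G_j}(a_j)$ be a neighbour of $x_i$, and set
\[
A_i=G[\{x_i,y_{i,0},\dots,y_{i,\sigma-1}\}],\qquad B_j=G\Bigl[\textstyle\bigcup_{\rho<\rho_j}N^{\rho}_{G_j}(a_j)\Bigr].
\]
Each $A_i$ lies in the ball of radius $1$ around $x_i$ in $G$, so the $A_i$ are pairwise anticomplete \emph{automatically} from $\dist_G(x_i,x_{i'})\ge 4$. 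The $B_j$ are your inner balls $\mathbb{B}_j$, pairwise anticomplete by the nesting you already observed; and $y_{i,j}\in V(A_i)$ has a neighbour in $N^{\rho_j-1}_{G_j}(a_j)\subseteq V(B_j)$. No induction on $\sigma$, no Menger argument, and one only needs $\rho_j\ge 1$ (which follows a posteriori from the $A_i$ being disjoint when $\sigma\ge 2$).

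Your proposed repair---re-choosing $G_{i+1}$ and the anchors $w_j$ level by level, with a Menger-type dichotomy when this fails---is not worked out, and I do not see that it goes through cleanly. The fingers $f_{i,j}$ must be adjacent to the \emph{final} anchors $w_j\in V(G_\sigma)$, so adjusting the $w_j$ at level $i$ risks breaking adjacencies already committed at earlier levels; and in the ``bounded separator'' branch you still need a full $K_{\sigma,\sigma}$, not a $K_{\sigma-1,\sigma-1}$, so it is unclear what the inductive hypothesis actually delivers. The role swap above dissolves the difficulty entirely.
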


\begin{proof} 
Suppose for a contradiction that there is a $(\sigma,4\sigma)$-abyss $(a_0,\ldots, a_{\sigma+1})$ in $G$. Let $G_0$ and $(\rho_i,G_{i+1}:i\in \{0,\ldots, \sigma-1\})$ be as in the definition; that is, $G_0$ is a connected induced subgraph of $G[X]$ such that $a_0,\ldots, a_{\sigma+1}\in V(G_0)$, and for each $i\in \{0,\ldots, \sigma-1\}$, we have $\rho_i\in \poi\cup \{0\}$ and $G_{i+1}$ is a connected induced subgraph of $G[N_{G_i}^{\rho_{i}+1}(a_i)]$ such that $a_{i+1},\ldots, a_{\sigma+1}\in V(G_{i+1})$.  In particular, we have $V(G_{\sigma})\subseteq \cdots\subseteq V(G_{0})$. Also, $G_{\sigma}$ is a connected induced subgraph of $G$ with $a_{\sigma}, a_{\sigma+1}\in V(G_{\sigma})$ where $a_{\sigma}, a_{\sigma+1}$ are distinct (because $\dist_{G}(a_{\sigma},a_{\sigma+1})\geq 4\sigma\geq 4$). It follows that $G$ has an induced minor isomorphic to $K_{1,1}$, and so $\sigma\geq 2$. Let us now show that:

\sta{\label{st:pwfarapart} There are $\sigma$ vertices $x_1,\ldots, x_{\sigma}\in V(G_{\sigma})$ that are pairwise at distance at least $4$ in $G$.}

Since $a_{\sigma},a_{\sigma+1}\in V(G_{\sigma})$ and $G_{\sigma}$ is connected, there is a path $P$ in $G_{\sigma}$ from $a_{\sigma}$ to $a_{\sigma+1}$. Note that $|\dist_{G}(a_{\sigma},x)-\dist_{G}(a_{\sigma},x')|\leq 1$ for every $xx'\in E(P)$. Since $\dist_{G}(a_{\sigma},a_{\sigma})=0$ and $\dist_{G}(a_{\sigma},a_{\sigma+1})\geq 4\sigma$, there exist $x_1,\ldots, x_{\sigma}\in V(P)\subseteq V(G_{\sigma})$ such that for each $i\in \{1,\ldots,\sigma\}$, we have $\dist_G(a_{\sigma},x_i)=4i$, and so for all $i,i'\in \{1,\ldots,\sigma\}$ with $i<i'$, we have $\dist_G(x_i,x_{i'})\geq \dist_G(a_{\sigma},x_{i'})-\dist_G(a_{\sigma},x_i)=4(i'-i)\geq 4$. This proves \eqref{st:pwfarapart}.
\medskip

Let $x_1,\ldots, x_{\sigma}\in V(G_{\sigma})$ be as in \eqref{st:pwfarapart}. For every $i\in \{1,\ldots,\sigma\}$ and $j\in \{0,\ldots,\sigma-1\}$, since $x_i\in V(G_{\sigma})\subseteq V(G_{j+1})\subseteq N_{G_j}^{\rho_{j}+1}(a_j)$, and since $G_{j}$ is connected, it follows that $x_i$ has a neighbor $y_{i,j}\in N_{G_j}^{\rho_{j}}(a_j)$ in $G_j$ (and so in $G$). For each $i\in \{1,\ldots,\sigma\}$, let 
$$A_i=G[\{x_i, y_{i,0},y_{i,1},\ldots,y_{i,\sigma-1}\}].$$

Then $A_1,\ldots, A_{\sigma}$ are connected, and by \eqref{st:pwfarapart}, $A_1,\ldots, A_{\sigma}$ are pairwise anticomplete in $G$. 

In particular, since $\sigma\geq 2$ and $A_1,\ldots, A_{\sigma}$ are pairwise anticomplete in $G$, it follows that for each $j\in \{0,\ldots,\sigma-1\}$, we have $\rho_j\geq 1$, which in turn implies that
$$B_j=G\left[\bigcup_{\rho=0}^{\rho_j-1}N^{\rho}_{G_{j}}(a_j)\right]$$
is non-null and connected. Moreover, for all $j,j'\in \{0,\ldots,\sigma-1\}$ with $j<j'$, by the definition of $B_j$ and since $V(B_{j'})\subseteq V(G_{j'})\subseteq V(G_{j+1})\subseteq N^{\rho_j+1}_{G_j}(a_j)$, it follows that $B_j$ and $B_{j'}$ are anticomplete in $G$. 

Furthermore, for all $i\in \{1,\ldots,\sigma\}$ and $j\in \{0,\ldots,\sigma-1\}$, it is immediate from the definitions of $A_i$ and $B_j$ that $V(A_i)\cap V(B_j)=\varnothing$. Also, since $G_j$ is connected and $y_{i,j}\in V(A_i)\cap N_{G_j}^{\rho_{j}}(a_j)$, it follows that $y_{i,j}\in V(A_i)$ has a neighbor in $N_{G_j}^{\rho_{j}-1}(a_j)\subseteq V(B_j)$; thus, $A_i$ and $B_j$ are not anticomplete in $G$. But now $(A_1,\ldots,A_{\sigma}; B_0,\ldots, B_{\sigma-1})$ is an induced $K_{\sigma,\sigma}$-model in $G$, a contradiction. This completes the proof of Lemma~\ref{lem:abyssmain}
\end{proof}

We continue with more definitions. Let $G$ be a graph and let $c\in \poi$. A \textit{$c$-vertex-coloring of $G$} is a partition $(V_1,\ldots, V_{c})$ of $V(G)$. Let $H$ be an induced subgraph of $G$ and let $(V_1,\ldots, V_{c})$ be a $c$-vertex-coloring of $H$. We say that $(V_1,\ldots, V_{c})$ has \textit{$G$-diameter less than $h\in \poi$} if for each $i\in \{1,\ldots, c\}$, every component $C$ of $H[V_i]=G[V_i]$ satisfies $\diam_G(V(C))< h$. For instance, a $c$-vertex-coloring $(V_1,\ldots, V_{c})$ of a graph $G$ is \textit{proper} -- meaning $V_1,\ldots, V_{c}$ are all stable sets in $G$ -- if and only if $(V_1,\ldots, V_{c})$ has $G$-diameter less than $1$.

Next, we apply \Cref{lem:abyssmain} to prove that:

 \begin{lemma}\label{lem:mainvertexcluster}
      Let $\sigma\in \poi\cup \{0\}$, let $\theta\in \poi$, let $G$ be a graph and let $X\subseteq V(G)$ such that there is no $(\sigma,\theta)$-$X$-abyss in $G$. Then $G[X]$ admits a $2^{\sigma}$-vertex-coloring of $G$-diameter less than $\theta$.
 \end{lemma}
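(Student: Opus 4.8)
The plan is to proceed by induction on $\sigma$. For the base case $\sigma=0$, note that a $(0,\theta)$-$X$-abyss in $G$ is precisely a pair $(a_0,a_1)$ of vertices lying in a common connected induced subgraph of $G[X]$ with $\dist_G(a_0,a_1)\geq \theta$ (the list of conditions indexed by $i\in\{0,\ldots,-1\}$ being vacuous, and $\dist_G(a_\sigma,a_{\sigma+1})=\dist_G(a_0,a_1)$). Hence the assumption that there is no $(0,\theta)$-$X$-abyss in $G$ is equivalent to the statement that every component of $G[X]$ has $G$-diameter less than $\theta$, and so the trivial $1$-vertex-coloring is a $2^0$-vertex-coloring of $G[X]$ of $G$-diameter less than $\theta$.

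Now assume $\sigma\geq 1$ and that the lemma holds for $\sigma-1$. First I would reduce to the case that $G[X]$ is connected: for each component $C$ of $G[X]$, a $(\sigma,\theta)$-$V(C)$-abyss in $G$ would also be a $(\sigma,\theta)$-$X$-abyss in $G$ (since a connected induced subgraph of $G[V(C)]$ is a connected induced subgraph of $G[X]$), so there is none; and since distinct components of $G[X]$ are pairwise anticomplete in $G$, the $2^\sigma$-vertex-colorings obtained for the individual components of $G[X]$ merge into a single $2^\sigma$-vertex-coloring of $G[X]$ of $G$-diameter less than $\theta$. So assume $G[X]$ is connected, fix $a_0\in X$, and for $\rho\in\poi\cup\{0\}$ write $L_\rho=N^\rho_{G[X]}(a_0)$; then $(L_\rho:\rho\in\poi\cup\{0\})$ partitions $X$, and $L_\rho$ and $L_{\rho'}$ are anticomplete in $G$ whenever $|\rho-\rho'|\geq 2$ (they are anticomplete in $G[X]$, which is an induced subgraph of $G$).

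The core of the argument is the claim that \emph{for every $\rho\in\poi$ and every component $D$ of $G[L_\rho]$, there is no $(\sigma-1,\theta)$-$V(D)$-abyss in $G$}. To prove it, I would take a putative $(\sigma-1,\theta)$-$V(D)$-abyss $(b_0,\ldots,b_\sigma)$ together with its witnesses $H_0$ and $(\mu_j,H_{j+1}:0\leq j\leq \sigma-2)$, and verify that the $(\sigma+2)$-tuple $(a_0,b_0,b_1,\ldots,b_\sigma)$ is a $(\sigma,\theta)$-$X$-abyss in $G$: one uses $G[X]$ itself as the outermost connected induced subgraph $G_0$ (which contains $a_0$ and all of $V(H_0)\subseteq V(D)\subseteq X$), sets $\rho_0=\rho-1$ so that $N^{\rho_0+1}_{G[X]}(a_0)=L_\rho\supseteq V(H_0)$, and takes $H_0,H_1,\ldots$ and $\mu_0,\mu_1,\ldots$ as the remaining witnesses, reindexed by one (so $G_{i}=H_{i-1}$ and $\rho_i=\mu_{i-1}$ for $i\geq 1$). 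The inner abyss's distance requirement $\dist_G(b_{\sigma-1},b_\sigma)\geq\theta$ becomes the required $\dist_G(a_\sigma,a_{\sigma+1})\geq\theta$. This contradicts the hypothesis, and the only delicate point is checking that this reindexing matches the recursive definition of an abyss exactly.

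Granting the claim, I would apply the inductive hypothesis to each component $D$ of each $G[L_\rho]$ with $\rho\geq 1$ to obtain a $2^{\sigma-1}$-vertex-coloring $c_D$ of $G[V(D)]=D$ of $G$-diameter less than $\theta$ (and treat $L_0=\{a_0\}$ as a trivially colored singleton block). One then colors $G[X]$ with $2^\sigma=2\cdot 2^{\sigma-1}$ colors by assigning a vertex $v\in L_\rho$ the pair $(\rho\bmod 2,\ c_{D_v}(v))$, where $D_v$ is the component of $G[L_\rho]$ containing $v$. To see this coloring has $G$-diameter less than $\theta$: any $G$-neighbor of $v\in L_\rho$ lying in $X$ lies in $L_{\rho-1}\cup L_\rho\cup L_{\rho+1}$, and among these only $L_\rho$ shares the parity of $\rho$; moreover distinct components of $G[L_\rho]$ are anticomplete in $G$, and $D=G[V(D)]$. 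Hence each component of each color class is contained in a component of some $G[c_D^{-1}(k)]$, which has $G$-diameter less than $\theta$ by construction; this completes the induction. I expect the main obstacle to be precisely the bookkeeping in the claim — confirming that prepending $a_0$ to an inner abyss yields a genuine outer abyss with all indices shifted consistently.
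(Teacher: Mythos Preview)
Your proposal is correct and follows essentially the same approach as the paper: induction on $\sigma$, BFS layering from a root $a_0$ in each component of $G[X]$, showing each layer admits a $2^{\sigma-1}$-coloring via the inductive hypothesis (because a $(\sigma-1,\theta)$-abyss in a layer can be prepended with $a_0$ to yield a $(\sigma,\theta)$-$X$-abyss), and combining the layer colorings using the parity of $\rho$. The only cosmetic difference is that you apply the inductive hypothesis to each component $D$ of $G[L_\rho]$ separately, whereas the paper applies it directly to the full layer $N^\rho_{G_0}(a_0)$; since the abyss definition already localizes to a connected induced subgraph, the two are equivalent.
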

\begin{proof}
    We proceed by induction on $\sigma$ (for fixed $\theta$). It suffices to that every component $G_0$ of $G[X]$ admits a $2^{\sigma}$-vertex-coloring of $G$-diameter less than $\theta$. Assume that $\sigma=0$. Then there is no $(0,\theta)$-$X$-abyss in $G$, which in particular means that there are no two vertices $a_0,a_1\in V(G_0)$ with $\dist_{G}(a_0,a_1)\geq \theta$. But then $\diam_{G}(V(G_0))<\theta$, and so $G_0$ admits a $1$-vertex-coloring of $G$-diameter less than $\theta$, as desired.

    Assume that $\sigma\geq 1$. Choose a vertex $a_0\in V(G_0)$. Since $G_0$ is connected, there exists $R\in \poi\cup \{0\}$ such that $N_{G_0}^{\rho}(a_0)\neq \varnothing$ for every $\rho\in \{0,\ldots, R\}$, and $N_{G_0}^{\rho}(a_0)=\varnothing$ for all $\rho\in \poi$ with $\rho>R$. We claim that:

    \sta{\label{st:colorlevels} For all $\rho\in \{0,\ldots, R\}$, $G[N_{G_0}^{\rho}(a_0)]$ admits a $2^{\sigma-1}$-vertex-coloring $\left(V^{\rho}_1,\ldots, V^{\rho}_{2^{\sigma-1}}\right)$ of $G$-diameter less than $\theta$.}

    Suppose not. Then by the inductive hypothesis, there is a $(\sigma-1,\theta)$-$N_{G_0}^{\rho}(a_0)$-abyss $(a_1,\ldots, a_{\sigma+1})$ in $G$. But now $(a_0,a_1,\ldots, a_{\sigma+1})$ is a $(\sigma,\theta)$-$X$-abyss in $G$, a contradiction. This proves \eqref{st:colorlevels}.
    \medskip    
    
   Henceforth, for every $\rho\in \{0,\ldots, R\}$, let $\left(V^{\rho}_1,\ldots, V^{\rho}_{2^{\sigma-1}}\right)$ be as given by \eqref{st:colorlevels}. Let $I_0$ be the set of all even numbers in $\{0,\ldots, R\}$ and let $I_1$ be the set of all odd numbers in $\{0,\ldots, R\}$. 
   
   For each $i\in \{1,\ldots, 2^{\sigma-1}\}$, let
    $$V_{0,i}=\bigcup_{\rho\in I_0}V^{\rho}_i$$
    and let
    $$V_{1,i}=\bigcup_{\rho\in I_1}V^{\rho}_i.$$
    
Then, from \eqref{st:colorlevels}, since $(N^{\rho}_{G_0}(u):\rho\in \{0,\ldots, R\})$ is a partition of $V(G_0)$, and since $N^{\rho}_{G_0}(a_0)$ and $N^{\rho'}_{G_0}(a_0)$ are anticomplete in $G_0$ for all  $\rho,\rho'\in \{0,\ldots, R\}$ with $|\rho-\rho'|>1$, it follows that $(V_{0,i},V_{1,i}: i\in \{1,\ldots, 2^{\sigma-1}\})$ is a $2^{\sigma}$-vertex-coloring of $G_0$ with $G$-diameter less than $\theta$. This completes the proof of Lemma~\ref{lem:mainvertexcluster}.
\end{proof}

We need the next two results (we do not need the definition of ``bounded expansion'' -- it merely bridges the two results). Recall that a \textit{star} is a graph isomorphic to $K_{1,s}$ for some $s\in \poi\cup \{0\}$. 

\begin{theorem}[Dvo\v{r}\'{a}k; Theorem 4 in \cite{dvorak}]\label{thm:dvorakexpansion}
    For every graph $H$ and every $\tau\in \poi$, the class of all graph with no induced subgraph isomorphic to any subdivision of $H$ and no subgraph isomorphic to $K_{\tau,\tau}$ has ``bounded expansion''.
\end{theorem}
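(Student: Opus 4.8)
The plan is to prove \Cref{thm:dvorakexpansion} --- which is a result of Dvo\v{r}\'{a}k --- by reducing ``bounded expansion'' to a statement about subdivisions of complete graphs, and then using $K_{\tau,\tau}$-freeness to turn a dense, bounded-depth topological minor into an \emph{induced} subdivision of $H$. Recall the characterization (Ne\v{s}et\v{r}il and Ossona de Mendez) that a class has bounded expansion if and only if for every $r\in\poi$ there is a bound $N=N(r)$ such that no graph in the class contains, as a subgraph, a subdivision of a graph $F$ with $|E(F)|\ge N|V(F)|$ in which every edge of $F$ is subdivided at most $2r$ times. So fix $r$, put $h=|V(H)|$, and suppose toward a contradiction that some $G$ in the class contains such a structure, with $N$ chosen enormous in terms of $h$, $\tau$, and $r$. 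Then $F$ has a subgraph of minimum degree exceeding $N$, so by the classical theorem (Bollob\'{a}s and Thomason; Koml\'{o}s and Szemer\'{e}di) that graphs of large minimum degree contain subdivisions of large complete graphs, $F$ contains a subdivision of $K_m$ with $m$ large; worsening constants if needed, we may arrange every branch path of this subdivision to have length at least $2$. Pulling this subdivision back through the given $(\le 2r)$-subdivision shows that $G$ contains a \emph{proper} subdivision of $K_m$ as a subgraph. Its branch paths may be long, but this does no harm, since we only aim to produce a proper (not a bounded-length) subdivision of $K_h$ at the end.

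Everything therefore reduces to the core claim: \emph{if $G$ has no $K_{\tau,\tau}$ subgraph and contains a proper subdivision of $K_m$ as a subgraph, with $m$ sufficiently large in terms of $h$ and $\tau$, then $G$ contains an induced proper subdivision of $K_h$}. Granting this, $G$ also contains an induced proper subdivision of $H$ (by the elementary fact recorded in Section~\ref{sec:notation}), hence an induced subgraph isomorphic to a subdivision of $H$, contradicting membership in the class. To prove the core claim I would first replace each branch path of the $K_m$-subdivision by a shortest path in $G$ between its ends \emph{using only the vertices of that branch path}; this makes every branch path induced in $G$ while keeping the branch paths pairwise internally disjoint. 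Next, I would apply Ramsey's theorem to the complete graph on the $m$ branch vertices, $2$-coloured by adjacency in $G$: a monochromatic clique of size $2\tau$ would contain $K_{\tau,\tau}$, so there is a set of $\Omega(\log m)$ pairwise nonadjacent branch vertices; discard the remaining branch vertices together with the paths incident to them (each surviving branch path now has length at least $2$). What survives is a proper subdivision of a still-large $K_{m'}$ whose branch vertices are pairwise nonadjacent and whose branch paths are induced, the only possibly offending edges of $G$ being \emph{transversal} chords: from a branch vertex to the interior of a path joining two other branch vertices, or between the interiors of two distinct branch paths.

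Eliminating the transversal chords is the main obstacle, and it is precisely where $K_{\tau,\tau}$-freeness does essential rather than cosmetic work. The strategy is: (i) use $K_{\tau,\tau}$-freeness to bound how many branch paths can interact with a single fixed branch vertex, or with a single fixed branch path, in any one prescribed fashion; and (ii) run a further Ramsey/pigeonhole argument over the pairs of surviving branch vertices, tagging each pair by the ``local interaction type'' of the corresponding branch path with everything around it, so that once $m$ is astronomically large relative to $h$ and $\tau$ there is a sub-collection of $h$ branch vertices on which the interaction type is the trivial one; the $\binom h2$ induced branch paths among them then pairwise avoid one another's closed neighbourhoods and avoid the other branch vertices, so their union is an induced proper subdivision of $K_h$. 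The step I expect to be genuinely delicate is making the ``interaction type'' bookkeeping precise and verifying that no nontrivial type can survive the final pigeonhole step without exhibiting a $K_{\tau,\tau}$; this is the technical heart of Dvo\v{r}\'{a}k's argument. Unwinding the reductions then produces, for every $r\in\poi$, the required bound $N(r)=N(h,\tau,r)$, which is exactly the assertion that the class has bounded expansion.
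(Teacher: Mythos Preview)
The paper does not prove \Cref{thm:dvorakexpansion}: it is quoted as a result of Dvo\v{r}\'{a}k (Theorem~4 in \cite{dvorak}) and used as a black box, together with \Cref{thm:nodmexpansion}, to obtain \Cref{cor:starcomponents}. There is therefore no proof in the paper to compare your proposal against.

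That said, your sketch is a reasonable outline of how Dvo\v{r}\'{a}k's theorem is actually proved. The reduction via the topological $\widetilde{\nabla}_r$ characterization of bounded expansion, followed by Bollob\'{a}s--Thomason/Koml\'{o}s--Szemer\'{e}di to extract a subdivided $K_m$, is correct. The ``core claim'' you isolate --- that a $K_{\tau,\tau}$-subgraph-free graph containing a (not necessarily induced) subdivision of $K_m$ with $m$ large must contain an \emph{induced} proper subdivision of $K_h$ --- is precisely the substance of Dvo\v{r}\'{a}k's argument, and your two-stage cleanup (Ramsey on branch vertices to get nonadjacency, then a further Ramsey/pigeonhole on interaction types between branch paths) is the right shape. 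Two minor remarks: you do not need to arrange the branch paths in $F$ to have length at least $2$ up front, since once you pass to pairwise nonadjacent branch vertices in $G$ the paths between them automatically have length at least $2$; and you are right that the transversal-chord bookkeeping is where the real work lies --- you have not actually carried it out, so what you have written is a plan rather than a proof, but the plan is sound.
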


\begin{theorem}[Ne\v{s}et\v{r}il and Ossona de Mendez; Theorem 7.1 in \cite{NOdMexpansion}]\label{thm:nodmexpansion}
    If a class $\mca{G}$ has ``bounded expansion'', then there is a constant $c=c(\mca{G})\in \poi$ such that every graph $G\in \mca{G}$ admits a proper $c$-vertex-coloring $(V_1,\ldots, V_{c})$ where for all distinct $i,j\in \{1,\ldots, c\}$, every component of $G[V_i\cup V_j]$ is a star.
\end{theorem}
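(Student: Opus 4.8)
The plan is to invoke the theory of \emph{transitive--fraternal augmentations}: the statement is exactly the case $p=2$ of the low tree-depth (``$\chi_p$'') colouring theorem, and for $p=2$ a \emph{single} augmentation step already does the job. Recall that ``$\mca{G}$ has bounded expansion'' means that for every $r\in\poi\cup\{0\}$ the quantity $\nabla_r(\mca{G})$, the supremum of $|E(M)|/|V(M)|$ over all depth-$r$ minors $M$ of graphs in $\mca{G}$, is finite. In particular $\nabla_0(\mca{G})<\infty$, so every $G\in\mca{G}$ is $D$-degenerate with $D:=2\lceil\nabla_0(\mca{G})\rceil$, hence admits an acyclic orientation $\vec{G}$ in which every vertex has in-degree at most $D$.

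First I would perform one augmentation step. Given $\vec{G}$ as above, let $\vec{G}^{+}$ be the digraph on $V(G)$ obtained by adding to $\vec{G}$: (i) a directed arc $x\to y$ whenever $\vec{G}$ has a directed path $x\to z\to y$ (the \emph{transitive} arcs), and (ii) for every pair of distinct vertices $x,y$ with a common out-neighbour in $\vec{G}$, an edge $xy$, oriented from the later to the earlier endpoint in a suitable degeneracy order (the \emph{fraternal} arcs). The transitive arcs raise each in-degree by at most $D^{2}$ (a new in-arc of $y$ is pinned down by an in-neighbour $z$ of $y$ and an in-neighbour $x$ of $z$). The content of the augmentation machinery --- and the \emph{only} place bounded expansion is used --- is the sparsity lemma stating that the undirected graph formed by the fraternal edges is $g$-degenerate for some $g=g(D,\nabla_1(\mca{G}))$, so it can be oriented with in-degree at most $g$; I would take this lemma from the development in \cite{NOdMexpansion} (which proves Theorem~\ref{thm:nodmexpansion} by precisely this route). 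Altogether $\vec{G}^{+}$ is a super-orientation of $G$ (same vertex set, $E(G)\subseteq E(\vec{G}^{+})$) with in-degree at most $d:=D^{2}+D+g=d(\mca{G})$.

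Next, since $\vec{G}^{+}$ has an orientation of in-degree at most $d$, its underlying graph is $2d$-degenerate, hence properly colourable with $c:=2d+1=c(\mca{G})$ colours; fix such a colouring $(V_1,\dots,V_c)$ of $\vec{G}^{+}$. As $E(G)\subseteq E(\vec{G}^{+})$, this is a proper $c$-colouring of $G$, so each $V_i$ is stable and each $G[V_i\cup V_j]$ is bipartite. It then suffices to show $G[V_i\cup V_j]$ contains no path on four vertices: a connected bipartite graph on at least three vertices in which some edge has both endpoints of degree $\ge 2$ contains such a path (step one vertex into each side from that edge, using bipartiteness to see the two new vertices are distinct), and a connected graph in which every edge has a leaf endpoint is a star. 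So suppose $v_0\dd v_1\dd v_2\dd v_3$ is a path in $G[V_i\cup V_j]$; properness forces the colours to alternate, say $v_0,v_2\in V_i$ and $v_1,v_3\in V_j$, with $v_0,v_1,v_2,v_3$ pairwise distinct. All three edges lie in $\vec{G}$. If $v_1$ is not a source of the sub-digraph on $\{v_0,v_1,v_2\}$ --- i.e.\ not both $v_1\to v_0$ and $v_1\to v_2$ --- then $(v_0v_1,v_1v_2)$ is a directed $2$-path or a pair of arcs into $v_1$, so by (i) or (ii) the edge $v_0v_2$ is in $\vec{G}^{+}$; likewise, if $v_2$ is not a source of the sub-digraph on $\{v_1,v_2,v_3\}$, then $v_1v_3\in E(\vec{G}^{+})$. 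But $v_1,v_2$ cannot both be sources, since that would require $v_1\to v_2$ and $v_2\to v_1$. Hence $\vec{G}^{+}$ contains $v_0v_2$ or $v_1v_3$, a monochromatic edge, contradicting properness of the colouring of $\vec{G}^{+}$. Thus $G[V_i\cup V_j]$ is $P_4$-free and every component is a star.

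The hard part is entirely Step~two: extracting from the bounded-expansion hypothesis an augmented digraph of in-degree bounded by a constant depending only on $\mca{G}$, i.e.\ proving that the fraternal edges form a graph of bounded degeneracy. That is where $\nabla_1(\mca{G})<\infty$ enters (and, for larger $p$, one would iterate the augmentation and invoke $\nabla_r$ for $r$ up to roughly $2^{p}$). Everything afterwards is a routine degeneracy colouring together with the short orientation case analysis above; I expect no further obstacle.
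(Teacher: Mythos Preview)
The paper does not give its own proof of this statement: it is quoted verbatim as Theorem~7.1 of Ne\v{s}et\v{r}il and Ossona de Mendez \cite{NOdMexpansion} (with the parenthetical remark that it is ``(5) implies (3) for $p=2$'' from that paper), and is used as a black box in the proof of Lemma~\ref{lem:mainedgecluster}. There is therefore nothing in the present paper to compare your argument against.

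That said, your sketch is essentially the proof given in the cited reference: one transitive--fraternal augmentation step, followed by a greedy proper colouring of the augmented digraph, together with the observation that a $P_4$ in any two colour classes forces a monochromatic edge in the augmentation. Your case analysis on the orientations of $v_0v_1,v_1v_2,v_2v_3$ is correct (if $v_1$ fails to be a source on $\{v_0,v_1,v_2\}$ then $v_0v_2$ is either transitive or fraternal, similarly for $v_2$, and both cannot be sources as $\vec{G}$ is an orientation). You are also right that the only substantive input from bounded expansion is the degeneracy bound on the fraternal edges via $\nabla_1(\mca{G})$, which you defer to \cite{NOdMexpansion}; that is exactly where the work lies, and the rest is routine.
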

(More precisely, Theorem 7.1 in \cite{NOdMexpansion} says that a number of conditions are equivalent for a class, and our \Cref{thm:nodmexpansion} above states that ``(5) implies (3) for $p=2$'' therein.)
\medskip

Combining Theorems~\ref{thm:dvorakexpansion} and \ref{thm:nodmexpansion}, we deduce that:

\begin{corollary}\label{cor:starcomponents}
    For all $\tau\in \poi$ and every graph $H$, there is a constant $c_{\ref{cor:starcomponents}}=c_{\ref{cor:starcomponents}}(H,\tau)\in \poi$ such that every graph $G$ with no induced subgraph isomorphic to any subdivision $H$ and no subgraph isomorphic to $K_{\tau,\tau}$ admits a proper $c_{\ref{cor:starcomponents}}$-vertex-coloring $(V_1,\ldots, V_{c_{\ref{cor:starcomponents}}})$ such that for all distinct $i,j\in \{1,\ldots, c_{\ref{cor:starcomponents}}\}$, every component of $G[V_i\cup V_j]$ is a star. 
\end{corollary}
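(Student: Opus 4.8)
The plan is to obtain \Cref{cor:starcomponents} by directly chaining the two quoted black boxes, Theorems~\ref{thm:dvorakexpansion} and \ref{thm:nodmexpansion}. Fix $\tau\in\poi$ and a graph $H$, and let $\mca{G}=\mca{G}(H,\tau)$ be the class of all graphs with no induced subgraph isomorphic to any subdivision of $H$ and no subgraph isomorphic to $K_{\tau,\tau}$. The first step is to invoke \Cref{thm:dvorakexpansion} to conclude that $\mca{G}$ has ``bounded expansion''. The second step is to apply \Cref{thm:nodmexpansion} to the class $\mca{G}$: this yields a constant $c=c(\mca{G})\in\poi$ such that every $G\in\mca{G}$ admits a proper $c$-vertex-coloring $(V_1,\ldots,V_c)$ in which, for all distinct $i,j\in\{1,\ldots,c\}$, every component of $G[V_i\cup V_j]$ is a star. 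Since the class $\mca{G}$ is determined entirely by $H$ and $\tau$, the constant $c$ depends only on $H$ and $\tau$, and we set $c_{\ref{cor:starcomponents}}=c_{\ref{cor:starcomponents}}(H,\tau)=c$. This is exactly the conclusion of the corollary.

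The only things to check are bookkeeping matters rather than genuine obstacles. First, one should confirm that the notion of ``proper $c$-vertex-coloring'' used in \Cref{cor:starcomponents} is the same as the one appearing in \Cref{thm:nodmexpansion}; this is immediate from the definition in \Cref{sec:mainpolyblock}, since a proper $c$-vertex-coloring is precisely a partition of $V(G)$ into $c$ stable sets. Second, one must make sure that the constant produced by \Cref{thm:nodmexpansion} depends only on the pair $(H,\tau)$ and not on the individual graph $G$; this follows because both cited theorems are phrased uniformly over the whole class $\mca{G}$, so ``bounded expansion'' of $\mca{G}$ and hence the constant $c(\mca{G})$ are attached to $\mca{G}=\mca{G}(H,\tau)$, not to any particular member of it.

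I expect no substantive difficulty in this argument: \Cref{cor:starcomponents} is a direct concatenation of the two imported results, and the ``hard part'' (the bounded-expansion theory) is entirely hidden inside Theorems~\ref{thm:dvorakexpansion} and \ref{thm:nodmexpansion}. The proof will therefore be only a few lines long, consisting of fixing $\mca{G}(H,\tau)$, applying \Cref{thm:dvorakexpansion}, then applying \Cref{thm:nodmexpansion}, and recording the resulting constant as $c_{\ref{cor:starcomponents}}(H,\tau)$.
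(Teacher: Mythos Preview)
Your proposal is correct and matches the paper's approach exactly: the paper simply states that the corollary follows by ``combining Theorems~\ref{thm:dvorakexpansion} and \ref{thm:nodmexpansion}'' without writing out any further details, and your argument is precisely the two-line concatenation you describe.
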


Finally, we are ready to prove \Cref{lem:mainedgecluster}:

\begin{proof}[Proof of \Cref{lem:mainedgecluster}]
    Let $\tau=2^{\sigma(\sigma-1)}$ and let $\gamma=c_{\ref{cor:starcomponents}}\left(K_{\sigma,\sigma}, \tau\right)$; thus, $\gamma$ depends only on $\sigma$. We will show that 
$$c_{\ref{lem:mainedgecluster}}=c_{\ref{lem:mainedgecluster}}(\sigma)=\binom{\gamma}{2}$$
satisfies the theorem.

Let $G$ be a graph with no induced minor isomorphic to $K_{\sigma,\sigma}$. By \Cref{lem:abyssmain}, there is no $(\sigma,4\sigma)$-$V(G)$-abyss in $G$, and so by \Cref{lem:mainvertexcluster}, $G$ admits a $2^{\sigma}$-vertex-coloring $(V_1,\ldots, V_{2^{\sigma}})$ of $G$-diameter less than $4\sigma$. For each $i\in \{1,\ldots, \sigma\}$, let $\mca{C}_i$ be the set of all components of $G[V_i]$. Let $\Gamma$ be the graph with vertex set $\mca{U}=\mca{C}_1\cup \cdots \cup \mca{C}_{2^{\sigma}}$ such $C,C'\in \mca{U}$ are adjacent in $\Gamma$ if and only if $C,C'$ are not anticomplete in $G$. It follows that $G$ has an induced minor isomorphic to $\Gamma$, and so $\Gamma$ has no induced minor isomorphic to $K_{\sigma,\sigma}$; in particular, $\Gamma$ is $K_{\sigma,\sigma}$-free. Also, $(\mca{C}_1, \ldots, \mca{C}_{2^{\sigma}})$ is a proper $2^{\sigma}$-vertex-coloring of $\Gamma$, and so $\Gamma$ is $K_{2^{\sigma}+1}$-free.

By the choice of $\tau$ and since $\Gamma$ is $(K_{\sigma,\sigma}, K_{2^{\sigma}+1})$-free, it follows from \Cref{thm:ramsey} that $\Gamma$ has no subgraph isomorphic to $K_{\tau,\tau}$. Since $\Gamma$ has induced minor isomorphic to $K_{\sigma,\sigma}$, it follows that $\Gamma$ has no induced subgraph isomorphic to any subdivision of $K_{\sigma,\sigma}$. Therefore, by \Cref{cor:starcomponents} and the choice of $\gamma$, there is a proper $\gamma$-vertex-coloring $(\mca{V}_1,\ldots, \mca{V}_{\gamma})$ of $\Gamma$ such that for all distinct $i,j\in \{1,\ldots, \gamma\}$, every component of $\Gamma[\mca{V}_i\cup \mca{V}_j]$ is a star. 

Recall that $c_{\ref{lem:mainedgecluster}}=\binom{\gamma}{2}$. Let $\{a_1,b_1\},\ldots, \{a_{c_{\ref{lem:mainedgecluster}}},b_{c_{\ref{lem:mainedgecluster}}}\}$ be an enumeration of all $2$-subsets of $\{1,\ldots, \gamma\}$. For each $i\in \{1, \ldots, c_{\ref{lem:mainedgecluster}}\}$, let
$$U_i=\bigcup_{C\in  \mca{V}_{a_i}\cup \mca{V}_{b_i}}V(C).$$
Then $U_1,\ldots, U_{c_{\ref{lem:mainedgecluster}}}\subseteq V(G)$ and $E(G[U_1])\cup \cdots \cup E(G[U_{c_{\ref{lem:mainedgecluster}}}])=E(G)$. Furthermore, for each $j\in \{1,\ldots, c_{\ref{lem:mainedgecluster}}\}$, since every component of $\Gamma[\mca{V}_{a_j}\cup \mca{V}_{b_j}]$ is a star, and $\diam_G(C)< 4\sigma$ for every $C\in \mca{V}_{a_j}\cup \mca{V}_{b_j}\subseteq \mca{U}$, it follows that every component $D$ of the graph $(V(G),E(G[U_j]))$ satisfies $\diam_G(V(D))\leq 3(4\sigma-1)+2<12\sigma$. 

Let $E_1=E(G[U_1])$, and for every $j\in \{2,\ldots, c_{\ref{lem:mainedgecluster}}\}$, let $E_j=E(G[U_j])\setminus (E_1\cup \cdots\cup E_{j-1})$.
Then $(E_1,\ldots, E_{c_{\ref{lem:mainedgecluster}}})$ is a $c_{\ref{lem:mainedgecluster}}$-edge-coloring of $G$. Moreover, for each $j\in \{1, \ldots, c_{\ref{lem:mainedgecluster}}\}$, since $E_{j}\subseteq E(G[U_j])$, it follows that for every component $D$ of the graph $(V(G),E_j)$, there is a component $D'$ of the graph $(V(G),E(G[U_j]))$ such that $V(D)\subseteq V(D')$, and so $\diam_G(V(D))\leq \diam_G(V(D'))<12\sigma$.  Hence, $(E_1,\ldots, E_{c_{\ref{lem:mainedgecluster}}})$ has $G$-diameter less than $12\sigma$. This completes the proof of \ref{lem:mainedgecluster}.
\end{proof}

\subsection{Induced minor models of bipartite graphs}\label{subsec:indmtorso}

In this subsection, we prove two technical lemmas about induced minor models of bipartite graphs with certain constraints. The first lemma has a fairly simple proof:

\begin{lemma}\label{lem:1-sub}
    Let $H$ be a bipartite graph with a bipartition $(A,B)$ such that every vertex in $A$ has degree two in $H$. Let $G$ be a graph with an induced minor isomorphic to $H$. Then there is an induced $H$-model $(X_v:v\in V(H))$ in $G$ such that $|V(X_a)|=1$ for all $a\in A$.
\end{lemma}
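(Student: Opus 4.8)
The plan is to take an induced $H$-model $(X_v : v \in V(H))$ in $G$ (one exists since $G$ has an induced minor isomorphic to $H$) that minimizes $\sum_{a\in A}|V(X_a)|$ over all induced $H$-models in $G$, and to prove that this model already satisfies $|V(X_a)| = 1$ for every $a\in A$. Suppose not, and fix $a\in A$ with $|V(X_a)|\ge 2$. Since $H$ is bipartite and $\deg_H(a)=2$, the two neighbours $b_1,b_2$ of $a$ in $H$ are distinct and both lie in $B$. For $i\in\{1,2\}$ let $R_i$ be the set of vertices of $X_a$ with a neighbour in $V(X_{b_i})$; both $R_1$ and $R_2$ are nonempty because $X_a$ is not anticomplete to $X_{b_1}$ or to $X_{b_2}$.

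Next I would carry out two reductions powered by minimality. First, $R_1\cap R_2=\varnothing$: if some $x$ lay in $R_1\cap R_2$, then replacing $X_a$ by $\{x\}$ would again be an induced $H$-model — the adjacencies to $X_{b_1},X_{b_2}$ survive, and shrinking a branch set only helps the anticompleteness conditions (the sole constraints on $X_a$ are through $b_1,b_2$ because $\deg_H(a)=2$) — contradicting minimality. Second, since $G[V(X_a)]$ is connected, choose a shortest path $P=p_0\dd p_1\dd\cdots\dd p_\ell$ inside $G[V(X_a)]$ with $p_0\in R_1$ and $p_\ell\in R_2$; as $R_1\cap R_2=\varnothing$ we get $\ell\ge 1$, $P$ is induced, and no internal vertex lies in $R_1\cup R_2$. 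Replacing $X_a$ by $V(P)$ is still an induced $H$-model, so by minimality $V(P)=V(X_a)$; hence $G[V(X_a)]$ is exactly the induced path $P$, with $p_0\in R_1\setminus R_2$ and $p_\ell\in R_2$.

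The crux is then a re-routing: set $X'_{b_1}=V(X_{b_1})\cup\{p_0\}$, set $X'_a=\{p_1,\dots,p_\ell\}$, and leave all other branch sets unchanged. I would verify that $(X'_v:v\in V(H))$ is again an induced $H$-model: $X'_{b_1}$ is connected because $p_0$ has a neighbour in $X_{b_1}$; $X'_a$ is a nonempty subpath of $P$; the edge $ab_1$ of $H$ is realized since $p_1\sim p_0\in X'_{b_1}$; the edge $ab_2$ is realized since $p_\ell\in X'_a$ still has a neighbour in $X_{b_2}$; every other edge of $H$ incident with $b_1$ survives since $V(X_{b_1})\subseteq X'_{b_1}$; for the non-edges, $X'_a\subseteq V(X_a)$ keeps its anticompleteness, while the only new vertex added, $p_0$, lies in $V(X_a)$ and hence is anticomplete to $X_v$ for every $v\notin\{b_1,b_2\}$, and it is anticomplete to $X_{b_2}$ precisely because $p_0\notin R_2$ — which covers the non-edge $b_1b_2$ (a non-edge of $H$ since $b_1,b_2\in B$). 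But then $\sum_{a'\in A}|V(X'_{a'})| = \sum_{a'\in A}|V(X_{a'})|-1$, contradicting the choice of the model; hence $|V(X_a)|=1$ for all $a\in A$.

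I expect the only real work to be the routine bookkeeping in the last step — checking that the re-routed tuple still meets every adjacency and anticompleteness requirement — with the two structural facts $\deg_H(a)=2$ (so $a$'s constraints pass only through $b_1,b_2$) and $b_1b_2\notin E(H)$ (from bipartiteness) doing all the heavy lifting. Everything else is a standard minimal-counterexample manipulation.
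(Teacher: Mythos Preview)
Your proof is correct and follows essentially the same approach as the paper: both take an induced $H$-model minimizing the total size of the $A$-branch-sets, assume some $X_a$ has at least two vertices, and derive a contradiction by absorbing part of $X_a$ into one of the two neighbouring branch sets $X_{b_i}$ (using that $b_1b_2\notin E(H)$ and that $a$ has no other neighbours). The only cosmetic difference is that you first reduce $X_a$ to an induced path before peeling off one endpoint, whereas the paper works directly with a component of $X_{a}\setminus x_1$; the underlying re-routing idea is identical.
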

\begin{proof}
   Since $G$ has an induced minor isomorphic to $H$, it follows that there is an induced $H$-model $(X_v:v\in V(H))$ in $G$ with  $|\bigcup_{a\in A}V(X_a)|$ as small as possible. We claim that $|V(X_a)|=1$ for all $a\in A$. Suppose not. Then there exists $a_0\in A$ such that $|V(X_{a_0})|\geq 2$. Let $N_H(a_0)=\{b_0,b_1\}\subseteq B$. For every $v\in V(H)\setminus \{a_0,b_0\}$, let $Y_v=X_v$ (in particular, we have $Y_{b_1}=X_{b_1}$). We show that:

   \sta{\label{st:minimalmodel} For every connected induced subgraph $Y$ of $X_{a_0}$ with $|V(Y)|<|V(X_{a_0})|$, either $X_{b_0}$ and $Y$ are anticomplete in $G$ or $X_{b_1}$ and $Y$ are anticomplete in $G$.}

Suppose $X_{b_0}$ and $Y$ are not anticomplete in $G$, and $X_{b_1}$ and $Y$ are not anticomplete in $G$. Let $Y_{a_0}=Y$ and let $Y_{b_0}=X_{b_0}$; in particular, we have $|V(Y_{a_0})|<|V(X_{a_0})|$. Then it is readily seen that $(Y_v:v\in V(H))$ is an induced $H$-model in $G$ with $|\bigcup_{a\in A}V(Y_a)|< |\bigcup_{a\in A}V(X_a)|$, a contradiction. This proves \eqref{st:minimalmodel}.
\medskip

Since $a_0b_1\in E(H)$, it follows that there is a vertex $x_1\in V(X_{a_0})$ that has a neighbor in $G$ in $V(X_{b_1})$. By \eqref{st:minimalmodel} and since $|V(X_{a_0})|\geq 2$, it follows that $x_1$ is anticomplete to $V(X_{b_0})$ in $G$. Thus, since $a_0b_0\in E(H)$, there is a vertex $x_0\in V(X_{a_0}\setminus x_1)$ that has a neighbor in $G$ in $V(X_{b_0})$. Let $C$ be the component of $X_{a_0}\setminus x_1$ containing $x_0$. Then, by \eqref{st:minimalmodel}, $C$ and $V(X_{b_1})$ are anticomplete in $G$. 

Now, let $Y_{a_0}=G[\{x_0\}]$ and let $Y_{b_0}=G[V(C)\cup V(X_{b_0})]$; in particular, we have $|V(Y_{a_0})|\leq |V(X_{a_0})|-|V(C)|<|V(X_{a_0})|$. Then, it is straightforward to check that $(Y_v:v\in V(H))$ is an induced $H$-model in $G$ with $|\bigcup_{a\in A}V(Y_a)|< |\bigcup_{a\in A}V(X_a)|$, a contradiction. This completes the proof of \ref{lem:1-sub}
\end{proof}

For our second lemma, we need several definitions. Given a tree $T$ and distinct vertices $x,y\in V(T)$, we denote by $T_{x\setminus y}$ the component of $T\setminus y$ that contains $x$. Let $G$ be a graph and let $\mf{t}=(T,\beta)$ be a tree decomposition of $G$. 

\begin{itemize}
    \item For every $S\subseteq V(T)$, we write $\beta(S)=\bigcup_{x\in S}\beta(x)$, and for every edge $xy\in E(T)$, we write $\beta(x,y)=\beta(x)\cap \beta(y)=\beta(V(T_{x\setminus y}))\cap \beta(V(T_{y\setminus x}))$ (the latter equality follows easily from the definition of a tree decomposition). 
    
    \item For $x\in V(T)$, the \textit{torso of $\mf{t}$ at $x$}, denoted $\mf{t}_{x}$, is the graph with $V(\mf{t}_{x})=\beta(x)$ such that for all distinct $u,v\in \beta(x)$, we have $uv\in E(\mf{t}_{x})$ if and only if either $uv\in E(G)$ or $u,v\in \beta(x,y)$ for some $y\in N_T(x)$. So $\mf{t}_{x}$ is obtained from $G[\beta(x)]$ by adding an edge between every two nonadjacent vertices in $G[\beta(x,y)]$ for all $y\in N_T(x)$.
    
    \item For $x\in V(T)$, the \textit{extended torso of $\mf{t}$ at $x$}, denoted $\mf{t}^{\ex}_{x}$, is the graph with $V(\mf{t}^{\ex}_{x})=V(G)$ such that for all distinct $u,v\in V(G)$, we have $uv\in E(\mf{t}^{\ex}_{x})$ if and only if either $uv\in E(G)$ or $u,v\in \beta(x,y)$ for some $y\in N_T(x)$. So $\mf{t}^{\ex}_{x}$ is obtained from $G$ by adding an edge between every pair of nonadjacent vertices in $G[\beta(x,y)]$ for all $y\in N_T(x)$. It follows that $\mf{t}_x=\mf{t}^{\ex}_x[\beta(x)]$.
\item We say that $\mf{t}$ is \textit{tight} if for each $x\in V(T)$ and every $y\in N_T(x)$, there is a component $C$ of $G[\beta(V(T_{y\setminus x}))\setminus \beta(V(T_{x\setminus y}))]$ such that every vertex in $\beta(x,y)$ is adjacent in $G$ to at least one vertex in $V(C)$. 
\end{itemize}

\begin{lemma}\label{lem:brambletoindm}
     Let $H$ be a bipartite graph with a bipartition $(A,B)$. Let $G$ be a graph, let $\mf{t}=(T,\beta)$ be a tight tree decomposition of a graph $G$, and let $x\in V(T)$ such that there is an induced $H$-model $\mf{m}=(X_v:v\in V(H))$ in $\mf{t}_x$ with $|V(X_a)|=1$ for all $a\in A$. Then $G$ has an induced minor isomorphic to $H$.
\end{lemma}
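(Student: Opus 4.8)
The plan is to lift the induced $H$-model $\mf{m}=(X_v:v\in V(H))$ from the torso $\mf{t}_x$ to an induced $H$-model $(Y_v:v\in V(H))$ in $G$ itself, keeping the $A$-branch-sets as the single vertices they already are and \emph{repairing} each ``fake'' edge of $\mf{t}_x$ -- that is, each $uv\in E(\mf{t}_x)\setminus E(G)$, which by definition of the torso satisfies $u,v\in\beta(x,y)$ for some $y\in N_T(x)$ -- by routing through a connected set of vertices that tightness supplies on the $y$-side of $x$. The one structural fact that makes this work is that each $\beta(x,y)$ induces a clique in $\mf{t}_x$: since $\mf{m}$ is an \emph{induced} model and $H$ has no edge inside $A$ and none inside $B$, for every $y\in N_T(x)$ at most one $b\in B$ has $V(X_b)\cap\beta(x,y)\neq\varnothing$ (two such branch-sets would be joined by a fake edge inside $\beta(x,y)$), at most one $a\in A$ lies in $\beta(x,y)$, and if such an $a$ and such a $b$ both exist then necessarily $ab\in E(H)$.

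First I would fix, for each $y\in N_T(x)$ with $\beta(x,y)\neq\varnothing$, a component $C_y$ of $G[\beta(V(T_{y\setminus x}))\setminus\beta(V(T_{x\setminus y}))]$ as provided by tightness, so that every vertex of $\beta(x,y)$ has a $G$-neighbor in $V(C_y)$. Using the partition $V(T)=V(T_{x\setminus y})\cup V(T_{y\setminus x})$ one checks that each vertex of $C_y$ appears only in bags of $T_{y\setminus x}$; this yields at once that $V(C_y)\cap\beta(x)=\varnothing$, that every $G$-neighbor of $V(C_y)$ lying in $\beta(x)$ already lies in $\beta(x,y)$, and that for distinct $y,y'$ the sets $V(C_y),V(C_{y'})$ are disjoint and anticomplete in $G$. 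I would then set $Y_a=G[\{a\}]$ for $a\in A$ (this is where the hypothesis $|V(X_a)|=1$ is used), and for $b\in B$ set
\[
Y_b=G\Big[V(X_b)\cup \bigcup\big\{\,V(C_y):y\in N_T(x),\ V(X_b)\cap\beta(x,y)\neq\varnothing\,\big\}\Big].
\]
By the clique observation each $C_y$ is used in at most one $Y_b$, so the $Y_v$ are non-null, pairwise disjoint, and disjoint from the vertices $\{a\}$ (recall $V(C_y)\cap\beta(x)=\varnothing$ while $V(X_b)\subseteq\beta(x)$).

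The rest is verification, which I expect to be routine. \emph{Connectivity:} take a spanning tree of $X_b$ in $\mf{t}_x$ and replace each fake edge $uv$ in it (with $u,v\in\beta(x,y)$, hence $C_y\subseteq V(Y_b)$) by a $u$--$v$ path with interior inside the connected set $C_y$; any added $C_y$ attaches to $V(X_b)$ via a vertex of $V(X_b)\cap\beta(x,y)$, so $G[V(Y_b)]$ is connected. \emph{Edges of $H$:} for $ab\in E(H)$, take a $\mf{t}_x$-neighbor $w\in V(X_b)$ of $a$; if $aw\in E(G)$ we are done, and otherwise $a,w\in\beta(x,y)$, so $C_y\subseteq V(Y_b)$ and tightness gives $a$ a $G$-neighbor in $C_y$. \emph{Non-edges of $H$:} since $G[\beta(x)]$ is $\mf{t}_x$ with some edges deleted, the traces on $\beta(x)$ of the branch-sets stay pairwise anticomplete in $G$; any additional $G$-adjacency between $Y_u$ and $Y_v$ with $uv\notin E(H)$ would involve some $C_y\subseteq V(Y_v)$, forcing a vertex of $Y_u$ into $\beta(x,y)$ and hence a fake edge of $\mf{t}_x$ between the $X_u$-part and the $X_v$-part -- contradicting anticompleteness in $\mf{m}$ -- or it would involve $C_y$ and $C_{y'}$ with $y\neq y'$, which are anticomplete. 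Hence $(Y_v:v\in V(H))$ is an induced $H$-model in $G$, so $G$ has an induced minor isomorphic to $H$.

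The main obstacle is conceptual, not computational: it is exactly the observation isolated in the first paragraph that because the adhesion sets $\beta(x,y)$ become cliques in the torso, an induced model in $\mf{t}_x$ cannot have two distinct $B$-branch-sets meeting the same $\beta(x,y)$, nor an $A$-vertex meeting a $\beta(x,y)$ that a non-neighbouring $B$-branch-set meets. This is precisely what allows the repair gadgets $C_y$ to be parcelled out among the $Y_b$ without creating any spurious adjacency, so that the word ``induced'' survives the passage from $\mf{t}_x$ down to $G$.
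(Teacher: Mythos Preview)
Your proposal is correct and follows essentially the same approach as the paper: fix the tight components $C_y$, attach each $C_y$ to the unique $B$-branch-set meeting $\beta(x,y)$, keep the $A$-branch-sets as singletons, and verify the induced-model axioms using that each $\beta(x,y)$ is a clique in $\mf{t}_x$. The paper's proof differs only cosmetically, defining the enlarged sets $Y_v$ uniformly for all $v\in V(H)$ before restricting back to $X_a$ for $a\in A$ at the end, whereas you make this choice up front; the core observation---that the clique structure of the adhesions forces $S_{v_1}\cap S_{v_2}=\varnothing$ for nonadjacent $v_1,v_2$---is identical.
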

\begin{proof}
 For each $y\in N_T(x)$, choose a component $C_y$ of $G[\beta(V(T_{y\setminus x}))\setminus \beta(V(T_{x\setminus y}))]$ such that every vertex in $\beta(x,y)$ has a neighbor in $G$ in $V(C_y)$ (this is possible as $\mf{t}$ is tight). Then, for all distinct $u_1,u_2\in \beta(x,y)$ with $u_1u_2\notin E(G)$, there is a path in $G$ from $u_1$ to $u_2$ whose interior is contained in $V(C_y)$. Also, since $(T,\beta)$ is a tree decomposition of $G$, it is readily seen that:

\sta{\label{st:tightproperties}$(C_y: y\in N_T(x))$ are pairwise anticomplete in $G$, and for every $y\in N_T(x)$, $\beta(x)\setminus \beta(y)$ and $V(C_y)$ are anticomplete in $G$.}

Next, for every $v\in V(H)$, let 
$$S_v=\{y\in N_T(x): V(X_v)\cap \beta(y)\neq \varnothing\}, \quad U_v=\bigcup_{y\in S_v} V(C_y), \quad Y_v=G[V(X_v)\cup U_v].$$
We show that:

\sta{\label{st:connectedbramble} $Y_v$ is connected for all $v\in V(H)$.}

Suppose that for some $v\in V(H)$, $Y_v$ has two distinct components $D_1,D_2$. Since $X_v$ is a non-null and connected induced subgraph of $\mf{t}_{x}$, and from the definitions of $S_v, U_v$ and $Y_v$, it follows that $\mf{t}^{\ex}_{x}[V(Y_v)]$ is connected, and so there are vertices $u_1\in V(D_1)$ and $u_2\in V(D_2)$ such that $u_1u_2\in E(\mf{t}^{\ex}_{x})$. Since $u_1u_2\notin E(G)$, it follows that for some $y\in N_T(x)$, we have $u_1,u_2\in \beta(x,y)$; in particular, $u_1,u_2\in V(X_v)\cap \beta(y)$, which in turn implies that $y\in S_v$, and so there is a path $P$ in $G$ from $u_1$ to $u_2$ with $V(P^*)\subseteq V(C_y)\subseteq U_v$. But then $V(P)\subseteq V(X_v)\cup U_v=V(Y_v)$, and thus $P$ is a path in $Y_v$ from $u_1$ to $u_2$, contrary to the assumption that $u_1$ and $u_2$ belong to distinct components of $Y_v$. This proves \eqref{st:connectedbramble}.

\sta{\label{st:antibramble} $Y_{v_1}$ and $Y_{v_2}$ are anticomplete in $G$ for all distinct $v_1,v_2\in V(H)$ with $v_1v_2\notin E(H)$.}

Since $\mf{m}$ is an induced $H$-model in $\mf{t}_x$, it follows that $V(X_{v_1})$ and $V(X_{v_2})$ are anticomplete in $\mf{t}_x$, and so in $G$ (because $E(G[\beta(x)])\subseteq E(\mf{t}_x)$). In particular, since $\beta(x,y)$ is a clique in $\mf{t}_{x}$ for every $y\in N_T(x)$, it follows that $S_{v_1}\cap S_{v_2}=\varnothing$. So by \eqref{st:tightproperties} and the definitions of $S_{v_1}, S_{v_2}$ and $U_{v_1}, U_{v_2}$, we have that $U_{v_1},U_{v_2}$ are anticomplete in $G$, and so are $V(X_{v_1}),U_{v_2}$ and $V(X_{v_2}),U_{v_1}$. This proves \eqref{st:antibramble}.

\sta{\label{st:AnotinB} For all $a\in A$ and $b\in B$ with $ab\in E(H)$, we have $V(X_{a})\cap V(Y_b)=\varnothing$, but $V(X_a)$ and $V(Y_b)$ are not anticomplete in $G$.}

Let $V(X_a)=\{u_a\}$. Since $\mf{m}$ is an induced $H$-model in $\mf{t}_x$, it follows that $u_a\notin V(X_b)$, and there exists $w\in V(X_b)$ such that $u_aw\in E(\mf{t}_x)$. Since $u_a\in V(\mf{t}_x)=\beta(x)$ and $V(Y_b)\setminus V(X_b)=U_b\subseteq V(G)\setminus \beta(x)$, it follows that $u_{a}\notin V(Y_b)$, and so $V(X_{a})\cap V(Y_b)=\varnothing$. It remains to show that $V(X_a)$ and $V(Y_b)$ are not anticomplete in $G$. If $u_{a}w\in E(G)$, then we are done. Therefore, we may assume that $u_{a}w\notin E(G)$, and so for some $y\in N_T(x)$, we have $u_a,w\in \beta(x,y)$. But now $y\in S_b$, and so $u_a\in V(X_{a})\cap\beta(x,y)$ has a neighbor in $G$ in $V(C_y)\subseteq U_b\subseteq V(Y_b)$, as desired. This proves \eqref{st:AnotinB}.
\medskip

For every $v\in V(H)$, define $Z_v$ as follows: If $v\in A$, then let $Z_v=X_v$, and if $v\in B$, then let $Z_v=Y_v$. It follows that $Z_v$ is always and induced subgraph of $Y_v$. Now, from \eqref{st:connectedbramble}, \eqref{st:antibramble}, and \eqref{st:AnotinB}, it follows that $(Z_v:v\in V(H))$ is an induced $H$-model in $G$. This completes the proof of \ref{lem:brambletoindm}.
\end{proof}

\subsection{Excluding subdivided complete graphs as subgraphs}\label{subsec:subcompletesubg} Here, we take the penultimate step toward the proof of \Cref{thm:mainpolyblock} by showing that:

\begin{lemma}\label{lem:subcompletesubg}
 For every planar graph $H$ and every $\sigma\in \poi$, there is a constant $d_{\ref{lem:subcompletesubg}}=d_{\ref{lem:subcompletesubg}}(H,\sigma)\in \poi$ such for every $\mu\in \poi$, every graph $G$ with no subgraph isomorphic to any subdivision of $K_{\mu}$ and no induced minor isomorphic to $H$ or $K_{\sigma,\sigma}$ satisfies 
    $\tw(G)< \mu^{d_{\ref{lem:subcompletesubg}}}$.
\end{lemma}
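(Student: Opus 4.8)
The plan is to reduce \Cref{lem:subcompletesubg} to the bounded-degree case handled by \Cref{lem:mainpolydegree}, by exhibiting a tree decomposition of $G$ whose torsos have maximum degree polynomial in $\mu$ and still exclude $H$ and $K_{\sigma,\sigma}$ as induced minors. First I would cash in the cheap consequences of forbidding all subdivisions of $K_{\mu}$ as subgraphs. Since $K_{\mu}$ is a subdivision of itself, $G$ is $K_{\mu}$-free, so $\omega(G)<\mu$; and since $K_{\mu,\binom{\mu}{2}}$ contains a subdivision of $K_{\mu}$, the graph $G$ has no subgraph isomorphic to $K_{\mu^{2},\mu^{2}}$ (one could also reach such a bound from $K_{\mu}$-freeness via two applications of \Cref{thm:ramsey}, since an induced $K_{\sigma,\sigma}$-subgraph is an induced minor). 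Moreover, every subgraph of $G$, having no subdivision of $K_{\mu}$, is $O(\mu^{2})$-degenerate, by the classical theorem of Bollob\'{a}s--Thomason and Koml\'{o}s--Szemer\'{e}di (or of Mader). Finally, $G$ has no induced subgraph isomorphic to a subdivision of $H$, nor of $K_{h}$ with $h=|V(H)|$, since such a subgraph would be an induced minor of $H$; consequently, by \Cref{thm:gridtograph}, if $\Gamma$ denotes the $28h$-grid and $\Gamma^{(1)}$ its $1$-subdivision then $G$ has no induced minor isomorphic to $\Gamma^{(1)}$ (because $H$ is an induced minor of $\Gamma$, hence of $\Gamma^{(1)}$), and likewise $G$ has no induced minor isomorphic to the $1$-subdivision $K^{(1)}_{\sigma,\sigma}$ of $K_{\sigma,\sigma}$.

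The heart of the proof is then to construct a tight tree decomposition $\mf{t}=(T,\beta)$ of $G$ of adhesion less than $a$ all of whose torsos $\mf{t}_{x}$ have maximum degree less than $\Delta$, for some $a,\Delta$ polynomial in $\mu$ with exponent depending only on $H$ and $\sigma$. I would build $\mf{t}$ by first decomposing $G$ along clique cutsets --- these have fewer than $\mu$ vertices and yield torsos that are induced subgraphs of $G$, hence still free of the excluded induced minors --- and then, inside each piece, iteratively pinching off bounded-degree chunks along small separators, keeping the decomposition tight at every stage. The point is that a separator, or a single vertex inside a torso, that is ``too large'' would, through the tightness of $\mf{t}$ (so that both sides of the separator carry a connected set meeting every separator vertex) together with the $O(\mu^{2})$-degeneracy, either be routed into a subdivision of $K_{\mu}$ in $G$, or produce a $K_{\mu^{2},\mu^{2}}$-subgraph of $G$, or else --- via \Cref{lem:1-sub} and \Cref{lem:brambletoindm}, applied to $\Gamma^{(1)}$ and $K^{(1)}_{\sigma,\sigma}$, which are bipartite with one side consisting of degree-two vertices --- produce an induced minor of $\Gamma^{(1)}$ or of $K^{(1)}_{\sigma,\sigma}$ in $G$; each of these is a contradiction. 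I expect this step, and in particular arranging that the torsos simultaneously have polynomially bounded degree and retain \emph{all} the forbidden induced minors (the bookkeeping for $K_{\sigma,\sigma}$ itself, as opposed to its subdivision, being the most delicate point, and where the clique-cutset layer of $\mf{t}$ is used), to be the main obstacle.

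Once $\mf{t}$ is in place the conclusion follows quickly. Each torso $\mf{t}_{x}$ has maximum degree less than $\Delta$ and no induced minor isomorphic to $H$ or $K_{\sigma,\sigma}$, so \Cref{lem:mainpolydegree} gives $\tw(\mf{t}_{x})\le \Delta^{d_{\ref{lem:mainpolydegree}}(h,\sigma)}$, which is polynomial in $\mu$. Since every adhesion set of $\mf{t}$ is a clique in the torsos on both of its sides, gluing optimal tree decompositions of the torsos along $T$ yields a tree decomposition of $G$ of width at most $\max_{x\in V(T)}\tw(\mf{t}_{x})+a$, and hence $\tw(G)<\mu^{d_{\ref{lem:subcompletesubg}}}$ for a suitable $d_{\ref{lem:subcompletesubg}}=d_{\ref{lem:subcompletesubg}}(H,\sigma)$, as required.
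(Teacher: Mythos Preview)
Your plan has a genuine gap at its core: you propose to build a tight tree decomposition of $G$ whose torsos \emph{all} have maximum degree polynomial in $\mu$, but you give no mechanism for achieving this, and indeed the paper does not claim it. The sentence ``iteratively pinching off bounded-degree chunks along small separators, keeping the decomposition tight at every stage'' is not a construction; it is where the proof is supposed to go. Forbidding subdivisions of $K_{\mu}$ does not obviously force such a decomposition to exist, and your list of contradictions (a $K_{\mu}$-subdivision, a $K_{\mu^{2},\mu^{2}}$-subgraph, or an induced minor via \Cref{lem:1-sub} and \Cref{lem:brambletoindm}) does not rule out a torso that is, say, $3$-connected with a vertex of huge degree but no large clique minor. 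You flag this as ``the main obstacle''; as written it is simply unproved.

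The paper sidesteps this by invoking an off-the-shelf structure theorem (\Cref{thm:tighttd}, due to Erde--Wei\ss auer and Wei\ss auer) which supplies a tight tree decomposition where each torso satisfies a \emph{dichotomy}: either it has fewer than $\mu^{2}$ vertices of degree at least $2\mu^{4}$, or it has no $K_{2\mu^{2}}$-minor. The paper then shows, via \Cref{lem:1-sub} and \Cref{lem:brambletoindm} exactly as you anticipate, that the relevant torso excludes the $1$-subdivision $H_{1}$ of $H$ and $K_{c,c}$ (for a suitable $c=c(H,\sigma)$) as induced minors. In the first branch of the dichotomy one removes the few high-degree vertices and applies \Cref{lem:mainpolydegree}; in the second branch one abandons \Cref{lem:mainpolydegree} altogether and instead bounds the treewidth via the grid-minor machinery (\Cref{thm:gridtograph}, \Cref{thm:gridtoinducedgrid}, \Cref{thm:bettergrid}), using that the torso has neither a large clique minor nor a large induced grid minor. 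Finally $\tw(G)=\max_{x}\tw(\mf{t}_{x})$ by \Cref{lem:bodtorso}. So the missing ingredients in your plan are \Cref{thm:tighttd} for the decomposition and the grid-minor route for the second branch; without them the argument does not close.
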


The proof follows a strategy used in our previous paper \cite{tw7} with Abrishami, Alecu, Chudnovsky and Spirkl (building in turn on ideas from \cite{lozin}). 

In particular, we will use the following, which, as pointed out in \cite{tw7} (see 6.2, 6.3 and 6.4 in therein), is an immediate corollary of Theorem 4 from \cite{tighttw} combined with Lemma 6 from \cite{Weissauerblock}:

\begin{theorem}[Erde and Wei\ss auer, Theorem 4 in \cite{tighttw}; Wei\ss auer, Lemma 6 in \cite{Weissauerblock}]\label{thm:tighttd}
    For all $\mu\in \poi$, every graph $G$ with no subgraph isomorphic to any subdivision of $K_{\mu}$ admits a tight tree decomposition $\mf{t}=(T,\beta)$ such that for every $x\in V(T)$, either $\mf{t}_{x}$ has fewer than $\mu^2$ vertices of degree at least $2\mu^4$, or $\mf{t}_{x}$ has no minor isomorphic to $K_{2\mu^2}$.
\end{theorem}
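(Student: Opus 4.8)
I would not prove \Cref{thm:tighttd} from scratch: it is the combination, spelled out in \cite{tw7} (see 6.2, 6.3 and 6.4 there), of Theorem~4 of \cite{tighttw} (Erde and Wei\ss auer) with Lemma~6 of \cite{Weissauerblock} (Wei\ss auer). The plan is to recall these two ingredients and glue them. The first supplies, for an arbitrary graph $G$, a \emph{tight} tree decomposition $\mf{t}=(T,\beta)$ whose torsos track the highly connected (block) structure of $G$; taking the connectivity threshold to be $2\mu^2$, every torso $\mf{t}_x$ is then either a $(2\mu^2-1)$-inseparable part of $G$ or has small adhesions together with bounded inseparable cores. Tightness — the property in the definition above, that each adhesion clique $\beta(x,y)$ is dominated in $G$ by one connected induced subgraph lying strictly on the far ($y$-)side of the edge $xy$ — is what we will use to couple the torsos back to $G$, and it is the torsos of this particular decomposition that I would feed into the second ingredient.

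That second ingredient, Lemma~6 of \cite{Weissauerblock}, is applied torso by torso: it analyzes precisely the blocks/cores produced above and states that, in a graph with no subgraph isomorphic to a subdivision of $K_\mu$, such a piece either contains at least $\mu^2$ vertices of degree at least $2\mu^4$ or has no minor isomorphic to $K_{2\mu^2}$. Since $\mf{t}$ exists and is tight by the first ingredient, every torso satisfies this dichotomy by the second, and $x\in V(T)$ was arbitrary, this is exactly the assertion of \Cref{thm:tighttd}.

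The step I expect to be the main obstacle — and that is handled in \cite{tw7} — is verifying that the two ingredients are compatible, i.e. that the torsos of the tight decomposition from \cite{tighttw} still satisfy the hypothesis of Lemma~6 of \cite{Weissauerblock}, namely having no subdivision of $K_\mu$. The danger is that passing to a torso adds a clique edge across each adhesion, which could in principle create a $K_\mu$-subdivision not present in $G$. One disposes of this using tightness: given such a subdivision inside $\mf{t}_x$, re-route each of its added clique edges through the connected subgraph of $G$ witnessing tightness on the far side of the relevant tree-edge. Since those downstream witnesses are pairwise anticomplete in $G$ and anticomplete to $\beta(x)\setminus\beta(y)$, and since a subdivision of $K_\mu$ is sparse (maximum degree at most $\mu-1$), these re-routings can be arranged compatibly to yield a $K_\mu$-subdivision inside $G$ — a contradiction. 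The bookkeeping needed to make the re-routings genuinely internally disjoint is the one delicate point, and it is exactly what 6.2--6.3 of \cite{tw7} take care of. With this compatibility established, the combination of the two cited theorems is immediate and \Cref{thm:tighttd} follows.
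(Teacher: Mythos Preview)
Your approach matches the paper's exactly: the paper does not prove \Cref{thm:tighttd} either, and simply records (in the sentence immediately preceding the statement) that it is an immediate corollary of Theorem~4 of \cite{tighttw} combined with Lemma~6 of \cite{Weissauerblock}, as spelled out in 6.2--6.4 of \cite{tw7}. One slip to fix: in your paraphrase of the second ingredient you wrote that the torso ``contains at least $\mu^2$ vertices of degree at least $2\mu^4$'', but the conclusion (and the statement of \Cref{thm:tighttd}) is that it has \emph{fewer than} $\mu^2$ such vertices --- otherwise the dichotomy you wrote would not match the theorem you are trying to derive.
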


We also need a few other results from the literature:

\begin{lemma}[See e.g. Lemma 5 in \cite{cliquetw}]\label{lem:bodtorso}
Let $G$ be a graph and let $\mf{t}=(T,\beta)$ be a tree decomposition of $G$. Then
$\tw(G)=\max_{x\in V(T)}\tw\left(\mf{t}_{x}\right)$.
\end{lemma}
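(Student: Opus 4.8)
The plan is to prove the two inequalities $\tw(G)\le\max_{x\in V(T)}\tw(\mf{t}_x)$ and $\tw(G)\ge\tw(\mf{t}_x)$ (for each fixed $x\in V(T)$) separately. The first is the one that matters downstream --- it is what lets one pass from the tight tree decomposition with well-behaved torsos supplied by \Cref{thm:tighttd} back to a bound on $\tw(G)$ --- so I would give it in full; the second is the more delicate direction and I would keep it shorter.

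For the upper bound, set $w=\max_{x\in V(T)}\tw(\mf{t}_x)$ and fix, for each $x\in V(T)$, a tree decomposition $(S_x,\gamma_x)$ of $\mf{t}_x$ of width at most $w$. The key observation is that, by the very definition of the torso, for every edge $xy\in E(T)$ the adhesion set $\beta(x,y)$ is a clique of $\mf{t}_x$, hence is contained in a single bag of $(S_x,\gamma_x)$; similarly it lies in a single bag of $(S_y,\gamma_y)$. I would then build a tree decomposition $(S,\gamma)$ of $G$ from the disjoint union of the trees $S_x$ over $x\in V(T)$ by adding, for each $xy\in E(T)$, one new node $t_{xy}$ with bag $\beta(x,y)$, joined to a bag of $S_x$ containing $\beta(x,y)$ and to a bag of $S_y$ containing $\beta(x,y)$. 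Since these new edges are inserted along the tree $T$, the graph $S$ is a tree. Verifying that $(S,\gamma)$ is a tree decomposition of $G$ is routine: every edge $uv$ of $G$ lies in some $\beta(x)$, hence is an edge of $\mf{t}_x$, hence is covered by a bag of $(S_x,\gamma_x)$; and for each $v\in V(G)$ the nodes of $S$ whose bag contains $v$ form a subtree, because within each $S_x$ with $v\in\beta(x)$ they do, and for an edge $xy$ of $T$ with $v\in\beta(x)\cap\beta(y)=\beta(x,y)$ the new node $t_{xy}$ (whose bag contains $v$) links the $S_x$-part to the $S_y$-part. Every bag of $(S,\gamma)$ has size at most $w+1$, so $\tw(G)\le w$.

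For the reverse inequality I would fix $x$, take an optimal tree decomposition $(S,\gamma)$ of $G$, restrict it to $\beta(x)$ to obtain a tree decomposition of $G[\beta(x)]$ of width at most $\tw(G)$, and then absorb the extra clique edges that $\mf{t}_x$ places on each adhesion set $\beta(x,y)$. Here one uses a tightness-type property of $\mf{t}$ (exactly the form in which the decomposition arrives in the application, via \Cref{thm:tighttd}): for each $y\in N_T(x)$ there is a connected subgraph $C_y$ of $G$ on the far side of $\beta(x,y)$ adjacent to every vertex of $\beta(x,y)$, with the $C_y$ pairwise vertex-disjoint, pairwise anticomplete, and each anticomplete to $\beta(x)\setminus\beta(x,y)$ --- so each adhesion set is realized inside $G$ as the boundary of a connected ``blob''. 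One can then argue, say via brambles, that a bramble of $\mf{t}_x$ of order $m$ lifts to a bramble of $G$ of order at least $m$: replace, inside each bramble element, the adhesion set $\beta(x,y)$ it meets by $\beta(x,y)\cup V(C_y)$, so that two elements meeting along a clique edge of $\beta(x,y)$ now share $C_y$, and a hitting set of the lifted bramble in $G$ projects --- using that the $C_y$ are disjoint and that $\beta(x,y)$ is a clique of $\mf{t}_x$ --- to a hitting set of the original bramble of no larger size. This gives $\tw(\mf{t}_x)\le\tw(G)$, which together with the upper bound finishes the proof.

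The main obstacle is the reverse inequality, and it is a genuine one: for an arbitrary tree decomposition a torso can have much larger treewidth than $G$ (completing a large independent set of $G$ that appears in a bag and in an adhesion set to a clique), and in particular $\mf{t}_x$ need not be a minor of $G$, so minor-monotonicity of treewidth is unavailable. The bramble-lifting above is the way around this, and the delicate step there is checking that the projection of a hitting set back to $\mf{t}_x$ does not lose order, which is precisely where the disjointness and domination properties of the $C_y$ (hence the tightness of $\mf{t}$) are used. Since only the upper bound is needed in the sequel and the relevant decomposition is tight by construction, no real difficulty arises in the intended application.
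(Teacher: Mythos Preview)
The paper does not prove this lemma; it is quoted from the literature. Your proof of the inequality $\tw(G)\le\max_{x\in V(T)}\tw(\mf{t}_x)$ is correct and standard, and as you note this is the only direction used downstream: in the proof of \Cref{lem:subcompletesubg} one picks $x_0$ maximizing $\tw(\mf{t}_{x_0})$ and deduces a bound on $\tw(G)$ from a bound on $\tw(\mf{t}_{x_0})$.

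You are also right that the reverse inequality fails for arbitrary tree decompositions. For instance, with $G$ the path $v_1\dd v_2\dd v_3\dd v_4\dd v_5$, $T$ a single edge $xy$, $\beta(x)=V(G)$ and $\beta(y)=\{v_1,v_3,v_5\}$, the torso $\mf{t}_x$ contains the triangle on $\{v_1,v_3,v_5\}$ and has treewidth $2>1=\tw(G)$. So the lemma as stated is imprecise, and your instinct to invoke tightness for the reverse direction is the right one.

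That said, your bramble sketch has a gap. When you project a hitting set $H$ of the lifted bramble back to $\beta(x)$ by replacing $H\cap V(C_y)$ with a single vertex of $\beta(x,y)$, there is no reason that vertex lies in a given bramble element $B$ with $B\cap\beta(x,y)\neq\varnothing$; the fact that $\beta(x,y)$ is a clique in $\mf{t}_x$ only guarantees that $B$ \emph{touches} other sets meeting $\beta(x,y)$, not that $B$ contains a prescribed vertex of it. A cleaner route under tightness is to observe that each $\beta(x,y)$ is a \emph{minimal} separator of $G$ (tightness across $xy$ in both directions gives full components on both sides) and then use the classical fact that completing a minimal separator to a clique preserves treewidth; iterating over $y\in N_T(x)$ and restricting to $\beta(x)$ yields $\tw(\mf{t}_x)\le\tw(G)$. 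Since only the other inequality is needed, none of this affects the application.
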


\begin{theorem}[Campbell, Davies, Distel, Frederickson, Gollin, Hendrey, Hickingbotham, Wiederrecht, Wood, Yepremyan; Theorem 17 in \cite{twhad}]\label{thm:gridtoinducedgrid}
 Let $\nu,\tau\in \poi$ and let $G$ be a graph with a minor isomorphic to the $(2\nu+1)\tau$-grid. Then $G$ has either an induced minor isomorphic to the $\nu$-grid or a minor isomorphic to $K_{\tau}$.
\end{theorem}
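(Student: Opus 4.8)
\medskip
\noindent\emph{Proof proposal.} The plan is to reduce to a purely combinatorial statement about supergraphs of a grid, then to use the factor-$\tau$ slack in the hypothesis to produce an induced $\nu$-grid minor whenever the ``extra edges'' are short, and finally --- the hard part --- to convert an unavoidable family of long extra edges into a $K_\tau$ minor. For the reduction, write $M$ for the $(2\nu+1)\tau$-grid and fix an $M$-model $(X_v:v\in V(M))$ in $G$; its branch sets are connected induced subgraphs of $G$. Pass to the graph $H$ with $V(H)=V(M)$ in which $u,v$ are adjacent exactly when $X_u$ and $X_v$ are not anticomplete in $G$. Then $E(M)\subseteq E(H)$ and $(X_v)$ is an \emph{induced} $H$-model in $G$, so $H$ is an induced minor of $G$; since induced minors compose and a minor of $H$ is a minor of $G$, it suffices to show that every graph $H$ on $V(M)$ with $E(M)\subseteq E(H)$ has an induced minor isomorphic to the $\nu$-grid or a minor isomorphic to $K_\tau$.

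Next I would fix a gap width $g=\Theta(\tau)$ --- one can afford $g\ge\tau$, since then $(2\nu+1)\tau\ge\nu+(\nu-1)g$ --- and partition $\{1,\dots,(2\nu+1)\tau\}$ into $\nu$ ``cell intervals'' separated by $\nu-1$ ``gap intervals'' of length at least $g$. With $B_{p,q}$ the product of the $p$-th and $q$-th cell intervals, joined to its grid-neighbours by thin corridor paths routed through the gaps, the sets $Z_{p,q}$ (each $B_{p,q}$ together with its half-corridors) form an induced $M$-model of the $\nu$-grid. A somewhat tedious but routine case analysis on how far a short edge can reach then shows that any edge of $H\setminus M$ whose endpoints are at $M$-distance at most $g$ is harmless for this model: its endpoints lie in a common $Z_{p,q}$, or in two adjacent ones, or at least one of them is not a model vertex. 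Hence if every edge of $H\setminus M$ is short, then the $(Z_{p,q})$ model is an induced $\nu$-grid minor of $H$ and we are done. This is exactly the step that consumes the slack in the hypothesis --- it is what leaves room for $\nu$ blocks with buffers of width $\Theta(\tau)$ between them.

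The remaining case, where $H$ has long extra edges, is where I expect the real difficulty. A single long edge is not an obstruction: its two endpoints meet only boundedly many of the blocks $B_{p,q}$, so a translate or a minor-reroute of the $\nu$-grid model can be arranged to absorb it. The content is to show that a family of long edges that \emph{cannot} be simultaneously absorbed, over all placements of the model inside $M$, must be entangled enough to force a $K_\tau$ minor; I would attempt a Menger/flow argument for this, extracting $\tau$ pairwise disjoint connected subregions of $M$ that are pairwise linked in $H$ --- the links being long edges together with disjoint grid-paths reaching them --- and invoking the fact that $\tau$ pairwise disjoint, pairwise non-anticomplete connected subgraphs form a $K_\tau$-model. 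The delicate point, and the main obstacle, is the bookkeeping: one must bound how much of the grid is consumed by the linkages so that what remains still comfortably exceeds $\nu$, and for this I would rely on the rigidity of the grid itself rather than a black-box flat-wall theorem, which would lose a $\operatorname{poly}(\tau)$ factor that the hypothesis cannot afford.
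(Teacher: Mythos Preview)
This theorem is not proved in the present paper: it is quoted verbatim from \cite{twhad} (Theorem~17 there) and used as a black box in the proof of \Cref{lem:subcompletesubg}. There is therefore no proof in this paper to compare your proposal against.

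On the proposal itself: your reduction to a supergraph $H$ of the grid $M$ is correct and standard, and the ``all extra edges are short'' case is the right kind of argument, though the claim that a short edge is always harmless for the block model needs more care than you indicate (an edge of $M$-length $\le g$ can still join two non-adjacent $Z_{p,q}$'s near a corner of the block pattern; one has to widen the buffers in both coordinates and check the geometry). The real gap is your third step. You correctly observe that a single long edge does not force $K_\tau$ and can be absorbed by relocating the model, but you never set up a genuine dichotomy: what exactly is the hypothesis under which you extract $K_\tau$? ``Every placement of the model is obstructed'' is not obviously equivalent to the existence of $\tau$ pairwise disjoint, pairwise linked regions, and the Menger/flow argument you allude to is not specified --- you would need to say between which sets you are running Menger, why the min-cut being small lets you salvage an induced grid model, and why the min-cut being large yields $\tau$ disjoint linked pieces. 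As written this is a plan for the easy half and an acknowledgment that the hard half is hard; it is not yet a proof. If you want to complete it along these lines, the cleanest route is likely to tile $M$ into a $\tau\times\tau$ array of $(2\nu+1)\times(2\nu+1)$ blocks and argue directly on the auxiliary graph on blocks, rather than on individual long edges.
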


\begin{theorem}[Chuzhoy and Tan \cite{chtan}]\label{thm:bettergrid}
 There exists $c_{\ref{thm:bettergrid}}\in \poi$ such that for all $\nu\in \poi$, every graph $G$ with no minor isomorphic to the $\nu$-grid satisfies $\tw(G)\leq c_{\ref{thm:bettergrid}}\nu^9\log^{c_{\ref{thm:bettergrid}}}\nu$.
\end{theorem}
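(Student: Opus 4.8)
\textbf{Proof proposal for Lemma~\ref{lem:subcompletesubg}.}

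The plan is to apply the tight tree decomposition from Theorem~\ref{thm:tighttd}, and argue that every torso has small treewidth; then \Cref{lem:bodtorso} will finish the job. Let $G$ be a graph with no subgraph isomorphic to any subdivision of $K_\mu$ and no induced minor isomorphic to $H$ or $K_{\sigma,\sigma}$. Apply \Cref{thm:tighttd} to obtain a tight tree decomposition $\mf{t}=(T,\beta)$ such that for every $x\in V(T)$, either $\mf{t}_x$ has fewer than $\mu^2$ vertices of degree at least $2\mu^4$, or $\mf{t}_x$ has no minor isomorphic to $K_{2\mu^2}$. By \Cref{lem:bodtorso}, it suffices to bound $\tw(\mf{t}_x)$ polynomially in $\mu$ for each $x\in V(T)$, and this bound will be allowed to depend on $H$ and $\sigma$.

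The crucial point is a transfer principle: \emph{any} bipartite induced minor of the torso $\mf{t}_x$ (with one side of the bipartition having all degrees equal to two) lifts to an induced minor of $G$ itself. Concretely, combining \Cref{lem:1-sub} and \Cref{lem:brambletoindm}: if $\mf{t}_x$ has an induced minor isomorphic to a bipartite graph $H'$ with bipartition $(A,B)$ in which every vertex of $A$ has degree two, then $G$ has an induced minor isomorphic to $H'$. In particular, since $G$ has no induced minor isomorphic to $H$ or to $K_{\sigma,\sigma}$, and since (as noted in the preliminaries) a proper subdivision of $K_h$ contains a proper subdivision of $H$ as an induced subgraph whenever $|V(H)|=h$, the torso $\mf{t}_x$ has no induced minor isomorphic to the $1$-subdivision of $H$ (which is bipartite with the subdivision vertices of degree two), nor to the $1$-subdivision of $K_{\sigma,\sigma}$; equivalently, for a suitable $h_0=h_0(H,\sigma)$, $\mf{t}_x$ has no induced subgraph isomorphic to a proper subdivision of $K_{h_0}$ and no induced minor isomorphic to $K_{\sigma,\sigma}$ (I would set $h_0$ large enough that a proper subdivision of $K_{h_0}$ contains both a proper subdivision of $H$ and one of $K_{\sigma,\sigma}$).

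Now handle the two cases from \Cref{thm:tighttd}. \textit{Case 1: $\mf{t}_x$ has no minor isomorphic to $K_{2\mu^2}$.} Then $\mf{t}_x$ has no minor isomorphic to the $\nu$-grid for $\nu=2\mu^2$ (a $\nu$-grid has a $K_{\nu}$... actually it has large treewidth but not a $K_{2\mu^2}$-minor directly; instead I use that a $\nu$-grid contains a $K_{2\mu^2}$-minor once $\nu$ is a modest function of $\mu$, say $\nu = 2\mu^2$ suffices since the $k$-grid has a $K_{\lceil k/2\rceil}$-minor or similar — pin down the exact constant). Hence by \Cref{thm:bettergrid}, $\tw(\mf{t}_x)\le c_{\ref{thm:bettergrid}}\nu^9\log^{c_{\ref{thm:bettergrid}}}\nu$, which is polynomial in $\mu$. \textit{Case 2: $\mf{t}_x$ has fewer than $\mu^2$ vertices of degree at least $2\mu^4$.} Let $D$ be this set of high-degree vertices, $|D|<\mu^2$. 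Delete $D$: the graph $\mf{t}_x\setminus D$ has maximum degree less than $2\mu^4$, it is an induced subgraph of $\mf{t}_x$ so it still has no induced minor isomorphic to $H$ or $K_{\sigma,\sigma}$, hence by \Cref{lem:mainpolydegree} (applied with $\Delta=2\mu^4$) we get $\tw(\mf{t}_x\setminus D)\le (2\mu^4)^{d_{\ref{lem:mainpolydegree}}(h,\sigma)}$, and adding back the fewer than $\mu^2$ vertices of $D$ raises the treewidth by at most $\mu^2$, so $\tw(\mf{t}_x)\le (2\mu^4)^{d_{\ref{lem:mainpolydegree}}(h,\sigma)}+\mu^2$, again polynomial in $\mu$. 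Taking $d_{\ref{lem:subcompletesubg}}$ large enough to dominate both bounds and using \Cref{lem:bodtorso} completes the proof.

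\textbf{Main obstacle.} The delicate part is the transfer principle and the bookkeeping around it: I need the tree decomposition to be \emph{tight} (which Theorem~\ref{thm:tighttd} provides) precisely so that \Cref{lem:brambletoindm} applies, and I must be careful that an induced minor of the torso $\mf{t}_x$ isomorphic to a $1$-subdivision of $H$ really does imply an induced minor of $G$ isomorphic to $H$ — which it does, since a graph containing a proper subdivision of $H$ as an induced subgraph has $H$ as an induced minor (contract the subdivided paths). The other subtlety is getting the exact grid/clique-minor constants right in Case~1 so that a $K_{2\mu^2}$-minor-free torso is genuinely $(\mathrm{poly}(\mu))$-grid-minor-free; this is routine but needs the standard fact relating grid minors and clique minors. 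Neither of these is deep, but both require care in the constants.
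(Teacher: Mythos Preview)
Your proposal is for Lemma~\ref{lem:subcompletesubg}, not for Theorem~\ref{thm:bettergrid} (which is a cited result of Chuzhoy and Tan with no proof in the paper). Comparing your sketch to the paper's proof of Lemma~\ref{lem:subcompletesubg}, the overall architecture is the same: apply Theorem~\ref{thm:tighttd}, transfer induced-minor obstructions from $G$ to the torso via Lemmas~\ref{lem:1-sub} and~\ref{lem:brambletoindm}, then handle the two cases using Lemma~\ref{lem:mainpolydegree} and Theorem~\ref{thm:bettergrid}. But there is a genuine gap in your transfer step.

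The pair Lemma~\ref{lem:1-sub} + Lemma~\ref{lem:brambletoindm} only lets you push down bipartite induced minors in which one side consists of \emph{singletons} (equivalently, one side has all vertices of degree two, so Lemma~\ref{lem:1-sub} manufactures the singletons). From this you correctly deduce that $\mf{t}_x$ has no induced minor isomorphic to the $1$-subdivision of $H$. But you then assert that $\mf{t}_x$ has no induced minor isomorphic to $K_{\sigma,\sigma}$, and this does \emph{not} follow: a $K_{\sigma,\sigma}$ induced minor in the torso need not admit a model with one side made of singletons, so Lemma~\ref{lem:brambletoindm} does not apply, and excluding the $1$-subdivision of $K_{\sigma,\sigma}$ is strictly weaker than excluding $K_{\sigma,\sigma}$ itself. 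Yet Lemma~\ref{lem:mainpolydegree} needs exactly an excluded $K_{\sigma',\sigma'}$ induced minor in the torso. The paper closes this gap with Corollary~\ref{cor:compbip}: it shows that if $\mf{t}_{x_0}$ had an induced $K_{c,c}$-minor for $c=c_{\ref{cor:compbip}}(1,h,\sigma,\sigma,\sigma)$, then (using that $\mf{t}_{x_0}$ has no induced minor isomorphic to the planar graph $H_1$) one obtains either $K_{\sigma,\sigma}$ as an induced \emph{subgraph} or a $1$-ample $(\sigma,\sigma)$-constellation --- in both cases an induced $K_{\sigma,\sigma}$-model with one side of singletons, to which Lemma~\ref{lem:brambletoindm} does apply, contradicting that $G$ excludes $K_{\sigma,\sigma}$. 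Thus the torso excludes $K_{c,c}$ (not $K_{\sigma,\sigma}$) as an induced minor, and Lemma~\ref{lem:mainpolydegree} is invoked with parameters $(H_1,c)$ rather than $(H,\sigma)$.

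A secondary issue is your Case~1: excluding $K_{2\mu^2}$ as a minor does not by itself exclude a $\mathrm{poly}(\mu)$-grid minor (grids have no large clique minors). The paper again routes through the torso's lack of an $H_1$ induced minor: by Theorems~\ref{thm:gridtograph} and~\ref{thm:gridtoinducedgrid}, a $2\mu^2(56h+1)$-grid minor in $\mf{t}_{x_0}$ would force either an induced $28h$-grid minor (hence $H_1$, contradiction) or a $K_{2\mu^2}$-minor (contradiction), and only then does Theorem~\ref{thm:bettergrid} apply.
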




\begin{proof}[Proof of \Cref{lem:subcompletesubg}]
    Define $H_1$ to be the graph with $V(H_1)=E(H)\cup V(H)$ and $E(H_1)=\{ev:e\in E(H), v\in V(H), v\in e\}$. Then $H_1$ is isomorphic to the $1$-subdivision of $H$; thus, $H_1$ is planar. Also, $H_1$ is bipartite with bipartition $(E(H),V(H))$ where every ``vertex'' $e\in E(H)$ has degree two in $H_1$. 
    
    Let $h=|V(H_1)|=|E(H)|+|V(H)|$, let 
    $c=c_{\ref{cor:compbip}}(1,h,\sigma,\sigma,\sigma)$ and let $d=d_{\ref{lem:mainpolydegree}}(h,c)$.
    Recall the absolute constant $c_{\ref{thm:bettergrid}}\in \poi$. We will show that:
$$d_{\ref{lem:subcompletesubg}}=d_{\ref{lem:subcompletesubg}}(H,\sigma)=\max\left\{5d+1, c_{\ref{thm:bettergrid}}+(56h+3)(9+c_{\ref{thm:bettergrid}})\right\}$$
satisfies the lemma. 

Let $G$ be a graph with no subgraph isomorphic to any subdivision of $K_{\mu}$ and no induced minor isomorphic to $H$ or $K_{\sigma,\sigma}$. If $\mu=1$, then $\tw(G)=0<\mu^{d_{\ref{lem:subcompletesubg}}}$. So we may assume that $\mu\geq 2$. By \Cref{thm:tighttd}, there is a tight tree decomposition $\mf{t}=(T,\beta)$ of $G$ such that for every $x\in V(T)$, either $\mf{t}_{x}$ has fewer than $\mu^2$ vertices of degree at least $2\mu^4$, or $\mf{t}_{x}$ has no minor isomorphic to $K_{2\mu^2}$. By \Cref{lem:bodtorso}, there is a vertex $x_0\in V(T)$ for which $\tw(\mf{t}_{x_0})=\tw(G)$. We further show that:

\sta{\label{st:noH1indm} $\mf{t}_{x_0}$ has no induced minor isomorphic to $H_1$.}

Suppose not. Then, by \Cref{lem:1-sub}, there is an induced $H_1$-model $(X_{w}:w\in V(H_1))$ in $\mf{t}_{x_0}$ such that $|V(X_e)|=1$ for all $e\in E(H)$. Since $\mf{t}$ is tight, it follows from \Cref{lem:brambletoindm} that $G$ has an induced minor isomorphic to $H_1$, and so $G$ has an induced minor isomorphic to $H$ (as $H_1$ is isomorphic to the $1$-subdivision of $H$), a contradiction. This proves \eqref{st:noH1indm}.

\sta{\label{st:nocompbipindm} $\mf{t}_{x_0}$ has no induced minor isomorphic to $K_{c,c}$.}

Suppose not. Then, by the choice of $c$, we can apply \Cref{cor:compbip} to $\mf{t}_{x_0}$. By \eqref{st:noH1indm}, $\mf{t}_{x_0}$ has no induced minor isomorphic to the $h$-vertex planar graph $H_1$, and so \ref{cor:compbip}~\ref{cor:compbip_b} does not hold. Consequently, either $\mf{t}_{x_0}$ has an induced subgraph isomorphic to $K_{\sigma,\sigma}$, or there is a $1$-ample $(\sigma,\sigma)$-constellation in $\mf{t}_{x_0}$. In either case, it follows that there is an induced $K_{\sigma,\sigma}$-model $\mf{m}=(A_1,\ldots, A_{\sigma}; B_1,\ldots, B_{\sigma})$ in $\mf{t}_{x_0}$ where $|V(A_1)|=\cdots=|V(A_{\sigma})|=1$. But now since $\mf{t}$ is tight, by \Cref{lem:brambletoindm}, $G$ has an induced minor isomorphic to $K_{\sigma,\sigma}$, a contradiction. This proves \eqref{st:nocompbipindm}.
\medskip

Now, recall that $\mu\geq 2$. If $\mf{t}_{x_0}$ has fewer than $\mu^2$ vertices of degree at least $2\mu^4$, then by \eqref{st:noH1indm}, \eqref{st:nocompbipindm}, \Cref{lem:mainpolydegree}, we have
$$\tw(\mf{t}_{x_0})\leq   (2\mu^4)^{d}+\mu^2\leq \mu^{5d}+\mu^2< 2\mu^{5d}\leq \mu^{5d+1}\leq \mu^{d_{\ref{lem:subcompletesubg}}}.$$

Thus, we may assume that $\mf{t}_{x_0}$ has no minor isomorphic to $K_{2\mu^2}$. By \eqref{st:noH1indm} and Theorems~\ref{thm:gridtograph} and \ref{thm:gridtoinducedgrid}, $\mf{t}_{x_0}$ has no minor isomorphic to $2\mu^2(56h+1)$-grid. Let $\nu=2\mu^2(56h+1)$. Then by \Cref{thm:bettergrid}, we have $\tw(G)\leq c_{\ref{thm:bettergrid}}\nu^9\log^{c_{\ref{thm:bettergrid}}}\nu$. Moreover, $c_{\ref{thm:bettergrid}}<2^{c_{\ref{thm:bettergrid}}}\leq \mu^{c_{\ref{thm:bettergrid}}}$ and $\nu=2\mu^2(56h+1)\leq \mu^3\cdot 2^{56h}\leq \mu^{56h+3}$. Hence, 
$$\tw(\mf{t}_{x_0})< \mu^{c_{\ref{thm:bettergrid}}}\cdot \left(\mu^{56h+3}\right)^{9+c_{\ref{thm:bettergrid}}}\leq \mu^{d_{\ref{lem:subcompletesubg}}}.$$
This completes the proof of \ref{lem:subcompletesubg}.
\end{proof}

\subsection{Proof of \Cref{thm:mainpolyblock}}\label{subsec:finalblock} We need a better version of \Cref{thm:ramsey} for graphs that exclude a fixed complete bipartite graph as an induced subgraph, which we will also use in \Cref{sec:mainpolysep}. (For the familiar reader, this simply restates the fact the Erd\H{o}s-Hajnal conjecture \cite{EH} is true for complete bipartite graphs, which, for instance, follows from the main results of \cite{APS} and \cite{EH2}. For completeness, we include a short proof with an explicit bound.)

\begin{lemma}\label{lem:EHforKss}
    Let $\alpha,s,t\in \poi$, every $K_{s,s}$-free and $K_{t}$-free graph $G$ on at least $\alpha^s t^{s-1}$ vertices has a stable set of cardinality $\alpha$.
\end{lemma}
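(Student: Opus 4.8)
The plan is to pass to complements. Put $\Gamma=\overline{G}$. Since $G$ is $K_{s,s}$-free, $\Gamma$ has no induced subgraph isomorphic to the complement of $K_{s,s}$, namely the disjoint union of two cliques of size $s$ with no edges between them; call this graph $D_s$. Since $G$ is $K_t$-free we have $\alpha(\Gamma)\le t-1$, and a stable set of size $\alpha$ in $G$ is exactly a clique of size $\alpha$ in $\Gamma$. So it suffices to prove: if $\Gamma$ has no induced $D_s$, $\alpha(\Gamma)\le t-1$, and $\omega(\Gamma)\le\alpha-1$, then $|V(\Gamma)|<\alpha^st^{s-1}$. First I would dispose of the easy range $\alpha\le s$ directly: there $\alpha^st^{s-1}\ge t^{\alpha-1}$, so the conclusion is immediate from \Cref{thm:ramsey} applied to $G$. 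Hence from now on $\alpha\ge s+1$.

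I would prove the complement statement by induction on $\omega(\Gamma)$. If $\omega(\Gamma)<s$ then $\Gamma$ is $K_s$-free, so $G$ has no stable set of size $s$, and \Cref{thm:ramsey} gives $|V(\Gamma)|=|V(G)|<t^{s-1}<\alpha^st^{s-1}$; this also settles $s=1$. Otherwise let $Q$ be a maximum clique of $\Gamma$, with $|Q|=\omega(\Gamma)\le\alpha-1$ and $|Q|\ge s$, and partition $V(\Gamma)$ into $Q$, the set $Z$ of vertices anticomplete to $Q$, and the set $P$ of vertices having both a neighbour and (since $Q$ is maximum) a non-neighbour in $Q$. The part $Z$ is immediate: an $s$-clique inside $\Gamma[Z]$, together with any $s$ vertices of $Q$, would induce $D_s$, so $\Gamma[Z]$ is $K_s$-free, and \Cref{thm:ramsey} applied to $G[Z]$ (which then has no stable set of size $s$) gives $|Z|<t^{s-1}$. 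Everything thus reduces to bounding $|P|$.

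To bound $P$ I would stratify it by $Q$: fix an ordering $v_1,\dots,v_q$ of $Q$, and for $u\in P$ let $\phi(u)$ be the least index $i$ with $u$ non-adjacent to $v_i$, so $P=A_1\cup\dots\cup A_q$ with $A_i=\phi^{-1}(i)$. For $i\ge2$, every vertex of $A_i$ is complete to the clique $\{v_1,\dots,v_{i-1}\}$, so appending this clique to any clique of $\Gamma[A_i]$ shows $\omega(\Gamma[A_i])\le\omega(\Gamma)-1$; as $\Gamma[A_i]$ still has no induced $D_s$ and still satisfies $\alpha\le t-1$, the induction hypothesis bounds $|A_i|$. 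The delicate stratum is $A_1$, the vertices missing $v_1$: these need not have reduced clique number, so a bare recursion stalls; moreover, even granted good bounds on the individual strata, combining them with a fresh palette for each overshoots $\alpha^st^{s-1}$ by a factor polynomial in $\alpha$. I expect this to be the main obstacle — in the case $s=2$ it is precisely Wagon's theorem that $2K_2$-free graphs satisfy $\chi\le\binom{\omega+1}{2}$ — and I would resolve it by a more careful colouring of $P$ that reuses colours across strata and handles $A_1$ through the smaller clique $Q\setminus\{v_1\}$. The reduction to the complement, and the handling of $Z$ and of the strata $A_i$ with $i\ge2$, are then routine bookkeeping with the two Ramsey estimates.
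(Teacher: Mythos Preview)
Your proposal has a genuine gap. You correctly identify that the stratum $A_1$ is the crux, and that a naive recursion both stalls there (no drop in $\omega$) and overshoots when the strata are combined. But you do not resolve this: ``a more careful colouring of $P$ that reuses colours across strata and handles $A_1$ through the smaller clique $Q\setminus\{v_1\}$'' is a plan, not an argument. For $s=2$ your observation is correct --- Wagon's bound $\chi(\Gamma)\le\binom{\omega(\Gamma)+1}{2}$ does give $|V(\Gamma)|<\alpha^2 t$ --- but you supply no generalisation for $s\ge3$, and it is not clear that a Wagon-style colouring yields the exponent $s$ rather than something growing with $\omega(\Gamma)$. As written, the proposal reduces the lemma to a statement you have not proved and explicitly label ``the main obstacle.''

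The paper's proof avoids the complement and the induction entirely, and is much shorter. Let $S$ be a maximum stable set in $G$ (having disposed of $\alpha\le s$ as you do). Every vertex of $V(G)\setminus S$ either has at most $s-1$ neighbours in $S$, hence lies in $X_A=\{v\notin S:N_G(v)\cap S\subseteq A\}$ for some $A\in\binom{S}{s-1}$, or has at least $s$ neighbours in $S$, hence lies in $Y_B=\{v\notin S:B\subseteq N_G(v)\}$ for some $B\in\binom{S}{s}$. Each $X_A$ has fewer than $t^{s-1}$ vertices, since otherwise \Cref{thm:ramsey} produces a stable $s$-set inside $X_A$, which together with $S\setminus A$ is a stable set larger than $S$. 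Each $Y_B$ has fewer than $t^{s-1}$ vertices, since otherwise \Cref{thm:ramsey} produces a stable $s$-set inside $Y_B$, which together with $B$ induces $K_{s,s}$. Summing over the $\binom{|S|}{s-1}+\binom{|S|}{s}=\binom{|S|+1}{s}$ choices gives $|V(G)|<(|S|+1)^s t^{s-1}$, whence $|S|\ge\alpha$.
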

\begin{proof}
This is immediate if $s=1$. Also, if $\alpha\leq s$, then the result follows from \Cref{thm:ramsey}. Assume that $\alpha\geq s\geq 2$. Let $S$ be a stable set in $G$ of maximum cardinality. For every $A\in \binom{S}{s-1}$, let $X_A$ be the set of all vertices in $V(G)\setminus S$ that are anticomplete to $S\setminus A$ in $G$.  Then $|X_A|\leq t^{s-1}-1$; because otherwise, by \Cref{thm:ramsey} applied to $G[X_A]$, there is a stable set $S'\subseteq X_A$ in $G$ with $|S'|=s$, and since $S'\subseteq X_A$ and $S\setminus A$ are anticomplete in $G$, it follows that $(S\setminus A)\cup S'$ is a stable set in $G$ of cardinality $|S|+1$, a contradiction. For every $B\in \binom{S}{s}$, let $Y_B$ be the set of all vertices in $V(G)\setminus S$ that are adjacent in $G$ to all the vertices in $B$. Then $|Y_B|\leq t^{s-1}-1$; because otherwise, by \Cref{thm:ramsey} applied to $G[Y_B]$, there is stable $S'\subseteq Y_B$ in $G$ with $|S'|=s$, and since every vertex in $S'\subseteq Y_A$ is adjacent in $G$ to every vertex in $B$, it follows that $G[S'\cup B]$ is isomorphic to $K_{s,s}$, a contradiction. Since
$V(G)\setminus S= (\bigcup_{A\in \binom{S}{s-1}}X_A)\cup (\bigcup_{B\in \binom{S}{s}}Y_B)$, we deduce that:
$$|V(G)\setminus S|\leq \left(\binom{|S|}{s-1}+\binom{|S|}{s}\right)(t^{s-1}-1)=\binom{|S|+1}{s}(t^{s-1}-1)$$
and so $|V(G)|\leq (|S|+1)^s(t^{s-1}-1)+|S|<(|S|+1)^st^{s-1}$. Recall that we also have $|V(G)|\geq \alpha^{s}t^{s-1}$.
Hence, $|S|\geq \alpha$. This completes the proof of \ref{lem:EHforKss}.
\end{proof}

Finally, we can prove \Cref{thm:mainpolyblock}, which we restate:

\mainpolyblock*
\begin{proof}
Let
    $$d_{\ref{thm:mainpolyblock}}=d_{\ref{thm:mainpolyblock}}(H,\sigma)=2\sigma d_{\ref{lem:subcompletesubg}}(H,\sigma).$$
    
Let $G$ be a  $(\kappa,\lambda)$-separable $K_t$-free graph with no induced minor isomorphic to $H$ or $K_{\sigma,\sigma}$. Let $\tau=\kappa^{\sigma}t^{\sigma-1}$. Assume that $G$ has a subgraph isomorphic to some subdivision of $K_{\lambda\tau^2}$. Then, since $2\lambda\tau^2\geq \lambda\tau^2+\lambda\tau=2\lambda\tau+\lambda\tau^2-\lambda\tau\geq 2\tau+\lambda\tau(\tau-1)$, it follows that $\lambda\tau^2\geq\tau+\lambda\binom{\tau}{2}$, and so there is a strong $(\tau,\lambda)$-block $(B,\mca{P})$ in $G$. Also, since $G$ is $K_{\sigma,\sigma}$-free and $K_{t}$-free, by \Cref{lem:EHforKss} and the choice of $\tau=\kappa^{\sigma}t^{\sigma-1}$, there is a stable set $B'\subseteq B$ in $G$ with $|B'|=\kappa$. But now $(B',\mca{P}|\binom{B'}{2})$ is stable strong $(\kappa,\lambda)$-block in $G$, contrary to the assumption that $G$ is $(\kappa,\lambda)$-separable. We deduce that $G$ has no subgraph isomorphic to a subdivision of $K_{\lambda\tau^2}$. Hence, from \Cref{lem:subcompletesubg} and since $\tau=\kappa^{\sigma}t^{\sigma-1}$, it follows that
$\tw(G)\leq  (\lambda\tau^2)^{d_{\ref{lem:subcompletesubg}}(H,\sigma)}\leq (\kappa \lambda t)^{2\sigma d_{\ref{lem:subcompletesubg}}(H,\sigma)}=(\kappa \lambda t)^{d_{\ref{thm:mainpolyblock}}}$.
This completes the proof of \ref{thm:mainpolyblock}.
\end{proof}

\section{Polynomial Separability}\label{sec:mainpolysep}

In this final section, we prove \Cref{thm:mainpolysep}. To that end, we adopt the ``seedling method'' that we first developed in \cite{tw18} with Chudnovsky and Spirkl. 


We need a few definitions. Let $G$ be a graph. For $X,Y\subseteq V(G)$, an \textit{$(X,Y)$-linkage in $G$} is a set $\mca{L}$ of pairwise disjoint paths in $G$ such that for every $L\in \mca{L}$, either $L$ has length zero and $V(L)\subseteq X\cap Y$, or $L$ has nonzero length, one end of $L$ belongs to $X\setminus Y$, the other end belongs to $Y\setminus X$, and $V(L^*)\cap (X\cup Y)=\varnothing$. In particular, every path $L\in \mca{L}$ has an end in $X$ and an end in $Y$, respectively called the \textit{$X$-end} and the \textit{$Y$-end of $L$} (and the $X$-end and the $Y$-end of $L$ are the same if and only if $|V(L)|=1$). We say that $\mca{L}$ is \textit{$\mu$-rigid}, for $\mu\in \poi$, if there is no $(X,Y)$-linkage $\mca{M}$ in $G$ with $|\mca{M}|=\mu$ and $V(\mca{M})\subseteq V(\mca{L})$ such that the paths in $\mca{M}$ are pairwise anticomplete in $G$.

For $\lambda,\mu\in \poi$, a \textit{$(\lambda,\mu)$-seedling in $G$} is a triple $(A,\mca{L},Y)$ such that $A$ is a path in $G$, $Y\subseteq V(G)\setminus V(A)$, and $\mca{L}$ is a $\mu$-rigid $(N_G(V(A)),Y)$-linkage in $G\setminus V(A)$ with $|\mca{L}|=\lambda$; it follows that $V(A)\cap V(\mca{L})=\varnothing$, and for every $L\in \mca{L}$, the $N_G(V(A))$-end of $L$ is the only vertex of $L$ with a neighbor in $G$ in $V(A)$. Two $(\lambda,\mu)$-seedlings $(A,\mca{L},Y)$ and $(A',\mca{L}',Y')$ in $G$ are \textit{disjoint} if $(A\cup V(\mca{L}))\cap (A'\cup V(\mca{L}'))=\varnothing$.

The following technical lemma is the main ingredient in the proof of \Cref{thm:mainpolysep}:

\begin{lemma}\label{lem:getbranches}
For all $\theta,\sigma\in \poi$, there is a constant $c_{\ref{lem:getbranches}}=c_{\ref{lem:getbranches}}(\theta,\sigma)\in \poi$ with the following property. Let $\lambda,\mu,t\in \poi$ with $t\geq 2$, let $G$ be a $K_{t}$-free graph with no induced minor isomorphic to $K_{\sigma,\sigma}$, and let $(A,\mca{L},Y)$ be a $\left((\lambda t)^{\sigma(\sigma+3)\left(\theta+2\mu\sigma\right)(\mu-1)},\mu\right)$-seedling in $G$. Then there are $\theta$ pairwise disjoint $(\lambda, c_{\ref{lem:getbranches}})$-seedlings $(A_1,\mca{L}_1,Y_1),\ldots, (A_{\theta},\mca{L}_{\theta},Y_{\theta})$ in $G[V(\mca{L})]$ such that $A_1,\ldots, A_{\theta}$ are pairwise anticomplete in $G$, and for every $i\in \{1,\ldots, \theta\}$, $A$ and $A_i$ are not anticomplete in $G$ but $V(A)$ and $V(\mca{L}_i)$ are anticomplete in $G$.
\end{lemma}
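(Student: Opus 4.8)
The key idea is to extract the $\theta$ seedlings one at a time from inside $V(\mca{L})$, using the $\mu$-rigidity of $\mca{L}$ together with the distance structure of the graph, and to pay for each extraction by a controlled loss in the size of the linkage. First I would set up a ``reservoir'' version of the statement: I will maintain, at stage $i$, a sub-linkage $\mca{L}^{(i)}\subseteq \mca{L}$ (restricted to subpaths ending at the $N_G(V(A))$-ends) of size still exponentially large in the remaining budget, such that $\mca{L}^{(i)}$ is still $\mu$-rigid as an $(N_G(V(A)),\cdot)$-linkage, and such that the $i$ seedlings already produced live on the part of $\mca{L}$ disjoint from $V(\mca{L}^{(i)})$. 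To produce one new seedling $(A_i,\mca{L}_i,Y_i)$: because $\mca{L}^{(i)}$ is $\mu$-rigid, it is not the case that $\mu$ of its paths are pairwise anticomplete; so along some path $L\in \mca{L}^{(i)}$ there is a vertex with many neighbours in other paths of $\mca{L}^{(i)}$, or — more usefully — there is a short connected ``hub'' meeting many of the paths. This is where I would use the absence of an induced $K_{\sigma,\sigma}$ minor via the abyss machinery (Lemmas \ref{lem:abyssmain} and \ref{lem:mainvertexcluster}): excluding $K_{\sigma,\sigma}$ as an induced minor forces the ``spread'' of connectivity across the $\mca{L}^{(i)}$-paths to be achieved within bounded $G$-distance, which lets me pick $A_i$ to be a short path (a piece of one $L$, or a bounded-radius ball) that is adjacent to $\lambda$ of the remaining paths.

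Second, having chosen the hub path $A_i$, I would take $\mca{L}_i$ to consist of the ``upper'' subpaths (the parts farther from $N_G(V(A))$) of $\lambda$ of the $\mca{L}^{(i)}$-paths that $A_i$ touches — truncated so that their $N_G(V(A_i))$-ends are exactly the neighbours of $V(A_i)$, and with $Y_i$ taken as the old far endpoints (or the $Y$-ends). Because these subpaths sit inside pairwise disjoint paths of $\mca{L}$ and $A_i$ is a connected piece meeting each, $\mca{L}_i$ is a genuine $(N_G(V(A_i)),Y_i)$-linkage in $G[V(\mca{L})]$. I must check $c_{\ref{lem:getbranches}}$-rigidity of $\mca{L}_i$: here $K_t$-freeness enters through \Cref{lem:EHforKss} / \Cref{thm:ramsey} — if $\mca{L}_i$ failed to be $c_{\ref{lem:getbranches}}$-rigid for a suitably large $c_{\ref{lem:getbranches}}=c_{\ref{lem:getbranches}}(\theta,\sigma)$, then many of its paths would be pairwise anticomplete, and combined with the hub $A_i$ and the old linkage this would contradict either $\mu$-rigidity of $\mca{L}$ or the absence of $K_{\sigma,\sigma}$ as an induced minor (the pairwise-anticomplete paths, together with the nested balls $B_j$ around the hub, reconstruct an induced $K_{\sigma,\sigma}$-model exactly as in \Cref{lem:abyssmain}). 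To guarantee $A_1,\dots,A_\theta$ pairwise anticomplete, and each $A_i$ anticomplete to $V(\mca{L}_j)$ for $j\neq i$, I would, at stage $i$, first delete from $\mca{L}^{(i)}$ all paths meeting the bounded ball around $A_i$ (a bounded number relative to the current size, by $K_{\sigma,\sigma}$-freeness again, or rather: only a bounded-diameter piece is removed, and the surviving paths beyond radius $1$ of $A_i$ are automatically anticomplete to $A_i$), then truncate the survivors to start beyond that ball. The anticompleteness of $A$ to $V(\mca{L}_i)$ is immediate since $V(\mca{L}_i)\subseteq V(\mca{L})$ and, by the seedling definition, the only vertices of $\mca{L}$ adjacent to $V(A)$ are the $N_G(V(A))$-ends, which I will have truncated away when forming $\mca{L}_i$ (I keep $A_i$ away from those ends as well, so $A\cup A_i$ being non-anticomplete needs a separate short argument — see below).

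Third, the arithmetic. Each stage costs: a factor accounting for choosing which path becomes the hub and which $\lambda$ paths form $\mca{L}_i$ (a polynomial in $\lambda t$ raised to a power like $\sigma(\sigma+3)$, coming from the two applications of the Ramsey-type bound with $K_{\sigma,\sigma}$- and $K_t$-exclusion); a factor like $(\lambda t)^{\theta+2\mu\sigma}$-ish absorbing the bounded diameters ($\le 12\sigma$ from \Cref{lem:mainedgecluster}, times the number of levels) and the truncations; and this repeated $\mu-1$ times because each hub extraction drops the rigidity parameter of the residual linkage by one (we start $\mu$-rigid and can afford to go down to $1$-rigid, which still forbids a path, i.e. is vacuous — so in fact the induction is on how many times we ``spend'' rigidity, at most $\mu-1$ times, matching the exponent $(\mu-1)$ in the hypothesis). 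Choosing the constants in the exponent to be exactly $\sigma(\sigma+3)(\theta+2\mu\sigma)(\mu-1)$ makes the hypothesis on $|\mca{L}|$ survive all $\theta$ rounds and still leave each $\mca{L}_i$ of size $\ge\lambda$. The requirement that $A$ and $A_i$ are \emph{not} anticomplete is handled by choosing $A_i$ to include (or be adjacent to) at least one $N_G(V(A))$-end of a path of $\mca{L}$: since every $L\in\mca{L}$ has its $N_G(V(A))$-end adjacent to $V(A)$, keeping one such end in $V(A_i)$ forces $A$–$A_i$ adjacency, while still allowing $V(\mca{L}_i)$ (the truncated upper parts) to be anticomplete to $V(A)$.

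\textbf{Main obstacle.} The hard part will be the simultaneous bookkeeping in the extraction step: producing $A_i$ short enough (bounded length, independent of everything) while guaranteeing both that it hits $\lambda$ survivors \emph{and} that $\mca{L}_i$ remains $c_{\ref{lem:getbranches}}$-rigid with $c_{\ref{lem:getbranches}}$ depending only on $\theta,\sigma$ (not on $\mu,\lambda,t$). This forces a careful use of the induced-$K_{\sigma,\sigma}$-minor exclusion twice over — once to bound the diameter of the hub (so $A_i$ is short and the removed piece is small), once to certify rigidity of the output — and the quantifier order (constant $c_{\ref{lem:getbranches}}$ chosen \emph{before} $\lambda,\mu,t$) is exactly what makes this delicate. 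I expect the cleanest route is to phrase the extraction as a standalone sublemma: ``a $\mu$-rigid $(X,Y)$-linkage of size $N$ in a $K_{\sigma,\sigma}$-induced-minor-free, $K_t$-free graph, with $N\ge(\lambda t)^{c}$, contains a $(\lambda,c')$-seedling on a bounded-diameter hub, leaving a $(\mu-1)$-rigid residual linkage of size $\ge N^{1/(\lambda t)^{c''}}$ that avoids the hub,'' and then iterate it $\theta$ times.
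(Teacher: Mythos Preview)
Your plan diverges from the paper's argument in two structural ways, and at least one of them is a genuine gap rather than an alternative route.

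\textbf{Where the $(\mu-1)$ exponent really comes from.} You propose to iterate an extraction step $\theta$ times, with the rigidity parameter of the residual linkage dropping by one at each step, and you read the exponent $(\mu-1)$ as counting those iterations. This is not how the argument works, and I do not see how to make your version go through: there is no mechanism by which extracting one seedling decreases the rigidity of what remains by exactly one. In the paper, the factor $(\mu-1)$ arises from a \emph{single} application of Ramsey's theorem at the very start: since $\mca{L}$ is $\mu$-rigid, no $\mu$ of its paths are pairwise anticomplete, so the graph on $\mca{L}$ with edges ``anticomplete'' is $K_\mu$-free, and Ramsey (\Cref{thm:ramsey}) yields a subfamily $\mca{S}_0$ of size $|\mca{L}|^{1/(\mu-1)}$ in which \emph{every two} paths touch. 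All subsequent work happens inside $\mca{S}_0$, and there is no further induction on $\mu$.

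\textbf{The extraction is simultaneous, not sequential, and does not use the abyss lemmas.} The abyss machinery (Lemmas~\ref{lem:abyssmain} and \ref{lem:mainvertexcluster}) belongs to an unrelated part of the paper and is never invoked here; bounded $G$-diameter plays no role. Instead, the paper builds a digraph on $\mca{S}_0$ (edges recording when the $N_G(V(A))$-end of one path sees another path) and applies the elementary digraph lemma (\Cref{lem:digraph}) twice, together with \Cref{lem:EHforKss}, to produce $\Theta=\theta+2\mu\sigma-1$ candidate seedlings \emph{all at once}: pairwise anticomplete subpaths $A_L$ (each containing its $N_G(V(A))$-end) together with $\lambda$-many ``upper'' pieces $\mca{R}_L$ for each. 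The factor $(\theta+2\mu\sigma)$ in the exponent pays for this overproduction. Rigidity of the outputs is then certified by pigeonhole: if fewer than $\theta$ of the $\Theta$ candidates were $c_{\ref{lem:getbranches}}$-rigid, then at least $2\mu\sigma$ would fail, each contributing $c_{\ref{lem:getbranches}}$ pairwise anticomplete paths; feeding these into \Cref{lem:bigramsey} produces $\mu$ pairwise anticomplete $(N_G(V(A)),Y)$-paths inside $V(\mca{L})$, contradicting $\mu$-rigidity of $\mca{L}$. This is why $c_{\ref{lem:getbranches}}$ can be chosen as $c_{\ref{lem:bigramsey}}(\theta,1,\sigma)$, independent of $\lambda,\mu,t$. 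Your proposal offers no comparable mechanism for bounding $c_{\ref{lem:getbranches}}$ independently of $\mu$: your rigidity argument for $\mca{L}_i$ appeals to $\mu$-rigidity of $\mca{L}$ directly, which would naturally make the resulting constant depend on $\mu$.
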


The proof of \Cref{lem:getbranches} uses two Ramsey-type results from \cite{tw18}. The first is about digraphs (and is easy to prove). For us, a \textit{digraph} $D$ has a finite vertex set $V(D)$ and an edge set $E(D)$ whose elements are ordered pairs of distinct vertices of $D$ (so we disallow ``loops'' and allow at most one edge in either direction between each pair of vertices). For $(u,v)\in E(D)$, we say that $v$ is an \textit{out-neighbor} of $u$. The \textit{out-degree} of $u\in V(D)$ is the number of out-neighbors of $u$. For $r\in \poi$, we write $D^{\leq r}$ and $D^{\geq r}$, respectively, for the set of all vertices in $D$ of out-degree at most $r$ and those of out-degree at least $r$. A \textit{stable set in $D$} is a subset $S$ of $V(D)$ such that $(u,v)\notin E(D)$ and $(v,u)\notin E(D)$ for all $u,v\in S$.

\begin{lemma}[Chudnovsky, Hajebi, Spirkl; Lemma 5.1 in \cite{tw18}]\label{lem:digraph}
    The following hold for all $q,r,s\in \poi$ and every digraph $D$.
    \begin{enumerate}[{\rm (a)}]
    \item \label{lem:digraph_a} 
    If $|D^{\leq r}|\geq 2rs$, then there is a stable set in $D$ of cardinality $s$.
    \item \label{lem:digraph_b} If $|D^{\geq qr}|\geq 2qrs$, then there is an $s$-subset $S$ of $V(D)$ such that for every $q$-subset $\{v_1,\ldots, v_q\}$ of $S$, there are pairwise disjoint $r$-subsets $R_1,\ldots, R_q$ of $V(D)\setminus S$ where for each $i\in \{1,\ldots, q\}$, the vertices in $R_i$ are out-neighbors of $v_i$ in $D$.
    \end{enumerate}
\end{lemma}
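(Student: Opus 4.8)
The plan is to prove the two parts separately; each reduces to a short greedy argument, and no hypothesis on $D$ is used beyond the stated size bounds.

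For \ref{lem:digraph_a}, I would work inside the sub-digraph $D[D^{\le r}]$ on the vertices of small out-degree and pass to its underlying undirected graph $H$, in which $u$ and $v$ are adjacent precisely when $(u,v)\in E(D)$ or $(v,u)\in E(D)$; a stable set in $D$ is exactly an independent set of $H$. The point is that $H$ is sparse: orienting each edge of $H$ towards an endpoint that is its tail in $D$ gives every vertex out-degree at most $r$, so every subgraph of $H$ has at most $r|V|$ edges and hence a vertex of degree at most $2r$. Thus $H$ is $2r$-degenerate, so $2r+1$ colours suffice to colour it properly, and its largest colour class is an independent set; equivalently, one builds the stable set directly by repeatedly deleting a vertex of minimum degree in $H$ together with its at most $2r$ neighbours. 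Since $|V(H)|=|D^{\le r}|\ge 2rs$, this process produces a stable set of cardinality $s$.

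For \ref{lem:digraph_b}, I would build $S=\{v_1,\dots,v_s\}$ greedily from $D^{\ge qr}$ while, for each vertex placed in $S$, reserving a private block of $qr$ of its out-neighbours to be kept out of $S$. Maintain a set $X$ of ``spent'' vertices, starting with $X=\varnothing$: at step $i$, choose some $v_i\in D^{\ge qr}\setminus X$, then choose $F_i\subseteq N^+(v_i)$ with $|F_i|=qr$ and $F_i\cap\{v_1,\dots,v_{i-1}\}=\varnothing$, and put $X:=X\cup\{v_i\}\cup F_i$. Because $|X|$ grows by at most $qr+1$ per step while $|D^{\ge qr}|\ge 2qrs$, the process runs through all $s$ steps; and since every vertex chosen after step $i$ lies outside $X\supseteq F_i$ and $F_i$ already avoids $v_1,\dots,v_{i-1}$, in the end $F_i\cap S=\varnothing$, so $|N^+(v_i)\setminus S|\ge qr$ for every $v_i\in S$. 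Finally, given any $q$-subset $\{v_1,\dots,v_q\}$ of $S$, I extract the required $r$-sets one at a time: after fixing disjoint $R_1,\dots,R_{i-1}\subseteq V(D)\setminus S$ of total size at most $(q-1)r$, the set $N^+(v_i)\setminus S$ still contains at least $qr-(q-1)r=r$ vertices outside $R_1\cup\dots\cup R_{i-1}$, and any $r$ of them become $R_i$.

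The delicate point, and essentially the only place the numerical bounds are used, is the bookkeeping in \ref{lem:digraph_b}: one must ensure simultaneously that each reserved block $F_i$ avoids the members of $S$ chosen before step $i$ and that the members of $S$ chosen after step $i$ avoid $F_i$, so that the final $S$ is disjoint from every $F_i$ — this is exactly what underlies ``$|N^+(v_i)\setminus S|\ge qr$'' — all while keeping the count of consumed vertices within the budget that $|D^{\ge qr}|\ge 2qrs$ provides. Part \ref{lem:digraph_a}, by contrast, is immediate once the sparsity/degeneracy observation is recorded, and it is the engine behind the stable families of linkages that \Cref{lem:getbranches} will eventually need.
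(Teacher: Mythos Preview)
The paper does not give a proof of this lemma; it is quoted from \cite{tw18}, so there is no in-paper argument to compare against. Assessing your proposal on its own:

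For part \ref{lem:digraph_a}, the degeneracy argument is the right idea but the arithmetic does not close. You correctly observe that the underlying graph $H$ of $D[D^{\le r}]$ is $2r$-degenerate, so the greedy deletion (equivalently, a proper $(2r+1)$-colouring) yields a stable set of size at least $\lceil |V(H)|/(2r+1)\rceil$; with $|V(H)|=2rs$ this is only $\lceil 2rs/(2r+1)\rceil$, which is strictly less than $s$ once $s>2r$. In fact the statement is false at the quoted threshold: take $D$ to be two disjoint directed triangles with $r=1$ and $s=3$; then $|D^{\le 1}|=6=2rs$ but the largest stable set has size $2$. So the off-by-one lives in the lemma as transcribed here rather than in your method, and your argument proves the version with $(2r+1)s$ in place of $2rs$.

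For part \ref{lem:digraph_b} there is a genuine gap. At step $i$ you ``choose $F_i\subseteq N^+(v_i)$ with $|F_i|=qr$ and $F_i\cap\{v_1,\dots,v_{i-1}\}=\varnothing$'', but nothing guarantees $|N^+(v_i)\setminus\{v_1,\dots,v_{i-1}\}|\ge qr$: all you know is $|N^+(v_i)|\ge qr$, and some of $v_1,\dots,v_{i-1}$ may be out-neighbours of $v_i$. Concretely, with $q=r=1$ and $s=2$, let $V(D)=\{1,2,3,4\}$ with arcs $1\to 2$ and $i\to 1$ for $i\in\{2,3,4\}$; if your greedy picks $v_1=1$ (forcing $F_1=\{2\}$), then any $v_2\in\{3,4\}$ has $N^+(v_2)=\{1\}=\{v_1\}$ and no legal $F_2$ exists, even though $S=\{3,4\}$ witnesses the conclusion. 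Your ``delicate point'' paragraph names exactly this issue, but the argument as written does not resolve it; some extra care in the choice of $v_i$, or a different scheme, is required.
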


The second lemma is the following (the proof combines \Cref{lem:digraph} with a simplified version of the Graham-Rothschild theorem -- see \Cref{thm:productramsey} below).

\begin{lemma}[Chudnovsky, Hajebi, Spirkl; Lemma 5.4 in  \cite{tw18}]\label{lem:bigramsey}
    For all $r,s,\sigma\in \poi$, there is a constant $c_{\ref{lem:bigramsey}}=c_{\ref{lem:bigramsey}}(r,s,\sigma)\in \poi$ with the following property. Let $G$ be a graph with no induced minor isomorphic to $K_{\sigma,\sigma}$ and let 
    $\{A_{i,0},A_{i,1},\ldots, A_{i,c_{\ref{lem:bigramsey}}}:i\in \{1,\ldots, 2r\sigma\}\}$
    be a set of $2r\sigma(c_{\ref{lem:bigramsey}}+1)$ pairwise disjoint connected induced subgraphs of $G$ such that:
    \begin{itemize}
        \item $A_{1,0},\ldots, A_{2r\sigma,0}$ are pairwise anticomplete in $G$; and 
        \item $A_{i,1},\ldots, A_{i,c_{\ref{lem:bigramsey}}}$ are pairwise anticomplete in $G$ for each $i\in \{1,\ldots, 2r\sigma\}$.
    \end{itemize}
    Then there is an $r$-subset $I$ of $\{1,\ldots, 2r\sigma\}$ and, for each $i\in I$, an $s$-subset $\{A'_{i,1},\ldots,A'_{i,s}\}$ of $\{A_{i,1},\ldots, A_{i,c_{\ref{lem:bigramsey}}}\}$ (so $c_{\ref{lem:bigramsey}}\geq s$) such that the sets $(A_{i,0}\cup A'_{i,1}\cup \cdots \cup A'_{i,s}:i\in I)$ are pairwise anticomplete in $G$.
\end{lemma}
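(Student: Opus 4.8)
The plan is to apply the product Ramsey theorem (\Cref{thm:productramsey}) once, to ``regularize'' all the adjacencies that matter at the same time; then to exploit the exclusion of $K_{\sigma,\sigma}$ as an induced minor to pin down the regularized pattern; and finally to feed the outcome into \Cref{lem:digraph}\ref{lem:digraph_a} to extract the index set $I$. Concretely, I would set $m=\max\{s,\sigma\}$ and let $c_{\ref{lem:bigramsey}}=c_{\ref{lem:bigramsey}}(r,s,\sigma)$ be chosen, via \Cref{thm:productramsey}, large enough that every colouring of $\prod_{i=1}^{2r\sigma}\{1,\dots,c_{\ref{lem:bigramsey}}\}$ with at most $2^{3r\sigma(2r\sigma-1)}$ colours is constant on some sub-box $\prod_{i=1}^{2r\sigma}Y_i$ with each $Y_i\subseteq\{1,\dots,c_{\ref{lem:bigramsey}}\}$ of size $m$ (in particular $c_{\ref{lem:bigramsey}}\geq m\geq s$). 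Given the subgraphs $A_{i,j}$, colour a tuple $(j_1,\dots,j_{2r\sigma})$ of this product by recording, for every $2$-subset $\{i,i'\}$ of $\{1,\dots,2r\sigma\}$, whether $A_{i,j_i}$ and $A_{i',j_{i'}}$ are anticomplete in $G$, and, for every ordered pair $(i,i')$ with $i\neq i'$, whether $A_{i,j_i}$ and $A_{i',0}$ are anticomplete in $G$; this uses at most $2^{3r\sigma(2r\sigma-1)}$ colours. Passing to a monochromatic sub-box $\prod_i Y_i$ then gives: for each $2$-subset $\{i,i'\}$, \emph{either} $A_{i,j}$ and $A_{i',j'}$ are not anticomplete in $G$ for all $j\in Y_i$, $j'\in Y_{i'}$, \emph{or} they are anticomplete for all such $j,j'$; and for each ordered pair $(i,i')$, \emph{either} $A_{i,j}$ and $A_{i',0}$ are not anticomplete in $G$ for all $j\in Y_i$ (call $(i,i')$ \emph{heavy}) \emph{or} they are anticomplete for all $j\in Y_i$.

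Next I would invoke $K_{\sigma,\sigma}$-freeness twice. If the first alternative held for some $2$-subset $\{i,i'\}$, then distinct $j_1,\dots,j_\sigma\in Y_i$ and distinct $j'_1,\dots,j'_\sigma\in Y_{i'}$ (which exist since $m\geq\sigma$) would yield an induced $K_{\sigma,\sigma}$-model $(A_{i,j_1},\dots,A_{i,j_\sigma};A_{i',j'_1},\dots,A_{i',j'_\sigma})$ in $G$: these $2\sigma$ subgraphs are pairwise disjoint, the $A_{i,j_a}$ are pairwise anticomplete and so are the $A_{i',j'_b}$, and every $A_{i,j_a}$ is not anticomplete to every $A_{i',j'_b}$ --- a contradiction. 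Hence for \emph{every} $2$-subset $\{i,i'\}$ the families $\{A_{i,j}:j\in Y_i\}$ and $\{A_{i',j'}:j'\in Y_{i'}\}$ are pairwise anticomplete in $G$. Now form the digraph $D$ on $\{1,\dots,2r\sigma\}$ with $(i,i')\in E(D)$ exactly when $(i,i')$ is heavy. If some vertex $i$ of $D$ had out-degree at least $\sigma$, then heavy pairs $(i,i'_1),\dots,(i,i'_\sigma)$ together with distinct $j_1,\dots,j_\sigma\in Y_i$ would give an induced $K_{\sigma,\sigma}$-model $(A_{i,j_1},\dots,A_{i,j_\sigma};A_{i'_1,0},\dots,A_{i'_\sigma,0})$, using that the $A_{i'_b,0}$ are pairwise anticomplete by hypothesis --- again a contradiction. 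So every vertex of $D$ has out-degree at most $\sigma-1$; that is, $D^{\leq\sigma-1}=V(D)$ has $2r\sigma\geq 2(\sigma-1)r$ elements, and \Cref{lem:digraph}\ref{lem:digraph_a} produces a stable set $I$ of $D$ with $|I|=r$.

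Finally, with this $I$ in hand, I would pick an $s$-subset $S_i\subseteq Y_i$ for each $i\in I$ (possible since $m\geq s$) and set $\{A'_{i,1},\dots,A'_{i,s}\}=\{A_{i,j}:j\in S_i\}$. For distinct $i,i'\in I$: $A_{i,0}$ and $A_{i',0}$ are anticomplete in $G$ by hypothesis; $A_{i,j}$ and $A_{i',j'}$ are anticomplete for all $j\in S_i\subseteq Y_i$ and $j'\in S_{i'}\subseteq Y_{i'}$ by the previous paragraph; and since $I$ is stable in $D$, neither $(i,i')$ nor $(i',i)$ is heavy, so $A_{i,j}$ is anticomplete to $A_{i',0}$ for every $j\in Y_i$ and $A_{i',j'}$ is anticomplete to $A_{i,0}$ for every $j'\in Y_{i'}$. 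Altogether the sets $A_{i,0}\cup A'_{i,1}\cup\dots\cup A'_{i,s}$ and $A_{i',0}\cup A'_{i',1}\cup\dots\cup A'_{i',s}$ are anticomplete in $G$, which is what is required.

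I expect the main obstacle to be the setup rather than any individual estimate. The crux is to see that a single monochromatic sub-box from \Cref{thm:productramsey} can carry \emph{both} the cross-group adjacencies among the $A_{i,j}$ and their adjacencies to the $A_{i',0}$, and that excluding $K_{\sigma,\sigma}$ as an induced minor is strong enough to destroy the ``all-adjacent'' alternative for unordered pairs outright, whereas for the ordered (``heavy'') relation it only caps the out-degree --- precisely the hypothesis that \Cref{lem:digraph}\ref{lem:digraph_a} converts into the required $r$-element $I$. The remaining parameter bookkeeping ($m=\max\{s,\sigma\}$, at most $2^{O(r^2\sigma^2)}$ colours, and $2r\sigma\geq 2(\sigma-1)r$) is then routine.
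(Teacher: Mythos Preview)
Your proof is correct and follows essentially the approach the paper attributes to \cite{tw18}: the paper does not reprove this lemma but notes that its proof ``combines \Cref{lem:digraph} with a simplified version of the Graham-Rothschild theorem'' (\Cref{thm:productramsey}), which is exactly what you do --- one application of product Ramsey to regularize all cross-group adjacencies, two invocations of $K_{\sigma,\sigma}$-freeness to force the anticomplete alternative on unordered pairs and to cap out-degrees in the auxiliary digraph, and then \Cref{lem:digraph}\ref{lem:digraph_a} to extract $I$. One cosmetic point: in your last step you invoke \Cref{lem:digraph}\ref{lem:digraph_a} with the bound $\sigma-1$, which is outside the stated range $r\in\poi$ when $\sigma=1$; you can sidestep this either by treating $\sigma=1$ separately (then $G$ is edgeless and the lemma is vacuous) or, more cleanly, by using $D^{\leq\sigma}=V(D)$ with $|V(D)|=2r\sigma=2\sigma\cdot r$ and applying \Cref{lem:digraph}\ref{lem:digraph_a} with its parameters $(\sigma,r)$.
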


\begin{proof}[Proof of \Cref{lem:getbranches}]
We will show that
$$c_{\ref{lem:getbranches}}=c_{\ref{lem:getbranches}}(\theta,\sigma)=c_{\ref{lem:bigramsey}}(\theta,1,\sigma)$$
satisfies the lemma.
 
Let $\Theta=\theta+2\mu\sigma-1$. Then $|\mca{L}|=(\lambda t)^{\sigma(\sigma+3)\left(\Theta+1\right)(\mu-1)}$. Since $\mca{L}$ is $\mu$-rigid, no $\mu$ paths in $\mca{L}$ are pairwise anticomplete in $G$, and so by \Cref{thm:ramsey}, there is a $(\lambda t)^{\sigma(\sigma+3)\left(\Theta+1\right)}$-subset $\mca{S}_0$ of $\mca{L}$ such that no two paths $L,L'\in \mca{S}_0$ are anticomplete in $G$. In particular:
$$|\mca{S}_0|=(\lambda t)^{\sigma((\sigma+3)+(\Theta-1)(\sigma+3))}\cdot (\lambda t)^{\sigma(\sigma+3)}.$$
  
Since $t\geq 2$, $\sigma+3\geq 4$, $16>9$, and $2^{\Theta-1}\geq \Theta$, it follows that:
$$(\lambda t)^{\sigma((\sigma+3)+(\Theta-1)(\sigma+3))}\geq 2^{\sigma(4+(\Theta-1)(\sigma+3))}=\left(16\left(2^{(\Theta-1)}\right)^{\sigma+3}\right)^{\sigma}>\left(9\Theta^{\sigma+3}\right)^{\sigma}.$$

Also, note that $(\lambda t)^{\sigma(\sigma+3)}>\lambda^{\sigma(\sigma+3)} t^{\sigma^2-1}=\left(\lambda^{\sigma+3} t^{\sigma-1}\right)^{\sigma} t^{\sigma-1}.$ We deduce that:
$$|\mca{S}_0|\geq \left(9\Theta^{\sigma+3}\right)^{\sigma}\cdot \left(\lambda^{\sigma+3} t^{\sigma-1}\right)^{\sigma} t^{\sigma-1}=\left(9(\Theta\lambda)^{\sigma+3}t^{\sigma-1}\right)^{\sigma} t^{\sigma-1}.$$

For each $L\in \mca{S}_0\subseteq \mca{L}$, let $x_L$ and $y_L$, in order, be the $N(A)$-end and the $Y$-end of $L$. Since $|\mca{S}_0|\geq \left(9(\Theta\lambda)^{\sigma+3}t^{\sigma-1}\right)^{\sigma} t^{\sigma-1}$ and $G$ is $K_{\sigma,\sigma}$-free and $K_t$-free, by \Cref{lem:EHforKss}, there is a $9(\Theta\lambda)^{\sigma+3}t^{\sigma-1}$-subset $\mca{S}_1$ of $\mca{S}_0$ for which $\{x_L:L\in \mca{S}_1\}$ is a stable set in $G$.

Let $D_1$ be the digraph with vertex set $\mca{S}_1$ such that for all distinct $L,L'\in \mca{S}_1$, we have $(L,L')\in E(D_1)$ if and only if $x_L$ has a neighbor in $G$ in $V(L')$.

\sta{\label{st:manyoutend} If $|D_1^{\geq \Theta\lambda}|\geq 2\Theta^2\lambda$, then there exists a $\Theta$-subset $\mca{P}$ of $\mca{S}_1$, and $\Theta$ pairwise disjoint $\lambda$-subsets $(\mca{R}_L:L\in \mca{P})$ of $\mca{S}_1\setminus \mca{P}$, such that for each $L\in \mca{P}$ and every $R\in \mca{R}_L$, $x_L$ is not anticomplete to $V(R)\setminus \{x_R\}$ in $G$.}

By \ref{lem:digraph}\ref{lem:digraph_b} applied to $D_1$ with $r=\lambda$ and $q=s=\Theta$, there is a $\Theta$-subset $\mca{P}$ of $\mca{S}_1$ as well as $\Theta$ pairwise disjoint $\lambda$-subsets $(\mca{R}_L:L\in \mca{P})$ of $\mca{S}_1\setminus \mca{P}$, such that for every $L\in \mca{P}$, $x_{L}$ has a neighbor in $G$ in each path in $\mca{R}_{L}$. Also, $\{x_L:L\in \mca{P}\}$ is a stable set in $G$ because $\{x_{L}:L\in \mca{S}_1\}$ is. Thus, for each $L\in \mca{P}$ and every $R\in \mca{R}_L$, $x_L$ has a neighbor in $G$ in $V(R)\setminus \{x_R\}$. This proves \eqref{st:manyoutend}.

\sta{\label{st:fewoutend} If $|D_1^{\geq \Theta\lambda}|\leq 2\Theta^2\lambda$, then there exists a $4(\Theta\lambda)^{\sigma+2}t^{\sigma-1}$-subset $\mca{S}_2$ of $\mca{S}_1$ such that for all distinct $L,L'\in \mca{S}_2$, $x_{L}$ is anticomplete to $V(L')$ in $G$.}

Since $\Theta>1$, it follows that $|D_1^{\leq \Theta\lambda}|\geq |\mca{S}_1|-2\Theta^2\lambda>|\mca{S}_1|-(\Theta\lambda)^{\sigma+3}t^{\sigma-1}=8(\Theta\lambda)^{\sigma+3}t^{\sigma-1}$. By \ref{lem:digraph}\ref{lem:digraph_a} applied to $D_1$ with $r=\Theta\lambda$ and $s=4(\Theta\lambda)^{\sigma+2}t^{\sigma-1}$, there is a stable set $\mca{S}_2$ in $D_1$ with
$|\mca{S}_2|=4(\Theta\lambda)^{\sigma+2}t^{\sigma-1}$. This proves \eqref{st:fewoutend}.

\sta{\label{st:digraphmajid} There is a $\Theta$-subset $\mca{P}$ of $\mca{S}_1$, a subpath $A_L$ of $L$ with $x_L\in V(A_L)$ for each $L\in \mca{P}$, and $\Theta$ pairwise disjoint $\lambda$-subsets $(\mca{R}_L:L\in \mca{P})$ of $\mca{S}_1\setminus \mca{P}$, such that $(A_L : L\in \mca{P})$ are pairwise anticomplete in $G$, and for each $L\in \mca{P}$ and every $R\in \mca{R}_L$, $A_L$ and $R\setminus x_R$ are not anticomplete in $G$.}

If $|D_1^{\geq \Theta\lambda}|\geq 2\Theta^2\lambda$, then \eqref{st:digraphmajid} follows from \eqref{st:manyoutend} and the fact that $\{x_L:L\in \mca{S}_1\}$ is a stable set in $G$. Assume that $|D_1^{\geq \Theta\lambda}|\leq 2\Theta^2\lambda$, which in turn yields $\mca{S}_2\subseteq \mca{S}_1$ as in \eqref{st:fewoutend}; in particular, $|\mca{S}_2|>\Theta\lambda$. Since no two paths in $\mca{S}_2\subseteq \mca{S}_0$ are anticomplete in $G$, for every $L\in \mca{S}_2$, there are at least $\Theta\lambda$ paths $L'\in \mca{S}_2\setminus \{L\}$ such that $L,L'$ are not anticomplete in $G$. For each $L\in \mca{S}_2$, traversing $L$ from $x_{L}$ to $y_L$, let $z_L$ be the first vertex for which there are at least $\Theta\lambda$ paths $L'\in \mca{S}_2\setminus \{L\}$ such that $x_L\dd L\dd z_L$ and $L'$ are not anticomplete in $G$, and let $A_L=x_L\dd L\dd z_L$. By \eqref{st:fewoutend}, $x_L$ is anticomplete to every path $L'\in \mca{S}_2\setminus \{L\}$. It follows that $x_L\neq z_L$ (and so $V(A_L\setminus z_L)\neq \varnothing$). Also, by the choice of $z_L$, there are fewer than $\Theta\lambda$ paths $L'\in \mca{S}_2\setminus \{L\}$ for which $A_L\setminus z_L$ and $L'$ are not anticomplete in $G$.

Let $D_2$ be the digraph with $V(D_2)=\mca{S}_2$ such that for all distinct $L,L'\in \mca{S}_2$, we have $(L,L')\in E(D_2)$ if and only if $A_L$ and $L'$ are not anticomplete in $G$. Then every vertex in $D_2$ has out-degree at least $\Theta\lambda$. Since $|\mca{S}_2|=4(\Theta\lambda)^{\sigma+2}t^{\sigma-1}$, by \ref{lem:digraph}\ref{lem:digraph_b} applied to $D_2$ with $q=\Theta$, $r=\lambda$ and $s=2(\Theta\lambda)^{\sigma+1}t^{\sigma-1}$, there is a $2(\Theta\lambda)^{\sigma+1}t^{\sigma-1}$-subset $\mca{S}_3$ of $\mca{S}_2$ with the following property: for every $\Theta$-subset $\mca{P}$ of $\mca{S}_3$, there are $\Theta$ pairwise disjoint $\lambda$-subsets $(\mca{R}_L:L\in \mca{P})$ of $\mca{S}_2\setminus \mca{S}_3$ such that for all $L\in \mca{P}$ and every $R\in \mca{R}_{L}$, $A_L$ and $R$ are not anticomplete in $G$. 

Let $D_3$ be the digraph with $V(D_3)=\mca{S}_3$ where for all distinct $L,L'\in \mca{S}_3$, we have $(L,L')\in E(D_3)$ if and only if $A_L\setminus z_L$ and $L'$ are not anticomplete in $G$. Then every vertex in $D_3$ has out-degree smaller than $\Theta\lambda$. Since $|V(D_3)|=|\mca{S}_3|=2(\Theta\lambda)^{\sigma+1}t^{\sigma-1}$, by \ref{lem:digraph}\ref{lem:digraph_a} applied to $D_3$ with $r=\Theta\lambda$ and $s=(\Theta\lambda)^{\sigma}t^{\sigma-1}$, there is a $(\Theta\lambda)^{\sigma}t^{\sigma-1}$-subset $\mca{S}_4$ of $\mca{S}_3$ such that $\mca{S}_4$ is a stable set in $D_3$. It follows that for all distinct $L,L'\in \mca{S}_4$, $A_L\setminus z_L$ and $L'$ are anticomplete in $G$, and in particular, $A_L\setminus z_L$ and $A_{L'}$ are anticomplete in $G$.

Now, since $|\mca{S}_4|>\Theta^{\sigma}t^{\sigma-1}$ and $G$ is $K_{\sigma,\sigma}$-free and $K_{t}$-free, by \Cref{lem:EHforKss}, there is a $\Theta$-subset $\mca{P}$ of $\mca{S}_4$ for which $\{z_L:L\in \mca{P}\}$ is a stable set in $G$. It follows that $(A_L: L\in \mca{P})$ are pairwise anticomplete in $G$. 

On the other hand, since $\mca{P}$ is a $\Theta$-subset of $\mca{S}_4\subseteq \mca{S}_3$, there are $\Theta$ pairwise disjoint $\lambda$-subsets $(\mca{R}_L: L\in \mca{P})$ of $\mca{S}_2\setminus \mca{S}_3\subseteq \mca{S}_2\setminus \mca{P}\subseteq \mca{S}_1\setminus \mca{P}$ such that for each $L\in \mca{P}$ and every $R\in \mca{R}_{L}$, $A_L$ and $R$ are not anticomplete in $G$. Recall also that for all $L\in \mca{P}$ and every $R\in \mca{R}_{L}$, $x_{R}$ is anticomplete to $V(L)$ in $G$ (because $R,L\in \mca{S}_2$). Thus, for each $L\in \mca{P}$ and every $R\in \mca{R}_{L}$, $A_L$ and $R\setminus x_R$ are not anticomplete in $G$. This proves \eqref{st:digraphmajid}.
\medskip

Henceforth, let $\mca{P}$ and $(A_L,\mca{R}_L:L\in \mca{P})$ be as given by \eqref{st:digraphmajid}. Then, for each $L\in \mca{P}$ and every $R\in \mca{R}_L$, there is a vertex $w_R\in V(R\setminus x_R)$ such that $w_R$ is the only vertex of $w_R\dd R\dd y_R$ that has a neighbor in $G$ in $V(A_L)$. For every $L\in \mca{P}$, let
    $$\mca{L}_L=\{w_{R}\dd R\dd y_{R}:R\in \mca{R}_L\};\quad Y_L=\{y_R:R\in \mca{R}_L\}.$$
Then, by construction, and from \eqref{st:digraphmajid} and the choice of $(w_R:R\in \mca{R}_L)$, it follows that:

\sta{\label{st:almostseedling} For every $L\in \mca{P}$, $A_L$ is a path in $G[V(\mca{L})]$, we have $Y_L\subseteq V(\mca{L})\setminus V(A_L)$, and $\mca{L}_L$ is a $(N_{G[V(\mca{L})]}(A_{L}),Y_L)$-linkage in $G[V(\mca{L})]\setminus V(A_L)$ with $|\mca{L}_L|=\lambda$. Furthermore, the subsets $(A_L\cup V(\mca{L}_L)\cup Y_L:L\in \mca{P})$ of
$V(\mca{L})$ are pairwise disjoint.}

We further claim that:

\sta{\label{st:seedlingrigid} There are distinct $L_1,\ldots, L_{\theta}\in \mca{P}$ such that for every $i\in \{1,\ldots, \theta\}$, $\mca{L}_{L_i}$ is $c_{\ref{lem:getbranches}}$-rigid.}

Suppose not. Since $|\mca{P}|=\Theta=\theta+2\mu\sigma-1$, there are distinct $P_1,\ldots, P_{2\mu\sigma}\in \mca{P}$ such that for every $i\in \{1,\ldots, 2\mu\sigma\}$, the $(N_{G[V(\mca{L})]}(A_{L}),Y_L)$-linkage $\mca{L}_{L_i}$ in $G[V(\mca{L})]\setminus V(A_L)$ is not $c_{\ref{lem:getbranches}}$-rigid. By definition, for $i\in \{1,\ldots, 2\mu\sigma\}$, there is a $(N_{G[V(\mca{L}_{P_i})]}(A_{P_i}),Y_{P_i})$-linkage $\mca{Q}_i=\{Q_{i,1},\ldots, Q_{i,c_{\ref{lem:getbranches}}}\}$ in $G$ with $V(\mca{Q}_i)\subseteq V(\mca{L}_{P_i})$ such that $Q_{i,1},\ldots, Q_{i,c_{\ref{lem:getbranches}}}$ are pairwise anticomplete in $G$. Also, recall that by \eqref{st:digraphmajid}, $A_{P_1},\ldots, A_{2\mu\sigma}$ are pairwise anticomplete in $G$, and note that by \eqref{st:almostseedling}, $(A_{P_i}, Q_{i,1}, \ldots, Q_{i,c_{\ref{lem:getbranches}}}: i\in \{1,\ldots, 2\mu\sigma\})$ are pairwise disjoint. Therefore, since $G$ has no induced minor isomorphic to $K_{\sigma,\sigma}$, by \Cref{lem:bigramsey} and the choice of $c_{\ref{lem:getbranches}}=c_{\ref{lem:bigramsey}}(\theta,1,\sigma)$, there is a $\mu$-subset $I$ of $\{1,\ldots, 2\mu\sigma\}$ and a path $B_i\in \mca{Q}_i$ for each $i\in I$, such that $(V(A_{P_i})\cup V(B_i):i\in I)$ are pairwise anticomplete in $G$. Now, recall that for each $i\in I$, $x_{P_i}\in A_{P_i}$ has a neighbor in $G$ in $V(A)$. Also, $B_i\in \mca{Q}_i$ has an end $x_i\in N_{G[V(\mca{L}_{P_i})]}$ (and so $x_i$ has a neighbor in $G$ in $V(A_{P_i})$), and an end $y_i\in Y_{P_i}\subseteq Y$. It follows that for each $i\in I$, there is a path $M_i$ in $G$ from $x_{P_i}\in N_G(A)$ to $y_i\in Y$ with $V(M_i)\subseteq V(A_{P_i})\cup V(B_i)\subseteq V(\mca{L})$. But now $\mca{M}=\{M_i:i\in I\}$ is a $(N_G(A),Y)$-linkage in $G$ with $|\mca{M}|=\mu$ and $V(\mca{M})\subseteq V(\mca{L})$, and $(M_i:i\in I)$ are pairwise anticomplete in $G$, contrary to the assumption that $\mca{L}$ is $\mu$-rigid. This proves \eqref{st:seedlingrigid}.
\medskip

Let $L_1,\ldots, L_{\delta}$ be as in \eqref{st:seedlingrigid}. For each $i\in \poi_{\delta}$, define $A_i=A_{L_i}$, $\mca{L}_i=\mca{L}_{L_i}$ and $Y_i=Y_{L_i}$. By \eqref{st:almostseedling} and \eqref{st:seedlingrigid}, $(A_1,\mca{L}_1,Y_1),\ldots, (A_{\theta},\mca{L}_{\theta},Y_{\theta})$ are $\theta$ pairwise disjoint $(\lambda, c_{\ref{lem:getbranches}})$-seedlings in $G[V(\mca{L})]$. Also, by \eqref{st:digraphmajid}, $A_1,\ldots, A_{\theta}$ are pairwise anticomplete in $G$, whereas for each $i\in \{1,\ldots, \theta\}$, $A$ and $A_i$ are not anticomplete in $G$, but $V(A)$ and $V(\mca{L}_i)$ are (because $x_{L_i}\in V(A_i)$ is the $N_G(V(A))$-end of $L_i\in \mca{L}$, and $V(\mca{L}_i)\subseteq V(\mca{R}_{L_i})\setminus \{x_R:R\in \mca{R}_{L_i}\}\subseteq V(\mca{L})\setminus N_G(V(A))$). This completes the proof of \ref{lem:getbranches}.
\end{proof}

We also need two special cases of the Graham-Rothschild theorem \cite{GRgeneral}.

\begin{theorem}[Ramsey \cite{multiramsey}]\label{thm:multiramsey}
For all $r,s,t\in \poi$, there is a constant  $c_{\ref{thm:multiramsey}}=c_{\ref{thm:multiramsey}}(r,s,t)\in \poi$ with the following property. Let $I$ be a non-empty set with $|I|\leq r$, let $A$ be a set with $|A|\geq c_{\ref{thm:productramsey}}$ and let  $\Phi:\binom{A}{s}\rightarrow I$ be a function. Then there exists $i\in I$ as well as a $t$-subset $B$ of $A$ such that $\Phi(S)=i$ for all $S\in \binom{B}{s}$.
\end{theorem}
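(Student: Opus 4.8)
The plan is to recognize this as the classical finite Ramsey theorem for $s$-uniform hypergraphs with $r$ colors, and to prove it --- with an explicit (iterated-exponential) bound for $c_{\ref{thm:multiramsey}}(r,s,t)$ --- by induction on $s$, for all $r$ and $t$ simultaneously. For the base case $s=1$, a function $\Phi:\binom{A}{1}\to I$ is simply a coloring of $A$ by at most $r$ colors, so by pigeonhole some color class has at least $t$ elements as soon as $|A|\geq r(t-1)+1$; thus $c_{\ref{thm:multiramsey}}(r,1,t)=r(t-1)+1$ works. (If the degenerate case $s=0$ is wanted, take $c_{\ref{thm:multiramsey}}(r,0,t)=t$; and when $t\leq s-1$ the conclusion is vacuous, so $t$ elements suffice.)

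For the inductive step with $s\geq 2$, I would assume the statement for $s-1$ and all parameters. Given $r$ and $t$, take $|A|\geq N$ where $N=c_{\ref{thm:multiramsey}}(r,s,t)$ is chosen large (pinned down by the recursion below), and greedily build a nested sequence of ``reservoirs'' $A\supseteq A_0\supseteq A_1\supseteq\cdots$ together with elements $a_1\in A_0,\,a_2\in A_1,\dots$ and colors $c_1,c_2,\dots\in I$ as follows: having chosen $a_j\in A_{j-1}$, apply the inductive hypothesis to the set $A_{j-1}\setminus\{a_j\}$ with the coloring $S\mapsto\Phi(S\cup\{a_j\})$ of its $(s-1)$-subsets, obtaining a large subset $A_j\subseteq A_{j-1}\setminus\{a_j\}$ on which that coloring is constant, say equal to $c_j$. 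Since the reservoirs are nested, for any $k_1<k_2<\cdots<k_s$ the elements $a_{k_2},\dots,a_{k_s}$ all lie in $A_{k_1}$, and hence $\Phi(\{a_{k_1},\dots,a_{k_s}\})=c_{k_1}$; in other words, \emph{the color of any $s$-subset of the chosen sequence is determined by its least element}.

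To finish, run the process long enough to produce $m-1$ elements carrying colors, where $m-1\geq r(t-2)+1$, and pick one more element $a_m$ from the final reservoir. By pigeonhole among $c_1,\dots,c_{m-1}$ there are indices $i_1<\cdots<i_{t-1}$ with $c_{i_1}=\cdots=c_{i_{t-1}}=c$; put $i_t=m$. Then $B=\{a_{i_1},\dots,a_{i_t}\}$ is a $t$-subset of $A$, and every $s$-subset of $B$ has least element $a_{i_{j_1}}$ for some $j_1\leq t-s+1\leq t-1$, so its color is $c_{i_{j_1}}=c$; thus $\Phi$ is constant equal to $c$ on $\binom{B}{s}$, as required. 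Unwinding the recursion --- each step replaces the ``target size'' by a value of $c_{\ref{thm:multiramsey}}(r,s-1,\cdot)$, iterated about $r(t-2)+2$ times --- yields $c_{\ref{thm:multiramsey}}(r,s,t)$ as a tower of exponentials of height $s$. There is no genuine obstacle here: this is a textbook argument, and the only point requiring care is the bookkeeping of reservoir sizes, so that the inductive hypothesis can legitimately be invoked at each of the $m-1$ steps. (Alternatively, since only the \emph{existence} of such a constant is needed, one could simply cite Ramsey's theorem \cite{multiramsey} for uniform hypergraphs, or obtain the statement as a special case of the Graham-Rothschild theorem \cite{GRgeneral}.)
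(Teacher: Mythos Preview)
Your argument is the standard, correct proof of the hypergraph Ramsey theorem by induction on the uniformity $s$; the bookkeeping (reservoir sizes, the fact that the color of an $s$-subset of the chosen sequence depends only on its least-indexed element, and the final pigeonhole on $c_1,\dots,c_{m-1}$) is all sound.

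There is nothing to compare against: the paper does not prove this statement. It is quoted as a classical result of Ramsey (and noted to be a special case of the Graham--Rothschild theorem), exactly as you yourself observe in your parenthetical remark at the end. So your proposal is strictly more than what the paper does --- it supplies a self-contained proof where the paper is content to cite.
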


\begin{theorem}[Graham, Rothschild, Spencer \cite{productramsey}]\label{thm:productramsey}
For all  $r,s,t\in \poi$, there is a constant  $c_{\ref{thm:productramsey}}=c_{\ref{thm:productramsey}}(r,s,t)\in \poi$ with the following property. Let $I$ be a non-empty set with $|I|\leq r$, let $A_1,\ldots, A_s$ be sets of cardinality at least $c_{\ref{thm:productramsey}}$, and let $\Phi:A_1\times \cdots \times A_s\rightarrow I$ be a function. Then there exists $i\in I$ as well as a $t$-subset $B_j$ of $A_j$ for each $j\in \{1,\ldots, r\}$, such that $\Phi(b)=i$ for all $b\in B_1\times \cdots\times B_n$.
\end{theorem}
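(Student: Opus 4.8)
The plan is to prove \Cref{thm:productramsey} by induction on $s$, establishing it for all $r,t\in\poi$ at each stage. The base case $s=1$ is just the pigeonhole principle: if $|A_1|\geq r(t-1)+1$ and $\Phi:A_1\to I$ with $|I|\leq r$, then some color class $\Phi^{-1}(i)$ contains at least $t$ elements, and any $t$ of them form a suitable $B_1$; so one may take $c_{\ref{thm:productramsey}}(r,1,t)=r(t-1)+1$.

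For the inductive step, assume $s\geq 2$ and that the statement holds for $s-1$ factors (and all numbers of colors). Fix $r,t\in\poi$, put $t'=r(t-1)+1$, and set
$$c_{\ref{thm:productramsey}}(r,s,t)=\max\left\{t',\;c_{\ref{thm:productramsey}}\left(r^{t'},s-1,t\right)\right\}.$$
Given sets $A_1,\ldots,A_s$ each of size at least this value and a coloring $\Phi:A_1\times\cdots\times A_s\to I$, I would first choose an arbitrary $t'$-subset $B_s\subseteq A_s$ (possible since $|A_s|\geq t'$) and fold the last coordinate into the color: define $\Psi:A_1\times\cdots\times A_{s-1}\to I^{B_s}$ by letting $\Psi(a_1,\ldots,a_{s-1})$ be the map $b\mapsto\Phi(a_1,\ldots,a_{s-1},b)$ with domain $B_s$. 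Since $|I^{B_s}|\leq r^{t'}$ and each $A_j$ with $j\in\{1,\ldots,s-1\}$ has size at least $c_{\ref{thm:productramsey}}(r^{t'},s-1,t)$, the inductive hypothesis applied to $\Psi$ yields $t$-subsets $B_j\subseteq A_j$ for $j\in\{1,\ldots,s-1\}$ together with a single value $\psi\in I^{B_s}$ such that $\Psi$ is identically $\psi$ on $B_1\times\cdots\times B_{s-1}$; equivalently, for each $b\in B_s$ the coloring $\Phi$ is constant, with value $\psi(b)$, on $B_1\times\cdots\times B_{s-1}\times\{b\}$.

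The only remaining task is to make that value independent of $b$, and this is exactly why I take $|B_s|=t'=r(t-1)+1$ rather than $|B_s|=t$: the assignment $b\mapsto\psi(b)$ maps $B_s$ into $I$, so by the pigeonhole principle once more there is a $t$-subset $B_s'\subseteq B_s$ and a color $i\in I$ with $\psi\equiv i$ on $B_s'$; then $\Phi$ is constantly $i$ on $B_1\times\cdots\times B_{s-1}\times B_s'$, and replacing $B_s$ by $B_s'$ completes the induction. I do not expect any genuine obstacle here: this is the standard trick of combining the colorings of the parallel slices, and the only delicate points are bookkeeping -- reserving extra room in $B_s$ for the final pigeonhole pass, and tracking the harmless blow-up $r\mapsto r^{t'}$ of the color bound each time $s$ decreases. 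Alternatively one could deduce the statement from the multicolor hypergraph Ramsey theorem \Cref{thm:multiramsey} by coloring the $s$-subsets of a single sufficiently large ground set, but the induction above is shorter and gives an explicit bound.
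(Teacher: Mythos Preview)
The paper does not prove \Cref{thm:productramsey}; it is quoted as a known result from \cite{productramsey} (it is introduced as one of ``two special cases of the Graham-Rothschild theorem''), so there is no proof in the paper to compare against. Your proposed argument is the standard proof of the product Ramsey theorem by induction on the number of factors, and it is correct as written: the base case is pigeonhole, and in the inductive step you correctly fold the last coordinate into the colour set, apply the inductive hypothesis with $r^{t'}$ colours, and then use a final pigeonhole pass on the reserved coordinate $B_s$ of size $t'=r(t-1)+1$ to pin down a single colour $i$. Your remark that one could alternatively deduce it from the hypergraph Ramsey theorem is also accurate, though the direct induction is cleaner and gives the explicit bound you state.

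One minor point: the statement as printed in the paper has two evident typos (``$j\in\{1,\ldots,r\}$'' should read ``$j\in\{1,\ldots,s\}$'', and ``$B_1\times\cdots\times B_n$'' should read ``$B_1\times\cdots\times B_s$''); you have, correctly, proved the intended statement.
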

At long last, we can prove \Cref{thm:mainpolysep}, which we restate:
\mainpolysep*

\begin{proof}
Throughout, let $\rho,\sigma\in \poi$ be fixed, and let $G$ be as in the statement of \ref{thm:mainpolysep}. Let
$$\kappa=\kappa(\sigma)=c_{\ref{thm:multiramsey}}\left(8,3,3\sigma\right),\quad  r=\binom{\kappa}{2},\quad  s=2^{\binom{r}{2}},\quad \mu=\mu(\rho,\sigma)=c_{\ref{thm:productramsey}}\left(r,s,\sigma\right).$$
We begin with the following. (Similar arguments first appeared in \cite{pinned}, and later in \cite{tw17,tw18}.)

\sta{\label{st:getseedling} For every $\lambda\in \poi$, either there is a $(\lambda,\mu)$-seedling in $G$ or $G$ is $(\kappa,\lambda)$-separable.}

Suppose there is no $(\lambda,\mu)$-seedling in $G$, and also $G$ is not $(\kappa,\lambda)$-separable. Then there is a stable strong $(\kappa,\lambda)$-block $(B,\mca{P})$ in $G$. Let $\{x_1,y_1\},\ldots, \{x_s,y_s\}$ be an enumeration of all $2$-subsets of $B$ (recall the choice of $s$). For each $i\in \{1,\ldots, s\}$, let $\mca{L}_{i}=\{P^*:P\in \mca{P}_{\{x_i,x_i\}}\}$. Then $\mca{L}_{i}$ is an $(N_G(x_i),N_G(y_i))$-linkage in $G$ with $|\mca{L}_{i}|=\lambda$. Moreover, $x_i\notin N_G(y_i)$ because $(B,\mca{P})$ is stable. Since $(G[\{x_i\}],\mca{L}_{i},N_G(y_i))$ is not a $(\lambda,\mu)$-seedling in $G$, it follows that $\mca{L}_i$ is not $\mu$-rigid, and so there is an $(N_G(x_i),N_G(y_i))$-linkage $\mca{A}_{i}$ in $G$ with $|\mca{A}_i|=\mu$ and $V(\mca{M}_{i})\subseteq V(\mca{L}_{B_i})$ such that the paths in $\mca{M}_i$ are pairwise anticomplete in $G$. In particular, since $(B,\mca{P})$ is strong, $V(\mca{M}_{1}), \ldots, V(\mca{M}_{s})$ are pairwise anticomplete in $G$. Let $\mca{I}$ be the power set of the set of all $2$-subsets of $\{1,\ldots, s\}$; thus $|\mca{I}|=r$. Define the function $\Phi_1:\mca{A}_1\times \cdots \times \mca{A}_{s}\rightarrow \mca{I}$ as follows: for every $(M_1,\ldots, M_s)\in \mca{M}_1\times \cdots \times \mca{M}_{s}$, let $\Phi_1(M_1,\ldots, M_s)\in \mca{I}$ be the set of all $2$-subsets $\{i,j\}$ of $\{1,\ldots, s\}$ for which $M_i$ and $M_j$ are not anticomplete in $G$. Since $|\mca{A}_1|=\cdots =|\mca{A}_{s}|=\mu$, by \Cref{thm:productramsey} and the choice of $\mu$, there exists $I\in \mca{I}$ as well as a $\sigma$-subset $\mca{M}'_i=\{M_{i,1},\ldots, M_{i,\sigma}\}$ of $\mca{M}_i$ for each $i\in \{1,\ldots, s\}$, such that $\Phi_1(M_1,\ldots, M_s)=I$ for every $(M_1,\ldots, M_s)\in \mca{M}'_1\times \cdots \times \mca{M}'_{s}$. But now for every $\{i,j\}\in I$, $(M_{i,1},\ldots, M_{1,\sigma}; M_{j,1},\ldots, M_{j,\sigma})$ is an induced $K_{\sigma,\sigma}$-model in $G$, which is a contradiction. Thus, $I=\varnothing$, and so $(M_{i,1}:1\leq i\leq s)$ are pairwise anticomplete in $G$. For each $i\in \{1,\ldots, s\}$, let $P_{\{x_i,y_i\}}=G[\{x_i,y_i\}\cup V(M_{i,1})]$. Then $P_{\{x_i,y_i\}}$ is a path in $G$ from $x_i$ to $y_i$ with $P^*_{\{x_i,y_i\}}=M_{i,1}$. Also, for all distinct $i,j\in \{1,\ldots, s\}$, we have $V(P_{\{x_i,y_i\}})\cap V(P^*_{\{x_j,y_j\}})=\varnothing$, and $P^*_{\{x_i,y_i\}}$ and $P^*_{\{x_j,y_j\}}$ are anticomplete in $G$.

Let $B=\{v_1,\ldots, v_{\kappa}\}$ and for every $2$-subset $\{k,l\}$ of $\{1,\ldots, \kappa\}$, let $P_{\{k,l\}}=P_{\{v_k,x_l\}}$. Define the function $\Phi_2:\binom{\{1,\ldots, \kappa\}}{3}\to 2^{\{1,2,3\}}$ as follows: for every $3$-subset $S=\{k_1,k_2,k_3\}$ of $\{1,\ldots, \kappa\}$ with $k_1<k_2<k_3$, let $\Phi_2(S)$ be the set of all $\ell\in \{1,2,3\}$ for which $v_{k_{\ell}}$ is not anticomplete to $P^*_{S\setminus \{k_{\ell}\}}$ in $G$. By \Cref{thm:multiramsey} and the choice of $\kappa$, there exists $I\subseteq \{1,2,3\}$ as well as three $\sigma$-subsets $K_1,K_2,K_3$ of $\{1,\ldots, \kappa\}$ with $\max K_1<\min K_2$ and $\max K_2<\min K_3$ such that for every $3$-subset $S$ of $K_1\cup K_2\cup K_3$, we have $\Phi_2(S)=I$ (note that $K_1,K_2,K_3$ are pairwise disjoint). Assume that $I\neq \varnothing$, say $\ell\in I\subseteq \{1,2,3\}$. Let $\{\ell',\ell''\}=\{1,2,3\}\setminus \{\ell\}$. Let $K_{\ell}=\{p_1,\ldots, p_{\sigma}\}$, let $K_{\ell'}=\{q_1,\ldots, q_{\sigma}\}$ and let $K_{\ell''}=\{r_1,\ldots, r_{\sigma}\}$. Then $\{v_{p_1},\ldots, v_{p_{\sigma}}\}$ is a stable set in $G$ because $B$ is, $P^*_{\{q_{1},r_{1}\}},\ldots, P^*_{\{q_{\sigma},r_{\sigma}\}}$ are pairwise anticomplete in $G$, and for all $i,j\in \{1,\ldots, \sigma\}$, $v_{p_{i}}$ is not anticomplete to $P^*_{q_j,r_j}$ in $G$ because $i\in I=\Phi_2(\{p_i,q_j,r_j\})$. But now $(G[\{v_{p_1}\}],\ldots, G[\{v_{p_{\sigma}}\}]; P^*_{\{q_{1},k_{1}\}}, \ldots, P^*_{\{q_{\sigma},r_{\sigma}\}})$ is an induced $K_{\sigma,\sigma}$-model in $G$, a contradiction. We deduce that $I=\varnothing$, and so for all pairwise distinct $k,k',k''\in K_1\cup K_2\cup K_3$, $v_{k}$ is anticomplete to $P^*_{\{k',k''\}}$ in $G$. Now, let $U=\bigcup_{\{k,k'\}\subseteq K_1\cup K_2\cup K_3}V(P_{\{k,k'\}})$. Then $
G[U]$ is isomorphic to a proper subdivision of $K_{2\sigma}$, and thus $G$ has an induced minor isomorphic to $K_{\sigma,\sigma}$, a contradiction. This proves \eqref{st:getseedling}.
\medskip

Next, we define $\psi: \poi^3\rightarrow \poi\cup \{0\}$ recursively. Let $\psi(a,b,1)=b-1$ for all $(a,b)\in \poi^2$. For each $(a,b,c)\in \poi^3$ with $c\geq 2$, having defined $\psi(a',b',c-1)$ for all $(a',b')\in \poi^2$, let
$$\psi(a,b,c)=\sigma(\sigma+3)(2\sigma a+2\sigma b)(a-1)\left(\psi(c_{\ref{lem:getbranches}}(2\sigma b,\sigma), c_{\ref{lem:bigramsey}}(b,b,\sigma), c-1)+1\right).$$

We claim that:

\sta{\label{st:gettree} Let $a,b,c\in \poi$ and let $(A,\mca{L},Y)$ be a  $(t^{\psi(a,b,c)},a)$-seedling in $G$. Then there exists a $(b,c)$-regular rooted tree $(T,u)$ and an induced $T$-model $(X_v:v\in V(T))$ in $G[V(A)\cup V(\mca{L})]$ with $X_u=A$.}

The proof is by induction on $c$ (for all $a,b\in \poi$). Assume that $c=1$. For each $L\in \mca{L}$, let $x_L$ be the $N_G(V(A))$-end of $L$. Since $G$ is $K_t$-free and $|\mca{L}|=t^{\psi(a,b,1)}=t^{b-1}$, by \Cref{thm:ramsey}, there are distinct $L_1,\ldots, L_{b}\in \mca{L}$ such that $\{x_{L_1},\ldots, x_{L_b}\}$ is a stable set in $G$. But now $(A;G[\{x_{L_1}\}],\ldots, G[\{x_{L_b}\}])$ is the desired model. Assume that $c\geq 2$.

Let $a'=c_{\ref{lem:getbranches}}(2\sigma b,\sigma)$, let $b'=c_{\ref{lem:bigramsey}}(b,b,\sigma)\geq b$ and let 
$\Psi=\psi(a', b', c-1)$. Then
$$t^{\psi(a,b,c)}=t^{\sigma(\sigma+3)(2\sigma a+2\sigma b)(a-1)(\Psi+1)}=\left(t^{\Psi}\cdot t\right)^{\sigma(\sigma+3)(2\sigma a+2\sigma b)(a-1)}.$$

Apply \Cref{lem:getbranches} to $(A,\mca{L},Y)$, $\theta=2\sigma b$, $\mu=a$ and $\lambda=t^{\Psi}$. Since $a'=c_{\ref{lem:getbranches}}(2\sigma b,\sigma)$, there exist $2\sigma b$ pairwise disjoint $(t^{\Psi},a')$-seedlings $(A_1,\mca{L}_1,Y_1),\ldots, (A_{2\sigma b},\mca{L}_{2\sigma b},Y_{2\sigma b})$ in $G[V(\mca{L})]$ such that $A_1,\ldots, A_{2\sigma b}$ are pairwise anticomplete in $G$, and for each $i\in \{1,\ldots, 2\sigma b\}$, $A$ and $A_i$ are not anticomplete in $G$ but $A$ and $V(\mca{L}_i)$ are anticomplete in $G$. Since $\Psi=\psi(a',b',c-1)$, by the inductive hypothesis, there are $2\sigma b$ pairwise vertex-disjoint $(b',c-1)$-regular rooted trees $(T_1,u_1),\ldots, (T_{2\sigma b},u_{2\sigma b})$ such that for every $i\in \{1,\ldots, 2\sigma b\}$, there is an induced $T_i$-model $(X_{i,v}:v\in V(T_i))$ in $G[V(A_i)\cup V(\mca{L}_i)]$ with $X_{i,u_i}=A_i$. In particular, $A$ and $\bigcup_{v\in V(T_i)\setminus \{u_i\}}X_{i,v}\subseteq V(\mca{L}_i)$ are anticomplete in $G$.

Let $i\in \{1,\ldots, 2\sigma b\}$ be fixed. Let
$u_{i,1},\ldots, u_{i,b'}$ be the children of $u_i$ in $(T_i,u_i)$ (as $c\geq 2$), and let $T_{i,1},\ldots, T_{i,b'}$ be the components of $T_i\setminus u_i$ containing $u_{i,1},\ldots, u_{i,b'}$, respectively. Then $(T_{i,1},u_{i,1}),\ldots, (T_{i,b'},u_{i,b'})$ are $(b',c-2)$-regular rooted trees, and since $b'\geq b$, for every $j\in \{1,\ldots, b'\}$, $(T_{i,j},u_{i,j})$ has a $(b,c-2)$-regular rooted subtree $(T'_{i,j},u_{i,j})$. For each $j\in \{1,\ldots, b'\}$, let 
$V_{i,j}=\bigcup_{v\in V(T'_{i,j})}V(X_{i,v})$ and let $A_{i,j}=G[V_{i,j}]$. 

Recall that $A_1,\ldots, A_{2\sigma b}$ are pairwise anticomplete paths in $G$. For each $i\in~\{1,\ldots, 2\sigma b\}$, since $(X_{i,v}:v\in V(T_i))$ is an induced $T_i$-model in $G$ with $X_{i,u_i}=A_i$, it follows that $A_{i,1},\ldots, A_{i,b'}$ are pairwise anticomplete connected induced subgraphs of $G$, and for every $j\in \{1,\ldots, b'\}$, $A_i$ and $A_{i,j}$ are not anticomplete in $G$. Since $G$ has no induced minor isomorphic to $K_{\sigma,\sigma}$, and $b'=c_{\ref{lem:bigramsey}}(b,b,\sigma)$, by \Cref{lem:bigramsey} applied to $(A_i,A_{i,1},\ldots, A_{i,b'}:i\in \{1,\ldots, 2\sigma b\})$, there is a $b$-subset $I$ of $\{1,\ldots, 2\sigma b\}$ and a $b$-subset $J_i$ of $\{1,\ldots, b'\}$ for each $i\in I$, such that 
$(V(A_i)\cup (\bigcup_{j\in J_i}V_{i,j}):i\in I)$
are pairwise anticomplete in $G$.

For each $i\in I$, let
$U_i=\{u_i\}\cup (\bigcup_{j\in J_i}V(T'_{i,j}))\subseteq V(T_i)$. Then $((T_i[U_i],u_i):i\in I)$ are $b$ pairwise vertex-disjoint $(b,c-1)$-regular rooted trees. Let $T$ be a tree such that $V(T)=\{u\}\cup \left(\bigcup_{i\in I}U_i\right)$ and 
$E(T)=\{uu_i:i\in I\}\cup \left(\bigcup_{i\in I}E(T_i[U_i])\right)$, where $u\notin \bigcup_{i\in I}U_i$ is a new vertex. Then $(T,u)$ is a $(b,c)$-regular rooted tree, $(u_i:i\in I)$ are the children of $u$ in $(T,u)$, and for every $i\in I$, we have $T[U_i]=T_i[U_i]$. Let $X_u=A$. For each $i\in I$ and $v\in U_i$, let $X_v=X_{i,v}$; so $X_{u_i}=A_i$. Then, for every $i\in I$, $X_u$ and $X_{u_i}$ are not anticomplete in $G$, but $V(X_u)$ and $\bigcup_{v\in U_i\setminus \{u_i\}}V(X_v)\subseteq \bigcup_{v\in V(T_i)\setminus \{u_i\}}V(X_{i,v})$ are anticomplete in $G$. Moreover, $(X_{v}:v\in U_i)$ is an induced $T[U_i]$-model in $G$ with $X_{u_i}=A_i$ and 
$$\bigcup_{v\in U_i\setminus \{u_i\}}V(X_v)=\bigcup_{j\in J_i}\bigcup_{v\in V(T'_{i,j})}V(X_{i,v})=\bigcup_{j\in J_i}V_{i,j}.$$
In particular, it follows that $(\bigcup_{v\in U_i}V(X_v):i\in I)$ are pairwise anticomplete in $G$. Hence, $(X_v:v\in V(T))$
is an induced $T$-model in $G$ with $X_u=A$. This proves \eqref{st:gettree}.
\medskip

To finish the proof, we show that $G$ is $(c_{\ref{thm:mainpolysep}},t^{d_{\ref{thm:mainpolysep}}})$-separable, where
$$c_{\ref{thm:mainpolysep}}=c_{\ref{thm:mainpolysep}}(\sigma)=\kappa;\quad d_{\ref{thm:mainpolysep}}=d_{\ref{thm:mainpolysep}}(\rho,\sigma)=\psi(\mu,2,\rho).$$
Suppose not. Then by \eqref{st:getseedling} applied to $\lambda=t^{d_{\ref{thm:mainpolysep}}}$, there is a $(t^{d_{\ref{thm:mainpolysep}}},\mu)$-seedling in $G$. But now since $d_{\ref{thm:mainpolysep}}=\psi(\mu,2,\rho)$, from \eqref{st:gettree} applied to $a=\mu$, $b=2$ and $c=\rho$, it follows that $G$ has an induced minor isomorphic to $\bin_{\rho}$, a contradiction. This completes the proof of \ref{thm:mainpolysep}.
\end{proof}

\section{Acknowledgments}

The author thanks Sophie Spirkl for her support and encouragement, and for helpful feedback (particularly for pointing out a mistake in an earlier statement of \Cref{conj:cocks}). Thanks also to Sebastian Wiederrecht for asking a question that helped motivate this work, and to Maria Chudnovsky for many discussions on related topics.
\bibliographystyle{plain}
\bibliography{ref}

\begin{thebibliography}{10}

\bibitem{tw7}
Tara Abrishami, Bogdan Alecu, Maria Chudnovsky, Sepehr Hajebi, and Sophie Spirkl.
\newblock Induced subgraphs and tree decompositions {VII}. {B}asic obstructions in {$H$}-free graphs.
\newblock {\em J. Combin. Theory Ser. B}, 164:443--472, 2024.

\bibitem{APS}
Noga Alon, J\'anos Pach, and J\'ozsef Solymosi.
\newblock Ramsey-type theorems with forbidden subgraphs.
\newblock volume~21, pages 155--170. 2001.
\newblock Paul Erd$\text{\H{o}}$s and his mathematics (Budapest, 1999).

\bibitem{cliquetw}
Hans~L. Bodlaender and Arie M. C.~A. Koster.
\newblock Safe separators for treewidth.
\newblock {\em Discrete Math.}, 306(3):337--350, 2006.

\bibitem{BHKM}
\'{E}douard Bonnet, J\k{e}drzej Hodor, Tuukka Korhonen, and Tom\'{a}\v{s} Masa\v{r}\'{i}k.
\newblock Treewidth is polynomial in maximum degree on weakly sparse graphs excluding a planar induced minor.
\newblock {\em {\rm Manuscript available at \url{https://arxiv.org/abs/2312.07962}}}, 2024.

\bibitem{espertdis2}
Marcin Bria\'nski, James Davies, and Bartosz Walczak.
\newblock Separating polynomial {$\chi$}-boundedness from {$\chi$}-boundedness.
\newblock {\em Combinatorica}, 44(1):1--8, 2024.

\bibitem{twhad}
Rutger Campbell, James Davies, Marc Distel, Bryce Frederickson, J.~Pascal Gollin, Kevin Hendrey, Robert Hickingbotham, Sebastian Wiederrecht, David~R. Wood, and Liana Yepremyan.
\newblock Treewidth, hadwiger number and induced minors.
\newblock {\em {\rm Manuscript available at \url{https://arxiv.org/abs/2410.19295}}}, 2024.

\bibitem{tw17}
Maria Chudnovsky, Sepehr Hajebi, and Sophie Spirkl.
\newblock {I}nduced subgraphs and tree decompositions {XVII}. {A}nticomplete sets of large of treewidth.
\newblock {\em {\rm Manuscript available at \url{https://arxiv.org/abs/2411.11842}}}, 2024.

\bibitem{tw18}
Maria Chudnovsky, Sepehr Hajebi, and Sophie Spirkl.
\newblock {I}nduced subgraphs and tree \mbox{decompositions} {XVIII}. {O}bstructions to bounded pathwidth.
\newblock {\em {\rm Manuscript available at \url{https://arxiv.org/abs/2412.17756}}}, 2024.

\bibitem{tw16}
Maria Chudnovsky, Sepehr Hajebi, and Sophie Spirkl.
\newblock Induced subgraphs and tree decompositions {XVI}. {C}omplete bipartite induced minors.
\newblock {\em Journal of Combinatorial Theory, Series B}, 176:287--318, 2026.

\bibitem{logDMS}
Maria Chudnovsky, A.~E. S, and Daniel Lokshtanov.
\newblock ({T}reewidth, {C}lique)-{B}oundedness and {P}oly-logarithmic {T}ree-{I}ndependence.
\newblock {\em {\rm Manuscript available at \url{https://arxiv.org/abs/2510.15074}}}, 2025.

\bibitem{CT}
Maria Chudnovsky and Nicolas Trotignon.
\newblock On treewidth and maximum cliques.
\newblock {\em Innovations in Graph Theory}, 2:223--243, 2025.

\bibitem{chtan}
Julia Chuzhoy and Zihan Tan.
\newblock Towards tight(er) bounds for the excluded grid theorem.
\newblock {\em Journal of Combinatorial Theory, Series B}, 146:219--265, 2021.

\bibitem{milanicsep}
Ferdinando Cicalese and Martin Milani\v{c}.
\newblock Graphs of separability at most 2.
\newblock {\em Discrete Appl. Math.}, 160(6):685--696, 2012.

\bibitem{cocks}
Daniel Cocks.
\newblock {$t$}-sails and sparse hereditary classes of unbounded tree-width.
\newblock {\em European J. Combin.}, 122:Paper No. 104005, 21, 2024.

\bibitem{DMS}
Cl\'{e}ment Dallard, Martin Milani\v{c}, and Kenny \v{S}torgel.
\newblock Treewidth versus clique number. {III}. {T}ree-independence number of graphs with a forbidden structure.
\newblock {\em J. Combin. Theory Ser. B}, 167:338--391, 2024.

\bibitem{diestel}
Reinhard Diestel.
\newblock {\em Graph theory}, volume 173 of {\em Graduate Texts in Mathematics}.
\newblock Springer, Berlin, fifth edition, 2018.

\bibitem{dvorak}
Zden\v{e}k Dvo\v{r}\'{a}k.
\newblock Induced subdivisions and bounded expansion.
\newblock {\em European J. Combin.}, 69:143--148, 2018.

\bibitem{tighttw}
Joshua Erde and Daniel Wei\ss\hspace{0mm}auer.
\newblock A short derivation of the structure theorem for graphs with excluded topological minors.
\newblock {\em SIAM J. Discrete Math.}, 33(3):1654--1661, 2019.

\bibitem{EH2}
Paul Erd\H{o}s and Andr\'{a}s Hajnal.
\newblock Ramsey-type theorems.
\newblock {\em Discrete Applied Mathematics}, 25(1):37--52, 1989.

\bibitem{EH}
Paul Erd$\text{\H{o}}$s and Andr\'{a}s Hajnal.
\newblock On spanned subgraphs of graphs.
\newblock In {\em Contributions to graph theory and its applications ({I}nternat. {C}olloq., {O}berhof, 1977) ({G}erman)}, pages 80--96. Tech. Hochschule Ilmenau, Ilmenau, 1977.

\bibitem{esperet}
Louis Esperet.
\newblock Graph colorings, flows and perfect matchings.
\newblock {\em habilitation thesis, Univ. Grenoble Alpes}, 2017.

\bibitem{GHdeg}
Ant\'{o}nio Gir\~{a}o and Zach Hunter.
\newblock {I}nduced subdivisions in {$K_{s,s}$}-free graphs with polynomial average degree.
\newblock {\em International Mathematics Research Notices}, 2025(4):1--23, 02 2025.

\bibitem{GRgeneral}
R.~L. Graham and B.~L. Rothschild.
\newblock Ramsey's theorem for $n$-parameter sets.
\newblock {\em Transactions of the American Mathematical Society}, 159:257--292, 1971.

\bibitem{productramsey}
Ronald~L. Graham, Bruce~L. Rothschild, and Joel~H. Spencer.
\newblock {\em Ramsey theory}.
\newblock Wiley Series in Discrete Mathematics and Optimization. John Wiley \& Sons, Inc., Hoboken, NJ, paperback edition, 2013.

\bibitem{approxpw}
Carla Groenland, Gwena\"{e}l Joret, Wojciech Nadara, and Bartosz Walczak.
\newblock Approximating pathwidth for graphs of small treewidth.
\newblock {\em ACM Trans. Algorithms}, 19(2), March 2023.

\bibitem{chordalehf}
Sepehr Hajebi.
\newblock Chordal {G}raphs, {E}ven-{H}ole-{F}ree {G}raphs and {S}parse {O}bstructions to {B}ounded {T}reewidth.
\newblock {\em J. Graph Theory}, 110(3):351--365, 2025.

\bibitem{pinned}
Sepehr Hajebi.
\newblock Induced subdivisions with pinned branch vertices.
\newblock {\em European J. Combin.}, 124:Paper No. 104072, 2025.

\bibitem{completeminorpw}
Robert Hickingbotham.
\newblock Induced subgraphs and path decompositions.
\newblock {\em Electron. J. Combin.}, 30(2):Paper No. 2.37, 12, 2023.

\bibitem{lozin}
Vadim Lozin and Igor Razgon.
\newblock Tree-width dichotomy.
\newblock {\em European J. Combin.}, 103:Paper No. 103517, 8, 2022.

\bibitem{NOdMexpansion}
Jaroslav Ne\v{s}et\v{r}il and Patrice Ossona~de Mendez.
\newblock Grad and classes with bounded expansion. {I}. {D}ecompositions.
\newblock {\em European J. Combin.}, 29(3):760--776, 2008.

\bibitem{multiramsey}
Frank~P. Ramsey.
\newblock On a {P}roblem of {F}ormal {L}ogic.
\newblock {\em Proc. London Math. Soc. (2)}, 30(4):264--286, 1929.

\bibitem{GMI}
Neil Robertson and P.~D. Seymour.
\newblock Graph minors. {I}. {E}xcluding a forest.
\newblock {\em J. Combin. Theory Ser. B}, 35(1):39--61, 1983.

\bibitem{GMV}
Neil Robertson and Paul Seymour.
\newblock Graph minors. {{V}}. {{E}}xcluding a planar graph.
\newblock {\em Journal of Combinatorial Theory, Series B}, 41(1):92--114, 1986.

\bibitem{Weissauerblock}
Daniel Wei\ss\hspace{0mm}auer.
\newblock On the block number of graphs.
\newblock {\em SIAM J. Discrete Math.}, 33(1):346--357, 2019.

\end{thebibliography}

\end{document}